\newcommand{\dott}{\, \cdot\,}
\numberwithin{equation}{section}
\DeclareMathOperator{\id}{Id}
\DeclareMathOperator{\meas}{meas}
\DeclareMathOperator{\supp}{supp}
\DeclareMathOperator*{\esssup}{esssup}
\newcommand{\act}{\cdot}
\newcommand{\tnorm}[1]{\vert\hspace*{-1pt}\vert\hspace*{-1pt}\vert#1\vert\hspace*{-1pt}\vert\hspace*{-1pt}\vert}
\newcommand{\Linf}{{L^\infty}}
\newcommand{\Winf}{{W^{1,\infty}}}
\newcommand{\Winfloc}{{W^{1,\infty}_{\rm loc}}}
\newcommand{\epsi}{\varepsilon}
\newcommand{\mus}{\mu_{\text{\rm s}}}
\newcommand{\Nat}{\mathbb N}
\newcommand{\smin}{\ensuremath{s_{\text{min}}}}
\newcommand{\smax}{\ensuremath{s_{\text{max}}}}
\newcommand{\inv}{\ensuremath{^{-1}}}
\newcommand{\WX}{\ensuremath{W_X^{1,\infty}}}
\newcommand{\WY}{\ensuremath{W_Y^{1,\infty}}}
\newcommand{\LX}{\ensuremath{L_X^\infty}}
\newcommand{\LY}{\ensuremath{L_Y^\infty}}
\renewcommand{\P}{\ensuremath{\mathcal{P}}}
\renewcommand{\H}{\ensuremath{\mathcal{H}}}
\newcommand{\N}{\ensuremath{\mathcal{N}}}
\newcommand{\X}{\ensuremath{\mathcal{X}}}
\newcommand{\Y}{\ensuremath{\mathcal{Y}}}
\newcommand{\C}{\ensuremath{\mathcal{C}}}
\newcommand{\B}{\ensuremath{\mathcal{B}}}
\newcommand{\Gr}{\ensuremath{G}}
\newcommand{\V}{\ensuremath{\mathcal{V}}}
\newcommand{\W}{\ensuremath{\mathcal{W}}}
\newcommand{\fracpar}[2]{\frac{\partial
    #1}{\partial #2}}
\newcommand{\ldX}{\fracpar{}{X}}
\newcommand{\ldY}{\fracpar{}{Y}}
\newcommand{\dXY}[1]{{#1}_{XY}}
\newcommand{\abs}[1]{\left\vert#1\right\vert}
\newcommand{\Real}{\mathbb R}
\newcommand{\norm}[1]{\left\Vert#1\right\Vert}
\newcommand{\norms}[1]{\Vert#1\Vert}
\newcommand{\Z}{\ensuremath{\mathcal{Z}}}
\newcommand{\E}{\ensuremath{\mathcal{E}}}
\newcommand{\D}{\ensuremath{\mathcal{D}}}
\newcommand{\G}{\ensuremath{\mathcal{G}}}
\newcommand{\muac}{\mu_{\text{\rm ac}}}
\newcommand{\nuac}{\nu_{\text{\rm ac}}}
\newcommand{\F}{\ensuremath{\mathcal{F}}}
\newcommand{\Fquot}{\ensuremath{\mathcal{F}/\Gr^2}}
\newcommand{\Mb}{\ensuremath{\mathbf{M}}}
\newcommand{\Pib}{\ensuremath{\mathbf{\Pi}}}
\newcommand{\Eb}{\ensuremath{\mathbf{E}}}
\newcommand{\Sb}{\ensuremath{\mathbf{S}}}
\newcommand{\Db}{\ensuremath{\mathbf{D}}}
\newcommand{\Lb}{\ensuremath{\mathbf{L}}}
\newcommand{\Cb}{\ensuremath{\mathbf{C}}}
\newcommand{\tb}{\ensuremath{\mathbf{t}}}
\newcommand{\xb}{\ensuremath{\mathbf{x}}}
\newcommand{\boldn}{\mathbf n}
\newtheorem{theorem}{Theorem}[section]
\newtheorem{lemma}[theorem]{Lemma}
\newtheorem{definition}[theorem]{Definition}
\begin{document}

\title[The nonlinear variational wave
equation]{Global semigroup of conservative solutions of the
  nonlinear variational wave equation}
\author[Holden]{Helge Holden}
\address[Holden]{\newline Department of
  Mathematical Sciences, Norwegian University of
  Science and Technology, NO--7491 Trondheim,
  Norway,\newline {\rm and} \newline Centre of
  Mathematics for Applications,
University of Oslo,
  P.O.\ Box 1053, Blindern,
  NO--0316 Oslo, Norway }
\email[]{holden@math.ntnu.no}
\urladdr{www.math.ntnu.no/\~{}holden/}

\author[Raynaud]{Xavier Raynaud}
\address[Raynaud]{\newline
  Centre of Mathematics for Applications, 
University of Oslo,
  P.O.\ Box 1053, Blindern,
  NO--0316 Oslo, Norway }
\email[]{raynaud@cma.uio.no}

\date{\today}

\subjclass[2000]{Primary: 35L70; Secondary: 49K20}

\keywords{Nonlinear variational wave equation, global semigroup, conservative solutions}

\thanks{Supported in part by the Research Council of Norway. This paper was written as part of  the international research program on Nonlinear Partial Differential Equations at the Centre for Advanced Study at the Norwegian Academy of Science and Letters in Oslo during the academic year 2008--09.}

\begin{abstract} 
We prove the existence of a global semigroup for conservative solutions of the nonlinear variational wave
equation $u_{tt}-c(u)(c(u)u_x)_x=0$. We allow for initial data $u|_{t=0}$ and
$u_t|_{t=0}$ that contain measures. We assume that $0<\kappa^{-1}\le c(u) \le
\kappa$. Solutions of this equation may experience concentration of the energy density 
$(u_t^2+c(u)^2u_x^2)dx$ into sets of measure zero.  The solution is constructed by introducing new variables related to the characteristics, whereby singularities in the energy density become  manageable. Furthermore, we prove that the energy may only focus on a set of times of zero measure or at points where $c'(u)$ vanishes.  A new numerical method to
construct conservative solutions is provided and illustrated on examples. 
\end{abstract}

\maketitle
\tableofcontents

\section{Introduction}

The nonlinear variational wave equation (NVW), which was first
introduced by Saxton in
\cite{Saxt:89}, is given
by the following nonlinear partial differential equation on the line
\begin{equation}
  \label{eq:nvw}
  u_{tt}-c(u)(c(u)u_x)_x=0
\end{equation}
with initial data
\begin{equation}
  \label{eq:initdat}
  u|_{t=0}=u_0,\ u_t|_{t=0}=u_1.
\end{equation}
The equation can be derived from the variational
principle applied to the functional
\begin{equation*}
  \iint\left(u_t^2-c^2(u)u_x^2\right)\,dxdt.
\end{equation*}
We are interested in the analysis of conservative solutions of this
initial value problem for $u_0, u_1\in L^2(\Real)$.   It is well known
that solutions of this equation develop singularities in finite time,
even for smooth initial data, see, e.g., \cite{GlaHunZh:96}. The continuation past
singularities is highly nontrivial, and allows for several distinct
solutions. Thus additional information or requirements are needed to
select a unique solution, and stability of solutions becomes a particularly
delicate issue. 
We here study the conservative case where one in addition to the
solution $u$ itself, requires that the energy is conserved. For
smooth solutions the energy is given by
$\E(t)=\int_{\Real}(u_t^2+c^2u_x^2)(t,x)\,dx$. However, as energy may focus
in isolated points, one has to look at energy density in the sense of
measures such that the absolutely continuous part of the measure
corresponds to the usual energy density.  The analysis resembles to a
large extent recent work done on the Camassa--Holm equation and the
Hunter--Saxton equation (see, e.g., \cite{HolRay:07,BreCons:06,ZhaZhen:00,HS:91} and references
therein). Our main result is the proof of the existence of a global semigroup for conservative solutions of the NVW equation, allowing for concentration of the energy density on sets of zero measure.  

The NVW equation has been extensively studied by Zhang and Zheng \cite{ZhaZhen:98,ZhaZhen:00,ZhaZhen:01,ZhaZhen:01a,ZhaZhen:03,ZhaZhen:05a,ZhaZhen:05}. 
However, our approach is closely related to the approach by Bressan and Zheng
\cite{BreZhe:06}, in that we introduce new variables based on the
characteristics, thereby, loosely speaking, separating waves going in positive and
negative direction. 

It is difficult to illustrate the ideas in this paper as there are no
elementary and explicit solutions available, except for the trivial
case where $c$ is constant, which yields the classical linear wave
equation. Thus one is forced to illustrate ideas numerically. Traditional finite
difference schemes will not yield conservative solutions, but rather
dissipative solutions due to the intrinsic numerical diffusion in these
methods. Hence it is a challenge of separate interest to compute numerically conservative
solutions of this equation to display some of the intricacies. This question is addressed and analyzed in
Section \ref{sec:num}.

Let us now turn to a more precise description of the content of this paper.
We consider the variables $R$ and $S$ defined as
\begin{equation}
  \label{eq:defRS}
  \left\{
    \begin{aligned}
      R&=u_t+c(u)u_x,\\
      S&=u_t-c(u)u_x.
    \end{aligned}
  \right.
\end{equation}
By \eqref{eq:nvw}, we have
\begin{equation}
  \label{eq:evolRS}
  \left\{
    \begin{aligned}
      R_t-cR_x&=\frac{c'}{4c}(R^2-S^2),\\
      S_t+cS_x&=\frac{c'}{4c}(S^2-R^2),
    \end{aligned}
  \right.
\end{equation}
or, on conservative form, 
\begin{equation}
  \label{eq:evolRSsq}
  \left\{
    \begin{aligned}
      (R^2+S^2)_t-(c(R^2-S^2))_x&=0,\\
      (\frac{1}{c}(R^2-S^2))_t-(R^2+S^2)_x&=0.
    \end{aligned}
  \right.
\end{equation}
Let $\E(t)$ denote the total energy of the system
at time $t$, i.e.,
\begin{equation}
  \label{eq:energdef}
  \E(t)=\int_{\Real}(u_t^2+c^2u_x^2)(t,x)\,dx=\int_\Real(R^2+S^2)\,dx.
\end{equation}
We assume that the initial total energy that we
denote $\E_0$ is finite and that $u$ is bounded in
$L^\infty$. For smooth solutions of \eqref{eq:nvw}
we have $\frac{d\E}{dt}=0$. We also assume that
$c\in C^1(\Real)$ and
$c\colon\Real\to[\kappa^{-1},\kappa]$ for some
constant $\kappa>0$. 

From \eqref{eq:energdef} we see that we need that
the functions $R$ and $S$ belong to
$L^2(\Real)$. It turns out that, as time evolves,
the functions $R^2$ and $S^2$ can concentrate on
sets of measure zero. The example presented in
Figure \ref{fig:difftimes}, see Section \ref{sec:nonlin}, illustrates
this phenomenon. In this case, we have a
nontrivial solution $u$ for $t$ nonzero, which is
however is identically equal to one at
$t=0$ and $u_t(0,x)=0$. However, when we analyze this example
closer, we see that the energy concentrates at the
origin, indeed
\begin{equation*}
  \lim_{t\to0}R^2(t,x)\,dx=\delta\quad\text{ and }\quad\lim_{t\to0}S^2(t,x)\,dx=2\delta
\end{equation*}
\begin{figure}[h]
  \centering
  \includegraphics[width=3cm]{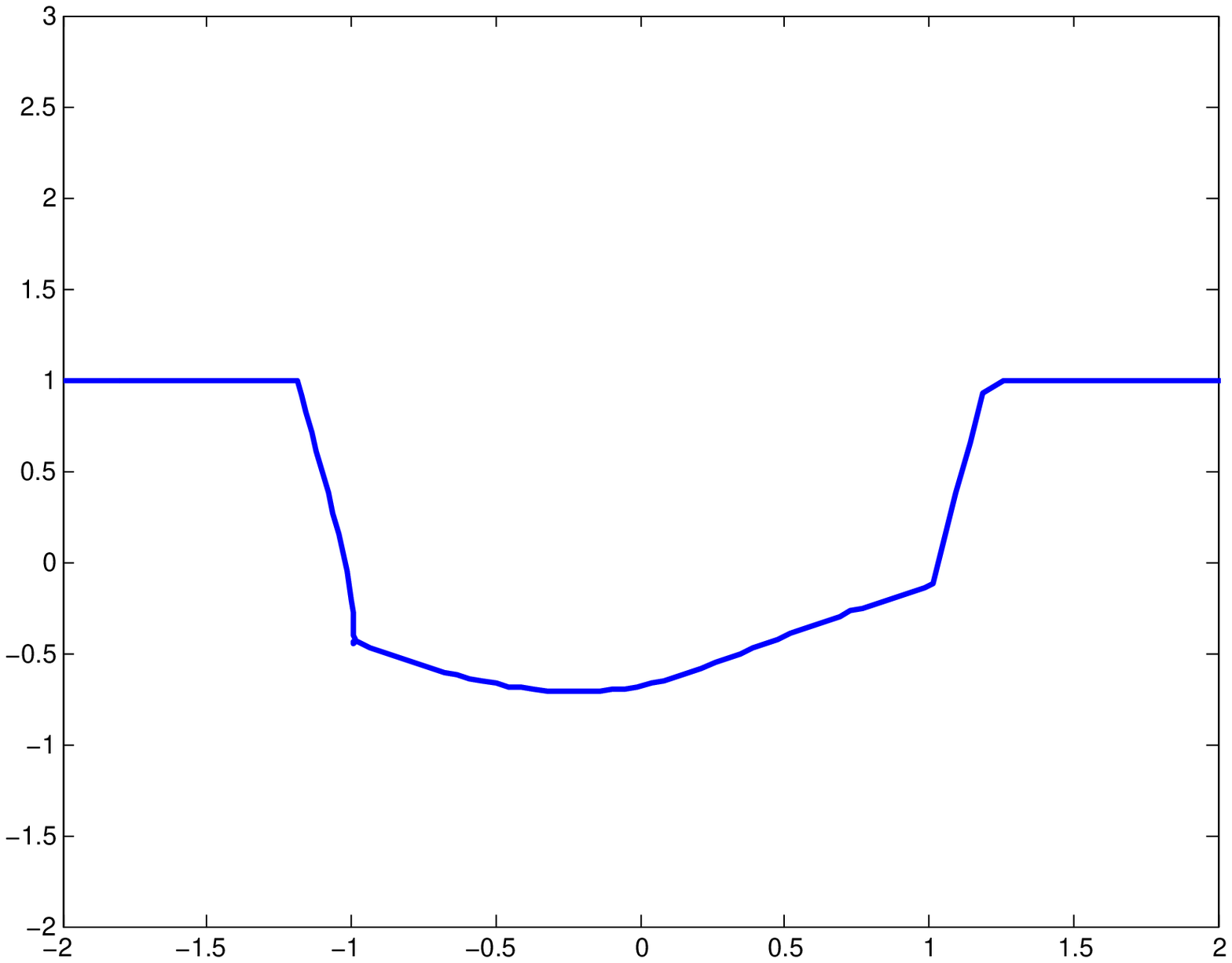}
  \includegraphics[width=3cm]{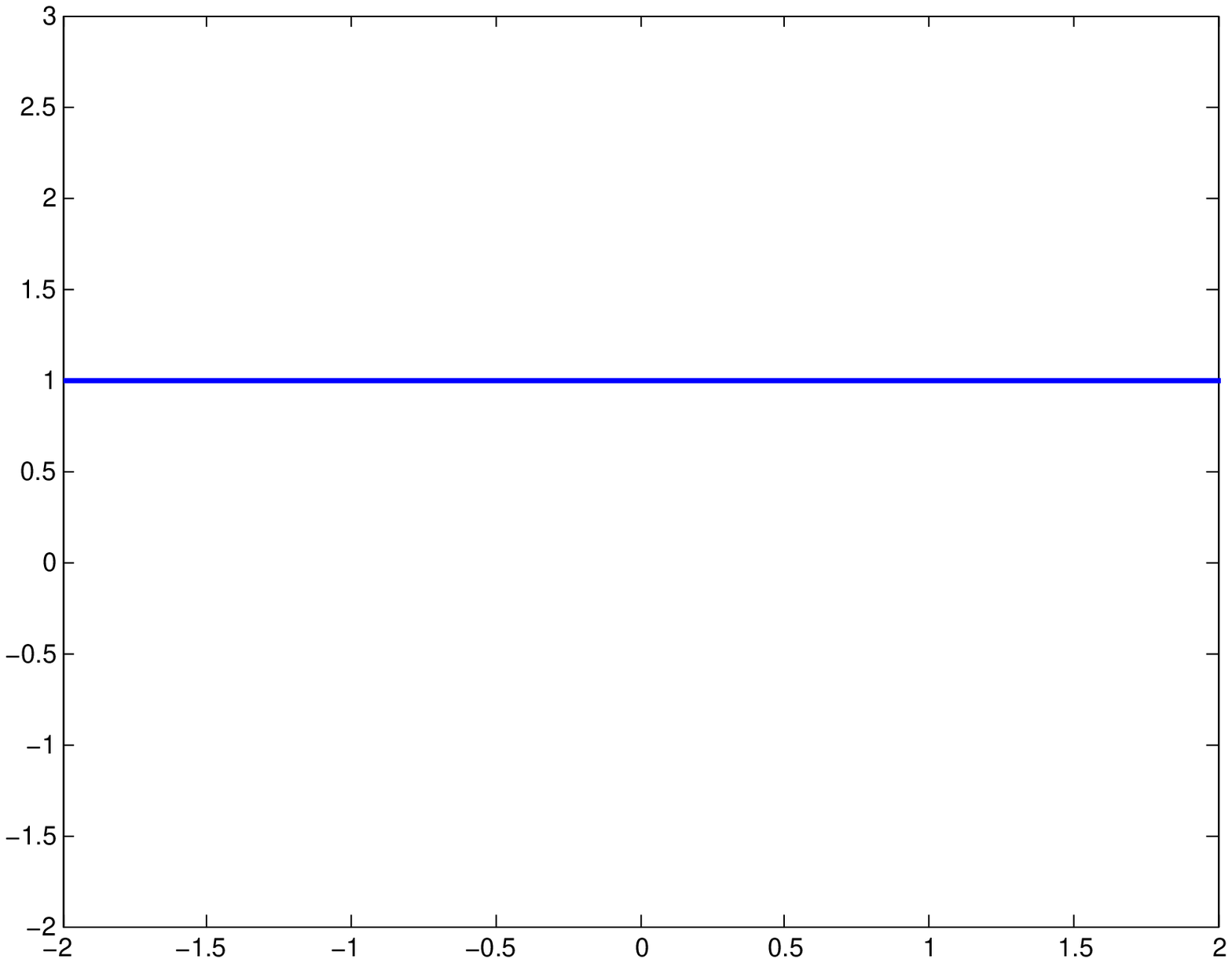}
  \includegraphics[width=3cm]{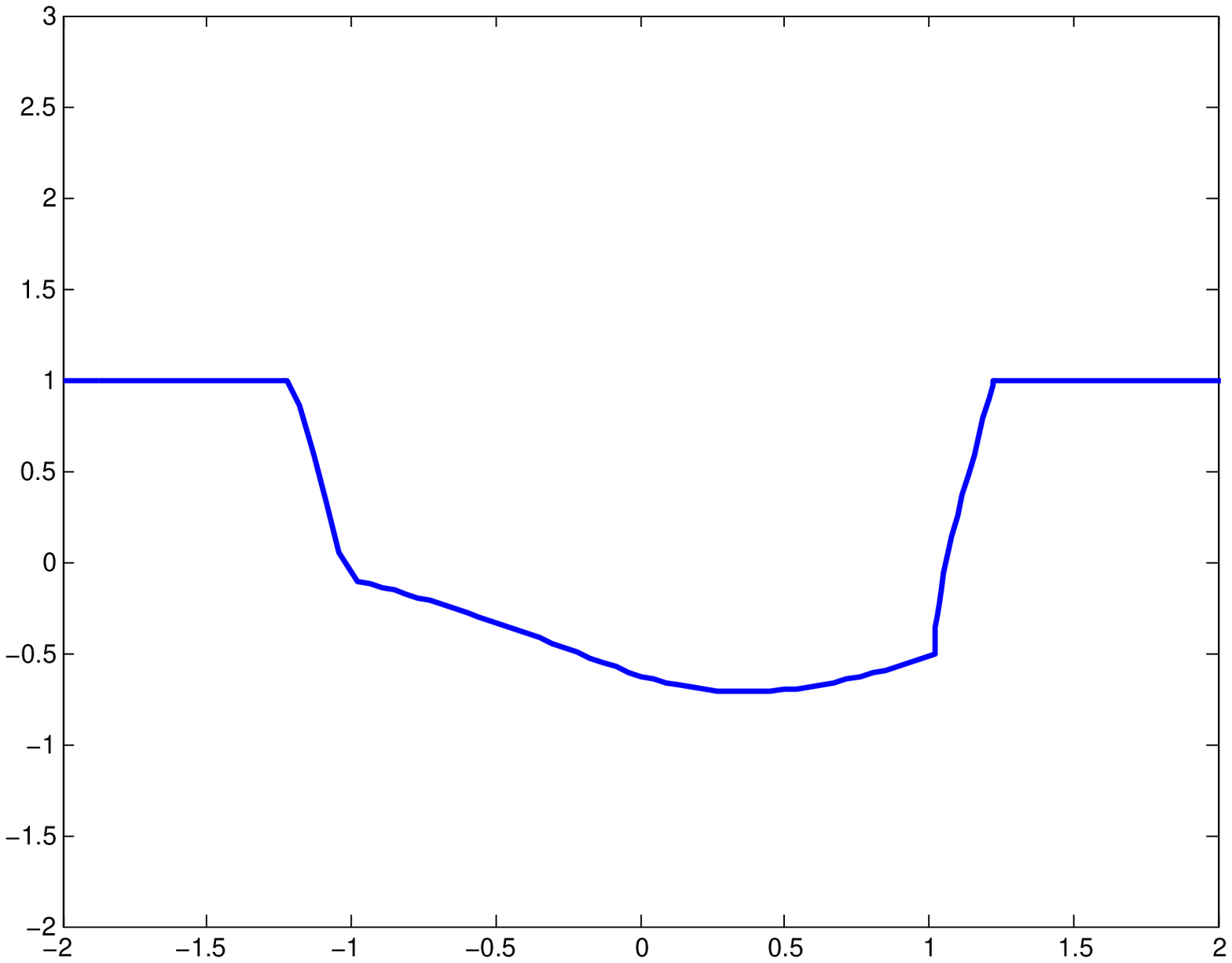}
  \caption{Plot of $u(t,x)$ for $t=-3$ (left), $t=0$ (center), $t=3$ (right).}
  \label{fig:difftimes}
\end{figure}
where $\delta$ is Dirac's delta function. Clearly
this complicates the existence and uniqueness
question for this equation.  As we want to
construct a semigroup of solutions for this type
of solutions, we have to know the location and
amount of backward ($R^2$) and forward ($S^2$)
energy that has concentrated on sets of zero
measure, an information which is not given by the
function $u$ itself. (In our example, since, at
$t=0$, the function $u$ is identically one and its
time derivative $u_t$ identically zero, we cannot infer where the
energy has concentrated.)  Thus we introduce the
set $\D$ whose elements, in addition to $u$, $R$,
$S$, contain two measures, $\mu$ and $\nu$, corresponding to forward
and backward energy density. More precisely, the measures are nonnegative Radon measures that satisfy
\begin{equation*}
 \muac=\frac14 R^2\,dx,\quad\nuac=\frac14 S^2\,dx.
\end{equation*}

Our main contribution in this article is
to present a rigorous construction of the
semigroup of conservative solutions in $\D$. Note
that the set $\D$ is the natural set of solutions for
conservative solutions, and the semigroup property
can only be established in $\D$, as 
illustrated by the example of Figure
\ref{fig:difftimes}.  Furthermore, by incorporating the energy measures as independent variables the formation of singularities is natural, and it allows for more general initial data.  The present approach also provides a natural numerical method for conservative solutions.  

As in \cite{BreZhe:06}, the construction of the
solutions is achieved via a change of variables
into a new coordinate system $(X,Y)$ that
straightens the characteristics. Even if we
use different variables, the solutions we obtain
are the same, but by extending the solutions to
the set $\D$, we are able to establish that the
solutions we construct satisfy the semigroup
property. We have to study in details the change
of variables mapping --- from the original
variables to the new variables and vice versa ---
because, in order to prove the semigroup property,
we have to establish that the two sets of
variables match in an appropriate way. Compared to
the variables used in \cite{BreZhe:06}, we prefer
variables with a more direct physical
interpretation. Namely, the variables we are
considering are time, $t(X,Y)$, space, $x(X,Y)$,
the solution function $U(X,Y)$, which formally
satisfies $u(t(X,Y),x(X,Y))=U(X,Y)$ and the energy
potentials $J$ and $K$. The definition of the
energy potentials $J$ and $K$ follows from
\eqref{eq:evolRSsq}, which says that the forms
$\frac14(R^2+S^2)\,dx+\frac14c(u)(R^2-S^2)\,dt$
and
$\frac1{4c(u)}(R^2-S^2)\,dx+\frac14(R^2+S^2)dt$
are closed, so that, by Poincar\'e's lemma, there
exist functions, here denoted the energy
potentials $J$ and $K$, whose differentials are
equal to the given forms. Thus the new set of
variables we will be considering equals
$Z=(t,x,U,J,K)$ and, after rewriting the governing
equations \eqref{eq:defRS} and \eqref{eq:evolRS}
in the new coordinate system $(X,Y)$, we get a
system of equations of the form
\begin{equation}
  \label{eq:eqsysintro}
  Z_{XY}=F(Z)(Z_X,Z_Y)
\end{equation}
where $F(Z)\colon\Real^5\times\Real^5\to\Real^5$ is a
bi-linear and symmetric operator, which depends
only on $U$, cf.~\eqref{eq:goveq}. 

In the new coordinates, the initial data 
corresponds to the set
$\Gamma_0=\{(X,Y)\in\Real^2\mid t(X,Y)=0\}$. In the smooth case, $\Gamma_0$ will be a strictly monotone curve. However, in our setting, $\Gamma_0$  may not even be a curve, and even if it is a curve, it may not be continuous nor strictly monotone. Indeed, it may contain horizontal and vertical segments, and furthermore, rectangular boxes corresponding to the situation where both $\mu$ and $\nu$ are singular at the same point.  If $\Gamma_0$ is a curve
with no vertical or horizontal parts and
the initial data is bounded in $L^\infty$ (by initial data, we
mean the values of $Z$, $Z_X$ and $Z_Y$ on
$\Gamma_0$), then the existence and uniqueness of
solutions to \eqref{eq:eqsysintro} is a classical
result, see, for example, \cite[Ch.~4]{garabedian}. In the present paper we
have to deal with unbounded data in $\D$ ($u_x$ and $u_t$ are unbounded in $L^\infty$). The new 
coordinates $(X,Y)$ are given by
\begin{equation*}
  dx-c(u)dt=0\text{ if and only if } dY=0
\end{equation*}
and
\begin{equation*}
  dx+c(u)dt=0\text{ if and only if } dX=0,
\end{equation*}
that is,  the characteristics  are mapped
to  horizontal and vertical lines.
We denote by $\Lb$ the mapping from the possible initial data in $\D$ to the set $\F$ defined by 
$\Gamma_0$ and the value of the initial data on $\Gamma_0$, thus $\Lb\colon\D\to\F$, see Definition \ref{def:mappingL}.  From 
$\Gamma_0$ we have to select one curve that can be used as initial data for the equation 
\eqref{eq:eqsysintro}.  There is a certain nonuniqueness due to fact that $\Gamma_0$ may not be curve. Let $\G_0$ denote the set of all curves, including the information about the initial data. We let $\Cb$ denote the mapping that from a set $\Gamma_0$ selects one possible curve, that is, 
$\Cb\colon \F\to \G_0$, see Definition \ref{def:C}. The inverse map that from curve determines the corresponding set in $\F$ is denoted $\Db$, see Definition \ref{def:D}. Once we have a curve with the initial data, we can in principle compute the solution by solving \eqref{eq:eqsysintro}. To show the existence of a global solution  we use the the bi-linearity of \eqref{eq:eqsysintro} and an a priori bound on the
energy potentials $J$ and $K$, see Section \ref{sec:existence}. We let the set of all possible solutions be denoted by $\H$, and let $\Sb\colon\G\to\H$ denote the map that computes the solution that passes through the curve in $\G$, see Theorem \ref{th:globalsol}. Here $\G$ is defined as $\G_0$ without the constraint that $t=0$, see Definition \ref{def:setG}. Recall that as $t$ now is a dependent variable, it does not make sense to compute the solution up to a specific time, but rather we determine the global solution for all times. Thus we need a mapping that extracts the solution $Z$ for a given time $T$, that is, the intersection of the solution in $\H$ with the set where $t(X,Y)=T$.  Let $\Eb\colon\H\to \G_0$ denote the map that from any given solution in $\H$ extracts the solution at $t=0$, that is, in $\G_0$, see Definition \ref{def:Eb}.  Next we define the operator $\tb_T\colon \H\to\H$ that shifts time in a solution in $\H$ by a given time $T$, see Definition \ref{def:tT}.  Now we can define the map $S_T\colon\F\to \F$ by 
$S_T=\Db\circ \Eb\circ\tb_T\circ \Sb\circ \Cb$, see Definition \ref{def:ST}. A key result is that $S_T$ is a semigroup on $\F$, see Theorem \ref{th:Stsemigroup}.
Next we need to return to the original variables. Let $\Mb\colon\F\to\D$ denote that map, see Definition \ref{def:M}.  Thus the solution operator $\bar S_T\colon\D\to\D$ is defined by 
(Definition \ref{def:bST}) 
\begin{equation}
  \label{eq:semigprop}
  \bar S_T=\Mb\circ S_T\circ\Lb.
\end{equation}
It remains to show that $\bar S_T$ is a semigroup.
However, since $\Mb$ is not inverse of $\Lb$, as
$\Lb\circ\Mb\neq\id_{\F}$, the semigroup property
of $\bar S_T$ still does not follow from
\eqref{eq:semigprop}. This fact is explained as
follows. When changing variables, we have
introduced a degree of freedom that we now want to
eliminate. This degree of freedom can be
identified precisely with the action of the group
$\Gr^2$, where $\Gr$ denotes the group of
diffeomorphisms of the real line. Indeed, by simply
using the bi-linearity of \eqref{eq:eqsysintro},
one can check that if $Z$ is a solution to
\eqref{eq:eqsysintro}, then $\bar
Z(X,Y)=Z(f(X),g(Y))$, where $(f,g)\in\Gr^2$, is
also a solution to the same equation. The
transformation $(X,Y)\mapsto(f(X),g(Y))$
corresponds to a stretching of the plane $\Real^2$
in the $X$ and $Y$ directions. Note that this
transformation maps horizontal (resp. vertical)
lines to horizontal (resp. vertical) lines and
therefore preserves the directions of the
characteristics.  Moreover, this transformation
does not affect the solution in the original 
coordinates. To illustrate this we ignore for the moment for the sake
of simplicity, the energies $\mu$ and
$\nu$ in the definition of $\D$. The solution $u(t,x)$ can be seen as the
surface in $\Real^3$ given by $(t,x,u(t,x))$ where
$(t,x)\in\Real^2$ are parameters. Through our
change of variables, we obtain another
parametrization of the same surface, namely,
\begin{equation}
  \label{eq:parsurf}
  (t(X,Y),x(X,Y),U(X,Y))
\end{equation}
where $(X,Y)\in\Real^2$ are the new
parameters. Additional properties of the solution
$Z=(t,x,U,J,K)$ which are contained in the
definition of $\H$ guarantee that the surface
defined by \eqref{eq:parsurf} does not fold over
itself so that it is in fact a graph. It is then
clear from \eqref{eq:parsurf} that the
transformation $(X,Y)\mapsto(f(X),g(Y))$ is simply
a re-parametrization of the same surface, which
defines  $u(t,x)$ uniquely. At the level of the set
$\F$, which corresponds to a parametrization of
the initial data in the new  coordinates, we
can also define the action of the group $\Gr^2$
that denote $\psi\times(f,g)\mapsto\psi\act(f,g)$
for any $\psi\in\F$ and $(f,g)\in\Gr^2$. We prove
that two elements which are equivalent correspond
to the same element in $\D$, that is,
\begin{equation}
  \label{eq:equivsaD}
  \Mb(\bar\psi)=\Mb(\psi)
\end{equation}
where $\bar\psi=\psi\act(f,g)$ for some
$(f,g)\in\Gr^2$. From \eqref{eq:equivsaD}, it is
now clear why $\Lb\circ\Mb\neq\id_{\F}$ as, in
general, $\bar\psi$ and $\psi$ are distinct. We
introduce a subset $\F_0$ of $\F$ which
corresponds to a section of $\F$ with respect to
the action of the group $\Gr^2$, which means that
the set $\F_0$ contains only one representative of
each equivalence class so that $\Fquot$ and $\F_0$
are in bijection. The system \eqref{eq:eqsysintro}
preserves the strict positivity of the quantities
$x_X+J_X$ and $x_Y+J_Y$ and the set $\F$ somehow
inherits this property which makes it possible to
define the projection $\Pi\colon\F\to\F_0$. The
projection $\Pi$ associates to any element in $\F$
its unique representative in $\F_0$ which belongs
to the same equivalence class. As expected, since
we have now eliminated the degree of freedom we
introduced by changing variables, we obtain that
$\F_0$ and $\D$ are in bijection. We are then able
to prove that $\bar S_t$ is a semigroup.

Our main result, Theorem \ref{th:main}, reads as follows:\\
\textbf{Theorem.} \textit{Given $(u_0,R_0,S_0,\mu_0,\nu_0)\in\D$, let us
  denote $(u,R,S,\mu,\nu)(t)=\bar
  S_t(u_0,R_0,S_0,\mu_0,\nu_0)$. Then $u$
  is  a weak solution of the nonlinear variational
  wave equation \eqref{eq:nvw}, that is,
  \begin{equation}
    \label{eq:soldist1A}
    \int_{\Real^2}(\phi_t-(c(u)\phi)_x)R\,dxdt+\int_{\Real^2}(\phi_t+(c(u)\phi)_x)S\,dxdt=0
  \end{equation}
  for all smooth functions $\phi$ with compact
  support and where
  \begin{equation}
    \label{eq:weakderu1A}
    R=u_t+c(u)u_x,\quad S=u_t-c(u)u_x.
  \end{equation}
  Moreover, the measures $\mu(t)$ and $\nu(t)$
  satisfy the following equations in the sense of
  distribution
  \begin{subequations}
    \label{eq:meassolA}
    \begin{equation}
      \label{eq:meassol1A}
      (\mu+\nu)_t-(c(\mu-\nu))_x=0
    \end{equation}
    and
    \begin{equation}
      \label{eq:meassol2A} 
      (\frac{1}{c}(\mu-\nu))_t-(\mu+\nu)_x=0.
    \end{equation}
  \end{subequations}
  The mapping $\bar S_T:\D\to\D$ is a semigroup, that is,
  \begin{equation*}
    \bar S_{t+t'}=\bar S_t\circ \bar S_{t'}
  \end{equation*}
  for all positive $t$ and $t'$.
}

Furthermore, we note the following important result (Theorem \ref{th:energconcentration}):\\
\textbf{Theorem.}  \textit{The solution satisfies the following properties:
 \begin{enumerate}
  \item[(i)] For all $t\in\Real$
    \begin{equation}
      \label{eq:prestotenergA}
      \mu(t)(\Real)+\nu(t)(\Real)=\mu_0(\Real)+\nu_0(\Real).
    \end{equation}
  \item[(ii)] For almost every $t\in\Real$, the
    singular part of $\mu(t)$ and $\nu(t)$ are
    concentrated on the set where $c'(u)=0$.
  \end{enumerate}
}

\smallskip
In this article, we do not study the stability of
the solutions. However, since the solutions we
obtain coincide with the ones obtained in
\cite{BreZhe:06} for initial data which do not
contain any singular measure, the solutions in
that case also satisfy the stability result stated
in \cite[Theorem 2]{BreZhe:06}. To obtain a
continuous semigroup of solution in $\D$, we would
like to follow the approach developed in
\cite{HolRay:07}, \cite{BrHoRa:09} for the
Camassa--Holm equation and Hunter--Saxton
equations. In these two papers, the conservative
solutions are also obtained via a change of
variables which is invariant with respect to
relabeling (i.e., with respect to the action of
the group of diffeomorphisms $\Gr$). We define a
distance between equivalence classes in the new 
coordinates. This distance is then mapped back
to the original set of coordinates so that we
obtain a continuous semigroup for this metric. In
the case of the nonlinear wave equation, in
particular, because of the truly two dimensional
nature of the problem, it is not so easy to
formulate a stability result in the new 
coordinates which holds when mapping back to the
original set of variables. In
Lemma \ref{lem:stabL2} we  present a result
in that direction.

There is a lack of explicit solutions to NVW. In this paper we consider two explicit examples. The first example, see Section \ref{sec:linearWE}, is the simplest possible, namely the linear wave equation (with $c$ constant), but with general initial data. We recover as expected the familiar d'Alembert solution. The energy measures are transported with velocity $\pm c$.  The second example, see Section \ref{sec:nonlin},  is a truly nonlinear case with velocity given by \eqref{eq:fartLC}. However, here we choose the simplest nontrivial initial data with energy concentration initially for both measures. The corresponding equation \eqref{eq:eqsysintro} is solved numerically, and the result is illustrated on Figs.~\ref{fig:difftimes}, \ref{fig:isotimes}--\ref{fig:paramsurface}.

The numerical method that yields conservative solutions is described in Section \ref{sec:num}.

\subsection{Physical motivation for the nonlinear variational wave equation} \label{sec:physics}

The NVW equation was first derived in the context of nematic liquid crystals, see 
\cite{Saxt:89,HS:91}. More precisely, a nematic crystal can be described, when we ignore the motion of the fluid,  by the dynamics of the so-called director field $\boldn=\boldn(x,y,z,t)\in\Real^3$ describing the orientation of rod-like molecules.  Thus $\abs{\boldn}=1$.  The Oseen--Franck strain-energy potential is given by
\begin{equation}
W(\boldn,\nabla\boldn)=\alpha\abs{\boldn \times(\nabla\times\boldn)}^2+\beta(\nabla\cdot\boldn)^2+\gamma(\boldn\cdot\nabla\times\boldn)^2,
\end{equation}
where $\alpha,\beta, \gamma$ are constitutive constants.  Consider next the highly simplified case of director fields of the type
\begin{equation}
\boldn=\boldn(x,t)=\cos(u(t,x)) \mathbf e_x+\sin(u(t,x)) \mathbf e_y
\end{equation}
where $\mathbf e_x$ and $\mathbf e_y$ are unit vectors in the $x$ and $y$ direction, respectively. In this case the functional $W(\boldn,\nabla\boldn)$ vastly simplifies to
\begin{equation}
W(\boldn,\nabla\boldn)=(\beta\cos^2 u+\alpha\sin^2 u)u_x^2,
\end{equation}
and $\abs{\boldn_t}^2=u_t^2$.
The dynamics is described by the variational  principle
\begin{equation}
\frac{\delta}{\delta u}\iint \big(u_t^2-c^2(u)u_x^2 \big) dxdt=0,
\end{equation}
where
\begin{equation}\label{eq:fartLC}
c^2(u)=\beta\cos^2 u+\alpha\sin^2 u,
\end{equation}
which results in the nonlinear variational wave equation 
\begin{equation}
   u_{tt}-c(u)(c(u)u_x)_x=0.
\end{equation}

\section{Equivalent system for the NVW equation}
\label{sec:equivsys}

In this section, we assume the existence of a
smooth solution $u=u(t,x)$ to \eqref{eq:nvw}. We
introduce the change of variables
$(t,x)\mapsto(X,Y)$ which straightens out the
characteristics: The forward characteristics,
which are given by the solutions of
$\frac{dx}{dt}=c(u(t,x(t)))$, are mapped to the
horizontal lines while the backward
characteristics, which are given by the solutions
of $\frac{dx}{dt}=-c(u(t,x(t)))$, are mapped to
the vertical lines. Formally, we can rewrite these
conditions as
\begin{equation}
  \label{eq:formch1}
  dx-c(u)dt=0\text{ if and only if } dY=0
\end{equation}
and
\begin{equation}
  \label{eq:formch2}
  dx+c(u)dt=0\text{ if and only if } dX=0.
\end{equation}
Our goal now is to rewrite the governing equation
\eqref{eq:nvw} in terms of the new variables
$(X,Y)$. The variables $(t,x)$ become functions of
$(X,Y)$ that we denote $t(X,Y)$ and $x(X,Y)$. We
set
\begin{equation}
  \label{eq:Uequ}
  U(X,Y)=u(t,x).
\end{equation}
Since
\begin{equation*}
  dx=x_XdX+x_YdY
  \ \text{ and }\   dt=t_XdX+t_YdY,
\end{equation*}
we obtain from \eqref{eq:formch1} and
\eqref{eq:formch2} that
\begin{equation}
  \label{eq:chardef}
  x_X=c(U)t_X\ \text{ and }\ x_Y=-c(U)t_Y.
\end{equation}
From \eqref{eq:evolRSsq}, we infer that the forms
$\frac14(R^2+S^2)dx+\frac{c}4(R^2-S^2)dt$ and
$\frac1{4c}(R^2-S^2)dx+\frac14(R^2+S^2)dt$ are
closed. Therefore, by Poincar\'e's lemma, we infer
the existence of two functions $J$ and $K$ for
which these forms are the differentials, that is,
\begin{equation}
  \label{eq:dJdef}
  dJ=\frac14(R^2+S^2)dx+\frac{c}4(R^2-S^2)dt
\end{equation}
and 
\begin{equation}
  \label{eq:dKdef}
  dK=\frac{1}{4c}(R^2-S^2)dx+\frac{1}{4}(R^2+S^2)dt.
\end{equation}
We have, after using \eqref{eq:chardef}, 
\begin{align*}
  dJ&=\frac14(R^2+S^2)dx+\frac{c}4(R^2-S^2)dt\\
  &=\frac14(R^2+S^2)(x_XdX+x_YdY)+\frac{c}4(R^2-S^2)(t_XdX+t_YdY)\\
  &=\frac12 R^2 x_XdX+\frac12 S^2 x_YdY,
\end{align*}
and, similarly, we get
\begin{equation*}
  dK=\frac{R^2}{2c}x_XdX-\frac{S^2}{2c}x_YdY
\end{equation*}
so that 
\begin{equation}
  \label{eq:derJKrel}
  J_X=c(U)K_X\ \text{ and }\ J_Y=-c(U)K_Y
\end{equation}
hold. Note the similarity between the relations
\eqref{eq:derJKrel} for the pair $(J,K)$ and the
relations \eqref{eq:chardef} for the pair
$(t,x)$. We want to compute the mixed second
derivatives of our new variables, namely, $t$,
$x$, $U$, $J$ and $K$. By \eqref{eq:chardef}, we
obtain
\begin{equation*}
  dt=t_XdX+t_YdY=\frac{1}{c}x_XdX-\frac{1}{c}x_YdY.
\end{equation*}
By expressing the fact that the form $dt$ is
closed (since it is exact), we get
\begin{equation*}
  \ldY\left(\frac1cx_X\right)=-\ldX\left(\frac1cx_Y\right)
\end{equation*}
which implies
\begin{equation*}
  \dXY{x}=\frac{c'}{2c}\left(u_Yx_X+u_Xx_Y\right).
\end{equation*}
Similarly, since the form
\begin{equation*}
  dx=x_XdX+x_YdY=ct_XdX-ct_YdY
\end{equation*}
is closed, we obtain
\begin{equation*}
  \ldY\left(ct_X\right)=-\ldX\left(ct_Y\right)
\end{equation*}
which implies
\begin{equation*}
  \dXY{t}=-\frac{c'}{2c}\left(u_Xt_Y+u_Yt_X\right).
\end{equation*}
By using the relations \eqref{eq:derJKrel}, the
form $dK$ can be rewritten as
\begin{equation*}
  dK=\frac{1}{c}J_XdX-\frac1cJ_YdY
\end{equation*}
and, expressing the fact that $dK$ is a closed, we
obtain
\begin{equation*}
  \ldY\left(\frac{1}{c}J_X\right)=\ldX\left(-\frac1cJ_Y\right)
\end{equation*}
which yields
\begin{equation*}
  \dXY{J}=\frac{c'}{2c}\left(J_XU_Y+J_YU_X\right).
\end{equation*}
Similarly, we can rewrite the form $dJ$ as
\begin{equation*}
  dK=cK_XdX-cK_YdY
\end{equation*}
and, expressing the fact $dK$ is closed, we get
\begin{equation*}
  \dXY{K}=-\frac{c'}{2c}\left(K_XU_Y+K_YU_X\right).
\end{equation*}
Let us consider the forms
\begin{equation}
  \label{eq:omega1}
  \omega_1=\frac{R}{2c}dx+\frac12 Rdt
\end{equation}
and
\begin{equation}
  \label{eq:omega2}
  \omega_2=\frac{S}{2c}dx-\frac12 Sdt.
\end{equation}
In the new variables, these forms rewrite
\begin{align}
  \notag
  \omega_1&=\frac{u_t+cu_x}{2c}(x_XdX+x_YdY)+\frac12(u_t+cu_x)(t_XdX+t_YdY)\\
  \notag
  &=(u_tt_X+u_xx_X)dX\quad\text{ (after using \eqref{eq:chardef})}\\
  \label{eq:rewrom}
  &=U_XdX,
\end{align}
and, similarly, we find
\begin{equation}
  \label{eq:rewrom2}
  \omega_2=-U_YdY.
\end{equation}
From \eqref{eq:omega1}, by using
\eqref{eq:evolRS}, we obtain
\begin{align*}
  d\omega_1&=(\frac{R_t}{2c}-\frac{c'R}{2c^2}u_t)dt\wedge dx+\frac12 R_x dx\wedge dt\\
  &=\frac{R_t-cR_x}{2c}dt\wedge dx-\frac{c'R}{2c^2}\frac12(R+S)dt\wedge dx\\
  &=\Big(\frac{c'(R^2-S^2)}{8c^2}-\frac{c'R}{2c^2}\frac12(R+S)\Big)\,dt\wedge dx\\
  &=\frac{c'}{2c^2}\left(\frac12(R+S)\right)^2dx\wedge dt=\frac{c'}{2c^2}u_t^2dx\wedge dt,
\end{align*}
and, furthermore, we obtain 
\begin{align}
  \notag
  d\omega_1&=\frac{c'}{2c^2}\Big(\frac14(R^2+S^2)dx\wedge dt+\frac12 RS dx\wedge dt\Big)\\
  \label{eq:domega}
  &=\frac{c'}{2c^2}dJ\wedge dt-\frac{c'}{2c}\omega_1\wedge\omega_2
\end{align}
because
\begin{align*}
  \omega_1\wedge\omega_2=-\frac{RS}{2c}dx\wedge dt.
\end{align*}
We rewrite \eqref{eq:domega} in the new set of
variables
\begin{align*}
  d\omega_1&=\frac{c'}{2c^2}(J_XdX+J_YdY)\wedge (t_XdX+t_YdY)-\frac{c'}{2c}\omega_1\wedge\omega_2\\
  &=-\frac{c'}{2c^3}(J_Xx_Y+J_Yx_X)dX\wedge dY+\frac{c'}{2c}U_XU_YdX\wedge dY.
\end{align*}
At the same time, by \eqref{eq:rewrom}, we have
$d\omega_1=-\dXY{U}dX\wedge dY$, and therefore
it follows that
\begin{equation*}
  \dXY{U}=\frac{c'}{2c^3}\left(J_Xx_X+J_Yx_X\right)-\frac{c'}{2c}U_XU_Y.
\end{equation*}
Finally, we obtain following system of equations
\begin{subequations}
  \label{eq:goveq}
  \begin{align}
    \label{eq:goveqt}
    \dXY{t}&=-\frac{c'}{2c}\left(U_Xt_Y+U_Yt_X\right),\\
    \label{eq:goveqx}
    \dXY{x}&=\frac{c'}{2c}\left(U_Yx_X+U_Xx_Y\right),\\
    \label{eq:govequ}
    \dXY{U}&=\frac{c'}{2c^3}\left(x_YJ_X+J_Yx_X\right)-\frac{c'}{2c}U_YU_X,\\
    \label{eq:goveqJ}
    \dXY{J}&=\frac{c'}{2c}\left(J_XU_Y+J_YU_X\right),\\
    \label{eq:goveqK}
    \dXY{K}&=-\frac{c'}{2c}\left(K_XU_Y+K_YU_X\right).
  \end{align}
\end{subequations}
Let $Z$ denote the vector $(t,x,U,J,K)$. The
system \eqref{eq:goveq} then rewrites  as
\begin{equation}
  \label{eq:condgoveq}
  Z_{XY}=F(Z)(Z_X,Z_Y)
\end{equation}
where $F(Z)$ is a bi-linear and symmetric tensor
from $\Real^5\times\Real^5$ to $\Real^5$. Due to
the relations \eqref{eq:chardef}, either one of  the equations
\eqref{eq:goveqt} and \eqref{eq:goveqx} is
redundant: one could remove one of them, and the
system would remain well-posed, and one retrieves $t$ or
$x$ by using \eqref{eq:chardef}. Similarly, either one of the equations 
\eqref{eq:goveqJ} and \eqref{eq:goveqK} becomes
redundant by \eqref{eq:derJKrel}. However, we find
it convenient to work with the complete set of
variables, that is, $Z=(t,x,U,J,K)$. We will see
later that the solutions of the system \eqref{eq:goveq} preserve
these conditions.

To prove the existence of solutions to
\eqref{eq:goveq}, we use a fixed point
argument. The argument is similar to the one that
can be found for example in \cite{garabedian} and
in \cite{BreZhe:06}. However, in order to take
into account the non-regularity of the data
($u_{0x}$ and $u_{0t}$ are in $L^2$ and the energy
can concentrate on sets of zero measure), we have to
consider, in the new set of coordinates, data
given on curves which have parts which are
parallel to the characteristic directions. In
particular, the curves are not given as  graphs of
a function.  We are looking for a solution that
satisfies a given initial condition at time
$t=0$. In the $(X,Y)$ plane, the set of points
which correspond to initial time, that is,
$t(X,Y)=0$, may be a curve,
$(\X(s),\Y(s))\in\Real^2$, parametrized by
$s\in\Real$, but it may also be a more complicated
set, see Figure \ref{fig:tconst} that we will
comment on later. We consider curves of the following
type.
\begin{definition}
  \label{def:setC}
  We denote by $\C$ the set of curves in the plane 
  $\Real^2$ parametrized by $(\X(s),\Y(s))$
  with $s\in\Real$, such that 
  \begin{subequations}
    \begin{align}
      \label{eq:regXY}
      &\X-\id,\ \Y-\id \in W^{1,\infty}(\Real),\\
      &\dot\X\geq0,\quad\dot\Y\geq0
    \end{align}
    and the normalization
    \begin{equation}
      \label{eq:regXY3}
      \frac12(\X(s)+\Y(s))=s,\text{ for all
      }s\in\Real.
    \end{equation}
  \end{subequations}
  We set
  \begin{equation}
    \label{eq:normC}
    \norm{(\X,\Y)}_{\C}=\norm{\X-\id}_{L^\infty}+\norm{\X-\id}_{L^\infty}.
  \end{equation}
\end{definition}
From the initial data $(u_0,R_0,S_0)$, we want to
define the curve $\Gamma_0=(\X(s),\Y(s))$ in $\C$
which corresponds to the initial time and the
value of $Z$ on this curve. To solve the governing
equations \eqref{eq:goveq}, we need to know the
values of $Z$, $Z_X$ and $Z_Y$ on the curve
$\Gamma_0$. In total, we have to determine 17
unknown functions. Given the initial data
$(u_0,R_0,S_0)$, there is no unique way to define
the curve $(\X(s),\Y(s))$ and the values of $Z$ on
this curve in order to obtain to the desired
solution. This fact is due to the relabeling
symmetry, a degree of freedom which is embedded in
the set of equations \eqref{eq:goveq} that we
precisely identify in Section
\ref{sec:relasym}. For now, the goal is to use
this degree of freedom to construct an initial
data which is bounded in $L^\infty(\Real)$ on the
curve. Let us now explain how we proceed for an
initial data $(u_0,R_0,S_0)\in[L^2(\Real)]^3$ for
which energy has not concentrated and we will see
later how to extend this construction to initial
data containing singular measures. In this case,
the function $\X$ and $\Y$ are invertible and,
slightly abusing notation, we denote by $Z(s)$,
$Z_X(X)$ and $Z_Y(Y)$ the values of
$Z(\X(s),\Y(s))$, $Z_X(X,\Y(\X^{-1}(X)))$ and
$Z_Y(\X(\Y^{-1}(Y)),Y)$, respectively. By
definition, we have
\begin{equation}
  \label{eq:deftini}
  t(s)=0,
\end{equation}
and, naturally, we set
\begin{equation}
  \label{eq:defUini}
  U(s)=u_0(x(s)).
\end{equation}
From the formal derivation of the previous
section, we have the following relations
\begin{align}
  \label{eq:defJXYini}
  J_X(\X)&=c(u)K_X(\X)=\frac12
  R_0^2(x)x_X(\X),&J_Y(\Y)&=-c(u)K_Y(\Y)=\frac12 S_0^2(x) x_Y(\Y),\\
  \label{eq:defUXYini}
  U_X(\X)&=\frac{R_0(x)}{c(u(x))}x_X(\X),&U_Y(\Y)&=-\frac{S_0(x)}{c(u(x))}x_Y(\Y).
\end{align}
We have the compatibility condition
\begin{equation}
  \label{eq:compaform}
  \dot Z(s)=Z_X(\X(s))\dot\X(s)+Z_Y(\Y(s))\dot\Y(s).
\end{equation}
We have 17 unknowns ($\X$,$\Y$,$Z$,$Z_X$,$Z_Y$)
and 15 equations, namely
\eqref{eq:deftini}--\eqref{eq:compaform},
\eqref{eq:regXY3} and \eqref{eq:chardef}. We use
the two degrees of freedom that remain in order to
obtain $Z_X$ and $Z_Y$ bounded. We set
\begin{equation}
  \label{eq:normJder}
  2x_X(X)+J_X(X)=1\quad\text{ and }\quad 2x_Y(Y)+J_Y(Y)=1.
\end{equation}
Since $x_X$ and $J_X$ are positive, it follows
from \eqref{eq:normJder} that these two quantities
are bounded. From the fact that
$2x_XJ_X=(c(U)U_X)^2$, it also follows that $U_X$
is bounded so that $Z_X$ is bounded. The same
conclusion holds for $Z_Y$. The normalisation
\eqref{eq:normJder} is convenient but arbitrary,
see Section \ref{sec:relasym}.  In particular, the
coefficient $2$ in front of $x_X$ and $x_Y$ in
\eqref{eq:normJder} does not have any importance;
it is used here to make the definition 
compatible with the normalization we will
introduce in Section \ref{sec:initdata} for the
general case. From \eqref{eq:compaform},
\eqref{eq:defUXYini} and \eqref{eq:defJXYini}, we
get
\begin{equation}
  \label{eq:xXYfrac}
  x_X(\X)=\frac2{4+R_0^2}(x)\quad \text{ and }\quad x_Y(\Y)=\frac2{4+S_0^2}(x),
\end{equation}
\begin{align}
  \label{eq:defJXYini2}
  J_X(\X)&=\frac{1}{c}K_X(\X)=\frac{R_0^2}{4+R_0^2}(x),&J_Y(\Y)&=-\frac{1}{c}K_Y(\Y)=\frac{S_0^2}{4+S_0^2}(x),\\
  \label{eq:defUXYini2}
  U_X(\X)&=\frac{2R_0}{c(4+R_0^2)}(x),&U_Y(\Y)&=-\frac{2S_0}{c(4+S_0^2)}(x).
\end{align}
Equation \eqref{eq:deftini} implies
\begin{equation*}
  0=t_X(\X)\dot\X+t_Y(\Y)\dot\Y=x_X(\X)\dot\X-x_Y(\Y)\dot\Y
\end{equation*}
and, at the same time, we have by the chain rule
\begin{equation*}
  \dot x(s)=x_X(\X)\dot\X+x_Y(\Y)\dot\Y
\end{equation*}
and therefore
\begin{equation}
  \label{eq:relxXYini}
  x_X(\X)\dot\X=x_Y(\Y)\dot\Y=\frac{\dot x}2.
\end{equation}
Hence, by \eqref{eq:regXY3}, \eqref{eq:xXYfrac}
and \eqref{eq:relxXYini}, we get
\begin{equation*}
  2=\dot\X+\dot\Y=\left(2+\frac14(R_0^2+S_0^2)(x)\right)\dot{x}
\end{equation*}
and we define $x(s)$ implicitly as
\begin{equation}
  \label{eq:defphi}
  2x(s)+\int_{-\infty}^{x(s)}\frac14(R_0^2+S_0^2)\,dx=2s.
\end{equation}
We have
\begin{equation*}
  2\dot x+\dot J=2x_X(\X)\dot\X+J_X(\X)\dot\X+2x_Y(\Y)\dot\Y+J_Y(\Y)\dot\Y=2
\end{equation*}
because of \eqref{eq:compaform} and
\eqref{eq:regXY3} so that $2x+J=2s$. Hence,
\begin{equation}
  \label{eq:defJini}
  J(s)=\frac18\int_{-\infty}^{x(s)}(R_0^2+S_0^2)\,dx
\end{equation}
and 
\begin{equation}
  \label{eq:defKini}
  K(s)=\int_{-\infty}^{x(s)}\frac{R_0^2-S_0^2}{8c}\,dx,
\end{equation}
which are also defined as the integrals of the
forms $dJ$ and $dK$ given by \eqref{eq:dJdef} and
\eqref{eq:dKdef} on the line
$(t,x)=\{0\}\times(-\infty,x(s))$. From
\eqref{eq:relxXYini} and \eqref{eq:xXYfrac}, it
follows that
\begin{equation}
  \label{eq:defderXYinit} 
  \dot \X(s)=\dot x(s)(1+\frac14 R_0^2)(x(s))\quad \text{ and }\quad
  \dot \Y(s)=\dot x(s)(1+\frac14 S_0^2)(x(s)),
\end{equation}
and we set
\begin{equation}
  \label{eq:defXYinit}
  \X(s)=x(s)+\frac14\int_{-\infty}^{x(s)} R_0^2\,dx\quad \text{ and
  }\quad \Y(s)=x(s)+\frac14\int_{-\infty}^{x(s)} S_0^2\,dx.
\end{equation}

\section{The initial data}
\label{sec:initdata}

In order to construct a semigroup of conservative
solutions, we have to take into account the part
of the energy which has concentrated in sets of
measure zero and we need to consider initial data
in the set $\D$ that we now define.
\begin{definition}
  The set $\D$ consists of the elements
  $(u,R,S,\mu,\nu)$ such that
  \begin{equation*}
    (u,R,S)\in [L²(\Real)]^3,
  \end{equation*}
  $u_x=\frac1{2c}(R-S)$ and $\mu$ and $\nu$ are
  finite positive Radon measures with
  \begin{equation}
    \label{eq:mudef}
    \muac=\frac14 R^2\,dx,\quad\nuac=\frac14 S^2\,dx.
  \end{equation}
\end{definition}
The measures $\mu$ and $\nu$ correspond to the
left and right traveling energy densities,
respectively.  Given the initial data
$(u_0,R_0,S_0,\mu_0,\nu_0)$, we have defined an
element in $\G_0$ where the set $\G_0$ is defined
below and which correspond to a parametrization of
the initial data in the new system of
coordinates. Elements of $\G$ consists of a curve
$(\X(s),\Y(s))$ (for $\G_0$, this curve
corresponds to time equal to zero) and three
variables, $\Z$, $\V$ and $\W$, that we now
introduce. These functions correspond to the data
that matches the solution $Z$ to \eqref{eq:goveq}
on the curve $(\X,\Y)$ in the sense that
\begin{subequations}
  \label{eq:ZcondininG}
  \begin{equation}
    \label{eq:ZcondininG1}
    \Z(s)=Z(\X(s),\Y(s))
  \end{equation}
  and
  \begin{equation}
    \label{eq:ZcondininG2}
    \V(\X(s))=Z_X(\X(s),\Y(s))\text{ and }\W(\X(s))=Z_Y(\X(s),\Y(s))
  \end{equation}
\end{subequations}
It is then convenient to introduce the following
notation: To any triplet $(\Z,\V,\W)$ of five
dimensional vector functions (we write $\Z=(\Z_1,\Z_2,\Z_3,\Z_4,\Z_5)$, etc), we associate the
triplet $(\Z^a,\V^a,\W^a)$ given by
\begin{subequations}
  \label{eq:defZa}
  \begin{equation}
    \Z^a_2=\Z_2-\id,\quad\V^a_2=\V_2-\frac12,\quad \W^a_2=\W_2-\frac12 
  \end{equation}
  and
  \begin{equation}
    \Z^a_i=\Z_i,\quad \V^a_i=\V_i,\quad \W^a_i=\W_i
  \end{equation}
\end{subequations}
for $i\in\{1,3,4,5\}$.
\begin{definition}
  \label{def:setG}
  The set $\G$ is the set of all elements which
  consist of a curve $(\X(s),\Y(s))$ and three
  vector valued functions from $\Real$ to
  $\Real^5$ denoted $\Z(s),\V(X),\W(Y)$. We denote
  $\Theta=(\X,\Y,\Z,\V,\W)$ and set
  \begin{equation}
    \label{eq:defnormG}
    \norm{\Theta}_{\G}=\norm{U}_{L^2(\Real)}
    +\norm{\V^a}_{L^2}+\norm{\W^a}_{L^2}
  \end{equation}
  where we denote $U=\Z_3$ and
  \begin{multline}
    \label{eq:deftnormG}
    \tnorm{\Theta}_{\G}=\norm{(\X,\Y)}_{\C}+\norms{\frac1{\V_2+\V_4}}_{L^\infty(\Real)}+\norms{\frac1{\W_2+\W_4}}_{L^\infty(\Real)}\\
    +\norm{\Z^a}_{L^\infty}+\norm{\V^a}_{L^\infty}+\norm{\W^a}_{L^\infty}.
  \end{multline}
  The element $\Theta\in\G$ if
  \begin{enumerate}
  \item[(i)] 
    \begin{equation*}
      \norm{\Theta}_{\G}<\infty\quad\text{and}\quad\tnorm{\Theta}_{\G}<\infty;
    \end{equation*}
  \item[(ii)]
    \begin{equation}
      \label{eq:posVWb}
      \V_2,\W_2,\V_4,\W_4\geq0;
    \end{equation}
  \item[(iii)] for almost every $s$, we have
    \begin{equation}
      \label{eq:relZVWG}
      \dot\Z(s)=\V(\X(s))\dot\X(s)+\W(\Y(s))\dot\Y(s);
    \end{equation}
  \item[(iv)] for almost every $X$ and $Y$, we have
    \begin{subequations}
    \begin{align}
      \label{eq:relVW}
      2\V_4(\X)\V_2(\X)&=(c(U)\V_3(\X))^2,& 2\W_4(\Y)\W_2(\Y)&=(c(U)\W_3(\Y))^2,\\
      \label{eq:reltx1}
      \V_{2}(\X)&=c(U)\V_{1}(\X),& \W_{2}(\Y)&=-c(U)\W_{1}(\Y),\\
      \label{eq:reltJK1}
      \V_{4}(\X)&=c(U)\V_{5}(\X),&\W_{4}(\Y)&=-c(U)\W_{5}(\Y).
    \end{align}
    \end{subequations}
  \item[(v)] We require
    \begin{equation}
      \label{eq:normalizationJ}
      \lim_{s\to-\infty}J(s)=0
    \end{equation}
    where we denote $J(s)=\Z_4(s)$. 
  \end{enumerate}
  We denote by $\G_0$ the subset of $\G$ which
  parametrizes data at time $t=0$, that is,
  \begin{equation*}
    \G_0=\{\Theta\in\G\mid \Z_1=0\}.
  \end{equation*}
\end{definition}
\begin{figure}
  \label{fig:tconst}
  \includegraphics[scale=1]{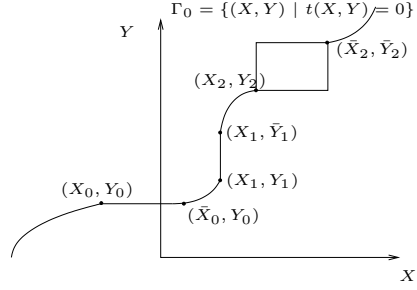}
  \caption{The domain $t(X,Y)=0$ in the $X,Y$
    plane consists of the union of a graph of a
    strictly increasing function, vertical and
    horizontal segments and rectangular boxes.}
\end{figure}
The requirement \eqref{eq:normalizationJ}
corresponds to a normalization of the energy
potential (or cumulative energy) to zero at minus
infinity. The variables $\Z$, $\V$ and $\W$ are
not independent of one another as it can be
seen from \eqref{eq:relZVWG}, \eqref{eq:reltx1},
\eqref{eq:reltJK1} but selecting a set of
independent variables will require an arbitrary
choice that we prefer to avoid and that is why we
consider all the variables at the same level. For
$\Theta\in\G_0$, we get by using \eqref{eq:relZVWG}
and \eqref{eq:reltx1}, that
\begin{equation}
  \label{eq:V2W2equala}
  \V_2(\X(s))\dot \X(s)=\W_2(\Y(s))\dot \Y(s).
\end{equation}
By using the normalization
\eqref{eq:regXY3}, we obtain that
\begin{align}
  \label{eq:edoforXY}
  \dot\X=\frac{2\W_2(\Y)}{\V_2(\X)+\W_2(\Y)},&&\dot\Y=\frac{2\V_2(\X)}{\V_2(\X)+\W_2(\Y)}
\end{align}
and, in principle, by integrating
\eqref{eq:edoforXY}, we recover $\X$ and
$\Y$. However, there are two obstacles to that:
The function $\V_2$ and $\W_2$ are in general not
Lipschitz so that we cannot use the standard
existence theorems for the solutions to
\eqref{eq:edoforXY} and, in addition, both $\V_2$
and $\W_2$ may vanish (it is what happens in the
case of a box) and \eqref{eq:edoforXY} does not
make sense any more.  Given
$(u_0,R_0,S_0,\mu_0,\nu_0)$, in the case where
$\mu_0=(\mu_0)_{\text{ac}}$ and
$\nu_0=(\nu_0)_{\text{ac}}$, we have defined
$\Theta=(\X,\Y,\Z,\V,\W)\in\G_0$ by
\eqref{eq:deftini}, \eqref{eq:defUini},
\eqref{eq:defphi}, \eqref{eq:defJini},
\eqref{eq:defKini}, \eqref{eq:xXYfrac},
\eqref{eq:defXYinit} and
\begin{align*}
  \V_4(\X(s))&=c(U)\V_5(\X(s))=\frac{R_0^2}{4+R_0^2}(x(s)),\\\W_4(\X(s))&=-c(U)\W_5(\X(s))=\frac{S_0^2}{4+S_0^2}(x(s)),\\
\V_3(\X(s))&=\frac{2R_0}{c(4+R_0^2)}(x(s)),&\W_3(\Y(s))&=-\frac{2S_0}{c(4+S_0^2)}(x(s)).
\end{align*}
We do not prove here that, for this definition, we
indeed have $(\X,\Y,\Z,\V,\W)\in\G_0$ because it
will be done later in more generality, see
Definitions \ref{def:mappingL} and \ref{def:C}. In
the next section we consider
$(\X,\Y,\Z,\V,\W)\in\G_0$ and construct solutions
of \eqref{eq:goveq} which satisfy
\eqref{eq:ZcondininG}. However, the set $\G_0$ is
not adequate when it comes to parametrize initial
data. In the case where there is no concentration
of the measures, that is,
$\mu_0=(\mu_0)_{\text{ac}}$ and
$\nu_0=(\nu_0)_{\text{ac}}$, we can see from
\eqref{eq:defXYinit} and \eqref{eq:defphi} that
$\dot\X>0$ and $\dot\Y>0$ almost everywhere so
that the curve does not contain strictly vertical
or horizontal regions. This property is
\textit{not} preserved by the equation. In
particular it means that at a later time, say
$T>0$, we can find a curve $(\bar\X,\bar\Y)\in\C$
such that $t(\bar\X(s),\bar \Y(s))=T$ and
$\dot\X(s)=0$ or $\dot\Y(s)=0$ on an interval
$[s_l,s_r]$, with $s_l<s_r$.  In general, the set
of points
\begin{equation*}
  \Gamma_T=\{(X,Y)\in\Real^2\mid t(X,Y)=T\}
\end{equation*}
is not a curve but a domain which consists of the
union of a graph of a strictly increasing
function, vertical and horizontal segments and
rectangular boxes, see Figure \ref{fig:tconst}. If
$\Gamma_T$ contains regions with boxes or vertical
or horizontal lines, it means that part of the
energy of the solution is concentrated at time $T$
in sets of zero measure, see Section
\ref{sec:backto}. We want to parametrize domains
$\Gamma_0$ (or $\Gamma_T$) depicted in Figure
\ref{fig:tconst}, which give the solution at time
zero (or a given time $T$) and which may contain
boxes. The set $\G_0$ defined above is
inappropriate. When considering an element in
$\G_0$, we choose a curve and in the case of a
box, the choice of the curve which joins the two
diagonal corners of the box while remaining inside
the box is arbitrary. Thus we introduce an
unwanted degree of freedom in the parametrization
of the initial data. The domain $\Gamma_0$
depicted in Figure \ref{fig:tconst} can be
parametrized by using two nondecreasing functions
$x_1(X)$ and $x_1(Y)$ and by considering the set
$\{(X,Y)\in\Real\mid x_1(X)=x_2(Y)\}$. Such sets
consist exactly of the union of the graph of a
strictly increasing function (when $x_1'>0$ and
$x_2'>0$), a horizontal segment (when $x_1'(X)=0$
for $X\in[X_0,\bar X_0]$ and $x_2'(Y_0)>0$), a
vertical segment (when $x_1'(X_1)>0$ and
$x_2'(Y)=0$ for $Y\in[Y_1,\bar Y_1]$) and a
rectangular box (when $x_1'(X)=x_2'(Y)=0$ for
$X\in[X_2,\bar X_2]$ and $Y\in[Y_2,\bar Y_2]$),
see Figure \ref{fig:tconst}. This observation
(partially) justifies the definition of the set
$\F$ which is introduced below. The set $\F$ can
be considered as a consistent way to parametrize
initial data. However, to construct the solutions,
we need to choose a curve and we use the
description of the initial data given by $\G_0$ so
that, finally, both sets are needed. To define
$\F$, we have to introduce the group $\Gr$ of
diffeomorphisms with some regularity conditions.
\begin{definition}
  \label{def:Gr}
  The group $G$ is given by all invertible functions $f$ such
  that
  \begin{equation}
    \label{eq:Hcond}
    f-\id\text{ and }f^{-1}-\id\text{ both belong to }\Winf(\Real),
  \end{equation}
  and
  \begin{equation*}
    (f-\id)'\in L^2(\Real).
  \end{equation*}
\end{definition}
We can now define the set $\F$.
\begin{definition}
  \label{def:F}
  We define the set $\F$ consisting of all
  function $\psi=(\psi_1,\psi_2)$ such that
  \begin{align*}
    \psi_1(X)&=(x_1(X),U_1(X),V_1(X),J_1(X),K_1(X))\\
    \text{ and }\quad
    \psi_2(Y)&=(x_2(Y),U_2(Y),V_2(Y),J_2(Y),K_2(Y))
  \end{align*}
  satisfy the following regularity and decay
  conditions
  \begin{subequations}
    \begin{equation}
      \label{eq:regx1x2}
      x_1-\id,\ x_2-\id,\ J_1,\ J_2,\ K_1,\ K_2\in W^{1,\infty}(\Real),
    \end{equation}
    \begin{equation}
      \label{eq:regx1x2b}
      x_1'-1,\ x_2'-1,\ J_1',\ J_2',\ K_1',\ K_2'\in L^2(\Real)\cap L^\infty(\Real),
    \end{equation}
    \begin{equation}
      \label{eq:regU1U2}
      U_1,\ U_2\in H^1(\Real),
    \end{equation}
    \begin{equation}
      \label{eq:regV1V2}
      V_1,\ V_2\in L^2(\Real)\cap L^\infty(\Real),
    \end{equation}
  \end{subequations}
  and which satisfy the additional conditions that
  \begin{equation}
    \label{eq:posit}
    x_1',x_2',J_1',J_2'\geq0,
  \end{equation}
  \begin{equation}
    \label{eq:J1K1rel}
    J_1'=c(U_1)K_1',\quad J_2'=-c(U_2)K_2',
  \end{equation}
  \begin{equation}
    \label{eq:relxJV1}
    x_1'J_1'=(c(U_1)V_1)^2,\quad x_2'J_2'=(c(U_2)V_2)^2,
  \end{equation}
  \begin{equation}
    \label{eq:Gprop}
    x_1+J_1,\quad x_2+J_2\in\Gr,
  \end{equation}
  \begin{equation}
    \label{eq:limJzero}
    \lim_{X\to-\infty}J_1(X)=\lim_{Y\to-\infty}J_2(Y)=0
  \end{equation}
  and, for any curve $(\X,\Y)\in\C$ such that
  \begin{equation*}
    x_1(\X(s))=x_2(\Y(s))\text{ for all }s\in\Real,
  \end{equation*}
  we have
  \begin{subequations}
    \label{eq:condcurvF}
    \begin{equation}
      \label{eq:U1U2equal}
      U_1(\X(s))=U_2(\Y(s))
    \end{equation}
    for all $s\in\Real$ and
    \begin{equation}
      \label{eq:condU1U_2}
      U_1'(\X(s))\dot\X(s)=U_2'(\Y(s))\dot\Y(s)=V_1(\X(s))\dot\X(s)+V_2(Y(s))\dot\Y(s)
    \end{equation}
  \end{subequations}
  for almost all $s\in\Real$.
\end{definition}
We show in Section \ref{sec:semigroupinG} that,
given a solution $Z$ of \eqref{eq:goveq}, there
exists a unique element $\psi\in\F$ which
describes in a unique way the set
$\Gamma_0=\{(X,Y)\in\Real^2\mid t(X,Y)=0\}$ and the
values of $Z$, $Z_X$ and $Z_Y$ on this set. The
functions $x_1$ and $x_2$ define the set
$\Gamma_0$ by $\Gamma_0=\{(X,Y)\in\Real^2\mid
x_1(X)=x_2(Y)\}$. It means in particular that, for
any curve $(\X,\Y)\in\C$ such that
$x_1(\X(s))=x_2(\Y(s))$, we have
$t(\X(s),\Y(s))=0$. The functions $U_1$ and $U_2$
give the value of $U(X,Y)$ on the set $\Gamma_0$,
as a function of $X$ and $Y$. To be more concrete,
let us consider the example where $x_1$ and $x_2$
are smooth, invertible and the inverses are also
smooth. In that case, which in fact corresponds to
the case where $u_0$, $R_0$ and $S_0$ are smooth
and there is no concentration of energy, i.e.,
$\mu_0=(\mu_0)_{\text{ac}}$ and
$\nu_0=(\nu_0)_{\text{ac}}$, the set $\Gamma_0$ is
the graph of a strictly increasing function (there
is no rectangular box and no vertical or
horizontal segments). The curve $\Gamma_0$ is
given by either $Y=x_1^{-1}\circ x_2(X)$ or
$X=x_2^{-1}\circ x_1(Y)$ and, just for this
paragraph, for the sake of simplicity, we denote
$Y(X)=x_1^{-1}\circ x_2(X)$ and
$X(Y)=x_2^{-1}\circ x_1(Y)$. Then, we have
\begin{equation}
  \label{eq:explU1}
  U_1(X)=U(X,Y(X))\text{ and }U_2(Y)=U(X(Y),Y).
\end{equation}
The functions of $V_1$ and $V_2$ give the partial
derivative of $U$. We have
\begin{equation}
  \label{eq:explV1V2}
  V_1(X)=U_X(X,Y(X))\text{ and }V_2(Y)=U_Y(X(Y),Y).
\end{equation}
As we can see in this example, the functions
$U_1$, $U_2$, $V_1$ and $V_2$ are not independent
 from one  another, and the way they depend one
another is given by \eqref{eq:U1U2equal}
and \eqref{eq:condU1U_2}. The function $J_1(X)$
gives the amount of forward energy contained on
the curve $Y=Y(X)$ between $-\infty$ and $X$, that
is,
\begin{equation*}
  J_1(X)=\int_{-\infty}^{X}J_X(X,Y(X))\,dX.
\end{equation*}
In the original set of coordinates, it gives
$J_1(X)=\frac14\int_{-\infty}^{x_1(X)} R_0^2\,dx$. Similarly,
the function $J_2(Y)$ gives the amount of backward
energy which is contained on the same curve
between $-\infty$ and $Y$, that is,
\begin{equation*}
  J_2(Y)=\int_{-\infty}^{Y}J_Y(X(Y),Y)\,dY.
\end{equation*}
In the original set of coordinates, it gives
$J_2(Y)=\frac14\int_{-\infty}^{x_2(Y)}S_0^2\,dx$. We
recall that these expressions hold only for smooth
initial data with no concentration of
energy. Still in this case, the functions $x_1$
and $x_2$ are strictly increasing so that $x_1'>0$
and $x_2'>0$ and the conditions \eqref{eq:relxJV1}
entirely determine the energy densities, which are
given $J_1'$ and $J_2'$ in the new sets of
coordinates. We have
\begin{equation*}
  \mu_0=(\mu_0)_{\text{ac}}=\frac14 R_0^2(x)\,dx=\frac{J_1'}{x_1'}\circ x_1^{-1}(x)\,dx
\end{equation*}
and the corresponding expression for $\nu_0$.  In
the case where there is concentration of energy,
the functions $x_1'$ or $x_2'$ vanish. The set
where $x_1$ (respectively $x_2$) vanishes
corresponds to the region where the energy density
$\mu_0$ (respectively $\nu_0$) has a singular
part. On those sets, the energy densities $J_1'$
and $J_2'$ cannot be retrieved from
\eqref{eq:relxJV1}. It is consistent with the fact
that the singular parts of the energy $\mu$ and
$\nu$ cannot be recovered by the knowledge of the
function $u$, $R$ and $S$, as illustrated in the
example presented in the introduction. As we will
see in Section \ref{sec:backto}, the relations
\eqref{eq:relxJV1} correspond to a reformulation
in the new coordinate system of \eqref{eq:mudef}.

We define a mapping $\Cb$ which to any given
initial data $\psi\in\F$ associate the
corresponding data $\Theta=(\X,\Y,\Z,\V,\W)\in\G_0$.
\begin{definition}
  \label{def:C}
  For any $\psi=(\psi_1,\psi_2)\in\F$, we define
  \begin{equation}
    \label{eq:defXs}
    \X(s)=\sup\{X\in\Real\mid x_1(X')<x_2(2s-X')\text{  for all }X'<X\}
  \end{equation}
  and set $\Y(s)=2s-\X(s)$. We have
  \begin{equation}
    \label{eq:x1eqx2}
    x_1(\X(s))=x_2(\Y(s)).
  \end{equation}
  We define
  \begin{subequations}
    \label{eq:barZmapC}
    \begin{align}
      &t(s)=0,\\
      \label{eq:barZmapC2}
      &x(s)=x_1(\X(s))=x_2(\Y(s)),\\
      &U(s)=U_1(\X(s))=U_2(\Y(s)),\\
      &J(s)=J_1(\X(s))+J_2(\Y(s)),\\
      &K(s)=K_1(\X(s))+K_2(\Y(s))
    \end{align}
  \end{subequations}
and
  \begin{align*}
    \V_1(X)&=\frac1{2c(U_1(X))} x_1'(X),&\W_1(Y)&=-\frac1{2c(U_2(Y))} x_2'(Y),\\
    \V_2(X)&=\frac12x_1'(X),&\W_2(Y)&=\frac12x_2'(Y),\\
    \V_3(X)&=V_1(X),&\W_3(Y)&=V_2(Y),\\
    \V_4(X)&=J_1'(X),&\W_4(Y)&=J_2'(Y),\\
    \V_5(X)&=K_1'(X),&\W_5(Y)&=K_2'(Y).
  \end{align*}
  Let $\Cb$ be the mapping from $\F$ to $\G_0$
  which to any $\psi\in\F$ associates the element
  $(\X,\Y,\Z,\V,\W)$ defined above.
\end{definition}

\begin{proof}[Proof of the well-posedness of Definition
  \ref{def:C}]
  Let us prove that $\X$ is increasing. Given
  $\bar s>s$, we consider a sequence $X_i$ which
  converges to $\X(s)$ with $X_i<\X(s)$. We have
  $x_1(X_i)<x_2(2s-X_i)$ which implies
  $x_1(X_i)<x_2(2\bar s-X_i)$ because $x_2$ is
  increasing. Hence $X_i<\X(\bar s)$. By letting
  $i$ tend to infinity, we get that $\X(s)\leq
  \X(\bar s)$. By continuity of $x_1$ and $x_2$, we
  have $x_1(\X(s))=x_2(\Y(s))$. We claim that $\X$
  is Lipschitz with a Lipshitz constant no bigger
  than 2, i.e.,
  \begin{equation}
    \label{eq:lipX0}
    \abs{\X(\bar s)-\X(\bar s)}\leq2\abs{\bar s-s}.
  \end{equation}
  Let us assume without loss of generality that
  $\bar s>s$. If \eqref{eq:lipX0} does not hold,
  we have
  \begin{equation}
    \label{eq:contlip}
    \X(\bar s)-\X(s)>2(\bar s-s)
  \end{equation}
  for some $s$ and $\bar s$ in $\Real$. It
  implies $\Y(\bar s)<\Y(s)$. Then, by
  monotonicity of $x_2$,
  \begin{equation*}
    x_1(\X(s))=x_2(\Y(s))\geq x_2(\Y(\bar s))=x_1(\X(\bar s)),
  \end{equation*}
  and therefore $x_1(\X(s))=x_1(\X(\bar s))$
  because $x_1$ is an increasing function and
  $\X(s)<\X(\bar s)$. It follows that $x_1$ is
  constant on $[\X(s),\X(\bar s)]$. Similarly, one
  proves that $x_2$ is constant on $[\Y(\bar
  s),\Y(s)]$. Let us consider the point $(X,Y)$
  given by $Y=\Y(s)$ and $X=2\bar s-\Y(s)$. We
  have
  \begin{equation*}
    \X(s)=2s-\Y(s)< X <2\bar s-\Y(\bar s)=\X(\bar s)
  \end{equation*}
  so that $x_1(X)=x_1(\X(s))=x_2(\Y(s))=x_2(2\bar
  s-Y)$ and $X<\X(\bar s)$, which contradicts the
  definition of $\X(\bar s)$. Hence,
  \eqref{eq:contlip} cannot hold and we have
  proved \eqref{eq:lipX0}. Let us prove that
  $\X-\id\in L^\infty$. We have
  \begin{equation*}
    \X(s)-s=\frac12(\X(s)-\Y(s))=\frac12(\X(s)-x_1(\X(s))+x_2(\Y(s))-\Y(s))
  \end{equation*}
  which is bounded as $x_1-\id$ and $x_2-\id$ belong
  to $L^\infty$. Let 
  \begin{equation*}
    B=\{s\in\Real\mid \dot \X(s)\geq 1\}.
  \end{equation*}
  Since $\dot \X+\dot \Y=2$, we have $\dot
  \Y\geq1$ on $B^c$. Hence,
  \begin{align*}
    \int_\Real U^2(s)\,ds&=\int_B U^2(s)\,ds+\int_{B^c} U^2(s)\,ds\\
    &\leq\int_B U_1^2(\X(s))\dot \X(s)\,ds+\int_{B^c} U_2^2(\Y(s))\dot \Y(s)\,ds\\
    &\leq\norm{U_1}_{L^2}^2+\norm{U_2}_{L^2}^2.
  \end{align*}
  It is then straightforward to check that the
  remaining properties that enter in the
  definition of $\G_0$ are fulfilled by
  $(\Z,\V,\W)$. To check that \eqref{eq:Gprop} is
  fulfilled, we use Lemma \ref{lem:charH} which
  is stated below.
\end{proof}

\begin{lemma} 
\label{lem:charH}
Let $\alpha\geq0$. If $f$ satisfies
\eqref{eq:Hcond}, then $1/(1+\alpha)\leq f_\xi\leq
1+\alpha$ almost everywhere. Conversely, if $f$ is
absolutely continuous, $f-\id\in{L^\infty(\Real)}$
and there exists $c\geq 1$ such that $1/c\leq
f_\xi\leq c$ almost everywhere, then $f$ satisfies
\eqref{eq:Hcond} and
\begin{equation*}
  \norm{f-\id}_{\Winf(\Real)}+\norm{f\inv-\id}_{\Winf(\Real)}\leq\alpha
\end{equation*}
for some $\alpha$ depending only on $c$ and
$\norm{f-\id}_{L^\infty(\Real)}$.
\end{lemma}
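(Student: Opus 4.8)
The plan is to prove the two implications separately. Throughout I read the first assertion as: if $f$ satisfies \eqref{eq:Hcond} with $\norm{f-\id}_{\Winf(\Real)}+\norm{f\inv-\id}_{\Winf(\Real)}\le\alpha$, then $1/(1+\alpha)\le f_\xi\le1+\alpha$ a.e. I would first record one structural fact used repeatedly: a function of the form $g=h-\id$ lying in $\Winf(\Real)$ is bounded and Lipschitz, and if moreover $h$ is a continuous injection of $\Real$ into itself then $h$ must be strictly \emph{increasing} --- because $\abs{h(x)-x}\le\norm{g}_{L^\infty}$ forces $h(x)\to\pm\infty$ as $x\to\pm\infty$, which excludes the decreasing case.

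For the forward implication I would assume \eqref{eq:Hcond} with the sum of norms at most $\alpha$, so that $f$ and $f\inv$ are both Lipschitz and, by the remark above, increasing. Writing $f=\id+(f-\id)$ gives $f_\xi=1+(f-\id)_\xi$ a.e., whence $\abs{f_\xi-1}\le\norm{(f-\id)_\xi}_{L^\infty}\le\alpha$ and in particular $f_\xi\le1+\alpha$. For the lower bound I would avoid differentiating the identity $f\inv\circ f=\id$ and instead argue on difference quotients: $f\inv$ is Lipschitz with constant $L:=\esssup\abs{(f\inv)_\xi}\le1+\alpha$, so for $x<y$ one has $y-x=f\inv(f(y))-f\inv(f(x))\le L\,(f(y)-f(x))$; dividing by $y-x$ and letting $y\downarrow x$ at a point of differentiability of $f$ yields $f_\xi\ge1/L\ge1/(1+\alpha)$ a.e.

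For the converse I would assume $f$ absolutely continuous, $f-\id\in L^\infty(\Real)$, and $1/c\le f_\xi\le c$ a.e. with $c\ge1$. Then $\abs{f_\xi-1}\le c-1$ a.e. (using $c\ge1$), so $f-\id\in\Winf(\Real)$ with $\norm{f-\id}_{\Winf(\Real)}\le\norm{f-\id}_{L^\infty(\Real)}+(c-1)$. The bound $f_\xi\ge1/c>0$ together with absolute continuity makes $f$ strictly increasing, and $f-\id\in L^\infty$ makes it onto, hence a homeomorphism of $\Real$. From $(y-x)/c\le f(y)-f(x)\le c(y-x)$ for $x<y$, substituting $x=f\inv(u)$ and $y=f\inv(v)$, I get $(v-u)/c\le f\inv(v)-f\inv(u)\le c(v-u)$, so $f\inv$ is Lipschitz with $1/c\le(f\inv)_\xi\le c$ a.e.; moreover $\norm{f\inv-\id}_{L^\infty(\Real)}=\norm{f-\id}_{L^\infty(\Real)}$ since $\abs{f\inv(x)-x}=\abs{f(y)-y}$ with $y=f\inv(x)$. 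Therefore $f\inv-\id\in\Winf(\Real)$, condition \eqref{eq:Hcond} holds, and adding the two norm estimates gives $\norm{f-\id}_{\Winf(\Real)}+\norm{f\inv-\id}_{\Winf(\Real)}\le2\big(\norm{f-\id}_{L^\infty(\Real)}+c-1\big)=:\alpha$, which depends only on $c$ and $\norm{f-\id}_{L^\infty(\Real)}$.

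I do not expect a genuine obstacle here; this is elementary analysis of bi-Lipschitz increasing homeomorphisms of the line, and the bulk of the work is bookkeeping of constants. The single point deserving care is the lower bound on $f_\xi$ in the forward direction, where a direct appeal to the chain rule for $f\inv\circ f$ would require the (true but not entirely free) fact that $f$ and $f\inv$ send Lebesgue-null sets to null sets; the difference-quotient argument above sidesteps this.
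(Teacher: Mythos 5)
Your proof is correct. The paper does not prove this lemma itself but defers to \cite{HolRay:07}, and the argument there is essentially the same elementary bi-Lipschitz bookkeeping you carry out; your reading of the forward implication (that $\alpha$ is a bound on $\norm{f-\id}_{\Winf(\Real)}+\norm{f\inv-\id}_{\Winf(\Real)}$) is the intended one, and your difference-quotient derivation of the lower bound $f_\xi\geq 1/(1+\alpha)$ is a clean way to avoid the chain-rule/null-set issue you mention.
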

The proof of this short lemma is given in
\cite{HolRay:07}.  In the opposite direction, to
any element $(\X,\Y,\Z,\V,\W)\in\G_0$, there
corresponds an element $(\psi_1,\psi_2)\in\F$
given by the mapping $\Db$ that we define next.
\begin{definition}
  \label{def:D}
  Given $(\X,\Y,Z,\V,\W)\in\G_0$, let
  $\psi_1=(x_1,U_1,J_1,K_1,V_1)$ and
  $\psi_2=(x_2,U_2,J_2,K_2,V_2)$ be defined as
  \begin{equation}
    \label{eq:x1x2x}
    x_1(\X(s))=x_2(\Y(s))=x(s)
  \end{equation}
  where we denote $x(s)=\Z_2(s)$ and
  \begin{equation}
    \label{eq:U1U2U}
    U_1(\X(s))=U_2(\Y(s))=U(s)
  \end{equation}
  where we denote $U(s)=\Z_3(s)$ and
  \begin{align}
    \label{eq:defJ1J2inD}
    J_1(\X(s))&=\int_{-\infty}^{s}\V_4(\X(s))\dot \X(s)\,ds,& 
J_2(\Y(s))&=\int_{-\infty}^{s}\W_4(\Y(s))\dot \Y(s)\,ds,\\
     \label{eq:defK1K2inD}
    K_1(\X(s))&=\int_{-\infty}^{s}\V_5(\X(s))\dot \X(s)\,ds,& 
K_2(\Y(s))&=\int_{-\infty}^{s}\W_5(\Y(s))\dot \Y(s)\,ds,
  \end{align}
  and
  \begin{equation}
    \label{eq:V1V3V2W3}
    V_1=\V_3,\quad V_2=\W_3.
  \end{equation}
  We denote by $\Db$ the mapping from $\G_0$ to
  $\F$ which to any $(\X,\Y,\Z,\V,\W)\in\G_0$
  associates the element $\psi$ as defined above.
\end{definition}
\begin{proof}[Well-posedness of Definition \ref{def:D}]
  Since $t(s)=0$ we have
  \begin{equation*}
    0=\dot t(s)=\V_1(\X(s))\dot \X(s)+\W_1(\Y(s))\dot \Y(s)
  \end{equation*}
  which implies that 
  \begin{equation}
    \label{eq:V2W2equal}
    \V_2(\X(s))\dot \X(s)=\W_2(\Y(s))\dot \Y(s)
  \end{equation}
  by \eqref{eq:reltx1}. We check the
  well-posedness of \eqref{eq:x1x2x} and
  \eqref{eq:U1U2U}.  Let us consider $s$ and $\bar
  s$ such that $\X(s)=\X(\bar s)$. Since $\X$ is
  increasing, it implies $\dot \X(\tilde s)=0$ and
  $\dot\Y(\tilde s)=2$ for all $\tilde s\in[s,\bar
  s]$. From \eqref{eq:V2W2equal}, it follows that
  $\W_2(\Y(\tilde s))=0$ for all $\tilde
  s\in[s,\bar s]$. Hence,
  \begin{equation*}
    \dot x(\tilde s)=\V_2(\X(\tilde s))\dot \X(\tilde s)+\W_2(\Y(\tilde s))\dot \Y(\tilde s)=0
  \end{equation*}
  and $x(s)=x(\bar s)$ so that the definition
  \eqref{eq:x1x2x} is well-posed. For $\tilde
  s\in[s,\bar s]$, we have $\W_3(\Y(\tilde s))=0$, by
  \eqref{eq:relVW} and the fact that
  $\W_2(\Y(\tilde s))=0$. Hence,
  \begin{equation*}
    \dot U(\tilde s)=\V_3(\X(\tilde s))\dot \X(\tilde s)+\W_3(\Y(\tilde s))\dot \Y(\tilde s)=0
  \end{equation*}
  and $U(s)=U(\bar s)$ so that the definition
  \eqref{eq:U1U2U} is well-posed. Let us prove
  that $x_1$ is Lipschitz. We have
  \begin{align*}
    x_1(\X(s))-x_1(\X(\bar s))&=x(s)-x(\bar s)\\
    &=\int_{\bar s}^{s}\dot x(s)\,ds\\
    &=\int_{\bar s}^{s}\V_2(\X(s))\dot
    \X(s)+\W_2(\Y(s))\dot \Y(s)\,ds\\
    &=2\int_{\bar s}^{s}\V_2(\X(s))\dot \X(s)\,ds&&\text{(by \eqref{eq:V2W2equal})}\\
    &\leq\norm{\V_2}_{L^\infty}\abs{\X(s)-\X(\bar s)}.
  \end{align*}
  Hence, $x_1$ is Lipschitz. One proves in the same
  way that $x_2$ is Lipschitz. Since
  \begin{equation*}
    0\leq\V_4(\X(s))\dot \X(s)\leq\V_4(\X(s))\dot \X(s)+\W_4(\Y(s))\dot \Y(s)=\dot J(s)
  \end{equation*}
  the function $\V_4(\X(s))\dot \X(s)$ belongs to
  $L^1(\Real)$. Assume that there exists an $s<\bar s$
  such that $\X(s)=\X(\bar s)$. Since $\X$ is
  increasing, it implies that $\dot\X(s)=$ for all
  $s\in[s,\bar s]$ and therefore
  $\int_{-\infty}^{s}\V_4(\X(s))\dot
  \X(s)\,ds=\int_{-\infty}^{\bar s}\V_4(\X(s))\dot
  \X(s)\,ds$ and the definition
  \eqref{eq:defJ1J2inD} of $J_1$ is
  well-posed. The same results hold for $J_2$. Let
  us prove that $U_1$ is absolutely continuous on
  any compact set. We consider $X_1<\cdots<X_N$ and
  $s_i$ such $\X(s_i)=X_i$. We have
  \begin{align*}
    \sum_{i=1}^N
    \abs{U_1(X_{i+1})-U_1(X_i)}&=\sum_{i=1}^N
    \abs{U_1(s_{i+1})-U_1(s_i)}\\
    &\leq\int_{\cup_i(s_i,s_{i+1})}\abs{\dot U_1(s)}\,ds\\
    &\leq\int_{\cup_i(s_i,s_{i+1})}(\V_3(\X)\dot \X+\W_3(\Y)\dot \Y)\,ds\\
    &\leq\norm{\V_3}_{L^\infty}\int_{\cup_i(s_i,s_{i+1})}\dot
    \X\,ds\\
    &\quad+\meas(\cup_i(s_i,s_{i+1}))^{1/2}\big(\int_{\cup_i(s_i,s_{i+1})}\W_3(\Y)^2\dot
    \Y^2\,ds\big)^{1/2}.
  \end{align*}
  By \eqref{eq:relVW}, we get $\W_3^2\leq
  2\kappa\norm{\W_4}_{L^\infty(\Real)}\W_2$, and
  therefore $\W_3^2(\Y)\dot \Y^2\leq C\W_2(\Y)\dot
  \Y^2=C \V_2(\X)\dot\X\dot\Y$, by
  \eqref{eq:V2W2equal}, for some constant
  $C$. Hence,
  \begin{align*}
    \int_{\cup_i(s_i,s_{i+1})}\W_3(\Y)^2\dot
    \Y^2\,ds&\leq
    C\int_{\cup_i(s_i,s_{i+1})}\V_2(\X)\dot
    \X\,ds\\
    &\leq C\int_{\cup_i(s_i,s_{i+1})}\dot
    \X\,ds=C\meas(\cup_{i}(X_i,X_{i+1}))
  \end{align*}
  for some constant $C$. Finally,
  \begin{equation*}
    \sum_{i=1}^N\abs{U_1(X_{i+1})-U_1(X_i)}\leq C(\meas(\cup_{i}(X_i,X_{i+1}))+\meas(\cup_{i}(X_i,X_{i+1}))^{1/2})
  \end{equation*}
  and $U_1$ is absolutely continuous. After
  differentiating \eqref{eq:U1U2U}, we get
  \begin{equation*}
    U_1'(\X)\dot \X=\V_3(\X)\dot \X+\W_3(\Y)\dot \Y
  \end{equation*}
  and, after taking the square of this expression,
  we obtain
  \begin{equation}
    \label{eq:derU1sum}
    U_1'(\X)^2\dot \X^2\leq 2(\V_3(\X)^2\dot \X^2+\W_3(\Y)^2\dot \Y^2).
  \end{equation}
  Since
  \begin{equation*}
    \W_3(\Y)^2\dot \Y^2=c\W_2\W_4\dot \Y^2=c\V_2\dot \X\W_4\dot \Y
\text{ (by \eqref{eq:V2W2equal})},
  \end{equation*}
  we have that \eqref{eq:derU1sum} implies
  \begin{align*}
    U_1'(\X)^2\dot \X&\leq C(\V_3(\X)^2\dot \X+\W_4(\Y)\dot \Y)\\
    &\leq C(\V_3(\X)^2\dot \X+\dot J)
  \end{align*}
  and, after a change of variables, we obtain
  \begin{equation*}
    \norm{U_1'}_{L^2}^2\leq C(\norm{\V_3}_{L^2}^2+\norm{J}_{L^\infty})<\infty.
  \end{equation*}
  Hence, $U_1'$ belongs to $L^2$. Similarly one
  proves that $U_2$ is absolutely continuous on
  any compact set and $U_2'\in L^2(\Real)$. To
  prove that the property \eqref{eq:Gprop} is
  fulfilled, we use Lemma \ref{lem:charH} and the
  fact that $1/(\V_2+\V_4),1/(\W_2+\W_4)\in
  L^{\infty}(\Real)$. The other properties of $\F$
  that $\psi$ has to fulfill can be checked more
  or less directly from the definition of $\G_0$.
\end{proof}

The sets $\F$ and $\G_0$ are not in bijection;
otherwise we would not have introduced $\F$, and
indeed one can show that
$\Cb\circ\Db\neq\id_{\G_0}$. However, we have
$\Db\circ\Cb=\id_{\F}$, as we will see in Lemma
\ref{lem:cdeeqe}.

Now we define how, from any initial data in $\D$,
that is, in the set of original coordinates, we
define the corresponding element in $\F$.
\begin{definition}
  \label{def:mappingL}
  We define the mapping $\Lb\colon\D\to\F$ where,
  for any $(u,R,S,\mu,\nu)\in\D$,
  $\psi=(\psi_1,\psi_2)=\Lb(u,R,S,\mu,\nu)$ is
  defined as follows. We set
  \begin{subequations}
    \label{eq:defvarL}
    \begin{align}
      \label{eq:defx1}
      x_1(X)&=\sup\{x\in\Real\mid x'+\mu(-\infty,x')<X\text{  for all }x'<x\},\\
      \label{eq:defx2}
      x_2(Y)&=\sup\{x\in\Real\mid x'+\nu(-\infty,x')<Y\text{  for all }x'<x\}
    \end{align}
    and
    \begin{equation}
      \label{eq:defJ1J2}
      J_1(X)=X-x_1(X),\quad     J_2(Y)=Y-x_2(Y)
    \end{equation}
    and
    \begin{equation}
      \label{eq:defU1U2}
      U_1(X)=u(x_1(X)),\quad  U_2(Y)=u(x_2(Y))
    \end{equation}
    and
    \begin{equation}
      \label{eq:defV1V2}
      V_1(X)=\left[\frac{R}{2c(U_1)}\right](x_1(X))x_1'(X),\     V_2(Y)=-\left[\frac{S}{2c(U_2)}\right](x_2(Y))x_2'(Y)
    \end{equation}
    and
    \begin{equation}
      \label{eq:defK1K2}
      K_1(X)=\int_{-\infty}^X\frac{J_1'(\bar X)}{c(U_1(\bar X))}\,d\bar X,\quad K_2(Y)=-\int_{-\infty}^Y\frac{J_2'(\bar Y)}{c(U_2(\bar Y))}\,d\bar Y.
    \end{equation}
  \end{subequations}
\end{definition}
Before proving the well-posedness of this
definition, we check that we end up with the same
initial data that was obtained at the end of
Section \ref{sec:equivsys}, where $\mu$ and $\nu$
were assumed to be absolutely continuous with
respect to the Lebesgue measure. Then, the
functions
\begin{equation*}
  \mu(-\infty,x')=\int_{-\infty}^{x'}\frac14 R^2\,dx\quad\text{and}\quad\nu(-\infty,x')=\int_{-\infty}^{x'}\frac14 S^2\,dx
\end{equation*}
are continuous, and furthermore, \eqref{eq:defx1} and
\eqref{eq:defx2} rewrite as 
\begin{equation*}
  x_1(X)+\int_{-\infty}^{x_1(X)}\frac14 R^2\,dx=X\quad\text{ and }\quad x_2(Y)+\int_{-\infty}^{x_2(Y)}\frac14 S^2\,dx=Y.
\end{equation*}
We sum these two equalities, and, since
$x_1(\X(s))=x_2(\Y(s))=x(s)$ and $\X+\Y=2s$, we
get
\begin{equation*}
  2x(s)+\int_{-\infty}^{x(s)}\frac14(R^2+S^2)\,dx=2s
\end{equation*}
and recover \eqref{eq:defphi}. In the definitions
\eqref{eq:defvarL}, we use the degree of freedom
we have in the new set of coordinates to set the
values of $x_1$ and $x_2$ in such a way that their
derivatives, $x_1'$ and $x_2'$, are bounded.
\begin{proof}[Proof of well-posedness of Definition \ref{def:mappingL}]
  Clearly, the definition of $x_1$ yields an
  increasing function and
  $\lim_{X\rightarrow\pm\infty}x_1(X)=\pm\infty$. For
  any $z>x_1(X)$, we have $X\leq
  z+\mu((-\infty,z))$. Hence, $X-z\leq\mu(\Real)$
  and, since we can choose $z$ arbitrarily close
  to $x_1(X)$, we get
  $X-x_1(X)\leq\mu(\Real)$. It is not hard to
  check that $x_1(X)\leq X$. Hence,
  \begin{equation}
    \label{eq:ximybd}
    \abs{x_1(X)-X}\leq\mu(\Real)
  \end{equation}
  and
  $\norm{x_1-\id}_{L^\infty}\leq\mu(\Real)$. Let
  us prove that $x_1$ is Lipschitz with Lipschitz
  constant at most one. We consider $X$, $X'$ in
  $\Real$ such that $X<X'$ and
  $x_1(X)<x_1(X')$. It follows from the definition
  that there exists an increasing sequence,
  $z_i'$, and a decreasing one, $z_i$, such that
  $\lim_{i\rightarrow\infty}z_i=x_1(X)$,
  $\lim_{i\rightarrow\infty}z_i'=x_1(X')$ with
  $\mu((-\infty,z_i'))+z_i'<X'$ and
  $\mu((-\infty,z_i))+z_i\geq X$. Combining the
  these two inequalities, we
  obtain
  \begin{equation}
    \label{eq:diffmu}
    \mu((-\infty,z_i'))-\mu((-\infty,z_i))+z_i'-z_i<X'-X.
  \end{equation}
  For $j$ large enough, since by assumption
  $x_1(X)<x_1(X')$, we have $z_i<z_i'$ and
  therefore
  $\mu((-\infty,z_i'))-\mu((-\infty,z_i))=\mu([z_i,z_i'))\geq0$. Hence,
  $z_i'-z_i<X'-X$. Letting $i$ tend to infinity,
  we get $x_1(X')-x_1(X)\leq X'-X$. Hence, $x_1$
  is Lipschitz with Lipschitz constant bounded by
  one and, by Rademacher's theorem, differentiable
  almost everywhere. Following \cite{Folland}, we
  decompose $\mu$ into its absolute continuous,
  singular continuous and singular part, denoted
  $\muac$, $\mu_{\text{sc}}$ and $\mus$,
  respectively. We have
  $\muac=\frac14 R^2\,dx$. The support of $\mus$
  consists of a countable set of points. Let
  $H(x)=\mu((-\infty,x))$, then $H$ is lower
  semi-continuous and its points of discontinuity
  exactly coincide with the support of $\mus$ (see
  \cite{Folland}). Let $A$ denote the complement
  of $x_1\inv(\supp(\mus))$. We claim that for any
  $X\in A$, we have
  \begin{equation}
    \label{eq:claimmu}
    \mu((-\infty,x_1(X)))+x_1(X)=X.
  \end{equation}
  From the definition of $x_1(X)$ follows the
  existence of an increasing sequence $z_i$ which
  converges to $x_1(X)$ and such that
  $H(z_i)+z_i<X$. Since $H$ is lower
  semi-continuous,
  $\lim_{i\rightarrow\infty}H(z_i)=H(x_1(X))$ and
  therefore
  \begin{equation}
    \label{eq:Fxi2}
    H(x_1(X))+x_1(X)\leq X.
  \end{equation}
  Let us assume that $H(x_1(X))+x_1(X)<X$. Since
  $x_1(X)$ is a point of continuity of $H$, we can
  then find an $x$ such that $x>x_1(X)$ and
  $H(x)+x<X$. This contradicts the definition of
  $x_1(X)$ and proves our claim
  \eqref{eq:claimmu}. In order to check that
  \eqref{eq:relxJV1} is satisfied, we have to
  compute the derivative of $x_{1}$. We define the
  set $B_1$ as
  \begin{equation*}
    B_1=\left\{x\in\Real\mid\lim_{\rho\downarrow0}\frac{1}{2\rho}\mu((x-\rho,x+\rho))=\frac14
      R^2(x)\right\}.
  \end{equation*}
  Since $\frac14 R^2(x)\,dx$ is the absolutely
  continuous part of $\mu$, we have, from
  Besicovitch's derivation theorem (see
  \cite{Ambrosio}), that $\meas(B_1^c)=0$. Given
  $X\in x_1\inv(B_1)$, we denote $x=x_1(X)$. We
  claim that for all $i\in\Nat$, there exists
  $0<\rho<\frac{1}{i}$ such that $x-\rho$ and
  $x+\rho$ both belong to $\supp(\mus)^c$. Assume
  namely the opposite. Then for any
  $z\in(x-\frac{1}{i},x+\frac{1}{i})\setminus\supp(\mus)$,
  we have that $z'=2x-z$ belongs to
  $\supp(\mus)$. Thus we can construct an
  injection between the uncountable set
  $(x-\frac{1}{i},x+\frac{1}{i})\setminus\supp(\mus)$
  and the countable set $\supp(\mus)$. This is
  impossible, and our claim is proved. Hence,
  since $x_1$ is surjective, we can find two
  sequences $X_i$ and $X_i'$ in $A$ such that
  $\frac12(x_1(X_i)+x_1(X_i'))=x_1(X)$ and
  $x_1(X_i')-x_1(X_i)<\frac{1}{i}$. We have, by
  \eqref{eq:claimmu}, since $x_1(X_i)$ and
  $x_1(X_i')$ belong to $A$,
  \begin{equation}
    \label{eq:muyxii}
    \mu([x_1(X_i),x_1(X_i')))+x_1(X_i')-x_1(X_i)=X_i'-X_i.
  \end{equation}
  Since $x_1(X_i)\notin\supp(\mus)$, we infer that
  $\mu(\{x_1(X_i)\})=0$ and
  $\mu([x_1(X_i),x_1(X_i')))=\mu((x_1(X_i),x_1(X_i')))$. Dividing
  \eqref{eq:muyxii} by $X_i'-X_i$ and letting $i$
  tend to $\infty$, we obtain
  \begin{equation}
    \label{eq:yxidef}
    x_1'(X)\frac14 R^2(x_1(X))+x_1'(X)=1
  \end{equation}
  where $x_1$ is differentiable in $x_1\inv(B_1)$,
  that is, almost everywhere in $x_1\inv(B_1)$. We
  will use several times this short lemma whose
  proof can be found in \cite{HolRay:07}.
  \begin{lemma}
    \label{lem:short} 
    Given an increasing Lipschitz function
    $f\colon\Real\to\Real$, for any set $B$ of
    measure zero, we have $f'=0$ almost everywhere
    in $f\inv(B)$.
  \end{lemma}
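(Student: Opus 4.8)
The plan is to reduce the assertion to a quantitative estimate on the sets
$E_n=\{x\in f\inv(B)\mid f \text{ is differentiable at } x \text{ and } f'(x)>1/n\}$, for $n\in\Nat$. Since $f$ is increasing we have $f'\geq0$ almost everywhere, and the set of points where $f'$ fails to exist is null; hence, once we know $\meas(E_n)=0$ for every $n$, the set $\{x\in f\inv(B)\mid f'(x)\neq0 \text{ or } f' \text{ undefined at } x\}$ is contained in a Lebesgue null set, and the lemma follows.

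First I would fix $n$ and $\epsi>0$ and, using $\meas(B)=0$, pick an open set $V\supseteq B$ with $\meas(V)<\epsi$. Since $f$ is continuous (being Lipschitz), $f\inv(V)$ is open, hence a countable disjoint union of open intervals $(a_k,b_k)$; note $E_n\subseteq f\inv(B)\subseteq f\inv(V)$. Monotonicity of $f$ is the essential ingredient: it forces the image intervals $f\big((a_k,b_k)\big)$ to be pairwise disjoint subintervals of $V$, whence $\sum_k\meas\big(f((a_k,b_k))\big)\leq\meas(V)<\epsi$. On the other hand, Lipschitzness makes $f$ absolutely continuous, so on each component $\int_{a_k}^{b_k}f'(x)\,dx=\meas\big(f((a_k,b_k))\big)$ (interpreted via monotone convergence for any unbounded component). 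Combining these facts with $f'\geq0$,
\begin{equation*}
  \int_{E_n}f'(x)\,dx\;\leq\;\int_{f\inv(V)}f'(x)\,dx\;=\;\sum_k\int_{a_k}^{b_k}f'(x)\,dx\;\leq\;\meas(V)\;<\;\epsi .
\end{equation*}
Since $f'>1/n$ on $E_n$, this yields $\tfrac1n\meas(E_n)<\epsi$, and letting $\epsi\downarrow0$ gives $\meas(E_n)=0$, as desired.

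A shorter variant, if one is willing to quote the change-of-variables formula for increasing absolutely continuous functions, is to enclose $B$ in a Borel null set $B'$ and compute $\int_{f\inv(B')}f'(x)\,dx=\int_\Real \mathbf{1}_{B'}(f(x))\,f'(x)\,dx=\int_{B'}\mathbf{1}_{[\inf f,\,\sup f]}(y)\,dy=0$; since $f'\geq0$ almost everywhere and $f\inv(B)\subseteq f\inv(B')$, this again forces $f'=0$ almost everywhere on $f\inv(B)$. Either way, there is no serious obstacle; the only subtleties worth checking are that monotonicity genuinely makes the images $f((a_k,b_k))$ essentially disjoint (so that multiplicities do not enter and the total length stays below $\meas(V)$), and that absolute continuity upgrades the crude Lipschitz bound $\meas^*(f(E))\leq L\,\meas^*(E)$ to the exact relation between $\meas\big(f(I)\big)$ and $\int_I f'$ on an interval $I$.
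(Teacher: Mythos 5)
Your proof is correct. The paper itself gives no proof of this lemma---it simply cites \cite{HolRay:07}---so there is no argument in the text to compare against; your covering argument is the standard one (and in the spirit of the cited reference). The only genuinely delicate points are exactly the two you flag, and you handle both correctly: monotonicity forces the images $f((a_k,b_k))$ of the components of $f\inv(V)$ to be intervals whose interiors are pairwise disjoint (the supremum of one is at most the infimum of the next), so their lengths sum to at most $\meas(V)$; and Lipschitz continuity gives absolute continuity, whence $\int_{a_k}^{b_k}f'=f(b_k)-f(a_k)=\meas\big(f((a_k,b_k))\big)$ on each (possibly unbounded, via monotone convergence) component, which together with $f'\geq 0$ a.e.\ and the Chebyshev-type bound on $E_n$ yields the claim. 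The change-of-variables variant is also valid once $B$ is enclosed in a Borel null hull so that the composed indicator is measurable, as you indicate.
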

  We apply Lemma \ref{lem:short} to $B_1^c$ and
  get, since $\meas(B_1^c)=0$, that $x_1'=0$
  almost everywhere on $x_1\inv(B_1^c)$. On
  $x_1\inv(B_1)$, we proved that $x_1'$
  satisfies \eqref{eq:yxidef}. It follows that
  $0\leq x_1'\leq1$ almost everywhere, which
  implies, since $J_1'=1-x_1'$, that
  $J_1'\geq0$. From \eqref{eq:yxidef}, we get
  \begin{equation*}
    x_1(X)'J_1'(X)=x_1'(X)^2\frac14 R^2(x_1(X))=(c(U_1(X))V_1(X))^2.
  \end{equation*}
  Let us prove that $U_1$ is absolutely continuous
  on any bounded interval. We consider a
  partition $X_1\leq\cdots\leq X_N$. We have
  \begin{equation*}
    \sum_{i=1}^N\abs{U_1(X_{i+1})-U_1(X_i)}\leq\int_{\cup_{i=1}^N(x_1(X_i),x_1(X_{i+1}))}\abs{u_x}\,dx.
  \end{equation*}
  Given $M>0$, for any $\epsi>0$, there exists
  $\delta$ such that for any set $A\subset[-M,M]$,
  we have that $\meas(A)<\delta$ implies
  $\int_A\abs{u_x}\,dx<\epsi$, because $u_x\in
  L^1_{\rm loc}$. We have
  \begin{equation*}
    \meas(\cup_{i=1}^N(x_1(X_i),x_1(X_{i+1})))\leq\norm{x_1'}_{L^\infty}\meas(\cup_{i=1}^N(X_i,X_{i+1}))
  \end{equation*}
  and since $x_1'\in L^\infty$, it follows that
  for any partition such that
  $\sum_{i}^N\abs{X_i-X_{i+1}}<\delta$, we have
  $\sum_{i=1}^N\abs{U_1(X_{i+1})-U_1(X_i)}<\epsi$
  and $U_1$ is absolutely continuous in any
  compact. Let us consider a curve $(\X,\Y)\in\C$
  such that $x_1(\X(s))=x_2(\Y(s))$ (such curves
  exist, see Definition \ref{def:C}). We
  differentiate $U_1(\X)$ and obtain
  \begin{align*}
    U_1'(\X)\dot \X&=u_x(x_1(\X))x_1'(\X)\dot \X=\frac{(R-S)(x_1(\X))}{2c(U_1(\X))}x_1'(\X)\dot \X\\
    &=\frac{R(x_1(\X))}{2c(U_1(\X))}x_1'(\X)\dot \X-\frac{S(x_2(\Y))}{2c(U_2(\Y))}x_2'(\Y)\dot \Y\\
    &=V_1(\X)\dot \X+V_2(\Y)\dot \Y.
  \end{align*}
  Here we have use the fact that
  \begin{equation*}
    x_1'(\X)\dot \X=x_2'(\Y)\dot \Y
  \end{equation*}
  which follows from $x_1(\X)=x_2(\Y)$.  From
  \eqref{eq:ximybd}, we obtain
  $\norm{J_1}_{L^\infty}\leq\mu(\Real)$. We have,
  since $J_1'\geq0$,
  \begin{equation*}
    \norm{J_1'}_{L^2}^2\leq \norm{
      J_1'}_{L^\infty} \norm{J_1'}_{L^1}
    \leq\norm{J_1}_{L^\infty}\leq\mu(\Real).
  \end{equation*} 
  After a change of variables, we get
  \begin{equation*}
    \int_{\Real}V_1(X)^2\,dX\leq\frac{\kappa^2}4\int_{\Real}R^2(x)\,dx<\infty
  \end{equation*}
  and
  \begin{align*}
    \int_\Real U_1'^2(X)\,dX&=\int_\Real
    u_x^2(x_1(X))x_1'^2(X)\,dX\\&\leq\int_\Real
    u_x^2(x_1(X))x_1'(X)\,dX=\int_{\Real}u_x^2(x)\,dx<\infty,
  \end{align*}
  so that both $V_1$ and $U_1'$ belong to
  $L^2$. Similarly, one proves that $V_2$ and
  $U_2'$ belong to $L^2$.  Let
  $B_3=\{\xi\in\Real\mid
  x_1'<\frac{1}{2}\}$. Since $x_1'-1\geq0$,
  $B_3=\{\xi\in\Real\mid
  \abs{x_1'-1}>\frac{1}{2}\}$, and, after using
  the Chebychev inequality, as
  $x_1'-1=-J_1'\in{L^2}$, we obtain
  $\meas(B_3)<\infty$. Hence,
  \begin{align*}
    \int_\Real U_1^2(X)\,dX&=\int_{B_3}
    U_1^2(X)\,dX+\int_{B_3^c} U_1^2(X)\,dX\\
    &\leq \meas(B_3)\norm{u}_{L^\infty}^2+2\int_{B_3^c}(u\circ x_1)^2x_1'\,dX\\
    &\leq \meas(B_3)\norm{u}_{L^\infty}^2+2\norm{u}_{L^2}^2,
  \end{align*}
  after a change of variables, and $U_1\in
  L^2$. Similarly, one proves that $U_2\in L^2$.
\end{proof}
In this section we have shown how to construct,
from a given initial data in $\D$, an element in
$\F$ (via the mapping $\Lb$) and then, from an
element in $\F$, and element in $\G_0$ (via the
mapping $\Cb$). From an element in $\G_0$, we can
finally construct the corresponding solution of
\eqref{eq:goveq}.

Now, we turn to the existence
of solution to \eqref{eq:goveq} for given data in
$\G$.
 
\section{Existence of solution for the equivalent system}

\label{sec:existence}

\subsection{Short-range existence}

We first establish the short-range existence of
solutions to \eqref{eq:goveq}. The difficulty here
consists of taking into account initial data
defined on a curve which may be parallel to the
characteristic curves $X=\text{constant}$ or
$Y=\text{constant}$. In the following, we will
denote by $\Omega$ any
rectangular domain of the type
\begin{equation*}
  \Omega=[X_l,X_r]\times[Y_l,Y_r],
\end{equation*}
and we denote $s_l=\frac12(X_l+Y_l)$ and
$s_r=\frac12(X_r+Y_r)$. We define curves in
$\Omega$ as follows.
\begin{definition}
  Given $\Omega=[X_l,X_r]\times[Y_l,Y_r]$, we
  denote by $\C(\Omega)$ the set of curves in
  $\Omega$ given by $(\X(s),\Y(s))$ for
  $s\in[s_l,s_r]$ which match the diagonal points
  of $\Omega$, that is, $\X(s_l)=X_l$,
  $\X(s_r)=X_r$, $\Y(s_l)=Y_l$, $\Y(s_r)=Y_r$, and
  such that
  \begin{subequations}
    \begin{align}
      &\X-\id,\ \Y-\id \in W^{1,\infty}([s_l,s_r]),\\
      &\dot\X\geq0,\quad\dot\Y\geq0,\\
      &\frac12(\X(s)+\Y(s))=s,\text{ for all
      }s\in\Real.
    \end{align}
  \end{subequations}
  We set
  \begin{equation*}
    \norm{(\X,\Y)}_{\C(\Omega)}=\norm{\X-\id}_{L^\infty([s_l,s_r])}+\norm{\Y-\id}_{L^\infty([s_l,s_r])}.
  \end{equation*}
\end{definition}
In this subsection we will construct solutions on
small rectangular domains $\Omega$. We introduce
the set $\G(\Omega)$ which is the counterpart of
$\G$ on bounded intervals. Elements of
$\G(\Omega)$ correspond to a curve in $\C(\Omega)$
and data on this curve.
\begin{definition}
  \label{def:setGI}
  Given a rectangular domain
  $\Omega=[X_l,X_r]\times[Y_l,Y_r]$, let
  $\Theta=(\X,\Y,\Z,\V,\W)$ where
  $(\X,\Y)\in\C(\Omega)$ and $\Z(s),\V(X),\W(Y)$
  are three five-dimensional vector-valued
  measurable functions. Using the same notation as
  in \eqref{eq:defZa}, we set
  \begin{equation*}
    \norm{\Theta}_{\G(\Omega)}=\norm{U}_{L^2([s_l,s_r])}
    +\norm{\V^a}_{L^2([X_l,X_r])}+\norm{\W^a}_{L^2([Y_l,Y_r])}
  \end{equation*}
  where we denote $U=\Z_3$ and
  \begin{multline*}
    \tnorm{\Theta}_{\G(\Omega)}=\norm{(\X,\Y)}_{\C(\Omega)}+\norms{\frac1{\V_2+\V_4}}_{L^\infty([X_l,X_r])}+\norms{\frac1{\W_2+\W_4}}_{L^\infty([Y_l,Y_r])}\\
    +\norm{\Z^a}_{L^\infty([s_l,s_r])}+\norm{\V^a}_{L^\infty([X_l,X_r])}+\norm{\W^a}_{L^\infty([Y_l,Y_r])}.
  \end{multline*}
  The element $\Theta$ belongs to $\G(\Omega)$ if the following four
  conditions hold:
  \begin{enumerate}
  \item[(i)]
    \begin{equation*}
      \tnorm{\Theta}_{\G(\Omega)}<\infty
    \end{equation*}
    and therefore
    $\norm{\Theta}_{\G(\Omega)}<\infty$ because we
    here consider a bounded domain.
  \item[(ii)]
    \begin{equation}
      \label{eq:posVW}
      \V_2,\W_2,\Z_4,\V_4,\W_4\geq0.
    \end{equation}
  \item[(iii)] For almost every $s$, we have
    \begin{equation}
      \label{eq:relZVW}
      \dot\Z(s)=\V(\X(s))\dot\X(s)+\W(\Y(s))\dot\Y(s).
    \end{equation}
  \item[(iv)]
    For almost every $X$ and $Y$, we have
    \begin{subequations}
      \begin{align}
        \label{eq:relVWb}
        2\V_4(\X)\V_2(\X)&=(c(U)\V_3(\X))^2,& 2\W_4(\Y)\W_2(\Y)&=(c(U)\W_3(\Y))^2,\\
        \label{eq:reltx1b}
        \V_{2}(\X)&=c(U)\V_{1}(\X),& \W_{2}(\Y)&=-c(U)\W_{1}(\Y),\\
        \label{eq:reltJK1b}
        \V_{4}(\X)&=c(U)\V_{5}(\X),&\W_{4}(\Y)&=-c(U)\W_{5}(\Y).
      \end{align}
    \end{subequations}
  \end{enumerate}
\end{definition}
We introduce the Banach spaces $\WX(\Omega)$ and
$\WY(\Omega)$ defined as
\begin{equation}
  \label{eq:defWXY1}
  \WX(\Omega)=L^\infty([Y_l,Y_r],W^{1,\infty}([X_l,X_r])),\quad \WY(\Omega)=L^\infty([X_l,X_r],W^{1,\infty}([Y_l,Y_r])),
\end{equation}
and the Banach spaces $L_X^\infty(\Omega)$ and
$L_Y^\infty(\Omega)$ defined as
\begin{equation*}
  L_X^\infty(\Omega)=L^\infty([Y_l,Y_r],C([X_l,X_r])),\quad L_Y^\infty(\Omega)=L^\infty([X_l,X_r],C([Y_l,Y_r])).
\end{equation*}
Let us consider $(\X,\Y,\Z,\V,\W)\in\G(\Omega)$.
By definition, the functions $\X$ and $\Y$ are
increasing. To any increasing function, one can
associate its generalized inverse, a concept which
is exposed for example in Brenier
\cite{brenier:09}. More generally, an increasing
function (not necessarily continuous) can be
identified as the subdifferential of a convex function
(which is a multivalued function). The generalized
inverse is then the subdifferential of the
conjugate of this convex function. We do not use
this framework here and prove directly the results
we need.
\begin{definition}
  \label{def:XYmap}
  Given $\Omega=[X_l,X_r]\times[Y_l,Y_r]$ and a
  curve $(\X,\Y)\in\C(\Omega)$, we define the
  generalized inverse of $\X$ and $\Y$,
  respectively, as
  \begin{align}
    \label{eq:defalpha}
    \alpha(X)&=\sup\{s\in[s_l,s_r]\mid
    \X(s)<X\} \text{   for
    }X\in (X_l,X_r],\\
    \label{eq:defbeta}
    \beta(Y)&=\sup\{s\in[s_l,s_r]\mid
    \Y(s)<Y\} \text{   for
    }Y\in (Y_l,Y_r].
  \end{align}
  We denote $\X^{-1}=\alpha$ and $\Y^{-1}=\beta$.
\end{definition}
The generalized inverse functions $\X^{-1}$ and
$\Y^{-1}$ enjoy the following properties.
\begin{lemma}
  \label{lem:geninv}
  The functions $\X^{-1}$ and $\Y^{-1}$ are lower
  semicontinuous nondecreasing functions. We have
  \begin{equation}
    \label{eq:relXalpha}
    \X\circ\X^{-1}=\id\text{ and }\Y\circ\Y^{-1}=\id,
  \end{equation}
  and
  \begin{align}
    \label{eq:relalphaX}
    \X^{-1}&\circ \X(s)=s\text{  for any }s\text{ such that }\dot\X(s)>0,\\
    \label{eq:relalphaY}
    \Y^{-1}&\circ \Y(s)=s\text{  for any }s\text{ such that }\dot\Y(s)>0.
  \end{align}
\end{lemma}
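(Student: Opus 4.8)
The plan is to establish the three assertions for $\X^{-1}=\alpha$; those for $\Y^{-1}=\beta$ follow by the identical argument, so I only treat $\alpha$. Throughout one uses that $\X$ is continuous (since $\X-\id\in\Winf([s_l,s_r])$), nondecreasing (since $\dot\X\geq0$), and satisfies $\X(s_l)=X_l$, $\X(s_r)=X_r$. Monotonicity of $\alpha$ is immediate: if $X<X'$ then $\{s\mid\X(s)<X\}\subseteq\{s\mid\X(s)<X'\}$, hence $\alpha(X)\leq\alpha(X')$. For lower semicontinuity, since $\alpha$ is nondecreasing it is enough to show that $\alpha$ is left-continuous. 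Fix $X_0\in(X_l,X_r]$ and put $L=\lim_{X\uparrow X_0}\alpha(X)$, which exists by monotonicity and satisfies $L\leq\alpha(X_0)$. Conversely, whenever $s$ satisfies $\X(s)<X_0$ one may choose $X$ with $\max\{\X(s),X_l\}<X<X_0$; then $X\in(X_l,X_r]$ and $\X(s)<X$, so $s\leq\alpha(X)\leq L$. Taking the supremum over all such $s$ yields $\alpha(X_0)\leq L$, hence $\alpha(X_0)=L$ and $\alpha$ is lower semicontinuous.

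Next I would prove $\X\circ\X^{-1}=\id$. Fix $X\in(X_l,X_r]$ and set $s^*=\alpha(X)=\sup A$, where $A=\{s\in[s_l,s_r]\mid\X(s)<X\}$. Since $\X(s_l)=X_l<X$ and $\X$ is continuous, a right neighborhood of $s_l$ lies in $A$, so $A\neq\emptyset$ and $s^*>s_l$. If $s^*\in A$ then $\X(s^*)<X$; if $s^*\notin A$, one may pick $s_i\in A$ with $s_i\uparrow s^*$ (possible since $s^*=\sup A$) and pass to the limit using continuity of $\X$, obtaining $\X(s^*)\leq X$. In either case $\X(s^*)\leq X$. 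Suppose $\X(s^*)<X$: if $s^*=s_r$ this contradicts $\X(s_r)=X_r\geq X$; if $s^*<s_r$, continuity of $\X$ at $s^*$ gives $\delta>0$ with $\X(s)<X$ on $[s^*,s^*+\delta]\subseteq[s_l,s_r]$, so $s^*+\delta\in A$, contradicting $s^*=\sup A$. Hence $\X(s^*)=X$, that is, $\X(\X^{-1}(X))=X$.

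Finally, for the relation \eqref{eq:relalphaX}, let $s_0\in(s_l,s_r]$ be a point where $\X$ is differentiable with $\dot\X(s_0)>0$, and set $X=\X(s_0)$ (the point $s_0=s_l$ is excluded, since there $\X(s_0)=X_l$ lies outside the domain of $\alpha$). For small $h>0$ differentiability gives $\X(s_0-h)=X-h\dot\X(s_0)+o(h)<X$; in particular $X>\X(s_0-h)\geq X_l$, so $\alpha(X)$ is well defined and $s_0-h$ belongs to the set defining $\alpha(X)$, whence $\alpha(X)\geq s_0-h$, and letting $h\downarrow0$ we get $\alpha(X)\geq s_0$. On the other hand, monotonicity of $\X$ gives $\X(s)\geq\X(s_0)=X$ for every $s\geq s_0$, so the set defining $\alpha(X)$ is contained in $[s_l,s_0]$ and $\alpha(X)\leq s_0$. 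Therefore $\X^{-1}(\X(s_0))=\alpha(X)=s_0$, which is \eqref{eq:relalphaX}; \eqref{eq:relalphaY} is obtained in the same way.

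I expect the only genuinely delicate point to be the identity $\X\circ\X^{-1}=\id$, where one must distinguish whether the supremum defining $\alpha(X)$ is attained and whether it lies in the interior of $[s_l,s_r]$ or at the right endpoint $s_r$; the remaining steps are routine manipulations with monotone continuous functions.
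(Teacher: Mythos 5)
Your proof is correct and follows essentially the same route as the paper: direct arguments from the definition of the generalized inverse using monotonicity and continuity of $\X$, with the key point for \eqref{eq:relalphaX} being, in both cases, that $\dot\X(s_0)>0$ forbids $\X$ from being flat immediately to the left of $s_0$. The only cosmetic differences are that you obtain lower semicontinuity via left-continuity of the nondecreasing function $\X^{-1}$ (the paper argues directly with a $\liminf$), and you prove \eqref{eq:relalphaX} by a one-sided differentiability estimate rather than by invoking \eqref{eq:relXalpha}.
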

Definition \ref{def:XYmap} extends naturally to
curves in $\C$ and Lemma \ref{lem:geninv} still
holds.
\begin{proof}
  We prove the lemma only for $\X^{-1}$, as the
  results for $\Y^{-1}$ can be proved in the same
  way. Let us prove that $\alpha$ is
  nondecreasing. For any $X<\bar X$, there exists
  a sequence $s_i$ such that
  $\lim_{i\to\infty}s_i=\alpha(X)$ and
  $\X(s_i)<X$. Hence, $\X(s_i)<\bar X$ which
  implies $s_i\leq \alpha(\bar X)$, which after
  letting $i$ tend to infinity, gives
  $\alpha(X)\leq \alpha(\bar X)$. Let us prove
  that $\alpha$ is lower semicontinuous. Given a
  sequence $X_i$ such that
  $\lim_{i\to\infty}X_i=X$, for any $\epsi>0$,
  there exists $s\in[s_l,s_r]$ such that
  \begin{equation}
    \label{eq:xstrbd}
    \alpha(X)>s>\alpha(X)-\epsi
  \end{equation}
  because $\alpha(X)>s_l$ for all $X\in(X_l,X_r]$.
  It implies $\X(s)<X$ as, otherwise, $X\leq
  \X(s)$ would yield $\alpha(X)\leq s$, which
  contradicts \eqref{eq:xstrbd}. Thus, for large
  enough $i$, we have $\X(s)<X_i$ so that
  $s<\alpha(X_i)$. Combined with
  \eqref{eq:xstrbd}, it implies
  \begin{equation*}
    \alpha(X)-\epsi<s\leq\liminf\alpha(X_i)
  \end{equation*}
  and, as $\epsi$ is arbitrary, we get that
  $\alpha$ is lower semicontinuous. Let us prove
  \eqref{eq:relXalpha}. Given $X\in(X_l,X_r]$, we
  consider an increasing sequence $s_i$ such that
  $\lim_{i\to\infty} s_i=\alpha(X)$ and
  $\X(s_i)<X$. Letting $i$ tend to infinity, since
  $\X$ is continuous, we get $\X(\alpha(X))\leq
  X$. Assume that $\X(\alpha(X))<X$, since $\X$ is
  continuous, there exists $s$ such that
  $\X(\alpha(X))<\X(s)$ and $\X(s)<X$. The latter
  inequality implies that $s\leq\alpha(X)$ which,
  by the monotonicity of $\X$, yields
  $\X(s)\leq\X(\alpha(X))$ and we obtain a
  contradiction. Let us prove
  \eqref{eq:relalphaX}. We denote
  $\N=\{s\in[s_l,s_r]\mid \dot X(s)>0\}$. We
  consider a fixed element $s_0\in \N$. We have
  \begin{equation}
    \label{eq:ineqalphaX}
    \alpha\circ \X(s_0)\leq s_0.
  \end{equation}
  Indeed, by the monotonicity of $\X$, for any
  $s\in \{s\in[s_l,s_r]\mid \X(s)<\X(s_0)\}$, we
  have $s<s_0$ and therefore, after taking the
  supremum, we obtain \eqref{eq:ineqalphaX}. Let
  us assume that $\alpha\circ \X(s_0)< s_0$. We
  denote $s_1=\alpha\circ\X(s_0)$. By
  \eqref{eq:relXalpha}, $\X(s_1)=\X(s_0)$, and
  from the monotonicity of $\X$, it follows that
  $\X(s)=\X(s_0)=\X(s_1)$ for all
  $s\in[s_0,s_1]$. It implies that
  $\dot\X(s_0)=0$, which contradicts the fact that
  $s_0\in\N$.
\end{proof}
In the following and when there is no ambiguity,
we will slightly abuse the notation and denote
$\Y\circ \X^{-1}(X)$ and $\X\circ \Y^{-1}(Y)$ by
$\Y(X)$ and $\X(Y)$, respectively. The curve
$(\X(s),\Y(s))$ is \textit{almost} a graph as it
consists of the union of the graphs of the
functions $X\mapsto\Y(X)$ and (after rotating the
axes by $\frac\pi2$) $Y\mapsto\X(Y)$. We prove
the existence of solutions to \eqref{eq:goveq} on
rectangular boxes. First we give the definition of
solutions.
\begin{definition}
  \label{def:sollocal}
  We say that $Z$ is solution to \eqref{eq:goveq}
  in $\Omega=[X_l,X_r]\times[Y_l,Y_r]$ if
  \begin{enumerate}
  \item[(i)] we have 
    \begin{equation*}
      Z\in\Winf(\Omega),\quad Z_X\in\WY(\Omega),\quad Z_Y\in\WX(\Omega);
    \end{equation*}
  \item[(ii)] and for almost every
    $X\in[X_l,X_r]$,
    \begin{equation}
      \label{eq:dercroisX}
      (Z_X(X,Y))_Y=F(Z)(Z_X,Z_Y)(X,Y);
    \end{equation}
    and, for almost every $Y\in[Y_l,Y_r]$,
    \begin{equation}
      (Z_Y(X,Y))_X=F(Z)(Z_X,Z_Y)(X,Y).
    \end{equation}
  \end{enumerate}
  We say that $Z$ is a global solution to
  \eqref{eq:goveq} if $Z$ is a solution to
  \eqref{eq:goveq} as defined above, for any
  rectangular domain $\Omega$.
\end{definition}
The regularity that we impose is also necessary to
extract the relevant data on a curve from a
function defined in the plane, as it is explained
in the following lemma.
\begin{lemma}[Extraction of data from a curve]
  We consider a five-dimensional vector function
  $Z$ in $\Real^2$ such that
  \begin{equation*}
    Z\in\Winf(\Omega),\quad Z_X\in\WY(\Omega),\quad Z_Y\in\WX(\Omega)
  \end{equation*}
  for any rectangular domain $\Omega$.  Then,
  given a curve $(\X,\Y)\in\C$, let $(\Z,\V,\W)$
  be defined as
  \begin{equation}
    \label{eq:defextdatZ}
    \Z(s)=Z(\X(s),\Y(s))\text{ for all }s\in\Real
  \end{equation}
  and
  \begin{subequations}
    \label{eq:defextdata}
    \begin{align}
      \V(X)&=Z_X(X,\Y(X))\text{   for a.e.~$X\in\Real$},\\
      \W(Y)&=Z_Y(\X(Y),Y)\text{   for a.e.~$Y\in\Real$},
    \end{align}
  \end{subequations}
  or, equivalently,
  \begin{align*}
    \V(\X(s))&=Z_X(\X(s),\Y(s))\text{   for a.e.~$s\in\Real$  
                   such that $\dot \X(s)>0$},\\
    \W(\Y(s))&=Z_Y(\X(s),\Y(s))\text{   for a.e.~$s\in\Real$
    such that $\dot \Y(s)>0$}.
  \end{align*}
  We have $(\Z,\V,\W)\in
  L^\infty_{\rm loc}(\Real)$ and we denote
  $\Theta=(\X,\Y,\Z,\V,\W)$ by
  \begin{equation*}
    Z\bullet(\X,\Y).
  \end{equation*}
\end{lemma}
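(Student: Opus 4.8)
The plan is to establish the three assertions hidden in the statement in turn: that the pointwise formulas \eqref{eq:defextdatZ}--\eqref{eq:defextdata} are meaningful, that the triplet $(\Z,\V,\W)$ they produce is locally essentially bounded, and that the two characterizations of $\V$ and $\W$ coincide. Since every bounded subset of $\Real^2$ is contained in some rectangular domain $\Omega=[X_l,X_r]\times[Y_l,Y_r]$, and the formulas are purely pointwise so that the constructions on two overlapping domains automatically agree, it suffices to work on a fixed such $\Omega$ and to produce bounds on $[s_l,s_r]$, $[X_l,X_r]$ and $[Y_l,Y_r]$ respectively.

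For $\Z$: since $\Omega$ is a rectangle, $Z\in\Winf(\Omega)$ admits a Lipschitz representative, and as $(\X,\Y)\in\C$ satisfies $\X-\id,\Y-\id\in\Winf(\Real)$, the curve $s\mapsto(\X(s),\Y(s))$ is Lipschitz and maps $[s_l,s_r]$ into $\Omega$; hence $\Z(s)=Z(\X(s),\Y(s))$ is a well-defined Lipschitz function of $s$ with $\abs{\Z(s)}\le\norm{Z}_{L^\infty(\Omega)}$, so $\Z\in W^{1,\infty}_{\rm loc}(\Real)$. For $\V$: by the definition of $\WY(\Omega)$, for a.e.\ $X\in[X_l,X_r]$ the slice $Y\mapsto Z_X(X,Y)$ lies in $\Winf([Y_l,Y_r])$ with norm at most $\norm{Z_X}_{\WY(\Omega)}$, hence is continuous on $[Y_l,Y_r]$; since $\Y(X):=\Y\circ\X^{-1}(X)$ takes values in $[Y_l,Y_r]$, the value $\V(X)=Z_X(X,\Y(X))$ is defined for a.e.\ $X$ and satisfies $\abs{\V(X)}\le\norm{Z_X}_{\WY(\Omega)}$, so $\V\in L^\infty([X_l,X_r])$. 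The construction of $\W$ is symmetric, using $Z_Y\in\WX(\Omega)$ and the continuity of $X\mapsto Z_Y(X,Y)$ for a.e.\ $Y$. It is worth stressing that only the regularity of $Z_X$ in the $Y$-variable enters --- the variable along which the curve moves when parametrized as a graph $Y=\Y(X)$ --- while no control of $Z_X$ in $X$ is used or available; correspondingly $\V$ is in general no better than $L^\infty$.

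It then remains to match the two descriptions of $\V$ (and, symmetrically, of $\W$). By Lemma \ref{lem:geninv}, $\X^{-1}\circ\X(s)=s$ for every $s$ with $\dot\X(s)>0$, so for such $s$ one has $\Y(\X(s))=\Y(s)$ and thus $\V(\X(s))=Z_X(\X(s),\Y(s))$, which is one implication. Conversely, $\X$ maps $[s_l,s_r]$ onto $[X_l,X_r]$, and $\meas\bigl(\X(\{s\mid\dot\X(s)=0\})\bigr)\le\int_{\{\dot\X=0\}}\dot\X(s)\,ds=0$ by the absolute continuity of $\X$; moreover if $\X(s)=\X(s')$ with $s<s'$ then $\X$ is constant on $[s,s']$, so $\dot\X=0$ a.e.\ there. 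Consequently, for a.e.\ $X\in[X_l,X_r]$ there is a unique $s=\X^{-1}(X)$ with $\dot\X(s)>0$ and $\X(s)=X$, and then $\V(X)=Z_X(X,\Y(s))=Z_X(X,\Y(X))$, which is the first formulation; this also shows that the second formulation determines $\V$ unambiguously.

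I expect the only genuinely delicate step to be this last bookkeeping with the generalized inverse --- controlling, up to a Lebesgue-null set, the image under $\X$ of the set where $\dot\X$ vanishes, together with the corresponding essential injectivity of $\X$ on $\{\dot\X>0\}$ --- for which Lemma \ref{lem:short} (equivalently, the absolute continuity of the monotone Lipschitz function $\X$) and the surjectivity and continuity of $\X$ supply exactly what is needed; the remaining assertions follow directly from the hypothesized $\Winf$-regularity of $Z$, $Z_X$ and $Z_Y$ in the relevant variables.
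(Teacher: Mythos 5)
Your treatment of $\Z$ and of the equivalence of the two formulations of $\V$, $\W$ is sound, and the latter is in fact more detailed than what the paper records (the paper states the equivalence without proof; your use of Lemma \ref{lem:geninv} together with the observation that $\meas\bigl(\X(\{\dot\X=0\})\bigr)\le\int_{\{\dot\X=0\}}\dot\X\,ds=0$ is exactly the right bookkeeping). The gap is in the step ``the value $\V(X)=Z_X(X,\Y(X))$ is defined for a.e.\ $X$ and bounded, so $\V\in L^\infty([X_l,X_r])$'': membership in $L^\infty$ requires \emph{measurability} of $X\mapsto Z_X(X,\Y(X))$, and this is precisely the point that is not automatic and that the paper's entire proof is devoted to. The function $Z_X$ is a priori only an equivalence class on $\Omega$, and the curve $\{(X,\Y(X))\}$ has two-dimensional measure zero; the $\Winf$-regularity of the slices $Y\mapsto Z_X(X,Y)$ fixes canonical pointwise values along the curve, but it does not by itself yield measurability in $X$ of the evaluated function, and one cannot argue via preimages either, since $X\mapsto\Y(X)$ may send sets of positive measure onto null sets (the very phenomenon you correctly invoke to explain why $Y$-continuity is indispensable).

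The paper closes this by showing that $f\mapsto\tilde f$, $\tilde f(X)=f(X,\Y(X))$, is a well-defined continuous linear map from $\WY(\Omega)$ to $\Linf([X_l,X_r])$: for a simple function $f(X,Y)=\sum_j g_j(Y)\chi_{A_j}(X)$ with $g_j\in\Winf([Y_l,Y_r])$, the composition $\tilde f(X)=\sum_j g_j(\Y(X))\chi_{A_j}(X)$ is measurable because $X\mapsto\Y(X)$ is (lower semicontinuous, hence) measurable and each $g_j$ is continuous, with $\norms{\tilde f}_{\Linf}\le\norm{f}_{\WY(\Omega)}$; the general case follows by density. An equivalent repair in your framework would be a Carath\'eodory-type approximation: replace $\Y(X)$ by measurable functions taking finitely many values in a suitable set of ``good'' $Y$-slices and pass to the limit using the $Y$-continuity of $Z_X(X,\cdot)$ for a.e.\ $X$. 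Either way, this measurability argument is the missing idea; once it is supplied, the rest of your proof goes through.
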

\begin{proof}
  We consider a domain
  $\Omega=[X_l,X_r]\times[Y_l,Y_r]$. We claim that
  for any $f\in\WY(\Omega)$ then $\tilde
  f(X)=f(X,\Y(X))$ is measurable and $f(X,\Y(X))\in
  L^{\infty}([X_l,X_r])$. It suffices to show that
  the linear mapping $f\mapsto \tilde{f}$ from
  $\WY(\Omega)$ to $\Linf([X_l,X_r])$ is
  well-defined on simple functions and
  continuous. We assume that $f$ is a simple
  function, that is,
  \begin{equation*}
    f(X,Y)=\sum_{j=1}^{N}g_{j}(Y)\chi_{A_{j}}(X)
  \end{equation*}
  where $\chi_A$ denotes the indicator function of
  the set $A$, $A_{j}$ are disjoint measurable
  sets and $g_{j}\in\Winf([Y_l,Y_r])$. Then,
  $\tilde
  f(X)=\sum_{j=1}^{N}g_{j}(\Y(X))\chi_{A_{j}}(X)$
  is measurable (as $X\mapsto\Y(X)$ is lower
  semicontinuous) and
  \begin{align*}
    \esssup_{X\in[X_l,X_r]} \abs{\tilde f(X)}&\leq \max_{j\in\{1,\ldots,N\}}\esssup_{X\in[X_l,X_r]}\abs{g_{j}(\Y(X))}\\
    &\leq\max_{j\in\{1,\ldots,N\}}\norm{g_{j}}_{\Winf([Y_l,Y_r])}\\
    &\leq\norm{f}_{\WY(\Omega)}
  \end{align*}
  so that $\tilde f\in L^\infty([X_l,X_r])$. Note
  that we need $g_j\in \Winf([Y_l,Y_r])$ as, if
  $g_j$ only belongs to $L^\infty([Y_l,Y_r])$, we
  \textit{do not have} in general
  $\esssup_{X\in[X_l,X_r]}\abs{g_{j}(\Y(X))}\leq\norm{g_{j}}_{L^\infty([Y_l,Y_r])}$
  as the function $X\mapsto\Y(X)$ may send sets of
  strictly positive measure to a set of measure
  zero (for example if $\Y$ is constant on an
  interval). Therefore the continuity in the $Y$
  direction which is necessary to make meaning of
  \eqref{eq:defextdata}. Using the same type of
  estimate, one gets that
  \begin{equation*}
    \norms{\tilde f}_{\Linf([X_l,X_r])}\leq\norm{f}_{\WY(\Omega)},
  \end{equation*}
  which concludes the proof of the claim. Similarly
  one proves that, for any $f\in\WX(\Omega)$, the
  mapping $Y\mapsto f(\X(Y),Y)$ is measurable and
  belongs to $L^{\infty}([X_l,X_r])$. Hence, we
  get that $(\Z,\V,\W)\in
  L^\infty_{\rm loc}(\Real)$.
\end{proof}

The decay of $Z$ at infinity in the diagonal
direction is more conveniently expressed in term
of the function $Z^a$ which we now define as
\begin{equation}
  \label{eq:Zdefwa}
  Z^a_2=Z_2-\frac12(X+Y)
  \text{ and }Z^a_i=Z_i\text{ for }i\in\{1,3,4,5\}.
\end{equation}
Even if we are not concerned yet with the behavior
at infinity, it is convenient to introduce $Z^a$
already here to write the estimate in a convenient
way.  We now introduce the set $\H(\Omega)$ of all
solutions to \eqref{eq:goveq} on rectangular
domains, which satisfy additional properties.

\begin{definition}
  \label{def:HO} 
  Given a rectangular domain
  $\Omega=[X_l,X_r]\times[Y_l,Y_r]$, let
  $\H(\Omega)$ be the set of all functions $Z$
  which are solutions to \eqref{eq:goveq} in the
  sense of Definition \ref{def:sollocal} and which
  satisfy the following properties
  \begin{subequations}
    \label{eq:relpres}
    \begin{align}
      \label{eq:reltx}
      x_X&=c(U)t_X,&x_Y&=-c(U)t_Y,\\
      \label{eq:reltJK}
      J_X&=c(U)K_X,&J_Y&=-c(U)K_Y,\\
       \label{eq:energrel2}
      2J_Xx_X&=\left(c(U)U_X\right)^2,& 
2J_Yx_Y&=\left(c(U)U_Y\right)^2, \\
       \label{eq:posxX}
      x_X&\geq0, &J_X&\geq0,\\
      \label{eq:posxY}
      x_Y&\geq0,&J_Y&\geq0,\\
       \label{eq:invbd0}
      x_X+J_X&>0,& x_Y+J_Y&>0.
    \end{align}
  \end{subequations}
\end{definition}

We have the following short-range existence
theorem.

\begin{theorem}
  \label{th:shortrange}
  There exists an increasing function $C$ such
  that, for any $\Omega=[X_l,X_r]\times[Y_l,Y_r]$
  and $\Theta=(\X,\Y,\Z,\V,\W)$ in $\G(\Omega)$,
  if
  $s_r-s_l\leq 1/C(\tnorm{\Theta}_{\G(\Omega)})$,
  then there exists a unique solution
  $Z\in\H(\Omega)$ such that
  \begin{equation}
    \label{eq:matchdata}
    \Theta=Z\bullet(\X,\Y).
  \end{equation}
\end{theorem}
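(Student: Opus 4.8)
The plan is to recast the governing system \eqref{eq:goveq} as a system of integral equations along the characteristic directions, to solve it by a contraction argument in the function spaces prescribed by Definition~\ref{def:sollocal}, and then to check \emph{a posteriori} that the resulting $Z$ lies in $\H(\Omega)$ and matches the data, i.e.\ that $\Theta=Z\bullet(\X,\Y)$. This is the classical fixed-point scheme for semilinear hyperbolic systems (as in \cite[Ch.~4]{garabedian} and \cite{BreZhe:06}), the only genuinely new feature being that the ``initial'' curve $(\X,\Y)$ may contain pieces parallel to the characteristic lines, so that it is only \emph{almost} a graph.

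First I would set up the integral formulation. Using the generalized inverses of Definition~\ref{def:XYmap}, write $\Y(X)=\Y\circ\X^{-1}(X)$ and $\X(Y)=\X\circ\Y^{-1}(Y)$, so that the vertical line through $X$ and the horizontal line through $Y$ meet the curve at $(X,\Y(X))$ and $(\X(Y),Y)$. Given a candidate $(Z,Z_X,Z_Y)$, define the map $\mathcal T(Z,Z_X,Z_Y)=(\bar Z,\bar Z_X,\bar Z_Y)$ by
\begin{align*}
  \bar Z_X(X,Y)&=\V(X)+\int_{\Y(X)}^{Y}F(Z)(Z_X,Z_Y)(X,\eta)\,d\eta,\\
  \bar Z_Y(X,Y)&=\W(Y)+\int_{\X(Y)}^{X}F(Z)(Z_X,Z_Y)(\xi,Y)\,d\xi,\\
  \bar Z(X,Y)&=\Z(\X^{-1}(X))+\int_{\Y(X)}^{Y}\bar Z_Y(X,\eta)\,d\eta,
\end{align*}
where $\Z,\V,\W$ are the data of $\Theta$; a fixed point of $\mathcal T$ is a solution of \eqref{eq:goveq} attaining $\Theta$ on the curve. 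Because $F$ is bilinear and depends only on the bounded component $U=Z_3$, and because — by the lemma on extraction of data from a curve together with the lower semicontinuity of $\X^{-1},\Y^{-1}$ — integrating in $Y$ a function that is $L^\infty$ in $X$ and continuous in $Y$ again produces a function of the same anisotropic type, $\mathcal T$ maps $\Winf(\Omega)\times\WY(\Omega)\times\WX(\Omega)$ into itself. Here the compatibility relation \eqref{eq:relZVW} is essential: it is exactly what makes $\bar Z$ independent of the non-unique path used to reach the curve, so that across an abscissa where $(\X,\Y)$ has a vertical segment the jump of $\Z\circ\X^{-1}$ cancels the jump of $\int_{\Y(X)}^{Y}\bar Z_Y\,d\eta$ and $\bar Z$ stays continuous on all of $\Omega$.

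Next I would run the contraction on the closed ball $B_M$ of radius $M=C_0\tnorm{\Theta}_{\G(\Omega)}$ in that product space, with $C_0$ large depending only on the norms of $\Z,\V,\W$, all controlled by $\tnorm{\Theta}_{\G(\Omega)}$. Since $\abs{X_r-X_l}$ and $\abs{Y_r-Y_l}$ are both at most $2(s_r-s_l)$, every integral in $\mathcal T$ is over an interval of length $\le2(s_r-s_l)$; the bilinearity of $F$ and the regularity of the coefficient $\frac{c'(U)}{2c(U)}$ on the compact range of $U$ then give $\mathcal T(B_M)\subset B_M$ and $\norm{\mathcal TW_1-\mathcal TW_2}\le C(M)(s_r-s_l)\norm{W_1-W_2}$. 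Choosing the increasing function $C$ of the statement so that $C(M)(s_r-s_l)\le\frac12$ yields a unique fixed point $Z$ in $B_M$, with $Z_X,Z_Y$ its genuine partial derivatives. Uniqueness in all of $\H(\Omega)$ (not merely in $B_M$) follows separately, since any $Z'\in\H(\Omega)$ with $Z'\bullet(\X,\Y)=\Theta$ solves the same integral equations and a Gronwall estimate for $Z-Z'$ — whose Lipschitz norms are finite by membership in $\H(\Omega)$ — forces $Z=Z'$.

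Finally I would verify $Z\in\H(\Omega)$. Differentiating the integral equations recovers \eqref{eq:dercroisX} and its $X$-analogue for a.e.\ $X$ and a.e.\ $Y$, so $Z$ solves \eqref{eq:goveq} in the sense of Definition~\ref{def:sollocal}. For the structural relations \eqref{eq:relpres}, each of $x_X-c(U)t_X$, $J_X-c(U)K_X$ and $2J_Xx_X-(c(U)U_X)^2$, together with the symmetric quantities in the $Y$-variables, satisfies — as a consequence of \eqref{eq:goveq} — a linear homogeneous ODE along the corresponding characteristic with vanishing trace on $(\X,\Y)$ because $\Theta$ obeys \eqref{eq:relVWb}--\eqref{eq:reltJK1b} and \eqref{eq:relZVW}, so Gronwall forces them all to vanish identically. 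For the sign conditions \eqref{eq:posxX}--\eqref{eq:posxY} and the strict positivity \eqref{eq:invbd0}, integrate $\partial_Y(x_X+J_X)=\frac{c'}{2c}\bigl(U_Y(x_X+J_X)+U_X(x_Y+J_Y)\bigr)$ (and its $X$-counterpart for $x_Y+J_Y$) along a characteristic starting from the curve, where the trace is $\V_2+\V_4>0$, resp.\ $\W_2+\W_4>0$; here the smallness of the box enters a second time, since for $s_r-s_l$ small the bounded integral contribution (estimated by $C(M)(s_r-s_l)$) is dominated by the strictly positive trace, which rules out a first zero, and then $2x_XJ_X=(c(U)U_X)^2\ge0$ pins $x_X,J_X$ (resp.\ $x_Y,J_Y$) to the same sign so that each is separately nonnegative. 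The identity $\Theta=Z\bullet(\X,\Y)$ is built into $\mathcal T$. The main obstacle is the second paragraph: making $\mathcal T$ well defined and bounded in the anisotropic spaces $\WX(\Omega),\WY(\Omega)$ when $(\X,\Y)$ is only ``almost a graph'' — this forces careful bookkeeping of which side of the curve each point lies on, of the jump discontinuities of $\X^{-1},\Y^{-1}$, and of precisely how much regularity (continuity in the transverse variable, merely $L^\infty$ in the other) is available for taking traces on the curve; once that is in place, the contraction estimate is routine thanks to the bilinear structure of $F$.
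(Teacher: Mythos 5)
Your proposal is correct and follows essentially the same route as the paper's proof: a Picard iteration on the integral form of \eqref{eq:goveq} in the anisotropic spaces $\WX(\Omega)$, $\WY(\Omega)$ with contraction coming from $s_r-s_l$ small, the compatibility relation \eqref{eq:relZVW} ensuring consistency of the two path representations across vertical and horizontal pieces of the curve, and then Gronwall-type arguments to propagate \eqref{eq:reltx}--\eqref{eq:energrel2} and the positivity of $x_X+J_X$, $x_Y+J_Y$ from the data on the curve. The only cosmetic difference is that the paper keeps two copies $Z_h,Z_v$ of the solution (one per representation) and proves $Z_h=Z_v$ at the fixed point, whereas you build in path-independence directly; this does not change the substance of the argument.
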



\begin{proof}
  We use a Picard fixed-point argument. 
  Define $\B$ as the set of elements
  $(Z_h,Z_v,V,W)$ such that
  \begin{equation*}
    Z_h\in[\LX]^5,\quad Z_v\in[\LY]^5,\quad V\in[\LY]^5,\quad W\in[\LX]^5
  \end{equation*}
  and
  \begin{equation}
    \label{eq:defBM}
    \sum_{i=1}^5(\norm{Z^a_{h,i}}_{\LX}+\norm{Z^a_{v,i}}_{\LY}+\norm{V_i}_{L_X^\infty}+\norm{W_i}_{L_X^\infty})\leq 2\tnorm{\Theta}_{\G(\Omega)}
  \end{equation}
  where we use for $Z_h$ and $Z_v$ the same
  notation given in \eqref{eq:Zdefwa} for $Z$. For
  the fixed point, the functions $Z_h$ and $Z_v$
  coincide and are equal to the solution $Z$, see
  below, but it is convenient to define both
  quantities in this proof and keep the symmetry
  of the problem with respect to the $X$ and $Y$
  variables.  We introduce the mapping $\P$ given,
  for any $(Z_h,Z_v,V,W)\in\B$, by
  $\P(Z_h,Z_v,V,W)=(\bar Z_h,\bar Z_v,\bar V,\bar
  W)$ where
  \begin{subequations}
    \label{eq:iter}
    \begin{align}
      \label{eq:iter11}
      \bar Z_h(X,Y)&=\Z(\Y^{-1}(Y))+\int_{\X(Y)}^X V(\tilde X,Y)\,d\tilde X\\
      \intertext{for a.e. $Y\in[Y_l,Y_r]$ and all
        $X\in[X_l,X_r]$,}
      \label{eq:iter12}
      \bar Z_v(X,Y)&=\Z(\X^{-1}(X))+\int_{\Y(X)}^Y W(X,\tilde Y)\,d\tilde Y\\
      \intertext{for a.e. $X\in[X_l,X_r]$ and all
        $Y\in[Y_l,Y_r]$,}
      \label{eq:iter2}
      \bar V(X,Y)&=\V(X)+\int_{\Y(X)}^Y F(Z_h)(V,W)(X,\tilde Y)\,d\tilde Y\\
      \intertext{for a.e. $X\in[X_l,X_r]$ and all $Y\in[Y_l,Y_r]$,}
      \label{eq:iter3}
      \bar W(X,Y)&=\W(Y)+\int_{\X(Y)}^X
      F(Z_h)(V,W)(\tilde X,Y)\,d\tilde X
    \end{align}
  \end{subequations}
  for a.e. $Y\in[Y_l,Y_r]$ and all
  $X\in[X_l,X_r]$. Let us consider a solution $Z$
  to \eqref{eq:goveq} which satisfies
  \eqref{eq:matchdata}. For any $(X,Y)\in\Omega$,
  we have
  \begin{equation*}
    Z(X,\Y(s))=\Z(s)+\int_{\X(s)}^X Z_X(\tilde X,\Y(s))\,d\tilde X
  \end{equation*}
  which, after taking $s=\Y^{-1}(Y)$, yields
  \begin{equation*}
    Z(X,Y)=\Z(\Y^{-1}(Y))+\int_{\X(Y)}^X Z_X(\tilde X,Y)\,d\tilde X,
  \end{equation*}
  by \eqref{eq:relXalpha}. Similarly, one proves that
  \begin{equation*}
    Z(X,Y)=\Z(\X^{-1}(X))+\int_{\Y(X)}^Y Z_Y(X,\tilde Y)\,d\tilde Y.
  \end{equation*}
  For every almost every $X\in[X_l,X_r]$ and all
  $Y\in[Y_l,Y_r]$, we have
  \begin{equation*}
    Z_X(X,Y)=Z_X(X,\Y(X))+\int_{\Y(X)}^Y
    F(Z)(Z_X,Z_Y)(X,\tilde Y)\,d\tilde Y,
  \end{equation*}
  and therefore, by \eqref{eq:matchdata}, we get 
  \begin{equation}
    \label{eq:ZXXX}
    Z_X(X,Y)=\V(X)+\int_{\Y(X)}^Y
    F(Z)(Z_X,Z_Y)(X,\tilde Y)\,d\tilde Y.
  \end{equation}
  Similarly, we have
  \begin{equation}
    Z_Y(X,Y)=\W(Y)+\int_{\X(Y)}^X
    F(Z)(Z_X,Z_Y)(\tilde X,Y)\,d\tilde X
  \end{equation}
  for all $X\in[X_l,X_r]$ and
  a.e. $Y\in[Y_l,Y_r]$. Thus, if $Z$ is a solution
  to \eqref{eq:goveq} (in the sense of Definition
  \ref{def:sollocal}) which satisfies
  \eqref{eq:matchdata} then $(Z,Z,Z_X,Z_Y)$ is a
  fixed point of $\P$. Since $0\leq\dot \X\leq 2$
  and $0\leq\dot Y\leq 2$, we have
  \begin{equation}
    \label{eq:bdxmxpi}
    \abs{X-\X(Y)}=\abs{\X(\alpha(X))-\X(\alpha(Y))}\leq \X(s_r)-\X(s_l)\leq 2(s_l-s_r)\leq 2\delta
  \end{equation}
  and, similarly,
  \begin{equation}
    \label{eq:bdymypi}
    \abs{Y-\Y(X)}\leq 2(s_l-s_r)\leq 2\delta.
  \end{equation}
  We can choose $\delta$ small enough, depending
  only on $\tnorm{\Theta}_{\G(\Omega)}$, such that
  the mapping $\P$ maps $\B$ into $\B$. Let us
  check this in more details only for the second
  component of $Z_h$. We have
  \begin{equation*}
    \bar Z^a_{h,2}=\bar Z_{h,2}-\frac12(X+Y)
    =\Z_2(\Y^{-1}(Y))-\frac12(X+Y)+\int_{\X(Y)}^X V(\tilde X,Y)\,d\tilde X
  \end{equation*}
  and, denoting $s=\Y^{-1}(Y)$,
  \begin{align*}
    \Z_2(s)-\frac12(X+Y)&=\Z_2(s)-\frac12(X+\Y(s))=\Z_2(s)-\frac12(\X(s)+\Y(s))+\frac12(X-\X(s))\\
    &=\Z_2^a(s)+\frac12(X-\X(Y)).
  \end{align*}
  Hence,
  \begin{equation*}
    \norm{\bar Z^a_{h,2}}_{L^{\infty}(\Omega)}\leq 
\tnorm{\Z_2^a}_{\G(\Omega)}+\delta(1+2\tnorm{\Theta}_{\G(\Omega)})
  \end{equation*}
  by \eqref{eq:bdxmxpi} and
  \eqref{eq:defBM}. After doing the same for the
  other components, we get
  \begin{equation*}
    \sum_{i=1}^5(\norm{\bar Z^a_{h,i}}_{\LX}+\norm{\bar Z^a_{v,i}}_{\LY}+\norm{\bar V_i}_{L_X^\infty}+\norm{\bar W_i}_{L_X^\infty})\leq \tnorm{\Theta}_{\G(\Omega)}+\delta C
  \end{equation*}
  for a constant $C$ which depends only on
  $\tnorm{\Theta}_{\G(\Omega)}$. Hence, by taking
  $\delta$ small enough, the mapping $\P$ maps
  $\B$ into $\B$. Using the fact that $F$ is
  locally Lipschitz (because it is bi-linear with
  respect to the two last variables and depends
  smoothly on $U=Z_3$), we prove that $\P$ is
  contractive. Hence, $\P$ admits a unique fixed
  point that we denote $(Z_h,Z_v,V,W)$. Let us
  prove that $Z_h=Z_v$. It basically follows from
  the fact that $W_X=V_Y$. Let us now denote
  by $\N_X$ the set of points $X\in[X_l,X_r]$ for
  which \eqref{eq:iter12} and \eqref{eq:iter2}
  hold (by definition, the set $\N_X$ has full
  measure). Similarly we denote by $\N_Y$ the set
  of points $Y\in[Y_l,Y_r]$ for which
  \eqref{eq:iter11} and \eqref{eq:iter3} hold. We
  have
  $\meas([X_l,X_r]\setminus\N_X)=\meas([Y_l,Y_r]\setminus\N_Y)=0$
  so that
  $\meas(\Omega\setminus\N_X\times\N_Y)=0$. For
  any $(X,Y)\in\N_X\times\N_Y$, we have
  \begin{align}
    \notag
    Z_h(X,Y)-Z_v(X,Y)&=\Z(\Y^{-1}(Y))+\int_{\X(Y)}^{X}V(\tilde X,Y)\,d\tilde X\\
    \label{eq:diffZ1Z2}
    &\quad -\Z(\X^{-1}(X))-\int_{\Y(X)}^YW(X,\tilde Y)\,d\tilde Y.
  \end{align}
  Since the terms involving $F$ cancel, we obtain
  by \eqref{eq:iter2} and \eqref{eq:iter3} that
  \begin{equation*}
    \int_{\X(Y)}^{X}V(\tilde X,Y)\,d\tilde X-\int_{\Y(X)}^YW(X,\tilde Y)\,d\tilde Y=\int_{\X(Y)}^X\V(\tilde X)\,d\tilde
    X-\int_{\Y(X)}^Y\W(\tilde Y)\,d\tilde
    Y.
  \end{equation*}
  Returning to the rigorous notation, we get
  \begin{equation}
    \label{eq:proofcom}
    \int_{\X(Y)}^{X}V(\tilde X,Y)\,d\tilde
    X-\int_{\Y(X)}^YW(X,\tilde Y)\,d\tilde
    Y=\int_{\X(\Y^{-1}(Y))}^{\X(\X^{-1}(X))}\V(\tilde
    X)\,d\tilde
    X-\int_{\Y(\X^{-1}(X))}^{\Y(\Y^{-1}(Y))}\W(\tilde
    Y)\,d\tilde Y
  \end{equation}
  where we have also used \eqref{eq:relXalpha}.
  We proceed with a change of variables in the two
  integrals on the right-hand side of
  \eqref{eq:proofcom} and get
  \begin{align}
    \notag
    \int_{\X(\Y^{-1}(Y))}^{\X(\X^{-1}(X))}&\V(\tilde
    X)\,d\tilde
    X-\int_{\Y(\X^{-1}(X))}^{\Y(\Y^{-1}(Y))}\W(\tilde
    Y)\,d\tilde
    Y&\\
    \notag
    &\quad=-\int_{\X^{-1}(X)}^{\Y^{-1}(Y)}\left(\W(\Y(s))\dot
      \Y(s)+\V(\X(s))\dot
      \X(s)\right)\,ds&\\
    \notag \notag
    &\quad=-\int_{\X^{-1}(X)}^{\Y^{-1}(Y)}\dot
    \Z(s)\,ds&\text{ by \eqref{eq:relZVW}}\\
    \label{eq:proofcom2}
    &\quad=\Z(\X^{-1}(X))-\Z(\Y^{-1}(Y))
  \end{align}
  and combining \eqref{eq:diffZ1Z2},
  \eqref{eq:proofcom} and \eqref{eq:proofcom2}, we
  get that $Z_h(X,Y)=Z_v(X,Y)$ for all
  $(X,Y)\in\N_X\times\N_Y$, that is, almost
  everywhere. We denote $Z=Z_h=Z_v$. For any
  $(X,Y)$ and $(\bar X,\bar Y)$ belonging to
  $\N_X\times\N_Y$, we get, by using
  \eqref{eq:iter11} and \eqref{eq:iter12}, that
  \begin{equation*}
    Z(X,Y)-Z(\bar X,\bar Y)=\int_{\bar X}^{X}V(\tilde X,Y)\,d\tilde X+\int_{\bar Y}^{Y}W(\bar X,\tilde Y)\,d\tilde Y.
  \end{equation*}
  Hence, by using the bound \eqref{eq:defBM},
  \begin{equation*}
    \abs{Z(X,Y)-Z(\bar X,\bar Y)}\leq 2\tnorm{\Theta}_{\G(\Omega)}(\abs{X-\bar X}+\abs{Y-\bar Y})
  \end{equation*}
  and $Z$ is Lipschitz in $\N_X\times\N_Y$. It
  implies that $Z$ is uniformly continuous in
  $\N_X\times\N_Y$ and there exists a unique
  continuous extension of $Z$ to the closure of
  $\N_X\times\N_Y$, that is, $\Omega$. From
  \eqref{eq:iter11} and \eqref{eq:iter12}, we get
  that
  \begin{equation}
    \label{eq:ZXeqV}
    Z_X(X,Y)=V(X,Y)    
  \end{equation}
  for all $Y\in[Y_l,Y_r]$ and a.e. $X\in[X_l,X_r]$
  and
  \begin{equation}
    \label{eq:ZYeqW}
    Z_Y(X,Y)=W(X,Y)
  \end{equation}
  for all $X\in[X_l,X_r]$ and
  a.e. $Y\in[Y_l,Y_r]$.  By using the fact that
  $(Z,Z,Z_X,Z_Y)$ is a fixed point in $\B$ and
  \eqref{eq:ZXeqV} and \eqref{eq:ZYeqW}, we can
  check that $Z_X\in\WY(\Omega)$ and
  $Z_Y\in\WX(\Omega)$.  By density, we can prove
  that
  \begin{equation}
    \label{eq:ZintforaX}
    Z(X,Y)-Z(\bar X,Y)=\int_{\bar X}^{X}Z_X(\tilde X,Y)\,dX,
  \end{equation}
  not only for almost every $Y\in[Y_l,Y_r]$ as
  \eqref{eq:iter11} yields,  but \textit{for all}
  $Y\in[Y_l,Y_r]$. Indeed, for any
  $Y\in[Y_l,Y_r]$, there exists a sequence
  $Y_n\in\N_Y$ such that $\lim_{n\to\infty}Y_n=Y$
  as $\meas([Y_l,Y_r]\setminus \N_Y)=0$ and we have
  \begin{equation}
    \label{eq:ZintforaXn}
    Z(X,Y_n)-Z(\bar X,Y_n)=\int_{\bar X}^{X}Z_X(\tilde X,Y_n)\,dX.
  \end{equation}
  Since $Z_X\in\WY(\Omega)$, we have that
  $\norm{Z_X(\dott,Y_n)}_{L^{\infty}([X_l,X_r])}\leq\norm{Z_X}_{\WY(\Omega)}\leq
  2\tnorm{\Theta}_{\G(\Omega)}$ and, for a.e. $\tilde
  X\in[X_l,X_r]$, $\lim_{n\to\infty}Z_X(\tilde
  X,Y_n)=Z_X(\tilde X,Y)$ for almost every $\tilde
  X$. Hence, by Lebesgue dominated convergence
  theorem and the continuity of $Z$,
  \eqref{eq:ZintforaXn} implies
  \eqref{eq:ZintforaX}. It remains to check that
  $Z$ satisfies \eqref{eq:matchdata}. Since
  $(Z,Z,Z_X,Z_Y)$ is a fixed point of $\P$, we
  have, by \eqref{eq:iter2} and \eqref{eq:iter3},
  that
  \begin{equation*}
    Z_X(X,\Y(X))=\V(X)\text{ and }Z_Y(\X(Y),Y)=\W(Y)
  \end{equation*}
  for a.e. $X\in[X_l,X_r]$ and $Y\in[Y_l,Y_r]$,
  respectively. It remains to check that $Z$
  satisfies \eqref{eq:defextdatZ}. On one hand, we
  have that
  \begin{equation}
    \label{eq:matchbcZ}
    Z(\X(s),\Y(s))=\Z(\X^{-1}(\X(s)))
  \end{equation}
  by \eqref{eq:iter12}  and,
  by \eqref{eq:relalphaX}, it implies
  \eqref{eq:defextdatZ} for all $s\in [s_l,s_r]$ such that
  $\dot\X(s)>0$. On the other hand we have
  \begin{equation}
    \label{eq:matchbcZ2}
    Z(\X(s),\Y(s))=\Z(\Y^{-1}(\Y(s)))
  \end{equation}
  by \eqref{eq:iter12} and, by
  \eqref{eq:relalphaY}, it implies
  \eqref{eq:defextdatZ} for all $s\in [s_l,s_r]$
  such that $\dot\Y(s)>0$. Since $\dot \X+\dot
  \Y=2$, the set of all $s\in [s_l,s_r]$ such that
  $\dot\X(s)>0$ or $\dot\Y(s)>0$ has full measure
  and therefore, for almost every $s\in
  [s_l,s_r]$, \eqref{eq:defextdatZ} holds. By
  continuity, we infer that \eqref{eq:defextdatZ}
  holds for all $s\in [s_l,s_r]$. Hence, we have
  proved that $Z$ is a solution to
  \eqref{eq:goveq} which satisfies
  \eqref{eq:matchdata} if and only if it is a
  fixed point of $\P$. Since the fixed point
  exists and is unique, we have proved the
  existence and uniqueness of the solution. Let us
  define the functions $v\in\WY(\Omega)$ and
  $w\in$ in $\WX(\Omega)$ as
  \begin{equation*}
    v=x_X-c(U)t_X\quad\text{ and }\quad w=x_Y+c(U)t_Y. 
  \end{equation*}
  We want to prove that $v$ and $w$ are both
  zero. After some computations using the governing
  equations \eqref{eq:goveq}, we obtain
  \begin{align}
    \notag
    v_Y&=\dXY{x}-c'(U)U_Yt_X-c(U)\dXY{t}\\
    \label{eq:vy}
    &=\frac{c'(U)}{2c(U)}\left(U_Yv+U_Xw\right)
  \end{align}
  and
  \begin{align}
    \notag
    w_X&=\dXY{x}+c'(U)U_Xt_Y-c(U)\dXY{t}\\
    \label{eq:wy}
    &=\frac{c'(U)}{2c(U)}\left(U_Yv+U_Xw\right).
  \end{align}
  It follows that
  \begin{align*}
    v(X,Y)&=\int_{\Y(X)}^Y\frac{c'(U)}{2c(U)}\left(U_Yv+U_Xw\right),\\
    w(X,Y)&=\int_{\X(X)}^X\frac{c'(U)}{2c(U)}\left(U_Yv+U_Xw\right),
  \end{align*}
  which implies, after using \eqref{eq:bdxmxpi},
  \eqref{eq:bdymypi} and \eqref{eq:defBM}, that
  \begin{align*}
    \norm{v}_{\LY}&\leq\delta
    2\tnorm{\Theta}_{\G(\Omega)}\max\frac{c'(U)}{2c(U)}
(\norm{v}_{\LY}+\norm{w}_{\LX}),\\
    \norm{w}_{\LX}&\leq\delta 2\tnorm{\Theta}_{\G(\Omega)}\max\frac{c'(U)}{2c(U)}(\norm{v}_{\LY}+\norm{w}_{\LX}).
  \end{align*}
  Hence, by taking $\delta$ smaller if necessary,
  we get $\norm{v}_{\LY}=\norm{w}_{\LX}=0$. Let us
  now introduce $z\in\WY(\Omega)$ as
  $z=2J_Xx_X-\left(c(U)U_X\right)^2$. We
  have
  \begin{align}
    \notag
    z_Y&=2\dXY{J}x_X+2J_X\dXY{x}-2c(U)^2U_X\dXY{U}-2c(U)c'(U)U_Y\left(U_X\right)^2\\
    \label{eq:zy}
    &=\frac{c'(U)}{c(U)}U_Yz
  \end{align}
  and $z(X,\Y(X))=0$ for $X\in[X_l,X_r]$, the
  unique solution to \eqref{eq:zy} is $z=0$. One
  proves in the same way that
  $J_Yx_Y=2\left(c(U)U_Y\right)^2$.  Let
  us now prove \eqref{eq:invbd0}. Since the
  initial data belongs to $\G(\Omega)$, we have
  $\norms{1/(x_X+J_X)(X,\Y(X))}_{L^\infty([X_l,X_r])}\leq
  \tnorm{\Theta}_{\G(\Omega)}$ as
  $1/(x_X+J_X)(X,\Y(X))=1/(\V_2+\W_2)(X)$
  for a.e. $X\in[X_l,X_r{}]$. For all fixed $X$
  such that $1/(x_X+J_X)(X,\Y(X))\leq
  \tnorm{\Theta}_{\G(\Omega)}$, that is for almost
  every $X\in[X_l,X_r]$, we define
  \begin{equation*}
    Y_*=\inf\{Y\in[Y_l,Y_r]\mid Y\leq \Y(X)\text{ and }(x_X+J_X)(X,Y')>0\text{ for all }Y'>Y\}
  \end{equation*}
  and similarly 
  \begin{equation*}
    Y^*=\sup\{Y\in[Y_l,Y_r]\mid Y\geq \Y(X)\text{ and }(x_X+J_X)(X,Y')>0\text{ for all }Y'<Y\}.
  \end{equation*}
  On $(Y_*,Y^*)$, we have $(x_X+J_X)(X,Y)>0$
  and we define
  \begin{equation*}
    q(Y)=\frac{1}{(x_X+J_X)(X,Y)}.
  \end{equation*}
  Let us assume that $Y_*<Y_r$ and therefore, by
  continuity, 
  \begin{equation}
    \label{eq:contblY}
    (x_X+J_X)(X,Y^*)=0.    
  \end{equation}
  On $(Y_*,Y^*)$, we have $q(Y)\geq0$ and, since
  $J_Xx_X\geq0$ by \eqref{eq:energrel2}, it
  implies that 
  \begin{equation}
    \label{eq:posxypr}
    x_X\geq0\text{ and }J_X\geq0.     
  \end{equation}
  By using \eqref{eq:goveq}, we obtain
  \begin{align}
    \notag
    q_Y&=-\frac{\dXY{x}+\dXY{J}}{(x_X+J_X)^2}\\
    \label{eq:vynob}
    &=-\frac{c'(U)}{2c(U)}\frac{U_Y(x_X+J_X)+(J_Y+x_Y)U_X}{(x_X+J_X)^2}.
  \end{align}
  From \eqref{eq:energrel2}, we infer that
  \begin{equation*}
    \abs{U_X}=\frac1{c(U)\sqrt{2}}\sqrt{J_Xx_X}\leq\frac1{2c(U)\sqrt{2}}(J_X+x_X).
  \end{equation*}
  Hence, \eqref{eq:vynob} yields
  \begin{equation*}
    q_Y\leq\frac{\abs{c'(U)}}{2c(U)}(\abs{U_Y}+\abs{J_Y}+\abs{x_Y})q\leq Cq
  \end{equation*}
  for some constant $C$ which depends only 
  \begin{equation}
    \label{eq:CdepesU}
    \esssup_{Y\in[Y_l,Y_r]}(\abs{U}(X,Y)+\abs{Z_Y}(X,Y)),
  \end{equation}
  that is $\tnorm{\Theta}_{\G(\Omega)}$, by
  \eqref{eq:defBM}. By Gronwall's lemma, it
  follows that $q$ cannot blow up in $Y^*$,
  contradicting \eqref{eq:contblY} and
  \begin{equation}
    \label{eq:bdforq}
    \frac1{x_X+J_X}(X,Y)\leq \frac1{\V_3+\V_4}(X)e^{C\abs{Y-\Y(X)}}
  \end{equation}
  for a.e. $X\in[X_l,X_r]$ and all
  $Y\in[Y_l,Y_r]$. In the same way, one proves
  that $Y_*=Y_l$. Hence, we have proved that, for
  almost every $X\in[X_l,X_r]$,
  \begin{equation*}
    x_X(X,Y)\geq0\text{ and }J_X(X,Y)\geq0\text{ for all }Y
  \end{equation*}
  and
  $\norm{1/(x_X+J_X)(X,Y)}_{W^\infty(Y_l,Y_r)}$
  is bounded, for almost every $X\in[X_l,X_r]$,
  by a constant which is independent of $X$ and
  therefore $1/(x_X+J_X)\in\WY(\Omega)$.
  This concludes the proof of \eqref{eq:posxX} and
  the first identity \eqref{eq:invbd0} while
  \eqref{eq:posxY} and the second identity in
  \eqref{eq:invbd0} can be proven in the same way.
\end{proof}

\subsection{A priori estimates}

Given a positive constant $L$, we call
 domains of the type
\begin{equation*}
  D=\{(X,Y)\in\Real^2\mid \abs{Y-X}<2L\}
\end{equation*}
for \textit{strip domain}. Strip domains are
correspond to domains where time is bounded. We
have the following a priori estimates for the
solution of \eqref{eq:goveq}. The energy $J(X,Y)$
is bounded in the whole plane while $Z^a$ (that is,
$Z$, up to a shift in the second component) and
its derivatives are bounded in every strip domain.
\begin{lemma}
  \label{lem:aprioribd}
  Given $\Omega=[X_l,X_r]\times[Y_l,Y_r]$ and
  $\Theta=(\X,\Y,\Z,\V,\W)\in\G(\Omega)$, let
  $Z\in\H(\Omega)$ be a solution to
  \eqref{eq:goveq} such that
  $\Theta=Z\bullet(\X,\Y)$. Let
  $\E_0=\norm{\Z_4}_{L^\infty([s_l,s_r])}+\norm{\Z_5}_{L^\infty([s_l,s_r])}$. Then
  the following statements hold:
  \begin{enumerate}
  \item[(i)] Global boundedness of the energy, more precisely,
    \begin{equation}
      \label{eq:bdJK}
      0\leq J(X,Y)\leq\E_0\text{ for all }(X,Y)\in\Omega\quad\text{ and }\quad\norm{K}_{L^\infty(\Omega)}\leq(1+\kappa)\E_0
    \end{equation}
    where $J=Z_4$ and $K=Z_5$.
  \item[(ii)] The function $Z$ and its derivatives
    remain uniformly bounded in strip
    domains. More precisely there exists a
    nondecreasing function
    $C_1=C_1(L,\tnorm{\Theta}_{\G(\Omega)})$, such
    that, for any $L>0$ and any $X$ and $Y$ such
    that $\abs{X-Y}\leq 2L$, we have
    \begin{equation}
      \label{eq:bdtxu}
      \abs{Z^a(X,Y)}\leq C_1,\quad \abs{Z_X(X,Y)}\leq C_1,\quad \abs{Z_Y(X,Y)}\leq C_1
    \end{equation}
    and
    \begin{equation}
      \label{eq:invbd}
      \frac1{x_X+J_X}(X,Y)\leq C_1,
\quad \frac1{x_Y+J_Y}(X,Y)\leq C_1.
    \end{equation}
  \end{enumerate}
  The condition (ii) above is equivalent to the following condition
  (iii):
  \begin{enumerate}
  \item[(iii)] For any curve
    $(\bar\X,\bar\Y)\in\C(\Omega)$, we have
    \begin{equation}
      \label{eq:bdexbuk}
      \tnorm{Z\bullet(\bar\X,\bar\Y)}_{\G(\Omega)}\leq C_1
    \end{equation}
\end{enumerate}
    where $C_1=
    C_1(\norms{(\bar\X,\bar\Y)}_{\C(\Omega)},\tnorm{\Theta}_{\G(\Omega)})$
    is a given function which is increasing with
    respect to both its arguments.
\end{lemma}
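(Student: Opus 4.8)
The plan is to prove (i) directly from the monotonicity of the energy potential $J$, to bootstrap from (i) to (ii), and then to observe that (ii) and (iii) are two formulations of the same bound. For (i): on $\H(\Omega)$ we have $J_X\ge0$ and $J_Y\ge0$ by \eqref{eq:posxX}--\eqref{eq:posxY}, so $J=Z_4$ is nondecreasing in each of $X$ and $Y$. Since the curve $(\X,\Y)$ joins the two diagonal corners of $\Omega$ and $J$ equals $\Z_4$ there with $0\le\Z_4\le\E_0$, evaluating $J$ at those corners gives $0\le J(X,Y)\le\E_0$ on all of $\Omega$. For $K=Z_5$ we use \eqref{eq:reltJK}, which gives $0\le K_X\le\kappa J_X$ and $|K_Y|\le\kappa J_Y$; integrating from the curve point $(X,\Y(X))$ along the vertical line through $(X,Y)$, and using $\int|K_Y|\,d\tilde Y\le\kappa\,|J(X,Y)-J(X,\Y(X))|\le\kappa\E_0$ together with $|K(X,\Y(X))|=|\Z_5|\le\E_0$, yields $\norm{K}_{L^\infty(\Omega)}\le(1+\kappa)\E_0$.

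The core is (ii). On $\H(\Omega)$, the energy identity \eqref{eq:energrel2} and the arithmetic--geometric mean inequality give $|U_X|\le\frac{\kappa}{\sqrt2}(x_X+J_X)$, and \eqref{eq:reltx}, \eqref{eq:reltJK} give $0\le t_X\le\kappa x_X$ and $0\le K_X\le\kappa J_X$; hence, writing $h=x_X+J_X$ and $k=x_Y+J_Y$, the vectors $Z_X$ and $Z_Y$ are comparable to $h$ and $k$ respectively, $c_\kappa^{-1}h\le|Z_X|\le c_\kappa h$, with $c_\kappa$ depending only on $\kappa$. So it suffices to bound $h$ and $k$ from above and below on strips. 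Differentiating $h$ in $Y$ by \eqref{eq:goveqx} and \eqref{eq:goveqJ} gives $h_Y=\frac{c'(U)}{2c(U)}(U_Yh+U_Xk)$, and the substitution $p=h\,c(U)^{-1/2}$ cancels the term $U_Yh$ against $\partial_Y c(U)^{-1/2}$, leaving $\partial_Y p=\tfrac12c'(U)\,c(U)^{-3/2}U_Xk$; bounding $|U_X|\le\frac{\kappa}{\sqrt2}h\le\frac{\kappa}{\sqrt2}\sqrt\kappa\,p$ and the coefficient $|c'(U)|/c(U)$ in terms of $\kappa$ and $\sup|U|$, we obtain $|\partial_Y\log p|\le C k$ and, symmetrically, $|\partial_X\log(k\,c(U)^{-1/2})|\le C h$.

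Now fix $(X_0,Y_0)$ with $|X_0-Y_0|\le2L$. The vertical segment from $(X_0,Y_0)$ to the curve point $(X_0,\Y(X_0))$ and the horizontal segment to $(\X(Y_0),Y_0)$ lie in $\Omega$ and have length at most $2L+M$, where $M$ bounds $\sup_s|\X(s)-\Y(s)|$ in terms of $\norm{(\X,\Y)}_{\C(\Omega)}$; on the curve, $h=\V_2+\V_4$ and $k=\W_2+\W_4$ are bounded above and below in terms of $\tnorm{\Theta}_{\G(\Omega)}$. Integrating the logarithmic estimates along these segments shows that $\log p(X_0,Y_0)$ is controlled by $\int k\,d\tilde Y$ over the vertical segment and, in turn, by $\int h\,d\tilde X$ over the horizontal segment; splitting $k=x_Y+J_Y$, the part $\int J_Y\,d\tilde Y\le\E_0$ is controlled by (i), and it remains to control the transport part $\int x_Y\,d\tilde Y$ together with $\int h\,d\tilde X$. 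This is closed by a continuation argument on the size of the domain of dependence: one shows that $N(R):=\sup_R\bigl(|Z^a|+|Z_X|+|Z_Y|+1/h+1/k\bigr)$ on the rectangle $R=[\X(Y_0),X_0]\times[\Y(X_0),Y_0]$, whose sides are $\le2L+M$, is finite, by subdividing $R$ into finitely many sub-rectangles --- the number controlled by $\E_0$ and $2L+M$ --- on each of which Theorem \ref{th:shortrange} applies and propagates the bound outward from the curve, exactly as in the proof of Theorem \ref{th:shortrange} but now with the global bound $J\le\E_0$ of (i) and the telescoping of $\int\frac{c'(U)}{2c(U)}U_Y\,d\tilde Y$ into a difference of values of $\tfrac12\log c(U)$ (hence bounded by $\log\kappa$) replacing the smallness of $s_r-s_l$ in driving the iteration; note $\sup|U|\le N(R)$, so the constants $C$ above are legitimate within this scheme. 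Once $Z_X,Z_Y$ (hence $h,k,1/h,1/k$) are bounded on the strip, the bound on $Z^a$ follows by integrating $\partial_XZ^a$ and $\partial_YZ^a$ --- whose second components are $x_X-\tfrac12$ and $x_Y-\tfrac12$, the others $Z_X$ and $Z_Y$ --- from the curve along segments of length $\le2L+M$. The main obstacle is exactly the closing of this continuation: ruling out blow-up of $h$ (equivalently of $|Z_X|$, or of the physical time $t$) at finite domain-of-dependence size, which is where the conservative structure --- the monotonicity and global bound on $J$, and the telescoping of the $\frac{c'(U)}{2c(U)}U_Y$ terms --- is indispensable.

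Finally, (ii) and (iii) are equivalent. If (ii) holds, any $(\bar\X,\bar\Y)\in\C(\Omega)$ lies in a strip $|X-Y|\le2L$ with $L$ bounded in terms of $\norm{(\bar\X,\bar\Y)}_{\C(\Omega)}$ and $\Omega$, and every component of $Z\bullet(\bar\X,\bar\Y)$ is an evaluation of $Z$, $Z_X$, $Z_Y$ or of $1/(x_X+J_X)$, $1/(x_Y+J_Y)$ (through $1/(\V_2+\V_4)$, $1/(\W_2+\W_4)$) at points of that strip, hence bounded by the $C_1$ of (ii); together with $\norm{(\bar\X,\bar\Y)}_{\C(\Omega)}$ this bounds $\tnorm{Z\bullet(\bar\X,\bar\Y)}_{\G(\Omega)}$. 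Conversely, if (iii) holds and $|X_0-Y_0|\le2L$, one builds a piecewise-linear curve $(\bar\X,\bar\Y)\in\C(\Omega)$ through $(X_0,Y_0)$ that is strictly increasing in $X$ near that point; the extraction lemma then produces $Z(X_0,Y_0)$, $Z_X(X_0,Y_0)$, $Z_Y(X_0,Y_0)$ and $1/(x_X+J_X)(X_0,Y_0)$ among the data of $Z\bullet(\bar\X,\bar\Y)$, all bounded by the $\tnorm{\cdot}$-bound of (iii), which gives (ii).
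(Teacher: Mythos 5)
Your part (i) and the equivalence of (ii) and (iii) are fine and essentially match the paper. The gap is in part (ii), exactly at the point you yourself flag as "the main obstacle". After reducing to the coupled logarithmic inequalities $\abs{\partial_Y\log p}\leq Ck$, $\abs{\partial_X\log q}\leq Ch$ with $h=x_X+J_X$, $k=x_Y+J_Y$, you need $\int k\,d\tilde Y$ along vertical segments (and $\int h\,d\tilde X$ along horizontal ones). You control the $\int J_Y$ part by (i), explicitly leave "the transport part $\int x_Y\,d\tilde Y$" open, and then close by a continuation argument: subdivide the rectangle into sub-rectangles, "the number controlled by $\E_0$ and $2L+M$", and propagate bounds with Theorem \ref{th:shortrange}. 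That count is precisely what is not justified: the admissible size of a rectangle in Theorem \ref{th:shortrange} is $1/C(\tnorm{\cdot}_{\G(\Omega)})$, and that norm (which includes $\norm{Z^a}_{L^\infty}$, $\norm{Z_X}_{L^\infty}$, $\norm{Z_Y}_{L^\infty}$, $1/h$, $1/k$, hence $\sup\abs{U}$ and through it $\sup\abs{c'(U)}$, since $c'$ is not assumed globally bounded) is exactly the quantity you are trying to bound and grows as you propagate; so the number of steps depends on the bound and the iteration does not visibly terminate after a number of steps depending only on $\E_0$ and $L$. The telescoping you do invoke, $\int\frac{c'(U)}{2c(U)}U_Y\,d\tilde Y=\frac12\bigl[\log c(U)\bigr]\leq\log\kappa$, only neutralizes the diagonal term $U_Yh$ (which your substitution $p=hc(U)^{-1/2}$ had already removed); it says nothing about the off-diagonal coupling through $U_Xk$, which is where the possible blow-up sits.

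The missing observation --- and the paper's actual mechanism --- is that $\int x_Y\,d\tilde Y$ telescopes just like $\int J_Y\,d\tilde Y$: along the vertical segment from $(X,\Y(X))$ to $(X,Y)$ one has $\int x_Y\,d\tilde Y=x(X,Y)-x(X,\Y(X))$, and since $x_X\geq0$, $x_Y\geq0$, the value $x(X,Y)$ is squeezed between the curve values $x(P_0)=\Z_2(s_0)$ and $x(P_1)=\Z_2(s_1)$ with $s_0=\Y^{-1}(Y)$, $s_1=\X^{-1}(X)$; on a strip $\abs{X-Y}\leq2L$ this gives $\int(x_Y+J_Y)\,d\tilde Y\leq \E_0+4\tnorm{\Theta}_{\G(\Omega)}+2L$ a priori, with no knowledge of $k$ pointwise. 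With this, the claimed circularity between $h$ and $k$ disappears: one first bounds $Z^a$ on the strip (in particular $U$, via $\abs{U_Y}\leq\frac{\kappa}{2\sqrt2}(x_Y+J_Y)$, which legitimizes the constants involving $c'(U)$ \emph{before} any bootstrap), and then a single Gronwall application to the linear equation $\partial_YZ_X=F(Z)(Z_X,Z_Y)$, whose coefficient is bounded by $C(x_Y+J_Y)$, gives $\abs{Z_X}\leq C_1$; symmetrically for $Z_Y$, and \eqref{eq:invbd} then follows from the estimate \eqref{eq:bdforq} of Theorem \ref{th:shortrange}, whose Gronwall constant is now uniform on the strip. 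So no subdivision or continuation is needed at all; as written, your proof of (ii) does not close without this telescoping bound on the $x$-variation, and supplying it makes the continuation machinery superfluous.
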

The inequalities in \eqref{eq:bdtxu} hold in fact
in $\Linf(\Omega)$, $\WY(\Omega)$ and
$\WX(\Omega)$, respectively. The inequalities in
\eqref{eq:invbd} hold in $\WY(\Omega)$ and
$\WX(\Omega)$, respectively.
\begin{proof}
  Given $P=(X,Y)\in\Omega$, let $s_0=\Y^{-1}(Y)$
  and $s_1=\X^{-1}(X)$. We denote
  $P_0=(X(s_0),Y(s_0))$ and
  $P_1=(X(s_1),Y(s_1))$. We assume that $s_0\leq
  s_1$ (the proof for the other case is very
  similar). Since $\dot\X$ and $\dot\Y$ are
  positive, it implies that $X=\X(s_1)\geq\X(s_0)$
  and $Y=Y(s_0)\leq\Y(s_1)$. Then, because
  $J_X\geq0$, $J_Y\geq0$ and $\Z_4\geq0$, we
  have
  \begin{equation}
    \label{eq:bdJK1}
    0\leq\Z_4(s_0)=J(P_0)\leq J(P)\leq J(P_1)=\Z_4(s_1)\leq\E_0
  \end{equation}
  which gives the first inequality in
  \eqref{eq:bdJK}. By \eqref{eq:reltJK}, we get
  \begin{equation*}
    \abs{K(P)-K(P_0)}\leq\kappa(J(P)-J(P_0))
  \end{equation*}
  which implies
  \begin{equation}
    \label{eq:bdabsK}
    \abs{K(P)}\leq\abs{K(P_0)}+\kappa(J(P)-J(P_0))
  \end{equation}
  and
  \begin{equation*}
    \abs{K(P)}\leq(1+\kappa)\E_0
  \end{equation*}
  by \eqref{eq:bdJK}. Since $x_X\geq0$, we have
  \begin{equation}
    \label{eq:bdxphim2}
    x(P)\geq x(P_0)=\Z_2(s_0)\geq -\tnorm{\Theta}_{\G(\Omega)}+s_0.
  \end{equation}
  Since $\frac12(X+Y)=Y+\frac12(X-Y)\leq Y(s_0)+L$,
  it follows that
  \begin{equation*}
    x(P)-\frac12(X+Y)\geq-\tnorm{\Theta}_{\G(\Omega)}+s_0-\Y(s_0)-L\geq-2\tnorm{\Theta}_{\G(\Omega)}-L.
  \end{equation*}
  Similarly, using that $x_Y\geq0$, we get
  \begin{equation}
    \label{eq:bdxphim3}
    x(P)\leq x(P_1)=\Z_2(s_1)\leq \tnorm{\Theta}_{\G(\Omega)}+s_1.
  \end{equation}
  and
  \begin{equation}
    \label{eq:bdxphim}
    x(P)-\frac12(X+Y)\leq 2\tnorm{\Theta}_{\G(\Omega)}+L.
  \end{equation}
  Hence, $\abs{x(P)-\frac12(X+Y)}\leq 2\tnorm{\Theta}_{\G(\Omega)}+L$. We
  have
  \begin{align}
    \label{eq:abst}
    \abs{t(P)}=\abs{\int_{Y}^{\Y(s_1)}t_Y(X,\tilde
      Y)\,d\tilde
      Y}\leq\int_{Y}^{\Y(s_1)}\frac{x_Y}{c(U)}\,d\tilde
    Y\leq\kappa(x(P_1)-x(P)).
  \end{align}
  Since
  \begin{equation}
    \label{eq:xp1xp0bd}
    x(P_1)-x(P)\leq x(P_1)-x(P_0)=\Z_2(s_1)-\Z_2(s_0)\leq 2\tnorm{\Theta}_{\G(\Omega)}+s_1-s_0
  \end{equation}
  and 
  \begin{equation}
    \label{eq:diffsbd}
    s_1-s_0=s_1-\Y(s_1)-(s_0-\X(s_0))+Y-X\leq 2\tnorm{\Theta}_{\G(\Omega)}+2L,
  \end{equation}
  it follows from \eqref{eq:abst} that
  $\abs{t(P)}\leq\kappa(4\tnorm{\Theta}_{\G(\Omega)}+2L)$. We have
  \begin{equation*}
    \abs{U(P)}\leq\abs{U(P_1)}+\int_{Y}^{\Y(s_1)}U_Y\,d\tilde Y.
  \end{equation*}
  By \eqref{eq:energrel2}, we have that
  \begin{equation}
    \label{eq:bduy}
    \abs{U_Y}\leq\frac{\kappa}{2\sqrt2}(J_Y+x_Y).
  \end{equation}
  Hence,
  \begin{align}
    \notag
    \abs{U(P)}&\leq\abs{U(P_1)}+\frac{\kappa}{2\sqrt2}\abs{J(P_1)+x(P_1)-J(P)-x(P)}\\
     \label{eq:bdUstrip}
     &\leq
     \tnorm{\Theta}_{\G(\Omega)}+\frac{\kappa}{2\sqrt2}(\E_0
     +4\tnorm{\Theta}_{\G(\Omega)}+2L)\text{ by
       \eqref{eq:xp1xp0bd} and \eqref{eq:bdJK1}}.
  \end{align}
  To prove that $Z_X$ and $Z_Y$ remain bounded, we
  use the bi-linearity embedded in the governing
  equation \eqref{eq:condgoveq}. We use first the
  linearity of $F(Z)$ with respect to the first
  variable and for almost every $X\in[X_l,X_r]$,
  we get, after applying Gronwall's lemma, that
  \begin{align}
    \notag \abs{Z_X(X,Y)}&\leq
    \abs{Z_X(X,\Y(X))}\exp(\int_{Y}^{\Y^{-1}(X)}\abs{F(Z)(\dott,Z_Y)}\,d\tilde Y)\\
    \label{eq:gronV}
    &=
    \abs{\V(X)}\exp(\int_{Y}^{\Y^{-1}(X)}\abs{F(Z)(\dott,Z_Y)}\,d\tilde
    Y).
  \end{align}
  Here $F(Z)(\cdot,W)$ denotes the matrix
  $V\mapsto F(Z)(V,W)$ and we use any matrix norm
  as they are all equivalent. We also assume (as
  earlier) that $Y\leq\Y^{-1}(X) $ (otherwise we
  have to interchange the bounds in the integral)
  and we denote $P_1=(X,\Y^{-1}(X))$. We have
  $\abs{\V(X)}\leq \tnorm{\Theta}_{\G(\Omega)}$. After using
  \eqref{eq:reltx}, \eqref{eq:reltJK} and
  \eqref{eq:bduy}, we obtain that
  \begin{equation*}
    \abs{F(Z)(\cdot,Z_Y)}\leq C(\abs{t_Y}+\abs{x_Y}+\abs{U_Y}+\abs{J_Y}+\abs{K_Y})
  \end{equation*}
  and we have used here the linearity of $F(Z)$
  with respect to its second variable. Hence,
  \begin{align*}
    \abs{F(Z)(\cdot,Z_Y)}&= C\left(\frac1c(x_Y+J_Y)+x_Y+J_Y+\abs{U_Y}\right)\\
    &\leq C(x_Y+J_Y)
  \end{align*}
  for a constant $C$ that depends only on $c(U)$
  and therefore only on
  $\tnorm{\Theta}_{\G(\Omega)}$ and $L$,
  by \eqref{eq:bdUstrip}. Hence,
  \begin{align*}
    \int_Y^{\Y^{-1}(X)}\abs{F(Z)(\dott,Z_Y)}\,d\tilde Y&\leq C\int_Y^{\Y^{-1}(X)}\left(x_Y+J_Y\right)\,d\tilde Y\\
    &=C(x(P_1)-x(P)+J(P_1)-J(P))\\
    &\leq C(\E_0+4\tnorm{\Theta}_{\G(\Omega)}+2L),
  \end{align*}
  by \eqref{eq:bdJK1}, \eqref{eq:xp1xp0bd} and
  \eqref{eq:diffsbd}. Combined with
  \eqref{eq:gronV}, it yields
  \begin{equation*}
    \abs{Z_X(X,Y)}\leq C
  \end{equation*}
  for some constant $C$ which depends only on $L$
  and $\tnorm{\Theta}_{\G(\Omega)}$. Similarly, one proves
   the bound on $Z_Y$. The estimate
  \eqref{eq:invbd} follows from the estimate
  \eqref{eq:bdforq} in the proof of Theorem
  \ref{th:shortrange} as the constant $C$ in
  \eqref{eq:bdforq} depends only on $L$ and
  $\tnorm{\Theta}_{\G(\Omega)}$, by
  \eqref{eq:bdtxu}, and
  \begin{equation*}
    \abs{Y-\Y(X)}=\abs{Y-X+\X(\X^{-1}(X))-\Y(\X^{-1}(X))}\leq L+\tnorm{\Theta}_{\G(\Omega)}.
  \end{equation*}
\end{proof}

\subsection{Global existence}

We obtain the following existence and uniqueness
lemma.

\begin{lemma}[Existence and uniqueness on
  arbitrarily large rectangles]
  \label{lem:globalsol}
  Given a rectangular domain
  $\Omega=[X_l,X_r]\times[Y_l,Y_r]$ and
  $\Theta=(\X,\Y,\Z,\V,\W)$ in $\G(\Omega)$, there
  exists a unique $Z\in\H(\Omega)$ such that
  \begin{equation*}
    \Theta=Z\bullet(\X,\Y).
  \end{equation*}
\end{lemma}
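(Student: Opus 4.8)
The plan is to obtain $Z$ by patching together the short-range solutions of Theorem~\ref{th:shortrange} along a finite "onion" of layers built outward from the curve $\Gamma=(\X,\Y)$, the point being that the a priori estimates of Lemma~\ref{lem:aprioribd} keep the admissible step size bounded below \emph{uniformly}, so finitely many layers exhaust the bounded rectangle $\Omega$. Fix $\Omega$ and $\Theta\in\G(\Omega)$. First I would partition $[s_l,s_r]$ into $s_l=\sigma_0<\dots<\sigma_N=s_r$ with each $\sigma_k-\sigma_{k-1}$ so small that Theorem~\ref{th:shortrange} applies on the box $B_k=[\X(\sigma_{k-1}),\X(\sigma_k)]\times[\Y(\sigma_{k-1}),\Y(\sigma_k)]$, allowing degenerate (segment-shaped) boxes where $\X$ or $\Y$ is constant, which is exactly what occurs on the vertical and horizontal parts of $\Gamma$. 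On $B_k$ the restriction $\Gamma|_{[\sigma_{k-1},\sigma_k]}$ joins the diagonal corners of $B_k$, so $\Theta|_{[\sigma_{k-1},\sigma_k]}\in\G(B_k)$ and we get $Z^{(k)}\in\H(B_k)$ with $\Theta|_{[\sigma_{k-1},\sigma_k]}=Z^{(k)}\bullet\Gamma|_{[\sigma_{k-1},\sigma_k]}$. Consecutive boxes overlap only in the corner point $(\X(\sigma_k),\Y(\sigma_k))$, where the values agree, so the $Z^{(k)}$ glue to one function $Z$ defined on the staircase region $R_0=\bigcup_kB_k$.

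Next I would enlarge the solved region. The set $R_0$ is bounded by two monotone curves from $(X_l,Y_l)$ to $(X_r,Y_r)$, an upper-left boundary $\Gamma_0^{\mathrm{ul}}$ and a lower-right boundary $\Gamma_0^{\mathrm{lr}}$, and $R_0$ is \emph{characteristically star-shaped toward $\Gamma$}: for every $P\in R_0$ the horizontal and vertical segments joining $P$ to $\Gamma$ stay inside $R_0$. Hence the characteristic integrations in the proof of Lemma~\ref{lem:aprioribd} may be carried out inside $R_0$, and they show that the extracted data $Z\bullet\Gamma_0^{\mathrm{ul}}$ and $Z\bullet\Gamma_0^{\mathrm{lr}}$ satisfy $\tnorm{Z\bullet\Gamma_0^{\mathrm{ul}}}_{\G},\ \tnorm{Z\bullet\Gamma_0^{\mathrm{lr}}}_{\G}\le M^*$, with $M^*$ depending only on $\tnorm{\Theta}_{\G(\Omega)}$ and on $L:=\frac12\max_{\Omega}|X-Y|$: indeed $0\le J\le\E_0$ throughout (the energy potential never exceeds its value on $\Gamma$), $x-\frac12(X+Y)$, $K$ and $U$ stay controlled by their values on $\Gamma$ together with $\E_0$ and $L$, and $Z_X,Z_Y$ and $1/(x_X+J_X),1/(x_Y+J_Y)$ are controlled by their values on $\Gamma$ via Gronwall with constants depending only on these quantities. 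Now repeat the box construction along $\Gamma_0^{\mathrm{ul}}$ and along $\Gamma_0^{\mathrm{lr}}$, this time with uniform mesh $\le 1/C(M^*)$ ($C$ the function of Theorem~\ref{th:shortrange}), obtaining $R_1\supset R_0$, and iterate; each $R_m$ is again characteristically star-shaped toward $\Gamma$, so the same bound $M^*$ controls the data on every layer boundary. Since each iteration genuinely enlarges the solved region and the mesh is bounded below by $1/C(M^*)$ uniformly in $m$, after finitely many layers on each side the boxes cover $\Omega$, giving $Z$ on all of $\Omega$.

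Finally I would verify $Z\in\H(\Omega)$ and $\Theta=Z\bullet(\X,\Y)$, and prove uniqueness. Each box solution lies in $\H$ of that box, so $Z$ solves \eqref{eq:goveq} in the sense of Definition~\ref{def:sollocal} (a local condition), the relations and sign conditions \eqref{eq:relpres} hold on each box and hence everywhere, and the uniform bound $M^*$ yields $Z\in\Winf(\Omega)$, $Z_X\in\WY(\Omega)$, $Z_Y\in\WX(\Omega)$; the matching on $\Gamma$ holds because it holds on every $B_k$. Uniqueness follows from the uniqueness part of Theorem~\ref{th:shortrange} by the same layered covering, now without any a priori estimate: if $Z^1,Z^2\in\H(\Omega)$ both match $\Theta$ on $\Gamma$, then on each $B_k$ both restrict to elements of $\H(B_k)$ with data $\Theta|_{[\sigma_{k-1},\sigma_k]}$, so $Z^1=Z^2$ on $R_0$; extracting the (now identical) data of $Z^1,Z^2$ on $\Gamma_0^{\mathrm{ul}},\Gamma_0^{\mathrm{lr}}$ and repeating layer by layer gives $Z^1=Z^2$ on $\Omega$. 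The main obstacle is precisely this geometric bookkeeping: handling the degenerate boxes produced by horizontal and vertical parts of the curves, checking that the growing solved regions stay characteristically star-shaped toward $\Gamma$ so that Lemma~\ref{lem:aprioribd} may be invoked on them to yield a bound $M^*$ that is genuinely independent of the layer index (this uniformity, resting on the estimates being propagated all the way back to the original curve $\Gamma$, is what makes the number of layers finite), and confirming that the patched function inherits every condition of Definition~\ref{def:HO}; the analytic core — the Picard iteration and the a priori bounds — is already supplied by Theorem~\ref{th:shortrange} and Lemma~\ref{lem:aprioribd}.
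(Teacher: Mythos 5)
Your strategy is in essence the paper's own: short-range existence (Theorem \ref{th:shortrange}) plus the a priori bounds of Lemma \ref{lem:aprioribd}, which bound the data extracted on interior piecewise horizontal/vertical curves by a constant depending only on $L$ and $\tnorm{\Theta}_{\G(\Omega)}$ and hence give a \emph{uniform} admissible step size; one first solves on boxes along $(\X,\Y)$ and then propagates outward, gluing and checking Definition \ref{def:HO} box by box, with uniqueness obtained by the same covering. Your explicit remark that the estimates must be propagated through the already-solved region back to the original curve (your ``characteristically star-shaped'' condition) is precisely the point the paper uses implicitly when it bounds $\tnorm{\tilde\Theta}_{\G(\Omega_{n,j})}$ by a constant depending only on $L$ and $\tnorm{\Theta}_{\G(\Omega)}$.

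There is, however, a gap in your covering argument at the step ``finitely many layers suffice''. In your scheme each layer re-partitions the staircase boundary $\Gamma_m^{\mathrm{ul}}$ (resp.\ $\Gamma_m^{\mathrm{lr}}$) with mesh $\delta'\le 1/C(M^*)$ and erects boxes over it. But over the horizontal and vertical runs of a staircase these boxes are degenerate segments and add no area; new area appears only in boxes straddling the re-entrant corners (in the first layer these are exactly the curve points $(\X(\sigma_k),\Y(\sigma_k))$), and such a box is $[\X(\sigma_k)-a,\X(\sigma_k)]\times[\Y(\sigma_k),\Y(\sigma_k)+b]$ with $a+b\le 2\delta'$, where the split between $a$ and $b$ is determined by where the corner parameter falls inside its partition interval. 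If a partition point lands on, or very near, a corner, then $a$ (or $b$) is zero or arbitrarily small, so the per-layer advance of the solved region is \emph{not} bounded below by anything proportional to the mesh; consequently ``mesh bounded below uniformly in $m$'' does not imply that finitely many layers exhaust $\Omega$ (an unlucky sequence of partitions could make the advances at a fixed height summable). This is repairable, e.g.\ by choosing each layer's partition adapted to the corners so that every re-entrant corner sits well inside its interval and quantifying the resulting advance, but the cleanest repair is the paper's bookkeeping: fix once and for all the grid generated by one sufficiently fine uniform partition $s_i$ of the original curve (grid lines $X=\X(s_i)$, $Y=\Y(s_i)$, with $N$ chosen so that $2\delta\le 1/C(C_2)$ for the uniform constant $C_2$ from Lemma \ref{lem:aprioribd}), and fill the $N\times N$ grid boxes inductively, adding at stage $n$ the row and column of boxes adjacent to $\Omega_n=[X_0,X_n]\times[Y_0,Y_n]$, each solved with data on its L-shaped edges taken from already-solved neighbours. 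Termination is then automatic, since there are only finitely many grid boxes, and the only uniformity needed is the one you already have.
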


\begin{proof}
  \begin{figure}
    \centering
    \includegraphics{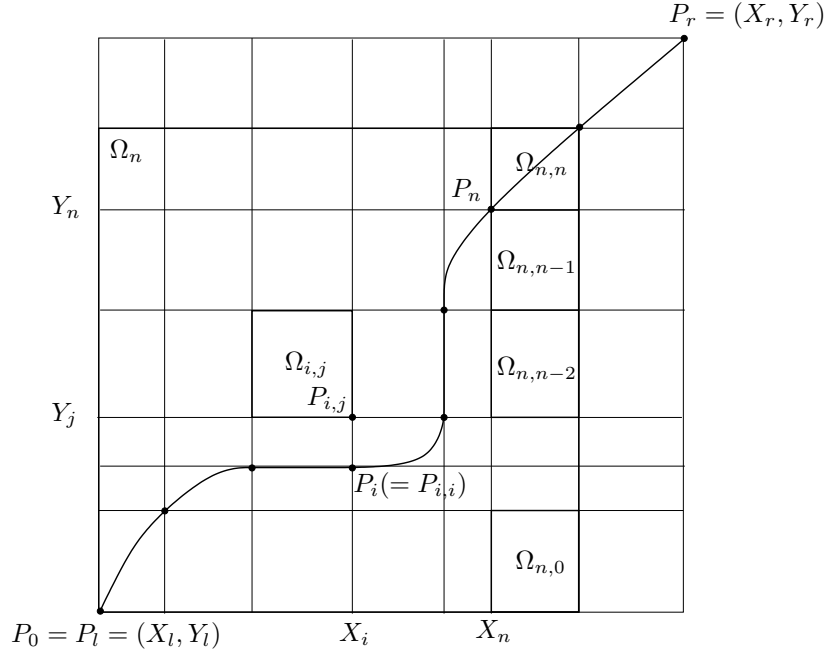}
    \caption{Construction of the global solution.}
    \label{fig:globalgrid}
  \end{figure} 
  Let $N$ denote an integer that we will set later
  and $\delta=\frac{s_r-s_l}N$. For
  $i=0,\ldots,N$, let $s_i=i\delta+s_l$ and we
  consider the sequence of points
  $P_i=(X_i,Y_i)=(X(s_i),Y(s_i))$. For
  $i,j=0,\ldots,N$, we construct a grid which
  consists of the points
  $P_{i,j}=(X_{i,j},Y_{i,j})$ where $X_{i,j}=X_i$
  and $Y_{i,j}=Y_j$, see Figure
  \ref{fig:globalgrid}. We denote by by
  $\Omega_{i,j}$ the rectangle with diagonal
  points $P_{i,j}$ and $P_{i+1,j+1}$. Let
  $\Omega_n$ denote the rectangle with diagonal
  points given by $(X_0,Y_0)$ and $(X_n,Y_n)$. We
  prove by induction that there exists a unique
  $Z\in\H(\Omega_n)$ such $\Theta=Z\bullet(X,Y)$
  (Here we use the same notation for
  $\Theta\in\G(\Omega)$ and $(\X,\Y)\in\C(\Omega)$
  and their restriction to $\Omega_n$ which belong
  to $\G(\Omega_n)$ and $\C(\Omega_n)$,
  respectively). On $\Omega_1$, we can choose $N$
  large enough and depending only on
  $\tnorm{\Theta}_{\G(\Omega_1)}\leq\tnorm{\Theta}_{\G(\Omega)}$
  such that
  \begin{equation*}
    s_1-s_0\leq\delta\leq
    C(\tnorm{\Theta}_{\G(\Omega)})^{-1}\leq
    C(\tnorm{\Theta}_{\G(\Omega_1)})^{-1},
  \end{equation*}
  and, by Theorem \ref{th:shortrange}, there
  exists a unique solution $Z\in\H(\Omega_1)$ such
  that $\Theta=Z\bullet(X,Y)$.  We assume that
  there exists a unique solution $Z\in\Omega_n$
  and prove that there exists a solution on
  $\Omega_{n+1}$. On $\Omega_{n,n}$, we get the
  existence of a unique solution by Theorem
  \ref{th:shortrange} as
  \begin{equation*}
    s_{n+1}-s_n\leq\delta\leq
    C(\tnorm{\Theta}_{\G(\Omega)})^{-1}\leq
    C(\tnorm{\Theta}_{\G(\Omega_{n,n})})^{-1}.    
  \end{equation*}
  For $j=n-1,\ldots,0$, we construct iteratively
  the unique solution in $\Omega_{n,j}$ and
  $\Omega_{j,n}$ as follows. We treat only the
  case of $\Omega_{n,j}$. We assume the solution
  is known on $\Omega_{n,j+1}$, then we define
  $\tilde\Theta=(\tilde \X,\tilde \Y,\tilde
  \Z,\tilde \V,\tilde \W)\in\G(\Omega_{n,j})$ as
  follows: The curve $\tilde \X(s),\tilde \Y(s)$
  is given by
  \begin{equation*}
    \tilde \X(s)=X_n,\quad \tilde\Y(s)=2s-X_n
  \end{equation*}
 for $\frac12(Y_j+X_n)\leq s\leq\frac12(Y_{j+1}+X_n)$,
 \begin{equation*}
   \tilde \X(s)=2s-Y_{j+1},\quad
   \tilde\Y(s)=Y_{j+1}       
 \end{equation*}
 for $\frac12(Y_{j+1}+X_n)\leq s\leq
 \frac12(Y_{j+1}+X_{n+1})$ and set
 \begin{align*}
   \tilde \Z(s)&=Z(\tilde \X(s),\tilde \Y(s))
   \text{ for $s\in[\frac12(Y_{j}+X_n),\frac12(Y_{j+1}+X_{n+1})]$},\\
   \tilde\V(X)&=Z_X(X,Y_{j+1})\text{   for a.e. $X\in[X_n,X_{n+1}]$},\\
   \tilde\W(Y)&=Z_Y(X_n,Y) \text{ for
     a.e. $X\in[Y_j,Y_{j+1}]$}.
 \end{align*}
 Using Lemma \ref{lem:aprioribd}, we can check that
 $\tnorm{\tilde\Theta}_{\G(\Omega_{n,j})}$ is
 bounded by a constant $C_2$ that depends only on
 $L$ and $\tnorm{\Theta}_{\G(\Omega)}$. We have
  \begin{align*}
    \frac12(Y_{j+1}+X_{n+1}-Y_{j}-X_n)=\frac12(\Y(s_{j+1})+\X(s_{n+1})-\Y(s_{j})-\X(s_{n}))\leq 2\delta.
  \end{align*}
  Here we have used that $\X$ and $\Y$ are
  Lipschitz with Lipschitz constant smaller than
  $2$. By taking $N$ large enough so that
  $2\delta$ is smaller that $C(C_2)^{-1}$, we can
  apply Theorem \ref{th:shortrange} to
  $\Omega_{n,j}$ and obtain the existence of a
  unique solution in $\H(\Omega_{n,j})$. Similarly
  we get the existence of a unique solution in
  $\H(\Omega_{j,n})$. Since
  \begin{equation*}
    \Omega_{n+1}=\Omega_{n}\cup(\cup_{j=0}^n\Omega_{j,n})\cup(\cup_{j=0}^n\Omega_{n,j}),
  \end{equation*}
  we have proved the existence of a unique
  solution in $\Omega_{n+1}$.
\end{proof}

In Lemma \ref{lem:aprioribd}, we establish
$L^\infty$-bounds on the derivatives on a strip
domain. It turns out that we can also establish
$L^2$-bounds on the derivatives as stated in the
next lemma. In this context, by $L^2$-bounds, we
mean that we can bound the integrals of the
differential forms $(Z_X^a)^2\,dX$ and
$(Z_Y^a)^2\,dY$ along a curve in $\C$. It is
useful to have in mind that, for any given
time $T$, we can find a curve $(\X,\Y)\in\C$ which
corresponds to this given time $T$, that is,
$t(\X(s),\Y(s))=T$ for all $s\in\Real$. Thus the
$L^2$-bound we now establish is fundamental to
obtain $L^2$-bounds in the initial set of
coordinates.
 
\begin{lemma}[A Gronwall lemma for
  curves]\label{lem:gron} Given $\Omega=[X_l,X_r]\times[Y_l,Y_r]$,
  $Z\in\H(\Omega)$ and
  $(\X,\Y)\in\C(\Omega)$. Then, for any
  $(\bar\X,\bar\Y)\in\C(\Omega)$,
  \begin{equation}
    \label{eq:L2bd}
    \norms{Z\bullet(\bar\X,\bar\Y)}_{\G(\Omega)}\leq C\norms{Z\bullet(\X,\Y)}_{\G(\Omega)}
  \end{equation}
  where
  $C=C(\norms{(\bar\X,\bar\Y)}_{\C(\Omega)},\tnorm{Z\bullet(\X,\Y)}_{\G(\Omega)})$
  is a given increasing function with respect to
  both its arguments.
\end{lemma}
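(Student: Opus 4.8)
The plan is to compare, pointwise, the data that $Z$ induces along $(\bar\X,\bar\Y)$ with the data it induces along $(\X,\Y)$ by a Gronwall estimate, and then to absorb the domain-size factors that such a comparison necessarily produces into the right-hand side $\norms{Z\bullet(\X,\Y)}_{\G(\Omega)}$ itself. Throughout, write $\Theta=(\X,\Y,\Z,\V,\W)=Z\bullet(\X,\Y)$ and $\bar\Theta=(\bar\X,\bar\Y,\bar\Z,\bar\V,\bar\W)=Z\bullet(\bar\X,\bar\Y)$, and let $C_1=C_1(\norms{(\bar\X,\bar\Y)}_{\C(\Omega)},\tnorm{Z\bullet(\X,\Y)}_{\G(\Omega)})$ denote a nondecreasing function of the two admissible arguments, allowed to increase from line to line. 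By Lemma \ref{lem:aprioribd} (part (iii)) we have $\tnorm{\bar\Theta}_{\G(\Omega)}\le C_1$; in particular $\abs{\bar\Z^a(s)}\le C_1$, $\abs{\bar\V(X)}\le C_1$ and $\abs{\bar\W(Y)}\le C_1$ for a.e.\ $s$, $X$, $Y$. (The Gronwall mechanism behind this is elementary: for a.e.\ $X$, $Z_X(X,\bar\Y(X))-Z_X(X,\Y(X))=\int_{\Y(X)}^{\bar\Y(X)}F(Z)(Z_X,Z_Y)(X,\eta)\,d\eta$, and since $F$ is bilinear with coefficient bounded on the compact range of $U$ while $\abs{Z_Y}$ is bounded on $\Omega$, Gronwall in $\eta$ gives $\abs{Z_X(X,\bar\Y(X))}\le\abs{Z_X(X,\Y(X))}\,e^{C\abs{\bar\Y(X)-\Y(X)}}$, and $\abs{\bar\Y(X)-\Y(X)}\le2\norms{(\bar\X,\bar\Y)}_{\C(\Omega)}+2\tnorm{\Theta}_{\G(\Omega)}$ is already controlled by the admissible arguments; the analogous estimates hold along $Y$ and for $Z$ itself.)

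Component $3$ of the $a$-modification being unchanged, $\abs{\bar U(s)}=\abs{\bar\Z_3(s)}\le\abs{\bar\Z^a(s)}\le C_1$ (with $\bar U=\bar\Z_3$), and $\bar\V^a-\bar\V$, $\bar\W^a-\bar\W$ being constant vectors of norm $\tfrac12$, also $\abs{\bar\V^a(X)},\abs{\bar\W^a(Y)}\le C_1$ for a.e.\ $X$, $Y$. On the bounded domain $\Omega$ this yields
\begin{equation*}
  \norms{\bar\Theta}_{\G(\Omega)}^2\le C_1^2\big((s_r-s_l)+(X_r-X_l)+(Y_r-Y_l)\big),
\end{equation*}
so it remains to bound $X_r-X_l$ and $Y_r-Y_l$ — and hence $s_r-s_l=\tfrac12(X_r-X_l)+\tfrac12(Y_r-Y_l)$ — by $\norms{Z\bullet(\X,\Y)}_{\G(\Omega)}^2$. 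This is where the structure of the solution class is used: by \eqref{eq:reltx} and \eqref{eq:posxX} one has $\V_2=c(U)\V_1\ge0$ and $\V_4\ge0$, so $\V_1\ge\V_2/\kappa$, and since moreover $\frac1{\V_2+\V_4}\le\tnorm{\Theta}_{\G(\Omega)}$ (immediate from the definition of $\tnorm{\,\cdot\,}_{\G(\Omega)}$) and $\kappa\ge1$,
\begin{equation*}
  \abs{\V^a(X)}^2\ge\V_1(X)^2+\V_4(X)^2\ge\tfrac12\big(\V_1(X)+\V_4(X)\big)^2\ge\tfrac1{2\kappa^2}\big(\V_2(X)+\V_4(X)\big)^2\ge\frac1{2\kappa^2\tnorm{\Theta}_{\G(\Omega)}^2}
\end{equation*}
for a.e.\ $X\in[X_l,X_r]$. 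Integrating over $[X_l,X_r]$ gives $X_r-X_l\le2\kappa^2\tnorm{\Theta}_{\G(\Omega)}^2\norm{\V^a}_{L^2([X_l,X_r])}^2\le2\kappa^2\tnorm{\Theta}_{\G(\Omega)}^2\norms{Z\bullet(\X,\Y)}_{\G(\Omega)}^2$; the identical computation, using $\W_2=-c(U)\W_1$ (so $\abs{\W_1}\ge\W_2/\kappa$) in place of the first relation, gives the same bound for $Y_r-Y_l$.

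Combining these estimates gives $\norms{Z\bullet(\bar\X,\bar\Y)}_{\G(\Omega)}^2\le6\kappa^2C_1^2\tnorm{Z\bullet(\X,\Y)}_{\G(\Omega)}^2\norms{Z\bullet(\X,\Y)}_{\G(\Omega)}^2$, and taking square roots proves \eqref{eq:L2bd} with $C=\sqrt6\,\kappa\,C_1\,\tnorm{Z\bullet(\X,\Y)}_{\G(\Omega)}$, which is increasing in $\norms{(\bar\X,\bar\Y)}_{\C(\Omega)}$ and in $\tnorm{Z\bullet(\X,\Y)}_{\G(\Omega)}$. The one delicate point is the penultimate step: a naive Gronwall comparison between two curves produces additive constants proportional to the size of $\Omega$ (the heights of the vertical slices over which $F(Z)$ is integrated), which are not among the admissible arguments of the constant. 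What rescues the estimate is that the relations defining $\H(\Omega)$ keep $\abs{\V^a}$ and $\abs{\W^a}$ bounded away from zero, so that the $L^2$-norm along the reference curve already dominates the length of $\Omega$. The remaining ingredients — the sup-bounds of Lemma \ref{lem:aprioribd} and the elementary inequalities above — are routine.
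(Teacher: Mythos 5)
Your opening step is fine: Lemma \ref{lem:aprioribd}~(iii) does give the sup-bounds on $\bar\Z^a,\bar\V^a,\bar\W^a$ with an admissible constant. The problem is the step you yourself flag as delicate, namely the claim that $\abs{\V^a}$ and $\abs{\W^a}$ are bounded \emph{below} by a constant depending only on $\kappa$ and $\tnorm{Z\bullet(\X,\Y)}_{\G(\Omega)}$, so that $\norms{Z\bullet(\X,\Y)}_{\G(\Omega)}^2$ dominates $X_r-X_l$ and $Y_r-Y_l$. That chain only works because, as \eqref{eq:defZa} is literally printed, the first component is left unshifted, $\V^a_1=\V_1$, and $\V_1=t_X$ along the curve is of order one even where the solution carries no energy (for data produced by $\Cb$ one has $\V_1=x_1'/(2c(U_1))\to 1/(2c)\neq0$ at infinity). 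But that literal reading is incompatible with how the norm is used: membership in $\G$ requires $\norm{\Theta}_{\G}<\infty$ on all of $\Real$, and the proof of Lemma \ref{lem:exany} uses $\lim_{X\to\infty}\norm{Z\bullet(\X_d,\Y_d)}_{\G(\Omega_{X,L})}=0$; both force every quantity entering $\norm{\cdot}_{\G}$ to decay along the diagonal, which $t_X$ does not. So \eqref{eq:defZa} must be read as measuring deviations from the quiescent state, and then your lower bound is simply false: for data supported in a compact set, on most of a large $\Omega$ one has $x_X\approx\tfrac12$, $U\approx U_X\approx J_X\approx K_X\approx0$, the right-hand side of \eqref{eq:L2bd} stays bounded while $X_r-X_l\to\infty$, and the intermediate inequality $X_r-X_l\le 2\kappa^2\tnorm{\Theta}_{\G(\Omega)}^2\norms{Z\bullet(\X,\Y)}_{\G(\Omega)}^2$ fails. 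Independence of the constant from $\meas(\Omega)$ is exactly the content of the lemma: it is applied in Lemma \ref{lem:exany} (see \eqref{eq:ZbarXYbars}) on domains $\tilde\Omega_{\bar s}$ with $\bar s\to\infty$, where $C$ must not depend on $\bar s$; an argument that bounds both sides by the measure of the domain proves a statement that cannot serve this purpose, even if it formally matches the printed definitions.

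What is missing is a genuine comparison of the two curve integrals, and this is where the actual work lies. The paper proceeds in three steps: on small squares ($s_r-s_l\le\delta$) it introduces the supremum $A$ over all curves in $\C(\Omega)$ of $\int\big(\tfrac12U^2\,(dX+dY)+(Z^a_X)^2\,dX+(Z^a_Y)^2\,dY\big)$, shows the integral along $\bar\Gamma$ is at most the integral along $\Gamma$ plus $C\delta A$, and absorbs $C\delta A$ for $\delta$ small; for a curve lying entirely on one side of $\Gamma$ it integrates the weighted forms $e^{-K(Y-X)}(Z^a_X)^2\,dX$, $e^{-K(Y-X)}(Z^a_Y)^2\,dY$ over the region between the curves (Stokes' theorem) and chooses $K$ so large that the bilinear source terms are dominated by the $-K$ term; finally it splits $[s_l,s_r]$ into finitely many intervals on which one of the two cases applies. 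Your pointwise Gronwall bound from Lemma \ref{lem:aprioribd} cannot substitute for this, because converting a sup-bound into an $L^2$-bound multiplies by the measure of the domain, which is precisely the dependence the constant is not allowed to carry.
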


\begin{proof} 
  Note that, for any function in $f\in\WY(\Omega)$
  (and respectively $g\in\WX(\Omega)$), the forms
  $f(X,Y)\,dX$ (respectively $g(X,Y)\,dY$) are
  well defined while the forms $f(X,Y)\,dY$
  (respectively $g(X,Y)\,dX$) are not. Given
  $Z\in\H(\Omega)$, we can consider the forms
  $U^2\,dX$, $U^2\,dY$, $(Z^a_X)^2\,dX$ and
  $(Z^a_Y)^2\,dY$. For any curve
  $\bar\Gamma=(\bar\X,\bar\Y)\in\C(\Omega)$,
  $(\bar\X,\bar\Y,\bar\Z,\bar\V,\bar\W)=\Theta\in\G(\Omega)$,
  we have (by definition of the integral of a form
  along a curve and the definition of
  $Z\bullet(\bar\X,\bar\Y)$)
  \begin{equation*}
    \int_{\bar\Gamma}(U^2\,dX+U^2\,dY)=2\int_{s_l}^{s_r}\bar\Z_3^2(s)\,ds\quad\text{ (as $\X+\Y=2s$)},
  \end{equation*}
  and
  \begin{equation*}
    \int_{\bar\Gamma}(Z^a_X)^2\,dX=\int_{X_l}^{X_r}\bar\V^a(X)^2dX,\quad\int_{\bar\Gamma}(Z^a_Y)^2\,dY=\int_{Y_l}^{Y_r}\bar\W^a(Y)^2dY.
  \end{equation*}
  We can rewrite
  \begin{equation*}
    \norm{Z\bullet(\bar X,\bar Y)}_{\G(\Omega)}^2=\int_{\bar\Gamma}\left(\frac12U^2\,(dX+dY)+(Z^a_X)^2\,dX+(Z^a_Y)^2\,dY\right).
  \end{equation*}
  Thus, we want to prove that
  \begin{multline}
    \label{eq:L2bdform}
    \int_{\bar\Gamma}\left(\frac12U^2\,(dX+dY)+(Z^a_X)^2\,dX+(Z^a_Y)^2\,dY\right)
    \\\leq
    C\int_{\Gamma}\left(\frac12U^2\,(dX+dY)+(Z^a_X)^2\,dX+(Z^a_Y)^2\,dY\right).
  \end{multline}
  We decompose the proof into three steps.\\[5mm]
  \textbf{Step 1.} We first prove that \eqref{eq:L2bdform} holds for small
  domains. We claim that there exist constants $\delta$ and $C$, which
  depend uniquely on $\norm{(\bar\X,\bar\Y)}_{\C(\Omega)}$ and
  $\tnorm{Z\bullet(\X,\Y)}_{\G(\Omega)}$ such that, for any
  rectangular domain $\Omega=[X_l,X_r]\times[Y_l,Y_r]$ with
  $s_r-s_l\leq\delta$, \eqref{eq:L2bdform} holds.  We denote by $C$ a
  generic increasing function of $\norm{(\bar\X,\bar\Y)}_{\C(\Omega)}$
  and $\tnorm{Z\bullet(\X,\Y)}_{\G(\Omega)}$. By Lemma
  \ref{lem:aprioribd}, we have
  \begin{equation*}
    \norm{U}_{L^\infty(\Omega)}+\norm{Z^a_X}_{L^\infty(\Omega)}+\norm{Z^a_Y}_{L^\infty(\Omega)}\leq C.
  \end{equation*}
  Let
  \begin{equation*}
    A=\sup_{\bar\Gamma}\int_{\Gamma}\left(\frac12U^2\,(dX+dY)+(Z^a_X)^2\,dX+(Z^a_Y)^2\,dY\right)
  \end{equation*}
  where the supremum is taken over all
  $\bar\Gamma=(\bar\X,\bar\Y)\in\C(\Omega)$. Since
  $Z$ is a solution of \eqref{eq:goveq}, we have
  for a.e. $X\in[X_l,X_r]$ and all
  $Y\in[Y_l,Y_r]$, that
  \begin{align*}
    (x_X-\frac12)^2(X,Y)&=(x_X-\frac12)^2(X,\Y(X))+\int_{\Y(X)}^{Y}(x_X-\frac12)x_{XY}\,d\bar Y\\
    &=(x_X-\frac12)^2(X,\Y(X))\\
    &\quad+\int_{\Y(X)}^{Y}\frac{c'(U)}{c(U)}\big((x_X-\frac12)^2(U_X+U_Y)+U_X(x_X-\frac12)+U_Y(x_X-\frac12)\big)\,d\bar Y\\
  \end{align*}
  and
  \begin{equation}
    \label{eq:bdcurv}
    \int_{\bar\Gamma}(x_X-\frac12)^2\,dX\leq\int_{\Gamma}(x_X-\frac12)^2\,dX+C\int_{X_l}^{X_r}\int_{Y_l}^{Y_r} ((Z^a_X)^2+(Z^a_Y)^2)dXdY.
  \end{equation}
  For any $Y\in[Y_l,Y_r]$, the
  integral $\int_{X_l}^{
    X_r}(Z^a_X)^2(X,Y)\,dX$ can be seen as the
  integral of the form $(Z^a_X)^2\,dX$ on the
  piecewise linear path $\Gamma$ going through the
  points $(X_l,Y_l)$, $(X_l,Y)$, $(X_r,Y)$,
  $(X_r,Y_r)$, so that
  $\int_{X_l}^{X_r}(Z^a_X)^2(X,Y)\,dX\leq
  A$. Similarly,
  $\int_{Y_l}^{Y_r}(Z^a_X)^2(X,Y)\,dY\leq A$, for
  any $X\in[X_l,X_r]$. Hence, \eqref{eq:bdcurv}
  yields
  \begin{align*}
    \int_{\bar\Gamma}(x_X-\frac12)^2\,dX&\leq\int_{\Gamma}(x_X-\frac12)^2\,dX+C(Y_r-Y_l+X_r-X_l)A\\
    &\leq\int_{\Gamma}(x_X-\frac12)^2\,dX+C\delta A
  \end{align*}
  as $(Y_r-Y_l+X_r-X_l)=s_r-s_l$.  By treating similarly the other
  components of $Z^a_X$ and $Z^a_Y$, we get
  \begin{equation}
    \label{eq:intZest}
    \int_{\bar\Gamma}(Z^a_X)^2\,dX\leq\int_{\Gamma}(Z^a_X)^2\,dX+6C\delta A
    \quad\text{ and }\quad\int_{\bar\Gamma}(Z^a_Y)^2\,dY\leq\int_{\Gamma}(Z^a_Y)^2\,dY+6C\delta A.
  \end{equation}
  For the component $U$, we have
  \begin{align*}
    U^2(X,Y)&=U^2(X,\Y(X))+\int_{\Y(X)}^{Y}2UU_Y\,d\bar Y\\
    &\leq
    U^2(X,\Y(X))+\int_{\Y(X)}^YU^2\,d\bar
    Y+\int_{\Y(X)}^YU_Y^2\,d\bar Y
  \end{align*}
  and it follows, as before, that
  \begin{equation}
    \label{eq:intUdXest}
    \int_{\bar\Gamma}U^2\,dX\leq\int_{\Gamma}U^2\,dX+C\delta A.
  \end{equation}
  Similarly, we obtain
  \begin{equation}
    \label{eq:intUdYest}
    \int_{\bar\Gamma}U^2\,dY\leq\int_{\Gamma}U^2\,dY+C\delta A.
  \end{equation}
  After adding \eqref{eq:intZest},
  \eqref{eq:intUdXest}, \eqref{eq:intUdYest} and
  recalling that $ds=\frac12(dX+dY)$, we obtain
  \begin{multline*}
    \int_{\bar\Gamma}(\frac12U^2\,(dX+dY)+(Z^a_X)^2\,dX+(Z^a_Y)^2\,dY)\\\leq\int_{\Gamma}(\frac12U^2\,(dX+dY)+(Z^a_X)^2\,dX+(Z^a_Y)^2\,dY)+13C\delta A
  \end{multline*}
  which yields, after taking the supremum over all
  curves $\bar\Gamma$,
  \begin{equation*}
    (1-13C\delta)A\leq\int_{\Gamma}(\frac12U^2\,(dX+dY)+(Z^a_X)^2\,dX+(Z^a_Y)^2\,dY)
  \end{equation*}
  and \eqref{eq:L2bdform} follows.\\[5mm]
  \textbf{Step 2.}  For an arbitrarily large
  rectangular domain
  $\Omega=[X_l,X_r]\times[Y_l,Y_r]$, let us prove
  that \eqref{eq:L2bdform} holds for the curves
  $\bar\Gamma=(\bar\X,\bar\Y)\in\C(\Omega)$ such
  that
  \begin{equation*}
    \bar\Y(s)-\bar\X(s)>\Y(s)-\X(s)\text{ for all }s\in(s_l,s_r),
  \end{equation*}
  that is, the curve $\bar\Gamma$ is above
  $\Gamma$ and intersects $\Gamma$ only at the end
  points.  Similarly one proves that
  \eqref{eq:L2bdform} holds for curves
  $\bar\Gamma=(\bar\X,\bar\Y)\in\C(\Omega)$ such
  that $\bar\Y(s)-\bar\X(s)<\Y(s)-\X(s)$ for
  all $s\in[s_l,s_r]$. For a constant $K>0$ that
  we will determine later, we have for
  a.e. $X\in[X_l,X_r]$, that
  \begin{multline*}
    e^{-K(\bar\Y(X)-X)}(x_X-\frac12)^2(X,\bar\Y(X))-e^{-K(\Y(X)-X)}(x_X-\frac12)^2(X,\Y(X))\\
    =\int_{\Y(X)}^{\bar\Y(X)}-Ke^{-K(Y-X)}(x_X-\frac12)^2\,dY+\int_{\Y(X)}^{\bar\Y(X)}e^{-K(X-Y)}(x_X-\frac12)x_{XY}\,dY
  \end{multline*}
  which implies, since $Z$ is solution and by the
  estimates of Lemma \ref{lem:aprioribd}, that
  \begin{multline}
    \label{eq:formintxX}
    \int_{\bar\Gamma}e^{-K(Y-X)}(x_X-\frac12)^2\,dX-\int_{\Gamma}e^{-K(Y-X)}(x_X-\frac12)^2\,dX\\
    \leq\int_{X_l}^{X_r}\int_{\Y(X)}^{\bar\Y(X)}-Ke^{-K(Y-X)}(x_X-\frac12)^2\,dXdY\\
+C\int_{X_l}^{X_r}\int_{\Y(X)}^{\bar\Y(X)}e^{-K(Y-X)}((Z^a_X)^2+(Z^a_Y)^2)\,dXdY.
  \end{multline}
  Note that \eqref{eq:formintxX} corresponds to an
  application of Stokes's theorem to the domain
  bounded by the curves $\Gamma$ and
  $\bar\Gamma$. We treat in the same way each
  component of $Z^a_X$ and obtain that
  \begin{multline}
    \label{eq:xixsqest}
    \int_{\bar\Gamma}e^{-K(Y-X)}(Z^a_X)^2\,dX-\int_{\Gamma}e^{-K(Y-X)}(Z^a_X)^2\,dX\\
    \leq\int_{X_l}^{X_r}\int_{\Y(X)}^{\bar\Y(X)}-Ke^{-K(Y-X)}(Z^a_X)^2\,dXdY\\+C\int_{X_l}^{X_r}\int_{\Y(X)}^{\bar\Y(X)}e^{-K(Y-X)}((Z^a_X)^2+(Z^a_Y)^2)\,dXdY.
  \end{multline}
  As far as $Z^a_Y$ is concerned, we get
  \begin{multline}
    \label{eq:xiysqest}
    \int_{\bar\Gamma}e^{-K(Y-X)}(Z^a_Y)^2\,dY-\int_{\Gamma}e^{-K(Y-X)}(Z^a_Y)^2\,dY\\
    \leq-\int_{Y_l}^{Y_r}\int_{\bar\X(Y)}^{\X(Y)}Ke^{-K(Y-X)}(Z^a_X)^2\,dXdY\\+C\int_{Y_l}^{Y_r}\int_{\bar\X(Y)}^{\X(Y)}e^{-K(Y-X)}((Z^a_X)^2+(Z^a_Y)^2)\,dXdY.
  \end{multline}
  Let us prove that the sets 
  \begin{equation*}
    \N_1=\left\{
      \begin{aligned}
        &X_l< X< X_r\\
        &\Y(X)<Y<\bar\Y(X)
      \end{aligned}
    \right.
    \quad\text{ and }\quad
    \N_2=\left\{
      \begin{aligned}
        &Y_l< Y< Y_r\\
        &\bar\X(Y)<X<\X(Y)
      \end{aligned}
    \right.
  \end{equation*}
  are equal up to a set of zero measure. Let us consider
  $(X,Y)\in\N_1$. We set $s_1=\X^{-1}(X)$, $s_2=\Y^{-1}(Y)$,
  $s_3=\bar\Y^{-1}(Y)$ and $s_4=\bar\X^{-1}(X)$. Since
  $\Y(X)<Y<\bar\Y(X)$, we get
  \begin{equation*}
    \Y(X)=\Y(s_1)<\Y(s_2)=\bar\Y(s_3)=Y<\bar\Y(s_4)=\bar\Y(X).
  \end{equation*}
  Hence, $s_1<s_2$ and $s_3<s_4$, which implies
  \begin{equation*}
    X=\X(s_1)\leq\X(s_2)=\X(Y),\quad \bar\X(Y)=\bar\X(s_3)\leq \bar\X(s_4)=X
  \end{equation*}
  and therefore $\bar\X(Y)\leq X\leq\X(Y)$. Thus
  we have prove that $\N_1\subset\N_2$ up to a set
  of zero measure. Similarly, one proves the
  reverse inclusion.  Hence, by adding
  \eqref{eq:xixsqest} and \eqref{eq:xiysqest}, we
  get
  \begin{multline}
    \int_{\bar\Gamma}e^{-K(Y-X)}((Z^a_X)^2\,dX+(Z^a_Y)^2\,dY)-\int_{\Gamma}e^{-K(Y-X)}((Z^a_X)^2\,dX+(Z^a_Y)^2\,dY)\\
    \leq-K\int_{\N_1}e^{-K(Y-X)}((Z^a_X)^2+(Z^a_Y)^2)\,dXdY+C\int_{\N_1}e^{-K(Y-X)}((Z^a_X)^2+(Z^a_Y)^2)\,dXdY.
  \end{multline}

As far as $U$ is concerned, we proceed in
  the same way and get
  \begin{align}
    \notag
    \int_{\bar\Gamma}e^{-K(Y-X)}U^2\,dX&-\int_{\Gamma}e^{-K(Y-X)}U^2\,dX\\
    \notag
    &=\int_{X_l}^{X_r}\int_{\Y(X)}^{\bar\Y(X)}e^{-K(Y-X)}(-KU^2+2UU_Y)\,dXdY\\
    \label{eq:usqestx}
    &\leq\int_{\N_1}e^{-K(Y-X)}(-KU^2+U^2+U_Y^2)\,dXdY
  \end{align}
  and
  \begin{align}
    \notag
    \int_{\bar\Gamma}e^{-K(Y-X)}U^2\,dY&-\int_{\Gamma}e^{-K(Y-X)}U^2\,dY\\
    \label{eq:usqesty}
    &\leq\int_{\N_2}e^{-K(Y-X)}(-KU^2+U^2+U_X^2)\,dXdY.
  \end{align}
  Combining \eqref{eq:xixsqest}, \eqref{eq:xiysqest},
  \eqref{eq:usqestx}, \eqref{eq:usqestx}, we get
  \begin{multline}
    \label{eq:xixysqest}
    \int_{\bar\Gamma}e^{-K(Y-X)}(\frac12U^2\,(dX+dY)+(Z^a_X)^2\,dX+(Z^a_Y)^2\,dY)\\-\int_{\Gamma}e^{-K(Y-X)}(\frac12U^2\,(dX+dY)+(Z^a_X)^2\,dX+(Z^a_Y)^2\,dY)\\
    \leq\int_{\N_1}(C-K)e^{-K(Y-X)}(U^2+(Z^a_X)^2+(Z^a_Y)^2)\,dXdY.
  \end{multline}
  We choose $K$ sufficiently large so that the right-hand side in
  \eqref{eq:xixysqest} is negative and we obtain that
  \begin{multline*}
    e^{-K\norm{\bar\X-\bar\Y}_{L^\infty}}\int_{\bar\Gamma}(\frac12U^2\,(dX+dY)+(Z^a_X)^2\,dX+(Z^a_Y)^2\,dY)\\
    \leq
    e^{K\norm{\X-\Y}_{L^\infty}}\int_{\Gamma}(\frac12U^2\,(dX+dY)+(Z^a_X)^2\,dX+(Z^a_Y)^2\,dY)
  \end{multline*}
  and \eqref{eq:L2bdform} follows.\\[5mm]
  \textbf{Step 3}. Given any rectangle
  $\Omega=[X_l,X_r]\times[Y_l,Y_r]$, we consider a
  sequence of rectangular domains
  $\Omega_i=[X_i,X_{i+1}]\times[Y_i,Y_{i+1}]$ for
  $i=0,\ldots,N-1$ where $X_i$ and $Y_i$ are
  increasing and $X_0=X_l$, $Y_0=Y_l$, $X_N=X_r$,
  $Y_N=Y_r$ and such that $(\X,\Y)$,
  $(\bar\X,\bar\Y)$ belong to $\G(\Omega_i)$ for
  $s\in[s_i,s_{i+1}]$. We construct the sequence
  of rectangles such that either
  $s_{i+1}-s_{i}\leq\delta$ (and Step 1 applies)
  or $\bar\Y(s)-\bar\X(s)\leq\Y(s)-\X(s)$ or
  $\Y(s)-\X(s)\leq\bar\Y(s)-\bar\X(s)$ for
  $s\in[s_i,s_{i+1}]$ (and Step 2 applies). Hence,
  \begin{align*}
    \norm{Z\bullet(\bar \X,\bar
      \Y)}_{\G(\Omega)}^2&=\sum_{i=0}^{N-1}\norm{Z\bullet(\bar \X,\bar
      \Y)}_{\G(\Omega_i)}^2\\
    &\leq \sum_{i=0}^{N-1}C\norm{Z\bullet(\X,\Y)}_{\G(\Omega_i)}^2\quad\text{(by steps 1 and 2)}\\
    &\leq C\norm{Z\bullet(\X,\Y)}_{\G(\Omega)}^2.
  \end{align*}
  We can construct the sequence of rectangles as follows. Let $\bar N$
  be an integer such that $\frac{s_l-s_r}{\bar N}\leq\frac{\delta}2$
  and we set $\tilde s_j=s_l+j\frac\delta2$ for $j=0,\ldots,\bar
  N$. We take $s_0=s_l$ and define $s_i$ iteratively: Given $s_i$ and
  $j_i\in\{0,\ldots,\bar N-1\}$ such that $j_i\geq i$,
  $\X(s_i)=\bar\X(s_i)$, $\Y(s_i)=\bar\Y(s_i)$ and $s_i\in[\tilde
  s_{j_i},\tilde s_{j_i+1}]$. If $j_i+1=\bar N$, we set $N=i+1$,
  $s_{i+1}=s_{r}$ and we are done.  Otherwise, there exists an index
  $k\geq j_i+1$ such that $\bar\Y(s)-\bar\X(s)<\Y(s)-\X(s)$ for all
  $s\in[\tilde s_{j+1},\tilde s_{k})$ or
  $\bar\Y(s)-\bar\X(s)>\Y(s)-\X(s)$ for all $s\in[\tilde
  s_{j+1},\tilde s_{k})$ and there exists $s\in[\tilde s_{k},\tilde
  s_{k+1}]$ such that $\bar\Y(s)-\bar\X(s)=\Y(s)-\X(s)$ (which implies
  that $\X(s)=\bar\X(s)$ and $\Y(s)=\bar\Y(s)$). We then set
  $j_{i+1}=k$ and choose $s_{i+1}\in[\tilde s_{k},\tilde s_{k+1}]$
  such that $\X(s_{i+1})=\bar\X(s_{i+1})$ and
  $\Y(s_{i+1})=\bar\Y(s_{i+1})$. Since $j_i\geq i$, the iteration
  stops in a finite number of steps.
  \begin{figure}
    \centering
    \includegraphics[width=7cm]{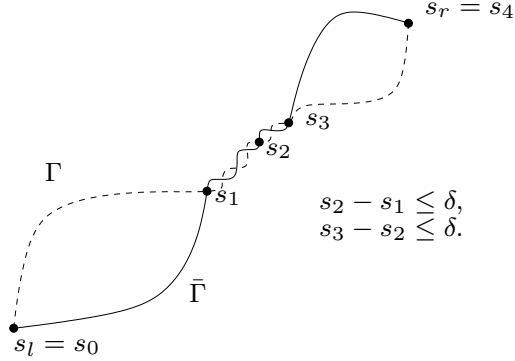}
    \caption{The interval $[s_l,s_r]$ is divided
      into \textit{large} intervals (in this
      example $[s_0,s_1]$ and $[s_3,s_4]$) where
      one curve is over the other and
      \textit{small} intervals of length smaller
      than $\delta$ (in this example $[s_1,s_2]$
      and $[s_2,s_3]$) where the curves can cross
      an arbitrarily number of times.}
    \label{fig:combarg}
  \end{figure}
\end{proof}

Given two solutions $Z$ and $\bar Z$, we want to
compare along curves in $\C$ the forms $Z_X\,dX$
and $Z_XY\,dY$ with $\bar Z_X\,dX$ and $\bar
Z_XY\,dY$, respectively. By using the same
argument as in the proof above, we obtain the
following stability result in $L^2$. This is a
stronger result than the one that could be
established from the fixed point argument in Lemma
\ref{lem:short} as the latter would only hold in
$L^\infty$.

\begin{lemma}[Stability in $L^2$]\label{lem:stabL2} Given $\Omega=[X_l,X_r]\times[Y_l,Y_r]$,
  $Z,\bar Z\in\H(\Omega)$ and
  $(\X,\Y)\in\C(\Omega)$. Then, for any
  $(\bar\X,\bar\Y)\in\C(\Omega)$,
  \begin{equation}
    \label{eq:L2stab}
    \norms{(Z-\bar Z)\bullet(\bar\X,\bar\Y)}_{\G(\Omega)}\leq C\norms{(Z-\bar Z)\bullet(\X,\Y)}_{\G(\Omega)}
  \end{equation}
  where
  $C=C(\norms{(\bar\X,\bar\Y)}_{\C(\Omega)},\tnorm{Z\bullet(\X,\Y)}_{\G(\Omega)}),\tnorm{\bar
    Z\bullet(\X,\Y)}_{\G(\Omega)})$ is a given
  increasing function with respect to both its
  arguments.
\end{lemma}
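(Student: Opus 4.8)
The plan is to derive the statement from the already established Gronwall lemma for curves (Lemma \ref{lem:gron}) by running the same argument on the difference $W=Z-\bar Z$. First I would record the equation satisfied by $W$: since $Z_{XY}=F(Z)(Z_X,Z_Y)$ and $\bar Z_{XY}=F(\bar Z)(\bar Z_X,\bar Z_Y)$, the bilinearity of $F(Z)$ in its two last arguments gives
\begin{equation*}
  W_{XY}=F(Z)(W_X,Z_Y)+F(Z)(\bar Z_X,W_Y)+\big(F(Z)-F(\bar Z)\big)(\bar Z_X,\bar Z_Y).
\end{equation*}
Since $F$ is locally Lipschitz and depends on $Z$ only through $U=Z_3$, the last term is bounded by $C\abs{U-\bar U}=C\abs{W_3}$ on bounded sets; combining this with the a priori $L^\infty$-estimates of Lemma \ref{lem:aprioribd} applied to both $Z$ and $\bar Z$ (which bound $U,\bar U,Z_X,Z_Y,\bar Z_X,\bar Z_Y$ on $\Omega$ in terms of $\tnorm{Z\bullet(\X,\Y)}_{\G(\Omega)}$ and $\tnorm{\bar Z\bullet(\X,\Y)}_{\G(\Omega)}$), I would obtain on $\Omega$
\begin{equation}
  \label{eq:WXYbd}
  \abs{W_{XY}(X,Y)}\leq C\big(\abs{W_X(X,Y)}+\abs{W_Y(X,Y)}+\abs{W(X,Y)}\big).
\end{equation}
I would then observe that the background shift in the second component cancels under differences, so that $(W_X)^a=W_X$ and $(W_Y)^a=W_Y$, and hence $\norm{W\bullet(\bar\X,\bar\Y)}_{\G(\Omega)}^2=\int_{\bar\Gamma}\big(\tfrac12\abs{W_3}^2(dX+dY)+\abs{W_X}^2\,dX+\abs{W_Y}^2\,dY\big)$; thus the role played by $Z^a$ in Lemma \ref{lem:gron} is here played by $W$ itself, with \eqref{eq:WXYbd} replacing the use of the governing equation for a single solution.

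With \eqref{eq:WXYbd} available, I would repeat the three-step scheme of the proof of Lemma \ref{lem:gron} almost verbatim. In Step 1, on a rectangle with $s_r-s_l\leq\delta$, I would expand $\abs{W_X}^2(X,Y)=\abs{W_X}^2(X,\Y(X))+\int_{\Y(X)}^Y 2\,W_X\dott W_{XY}\,d\bar Y$, bound the integrand by $C(\abs{W_X}^2+\abs{W_Y}^2+\abs{W}^2)$ using \eqref{eq:WXYbd} and Young's inequality, handle $\abs{W_Y}^2$ and $\abs{W_3}^2$ the same way (with $2W_3(W_Y)_3\leq\abs{W_3}^2+\abs{W_Y}^2$), integrate over $\Omega$, bound each area integral by $\delta A$ where $A=\sup_{\bar\Gamma}\int_{\bar\Gamma}\big(\tfrac12\abs{W_3}^2(dX+dY)+\abs{W_X}^2\,dX+\abs{W_Y}^2\,dY\big)$, and absorb the $\delta A$ term by choosing $\delta$ small. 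In Step 2, for curves with one of them lying strictly above the other, I would insert the weight $e^{-K(Y-X)}$ and apply Stokes's theorem on the region between $\Gamma$ and $\bar\Gamma$, exactly as in \eqref{eq:formintxX}--\eqref{eq:xixysqest}; by \eqref{eq:WXYbd} the resulting area integrand is of the form $(C-K)e^{-K(Y-X)}(\abs{W}^2+\abs{W_X}^2+\abs{W_Y}^2)$, which is nonpositive once $K$ is chosen large, and \eqref{eq:L2stab} follows. In Step 3, I would decompose $[s_l,s_r]$ into finitely many subintervals, each either of length $\leq\delta$ or with one curve staying above the other, as in Figure \ref{fig:combarg}, and add the corresponding inequalities.

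The only place where the argument genuinely differs from that of Lemma \ref{lem:gron}, and hence the point requiring the most care, is the control of the inhomogeneous term $\big(F(Z)-F(\bar Z)\big)(\bar Z_X,\bar Z_Y)$ in \eqref{eq:WXYbd}: here one must use the $L^\infty$-bounds of Lemma \ref{lem:aprioribd} on $\bar Z_X$ and $\bar Z_Y$ together with the local Lipschitz dependence of $F$ on $U$, so that this term is controlled by $C\abs{W_3}$ and is absorbed into the $\abs{W}^2$-contribution of the $\G(\Omega)$-norm --- a contribution absent in the single-solution estimate but harmless. Everything else, including the combinatorial construction of the subdivision in Step 3, is the verbatim repetition of the Gronwall-for-curves argument.
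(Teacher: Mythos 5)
Your proposal is correct and is essentially the argument the paper intends: Lemma \ref{lem:stabL2} is stated there with the remark that it follows ``by using the same argument'' as Lemma \ref{lem:gron}, and you have filled in exactly the right details, in particular identifying the only genuinely new ingredient, the term $\big(F(Z)-F(\bar Z)\big)(\bar Z_X,\bar Z_Y)$, and controlling it by $C\abs{W_3}$ via the Lipschitz dependence of $F$ on $U$ and the $L^\infty$-bounds of Lemma \ref{lem:aprioribd} for both solutions. One small precision: in the displayed inequality \eqref{eq:WXYbd} and in Step 1 the term $\abs{W}$ should indeed be $\abs{W_3}$, as you state in your final paragraph, since the function-level part of the $\G(\Omega)$-norm of the difference only contains the third component (quantities like $t-\bar t$ are not controlled by the right-hand side of \eqref{eq:L2stab}), so the absorption argument works only with $\abs{W_3}^2$.
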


In the definition below of global solutions we
include a condition about the decay of the
solutions along the diagonal. This
condition is necessary to guarantee that, given a
solution, the curves which correspond to a given
time $T$ belong to $\C$.

\begin{definition}[Global solutions]
  \label{def:H} 
  Let $\H$ be the set of all functions
  $Z\in\Winfloc(\Real^2)$ such that
  \begin{enumerate}
  \item[(i)] $Z\in\H(\Omega)$ for all rectangular
    domains $\Omega$; and
  \item[(ii)] there exists a curve $(\X,\Y)\in\C$ such
    that $Z\bullet(\X,\Y)\in\G$.
  \end{enumerate}
\end{definition}
The condition (ii), which corresponds to a decay
condition, does not depend on the particular curve
for which it holds, as the next lemma shows. In
particular, we can replace condition (iv) in
Definition \ref{def:H} by the requirement that
$Z\bullet(\X_d,\Y_d)\in\G$ for the diagonal
($Y=X$), which is given by $\X_d(s)=\Y_d(s)=s$. We
then denote
\begin{equation*}
  \norm{Z}_{\H}=\norm{Z\bullet(\X_d,\Y_d)}_{\G}\quad\text{ and }\quad\tnorm{Z}_{\H}=\tnorm{Z\bullet(\X_d,\Y_d)}_{\G}.
\end{equation*}
\begin{lemma}
  \label{lem:exany}
  Given $Z\in\H$, we have $Z\bullet(\X,\Y)\in\G$
  \textnormal{for any curve}
  $(\X,\Y)\in\C$. Moreover, the limit
  $\lim_{s\to\infty}J(\X(s),\Y(s))$ is independent
  of the curve $(\X,\Y)\in\C$.
\end{lemma}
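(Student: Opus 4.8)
The plan is to fix, once and for all, a curve $(\X_0,\Y_0)\in\C$ with $\Theta_0:=Z\bullet(\X_0,\Y_0)\in\G$ (such a curve exists since $Z\in\H$), take an arbitrary $(\X,\Y)\in\C$, and check that $\Theta:=Z\bullet(\X,\Y)$ fulfils conditions (i)--(v) of Definition \ref{def:setG}. Conditions (ii), (iii) and (iv) are \emph{local} and are inherited directly from the fact that $Z\in\H(\Omega)$ for every rectangular domain $\Omega$: restricting the relations \eqref{eq:relpres} to the curve gives $\V_2=x_X\ge0$, $\V_4=J_X\ge0$ (and likewise $\W_2,\W_4\ge0$) and the algebraic identities \eqref{eq:relVW}, \eqref{eq:reltx1}, \eqref{eq:reltJK1}; the compatibility identity \eqref{eq:relZVWG} follows by writing $\Z(s)=Z(\X(s),\Y(s))$ as the sum of an $X$-integral and a $Y$-integral of the derivatives of $Z$ (as in \eqref{eq:ZintforaX}) and differentiating, using the continuity of $Z_X$ in the $Y$-variable and of $Z_Y$ in the $X$-variable, exactly as in the extraction-of-data lemma (which also gives $(\Z,\V,\W)\in L^\infty_{\mathrm{loc}}$). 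Hence the real content is the finiteness part of (i), the normalisation (v), and the curve-independence of $\lim_{s\to\infty}J(\X(s),\Y(s))$.

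For $\tnorm{\Theta}_\G<\infty$ we invoke the a priori estimates. Since $\X-\id\in L^\infty$ and $\frac12(\X+\Y)=\id$, the curve $(\X,\Y)$ lies in the strip $\{\abs{Y-X}\le 2L\}$ with $L=\norm{\X-\id}_{L^\infty}$. Fix $s\in\Real$; choosing $s_l$ very negative and $s_r$ very large and setting $\Omega=[\X_0(s_l),\X_0(s_r)]\times[\Y_0(s_l),\Y_0(s_r)]$, the rectangle $\Omega$ exhausts $\Real^2$ (because $\X_0-\id,\Y_0-\id\in L^\infty$), so we may assume $(\X(s),\Y(s))\in\Omega$; moreover $(\X_0,\Y_0)$ restricted to $[s_l,s_r]$ lies in $\C(\Omega)$ with $\tnorm{\Theta_0|_\Omega}_{\G(\Omega)}\le\tnorm{Z}_\H$. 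Then Lemma \ref{lem:aprioribd}(ii) bounds $\abs{Z^a(\X(s),\Y(s))}$, $\abs{Z_X(\X(s),\Y(s))}$, $\abs{Z_Y(\X(s),\Y(s))}$ and $1/(x_X+J_X)$, $1/(x_Y+J_Y)$ at that point by a constant $C_1(L,\tnorm{Z}_\H)$ independent of $s$. Since $\Z^a_2(s)=Z^a_2(\X(s),\Y(s))$, $\V^a(X)=Z^a_X(X,\Y(X))$ and $\W^a(Y)=Z^a_Y(\X(Y),Y)$, this gives uniform $L^\infty$ bounds on $\Z^a$, $\V^a$, $\W^a$ and on $1/(\V_2+\V_4)$, $1/(\W_2+\W_4)$; together with $\norm{(\X,\Y)}_\C<\infty$ this establishes $\tnorm{\Theta}_\G<\infty$.

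For $\norm{\Theta}_\G<\infty$ we use the Gronwall lemma for curves, Lemma \ref{lem:gron}, together with an exhaustion of $\Real^2$. Writing the $L^2$ norm as an integral of forms, $\norm{Z\bullet(\bar\X,\bar\Y)}_{\G(\Omega)}^2=\int_{\bar\Gamma}\bigl(\tfrac12U^2\,(dX+dY)+(Z^a_X)^2\,dX+(Z^a_Y)^2\,dY\bigr)$, we must bound this quantity for $\bar\Gamma=(\X,\Y)$ restricted to $[s_l,s_r]$, uniformly as $s_l\to-\infty$ and $s_r\to\infty$. Put $\Omega=[\X(s_l),\X(s_r)]\times[\Y(s_l),\Y(s_r)]$, so that $(\X,\Y)|_{[s_l,s_r]}\in\C(\Omega)$ and $\Omega\uparrow\Real^2$. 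We extend the portion of $(\X_0,\Y_0)$ lying in $\Omega$ to a curve $\widetilde\Gamma_0\in\C(\Omega)$ by appending, near the two diagonal corners of $\Omega$, monotone staircases whose total length is controlled by $\norm{\X-\id}_{L^\infty}+\norm{\X_0-\id}_{L^\infty}$ only. Lemma \ref{lem:gron} with reference curve $\widetilde\Gamma_0$ then gives $\norm{Z\bullet((\X,\Y)|_{[s_l,s_r]})}_{\G(\Omega)}\le C\,\norm{Z\bullet(\widetilde\Gamma_0)}_{\G(\Omega)}$ with $C=C(\norm{(\X,\Y)|_{[s_l,s_r]}}_{\C(\Omega)},\tnorm{Z\bullet(\widetilde\Gamma_0)}_{\G(\Omega)})$, and by the preceding paragraph together with Lemma \ref{lem:aprioribd}(iii) both arguments of $C$, as well as the contribution of the appended staircases to $\norm{Z\bullet(\widetilde\Gamma_0)}_{\G(\Omega)}$, are bounded uniformly in $\Omega$, while the main part of $\widetilde\Gamma_0$ contributes at most $\norm{Z\bullet(\X_0,\Y_0)}_\G$. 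Letting $s_l\to-\infty$, $s_r\to\infty$ and using monotone convergence (the integrand above being nonnegative along monotone curves) yields $\norm{\Theta}_\G<\infty$. I expect this exhaustion step, with its bookkeeping of curves matched to rectangle corners, to be the main technical obstacle; the rest is routine.

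Finally, for (v) and the curve-independence of the limit, observe that $J=Z_4$ is nondecreasing in each variable since $J_X,J_Y\ge0$. If $(\X_1,\Y_1),(\X_2,\Y_2)\in\C$ and $M\ge\norm{\X_1-\id}_{L^\infty}+\norm{\X_2-\id}_{L^\infty}$, then for every $s$ one has $\X_1(s)\le\X_2(s+2M)$ and $\Y_1(s)\le\Y_2(s+2M)$, so monotonicity of $J$ gives $J(\X_1(s),\Y_1(s))\le J(\X_2(s+2M),\Y_2(s+2M))$, and symmetrically $J(\X_2(s),\Y_2(s))\le J(\X_1(s+2M),\Y_1(s+2M))$. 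Letting $s\to\pm\infty$ in these two inequalities shows that $\lim_{s\to\pm\infty}J(\X(s),\Y(s))$ takes the same value for every $(\X,\Y)\in\C$; comparing with $(\X_0,\Y_0)$, for which $\Theta_0=Z\bullet(\X_0,\Y_0)$ satisfies \eqref{eq:normalizationJ}, gives $\lim_{s\to-\infty}J(\X(s),\Y(s))=0$, so (v) holds for $\Theta$. Therefore $Z\bullet(\X,\Y)\in\G$ for every $(\X,\Y)\in\C$, and $\lim_{s\to\infty}J(\X(s),\Y(s))$ is curve-independent.
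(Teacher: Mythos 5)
Your proposal is correct and follows essentially the same route as the paper: the $\tnorm{\cdot}_{\G}$-bound comes from the strip estimates of Lemma \ref{lem:aprioribd}, the $L^2$-bound from Lemma \ref{lem:gron} applied on an exhausting sequence of rectangles with one of the two curves extended so that both belong to $\C(\Omega)$, and the normalization (v) together with the curve-independence of $\lim_{s\to\pm\infty}J$ from the monotonicity of $J$; the only (harmless) variations are that you extend the reference curve and therefore have to estimate the appended staircases, whereas the paper extends the arbitrary curve so the added segments never need estimating, and that you compare curves via the shift $s\mapsto s+2M$ instead of the paper's generalized-inverse points $s_1,s_2$. One cosmetic caution: in the uniform bounds write $\tnorm{\Theta_0}_{\G}$ rather than $\tnorm{Z}_{\H}$, since the admissibility of the diagonal curve (and hence the finiteness of $\tnorm{Z}_{\H}=\tnorm{Z\bullet(\X_d,\Y_d)}_{\G}$ for every $Z\in\H$) is itself a consequence of this lemma.
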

In this lemma we denote as before $\Z_4$ by $J$
where
$\Theta=(\X,\Y,\Z,\V,\W)=Z\bullet(\X,\Y)$. Later,
we will see that the limit of $J$ at infinity
corresponds to the total energy and the lemma
would allow us to prove that the total energy is
conserved.
\begin{proof}[Proof of Lemma \ref{lem:exany}]
  For any curve $(\bar\X,\bar\Y)\in\C$, we have to
  prove that
  \begin{equation}
    \label{eq:bdtnorrepa}
    \tnorm{Z\bullet(\bar\X,\bar\Y)}_{\G}<\infty\quad\text{ and }\quad\norm{Z\bullet(\bar\X,\bar\Y)}_{\G}<\infty
  \end{equation}
  and
  \begin{equation}
    \label{eq:limJrepar}
    \lim_{s\to\infty}\bar J(s)=0
  \end{equation}
  where $\bar J=\bar\Z_4$ with
  $(\bar\X,\bar\Y,\bar\Z,\bar\V,\bar\W)=Z\bullet(\bar\X,\bar\Y)$. For
  any real number $\bar s\in\Real$ that will
  eventually tend to infinity and denote
  $\Omega_{\bar s}=[\bar\X(-\bar s),\bar\X(\bar
  s)]\times[\bar\Y(-\bar s),\bar\Y(\bar s)]$. Let
  \begin{equation}
    \label{eq:condcurv1}
    s_{\max}=
    \begin{cases}
      \Y^{-1}(\bar\Y(\bar s))\quad\text{ if
      }\Y(\bar\X(\bar s))\leq
      \bar\Y(\bar s),\\
      \X^{-1}(\bar\X(\bar s))\quad\text{
        otherwise},
    \end{cases}
  \end{equation}
  and 
  \begin{equation}
    \label{eq:condcurv2}
    s_{\min}=
    \begin{cases}
      \Y^{-1}(\bar\Y(-\bar s))\quad\text{ if
      }\Y(\bar\X(-\bar s))\leq
      \bar\Y(-\bar s),\\
      \X^{-1}(\bar\X(-\bar s))\quad\text{
        otherwise},
    \end{cases}
  \end{equation}
  see Figure \ref{fig:twocurves} for an example.
  One can check that by construction
  $s_{\min}\leq-\bar s\leq\bar s\leq s_{\max}$ and
  we denote $\tilde\Omega_{\bar
    s}=[\X(s_{\min}),\X(s_{\max})]\times[\Y(s_{\min}),\Y(s_{\max})]$. We
  have $\Omega_{\bar s}\subset\tilde\Omega_{\bar
    s}$. We construct the curve which consists of
  a (vertical or horizontal) straight line joining
  $(\X(s_{\min}),\Y(s_{\min}))$ and $(\X(\bar
  s),\Y(\bar s))$, the curve
  $(\bar\X(s),\bar\Y(s))$ for $s\in[-\bar s,\bar
  s]$ and another (vertical or horizontal)
  straight line joining $(\bar\X(\bar
  s),\bar\Y(\bar s))$ and
  $(\X(s_{\max}),\Y(s_{\max}))$, see Figure
  \ref{fig:twocurves}. We denote by
  $(\tilde\X,\tilde\Y)$ this curve and we have
  that $(\tilde\X,\tilde\Y)$ and $(\X,\Y)$ belong
  to $\G(\tilde\Omega_{\bar s})$.  By Lemma
  \ref{lem:aprioribd}, we get
  \begin{multline*}
    \tnorm{Z\bullet(\bar\X,\bar\Y)}_{\G(\Omega_{\bar
        s})}\leq\tnorm{Z\bullet(\bar\X,\bar\Y)}_{\G(\tilde\Omega_{\bar
        s})}\\\leq
    C_1(\norms{(\bar\X,\bar\Y)}_{\C(\tilde\Omega_{\bar
        s})},\tnorm{\Theta}_{\G(\tilde\Omega_{\bar
        s})})\leq
    C_1(\norms{(\bar\X,\bar\Y)}_{\C},\tnorm{\Theta}_{\G})
  \end{multline*}
  and by letting $\bar s$ tend to infinity, we get
  $\tnorm{Z\bullet(\bar\X,\bar\Y)}_{\G}<\infty$.  By
  Lemma \ref{lem:gron}, we get
  \begin{equation}
    \label{eq:ZbarXYbars}
    \norm{Z\bullet(\bar\X,\bar\Y)}_{\G(\Omega_{\bar s})}\leq\norm{Z\bullet(\bar\X,\bar\Y)}_{\G(\tilde\Omega_{\bar s})}\leq C\norm{Z\bullet(\X,\Y)}_{\G(\tilde\Omega_{\bar s})}\leq C\norm{Z\bullet(\X,\Y)}_{\G}
  \end{equation}
  where the constant $C$ depends on $\norm{(\bar
    X,\bar Y)}_{\G(\tilde\Omega_{\bar s})}$ and
  $\tnorm{Z\bullet(\X,\Y)}_{\G(\tilde\Omega_{\bar
      s})}$, that is, on $\norm{(\bar X,\bar
    Y)}_{\G}$ and $\tnorm{Z\bullet(\X,\Y)}_{\G}$,
  which are independent on $\bar s$. By letting
  $\bar s$ tend to infinity in
  \eqref{eq:ZbarXYbars}, we get
  $\norm{Z\bullet(\bar\X,\bar\Y)}_{\G}<\infty$. It
  remains to prove \eqref{eq:limJrepar}. We know
  that $\bar J$ is positive. We denote
  $J(s)=\Z_4(s)$ with
  $(\X,\Y,\Z,\V,\W)=Z\bullet(\X,\Y)$ and, slightly
  abusing the notation, we denote also by $J$,
  $J(X,Y)=Z_4(X,Y)$. For any $s\in\Real$, let
  $s_1=\X^{-1}\bar\X(s)$ and
  $s_2=\Y^{-1}\bar\Y(s)$. If $s_1\leq s_2$, then
  $\bar\X(s)=\X(s_1)\leq\X(s_2)$ and
  $\Y(s_1)\leq\bar\Y(s)=\bar\Y(s_2)$. By the monotonicity of
  $J(X,Y)$, we get
  \begin{equation*}
    J(\X(s_1),\Y(s_1))\leq J(\bar\X(s),\bar\Y(s))\leq J(\X(s_2),\Y(s_2)).
  \end{equation*}
  If $s_2\leq s_1$, we get a similar result so
  that, finally,
  \begin{subequations}
    \label{eq:zeleqJs1s2}
    \begin{equation}
      \label{eq:zeleqJs1s2a}
      \min\{J(\X(s_1),\Y(s_1)),J(\X(s_2),\Y(s_2))\}\leq J(\bar \X(s),\bar \Y(s))
    \end{equation}
    and
    \begin{equation}
      \label{eq:zeleqJs1s2b}
      J(\bar \X(s),\bar \Y(s))\leq\max\{J(\X(s_1),\Y(s_1)),J(\X(s_2),\Y(s_2))\}.
    \end{equation}
  \end{subequations}
  Since 
  \begin{equation*}
    \abs{s_1-s}\leq\abs{\X(s_1)-s_1}+\abs{\bar\X(s)-s}\leq\norm{(\X,\Y)}_{\C}+\norm{(\bar\X,\bar\Y)}_{\C},
  \end{equation*}
  we have that $\lim_{s\to\pm\infty}s_1=\pm\infty$ 
  similarly we obtain that
  $\lim_{s\to\pm\infty}s_2=\pm\infty$. Hence,
  \eqref{eq:zeleqJs1s2} yields
  \begin{equation*}
    \lim_{s\to\pm\infty}J(\bar \X(s),\bar \Y(s))=\lim_{s\to\pm\infty} J(\X(s),\Y(s)).
  \end{equation*}
  Thus, these limits are independent of the curve
  $\bar\Gamma$ which is chosen. The existence of
  the limits is guaranteed by the monotonicity and
  boudedness of $J$. The identity
  \eqref{eq:limJrepar} follows from
  \eqref{eq:normalizationJ}.
  \begin{figure}
    \centering
    \includegraphics[width=7cm]{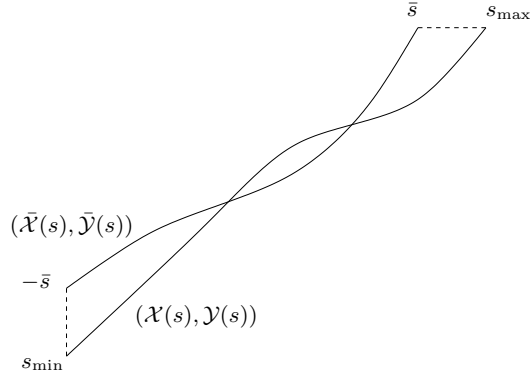}
    \caption{Prolongation of the curve $\bar\Gamma$.}
    \label{fig:twocurves}
  \end{figure}
\end{proof}

From Lemma \ref{lem:globalsol}, we infer the
following global existence theorem for the
equivalent system.

\begin{theorem}[Existence and uniqueness of global
  solutions]
  \label{th:globalsol}
  For any initial data\\ $(\X,\Y,\Z,\V,\W)$ in
  $\G$, there exists a unique solution $Z\in \H$
  such that $\Theta=Z\bullet (\X,\Y)$. We denote this solution mapping
  by $\Sb\colon\G\to\H$.
\end{theorem}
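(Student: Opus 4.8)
The plan is to build the global solution by exhausting $\Real^2$ with an increasing sequence of rectangular domains whose diagonal corners lie on the given curve, solving on each box by Lemma \ref{lem:globalsol}, and then gluing the pieces together by uniqueness. Fix a sequence $s_n\to\infty$ and set $\Omega_n=[\X(-s_n),\X(s_n)]\times[\Y(-s_n),\Y(s_n)]$. Since $\X-\id,\Y-\id\in\Winf(\Real)$ and $\frac12(\X+\Y)=s$, we have $\X(\pm s_n)\to\pm\infty$ and $\Y(\pm s_n)\to\pm\infty$, so the $\Omega_n$ are nested and $\bigcup_n\Omega_n=\Real^2$. The restriction of $(\X,\Y)$ to $[-s_n,s_n]$ matches the diagonal corners of $\Omega_n$, hence belongs to $\C(\Omega_n)$, and the corresponding restriction of $\Theta$ belongs to $\G(\Omega_n)$ (the seminorms over the bounded domain are dominated by the global ones, and the sign and algebraic conditions, including $\Z_4\ge0$, follow since $\dot\Z_4=\V_4\dot\X+\W_4\dot\Y\ge0$ together with \eqref{eq:normalizationJ}). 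Lemma \ref{lem:globalsol} then provides a unique $Z_n\in\H(\Omega_n)$ with $\Theta=Z_n\bullet(\X,\Y)$ on $\Omega_n$.

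Next I would check consistency of the pieces. The defining conditions of $\H(\Omega)$ — being a solution of \eqref{eq:goveq} in the sense of Definition \ref{def:sollocal} together with the pointwise relations \eqref{eq:relpres} — are local, so $Z_{n+1}$ restricted to $\Omega_n$ lies in $\H(\Omega_n)$, and since data extraction along a curve is local in $s$, it extracts the data $\Theta$ on $[-s_n,s_n]$. By the uniqueness part of Lemma \ref{lem:globalsol}, $Z_{n+1}|_{\Omega_n}=Z_n$. Therefore $Z(X,Y):=Z_n(X,Y)$ for $(X,Y)\in\Omega_n$ is well defined on $\Real^2$, lies in $\Winfloc(\Real^2)$, and satisfies $Z\in\H(\Omega)$ for every rectangular domain $\Omega$, because any such $\Omega$ is contained in some $\Omega_n$ and restriction preserves $\H(\Omega)$.

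It then remains to verify condition (ii) of Definition \ref{def:H}. Since every point $(\X(s),\Y(s))$ lies in some $\Omega_n$ and extraction of data is a local operation, $Z\bullet(\X,\Y)=\Theta$, which belongs to $\G$ by hypothesis; hence $Z\in\H$ and $\Theta=Z\bullet(\X,\Y)$. (Lemma \ref{lem:exany} is not actually needed for existence, but it guarantees that this decay condition is independent of the chosen curve, so the resulting set $\H$ is the right one.) For uniqueness, suppose $\bar Z\in\H$ also satisfies $\Theta=\bar Z\bullet(\X,\Y)$. For each $n$, the restriction $\bar Z|_{\Omega_n}$ lies in $\H(\Omega_n)$ and extracts $\Theta$ on $[-s_n,s_n]$, so Lemma \ref{lem:globalsol} forces $\bar Z|_{\Omega_n}=Z_n=Z|_{\Omega_n}$; letting $n\to\infty$ yields $\bar Z=Z$. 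One then defines $\Sb\colon\G\to\H$ by $\Sb(\Theta)=Z$.

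The quantitative heart of the argument has already been carried out in Lemmas \ref{lem:globalsol}, \ref{lem:aprioribd} and \ref{lem:gron}, so the only genuine obstacle here is bookkeeping: one must be careful that the restriction of an element of $\G$ to a box is an element of $\G(\Omega_n)$ and that restrictions of solutions in $\H(\Omega')$ to a smaller $\Omega$ remain in $\H(\Omega)$, so that the locally defined pieces $Z_n$ really agree on overlaps and the patched function is admissible on every rectangle. Once these localization statements are recorded, the proof is a routine exhaustion-plus-uniqueness argument.
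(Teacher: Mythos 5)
Your proposal is correct and follows the same route the paper intends: the paper states Theorem \ref{th:globalsol} as a direct consequence of Lemma \ref{lem:globalsol}, and your argument simply makes explicit the exhaustion of $\Real^2$ by rectangles $\Omega_n$ with diagonal corners on the curve, the gluing by the uniqueness part of Lemma \ref{lem:globalsol}, and the verification of condition (ii) of Definition \ref{def:H} via $Z\bullet(\X,\Y)=\Theta\in\G$. You also correctly handled the one genuinely non-trivial restriction point, namely that $\Z_4\geq0$ (required in Definition \ref{def:setGI} but not listed in Definition \ref{def:setG}) follows from $\dot\Z_4=\V_4\dot\X+\W_4\dot\Y\geq0$ and the normalization \eqref{eq:normalizationJ}.
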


\section{Semigroup of solution $S_T$ in $\F$}
\label{sec:semigroupinG}
From a solution function $Z\in\H$ in the whole
plane, we want to extract the data at a given
time. It is enough to do it at $t=0$, and the
definition below describes how we proceed.

\begin{definition}
  \label{def:Eb}
  Given $Z\in\H$, we define
  \begin{equation}
    \label{eq:defXsE}
    \X(s)=\sup\{X\in\Real\mid t(X',2s-X')<0\quad\text{for all }X'<X\}
  \end{equation}
  and $\Y(s)=2s-\X(s)$. Then, we have
  $(\X(s),\Y(s))\in\C$ and
  $Z\bullet(\X,\Y)\in\G_0$. We denote by $\Eb$ the
  mapping from $\H$ to $\G_0$ that associates to
  any $Z\in\H$ the element
  $Z\bullet(\X,\Y)\in\G_0$ as defined here.
\end{definition}

\begin{proof}[Proof of the well-posedness of Definition \ref{def:Eb}]
  First we prove that $\X$ is increasing. Let
  $X_n$ be a sequence such that $X_n<\X(s)$ and
  $X_n\to \X(s)$. We have $t(X_n,2s-X_n)<0$ for
  any $\bar s>s$. Since $t$ is decreasing with
  respect to the second variable (as $t_Y\leq0$),
  we have that $t(X_n,2\bar s-X_n)<0$ and
  therefore $X_n\leq \X(\bar s)$. After letting
  $n$ tend to infinity, we obtain $\X(s)\leq
  \X(\bar s)$ so that $\X$ is an increasing
  function. Let us prove that $\X$ is Lipschitz
  with Lipschitz coefficient smaller than $2$. Let
  us assume the opposite, i.e., there exists $\bar
  s>s$ such that
  \begin{equation}
    \label{eq:X0notlip}
    \X(\bar s)-\X(s)>2(\bar s-s).
  \end{equation}
  It implies that $\Y(s)>\Y(\bar s)$. Since
  $t_X\geq0$ and $t_Y\leq0$, we have, for any
  $(X,Y)\in[\X(s),\X(\bar s)]\times[\Y(\bar
  s),\Y(s)]$
  \begin{equation*}
    0=t(\X(s),\Y(s))\leq t(X,\Y(s))\leq t(X,Y)\leq t(X,\Y(\bar s))\leq t(\X(\bar s),\Y(\bar s))=0
  \end{equation*}
  and, therefore $t(X,Y)=0$ on
  $\Omega=[\X(s),\X(\bar s)]\times[\Y(\bar
  s),\Y(s)]$. Let us consider the point
  $(X,Y)\in\Omega$ for $Y=\Y(s)$ and $X=2\bar
  s-\Y(s)$. We have $t(X,Y)=0$, $X+Y=2\bar s$ and
  $X<\X(\bar s)$, which contradict the definition
  of $\X$ at $\bar s$. Thus, we have proved that
  $\X$ is Lipschitz. To show that $(\X,\Y)\in\C$,
  it remains to prove that
  $\norm{\X-\Y}_{L^\infty(\Real)}<\infty$. We
  claim that there there exists $\bar L$ such that
  \begin{equation}
    \label{eq:claimt}
    \liminf_{X\to\infty}t(X+L,X)\geq1
  \end{equation}
  for any $L\geq\bar L$. Let us prove this
  claim. By using the fact that $x_X=c(U)t_X$ and
  $c(U)\geq\frac1\kappa$, we get
  \begin{align}
    \notag
    t(X+L,X)&=t(X,X)+\int_{X}^{X+L}t_X(\tilde X,X)\,d\tilde X\\
    \label{eq:txplx}
    &\geq
    -\tnorm{Z}_{\H}+\frac{L}{2\kappa}+\frac{1}{\kappa}\int_{X}^{X+L}(x_X-\frac{1}{2})(\tilde
    X,X)\,d\tilde X.
  \end{align}
  We look at the domain
  $\Omega_{X,L}=[X,X+L]\times[X,X+L]$. We consider
  the curve $\X_d(s)=\Y_d(s)=s$ (the diagonal) and
  the curve $(\bar\X,\bar\Y)$ which consists of a
  horizontal and a vertical segment given by
  \begin{equation*}
    \begin{cases}
      \bar\X(s)=2s-X,\quad\bar\Y(s)=X&\text{ for }s\in[0,X+\frac{L}2]\\
      \bar\X(s)=X+L,\quad\bar\Y(s)=2s-(X+L)&\text{
        for }s\in[X+\frac{L}2,X+L].
    \end{cases} 
  \end{equation*}
  By Lemma \ref{lem:gron} we get
  \begin{equation}
    \label{eq:intxxplxx}
    \int_X^{X+L}(x_X-\frac12)^2(\tilde
    X,X)\,d\tilde X\leq\norm{Z\bullet(\bar
      X,\bar\Y)}_{\G(\Omega_{X,L})}^2 \leq
    C\norm{Z\bullet(\X_d,\Y_d)}_{\G(\Omega_{X,L})}^2
  \end{equation}
  where the constant $C$ depends on $L$ and
  $\tnorm{Z\bullet(\X_d,\Y_d)}_{\G(\Omega_{X,L})}$
  which is bounded by $\tnorm{Z}_{\H}$. We have
  \begin{equation*}
    \lim_{X\to\infty}\norm{Z\bullet(\X_d,\Y_d)}_{\G(\Omega_{X,L})}^2=\lim_{X\to\infty}\int_{X}^{X+L}(U^2(\tilde X,\tilde
    X)+(Z^a_X)^2(\tilde X,\tilde X)+(Z^a_Y)^2(\tilde
    X,\tilde X))\,d\tilde X=0,
  \end{equation*}
  and therefore \eqref{eq:intxxplxx} and
  \eqref{eq:txplx} yield
  \begin{equation*}
    \liminf_{X\to\infty}t(X+L,X)\geq-\norm{Z}_{\H}+\frac{L}{2\kappa}
  \end{equation*}
  which, for $L$ large enough, implies
  \eqref{eq:claimt}. Using the same type of
  argument, we prove that there exists $L$ such
  that
  \begin{align}
    \label{eq:liminfMbd1}
    \liminf_{X\to\infty}t(X+L,X)&\geq1,&\limsup_{X\to\infty}t(X-L,X)&\leq-1,\\
    \label{eq:liminfMbd2}
    \liminf_{X\to-\infty}t(X+L,X)&\geq1,&\liminf_{X\to-\infty}t(X-L,X)&\leq-1.
  \end{align}
  Let us prove that
  $\limsup_{s\to\infty}(\X(s)-s)\leq\frac{L}2$. We
  assume the opposite and then, there exists
  $s\in\Real$ such that
  $t(s+\frac{L}2,s-\frac{L}2)\geq 1$, by
  \eqref{eq:liminfMbd1}, and
  $\X(s)>s+\frac{L}2$. It implies that
  $\Y(s)=2s-\X(s)\leq s-\frac{L}2$, and, using the
  monotonicity of $t$ (that is $t_X\geq0$ and
  $t_Y\leq0$), we get $1\leq
  t(s+\frac{L}2,s-\frac{L}2)\leq
  t(\X(s),\Y(s))=0$, which is a
  contradiction. Similarly one proves that
  $\liminf_{s\to\infty}(\X(s)-s)\geq -\frac{L}2$,
  $\limsup_{s\to-\infty}(\X(s)-s)\leq \frac{L}2$
  and $\liminf_{s\to-\infty}(\X(s)-s)\geq
  -\frac{L}2$ and it follows that
  \begin{equation*}
    \limsup_{s\to\pm\infty}\abs{\X(s)-s}\leq \frac{L}2.
  \end{equation*}
  Hence, the condition \eqref{eq:regXY} is
  satisfied and $(\X,\Y)\in\C$. By Lemma
  \ref{lem:exany}, we have
  $(\X,\Y,\Z,\V,\W)=Z\bullet(\X,\Y)\in\G$ and by
  construction $\Z_1(s)=t(\X(s),\Y(s))=0$ so that
  $Z\bullet(\X,\Y)\in\G_0$.
\end{proof}

\begin{definition} \label{def:tT}
  Given any $T$, let us introduce the mapping
  $\tb_T\colon\H\to\H$ defined as follows. For any
  $Z\in\H$, let $\tb_T(Z)=\bar Z\in\H$ be given by
  \begin{equation*}
    \bar t(X,Y)=t(X,Y)-T
  \end{equation*}
  and
  \begin{align*}
    \bar x(X,Y)&=x(X,Y),&\bar U(X,Y)&=U(X,Y),\\
    \bar J(X,Y)&=J(X,Y),&\bar K(X,Y)&=K(X,Y).
  \end{align*}
  We have
  \begin{equation}
    \label{eq:tranlaw}
    \tb_{T+T'}=\tb_T\circ\tb_{T'}.
  \end{equation}
\end{definition}
We have now defined the following mappings:
\begin{equation}
  \label{eq:diagmappings}
  \xymatrix{\F\ar@<1mm>[r]^{\Cb}
    &\G_0 \ar@<1mm>[l]^{\Db} \ar@<1mm>[r]^{\Sb}&\H \ar@<1mm>[l]^{\Eb} \ar@(dr,ur)[lr]_{\tb_T}}
\end{equation}
The mapping $\tb_T$ is used to extract the
solution at any given time $T$ by only using the
operator $\Eb$, which is designed for time
zero. Indeed, by taking $\Eb\circ \tb_T$, we
recover an element in $\G_0$ which corresponds to
the solution at time $T$. In the following lemma,
we prove that $\F$ and $\H$ are in bijection,
which also justify the introduction of $\F$: It is
a consistent way to parametrize initial data: To
any element in $\F$, there corresponds a unique
solution in $\H$, and vice versa. The $\G_0$ does
not fit that role as $\G_0$ and $\H$ are not in
bijection.
\begin{lemma}
  \label{lem:cdeeqe}
  We have
  \begin{equation}
    \label{eq:cdeeqe}
    \Cb\circ \Db\circ \Eb=\Eb,\quad\Db\circ\Cb=\id
  \end{equation}
  and
  \begin{equation}
    \label{eq:escc}
    \Eb\circ\Sb\circ\Cb=\Cb,\quad\Sb\circ\Eb=\id.
  \end{equation}
  It follows that $\Sb\circ\Cb=(\Db\circ\Eb)^{-1}$
  and the sets $\F$ and $\H$ are in bijection.
\end{lemma}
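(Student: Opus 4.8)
The plan is to establish the four identities in \eqref{eq:cdeeqe}--\eqref{eq:escc} one at a time and then obtain the bijection by composing them. Throughout I use, for $Z\in\H$, that $t_X\geq0$, $t_Y\leq0$, $x_X=c(U)t_X\geq0$ and $x_Y=-c(U)t_Y\geq0$ (Definition~\ref{def:HO}), so that $t$ and $x$ are nondecreasing in $X$, that $t$ is nonincreasing in $Y$ while $x$ is nondecreasing in $Y$, and in particular $t$ is nondecreasing along each anti-diagonal $\{X+Y=2s\}$. One of the four identities is immediate: given $Z\in\H$, $\Eb(Z)=Z\bullet(\X,\Y)\in\G$, and $Z$ is itself a global solution satisfying $Z\bullet(\X,\Y)=Z\bullet(\X,\Y)$, so the uniqueness statement in Theorem~\ref{th:globalsol} gives $\Sb(\Eb(Z))=Z$; hence $\Sb\circ\Eb=\id_\H$.

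Next I would prove $\Db\circ\Cb=\id_\F$. Let $\psi=(\psi_1,\psi_2)\in\F$ and $(\X,\Y,\Z,\V,\W)=\Cb(\psi)$. Since $\X$ is Lipschitz with $\X-\id\in L^\infty$ it is surjective onto $\Real$, and likewise $\Y$; moreover $x_1\circ\X=\Z_2$, $U_1\circ\X=\Z_3$, $\V_3=V_1$, $\V_4=J_1'$ and $\V_5=K_1'$ by the definition of $\Cb$. Writing $(\bar\psi_1,\bar\psi_2)=\Db(\Cb(\psi))$, Definition~\ref{def:D} gives $\bar x_1\circ\X=\Z_2=x_1\circ\X$, $\bar U_1\circ\X=\Z_3=U_1\circ\X$ and $\bar V_1=\V_3=V_1$, whence $\bar x_1=x_1$, $\bar U_1=U_1$, $\bar V_1=V_1$ by surjectivity of $\X$; and, using the change-of-variables formula for the monotone Lipschitz function $\X$ together with the normalization $\lim_{X\to-\infty}J_1=0$ of \eqref{eq:limJzero},
\begin{equation*}
  \bar J_1(\X(s))=\int_{-\infty}^{s}\V_4(\X(\sigma))\dot\X(\sigma)\,d\sigma=\int_{-\infty}^{\X(s)}J_1'(X)\,dX=J_1(\X(s)),
\end{equation*}
so $\bar J_1=J_1$, and similarly $\bar K_1=K_1$. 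The $\psi_2$-components are treated in the same way, so $\Db(\Cb(\psi))=\psi$.

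The remaining two identities both rest on the geometric fact that if $Z\in\H$, if $(\X,\Y)\in\C$ satisfies $t(\X(s),\Y(s))=0$ for all $s$, and if $x_1,x_2$ are the monotone functions produced by $\Db(Z\bullet(\X,\Y))$ (so that $x_1\circ\X=x_2\circ\Y=\Z_2$), then
\begin{equation*}
  \{(X,Y)\mid t(X,Y)<0\}=\{(X,Y)\mid x_1(X)<x_2(Y)\}.
\end{equation*}
Granting this, on each anti-diagonal the set where $t<0$ and the set where $x_1<x_2$ coincide, and each equals $(-\infty,\X(s))$ when $(\X,\Y)$ is the curve built by the sup-formula of Definition~\ref{def:Eb} (from $\{t<0\}$) or by the sup-formula of Definition~\ref{def:C} (from $\{x_1<x_2\}$); hence those two sup-formulas produce the same curve. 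A short computation analogous to the second paragraph, but with the roles of $\Cb$ and $\Db$ interchanged and using $\lim_{s\to-\infty}\Z_4=0$ from \eqref{eq:normalizationJ} (and the analogous fact for $\Z_5$), then shows the full five-tuples match. This gives $\Cb(\Db(\Eb(Z)))=\Eb(Z)$ for $Z\in\H$; and for $\psi\in\F$, putting $Z=\Sb(\Cb(\psi))$ so that $Z\bullet(\X,\Y)=\Cb(\psi)$ and therefore $\Db(Z\bullet(\X,\Y))=\Db(\Cb(\psi))=\psi$ by the identity just proved, the same fact identifies the $\Eb$-curve of $Z$ with the $\Cb$-curve of $\psi$, giving $\Eb(\Sb(\Cb(\psi)))=Z\bullet(\X,\Y)=\Cb(\psi)$. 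I expect the proof of this geometric fact to be the main obstacle: the picture is clear (both sets are the region "above and to the left" of $\Gamma_0$), and the key mechanism is that if $x$ is constant on a rectangle then $x_X=x_Y=0$ there, hence $t_X=t_Y=0$ and $t$ is constant on that rectangle as well; but one must analyze carefully what happens along the boundary of $\Gamma_0$ and at the rectangular boxes, using that the relations \eqref{eq:relZVWG}--\eqref{eq:reltJK1} on $\G_0$ make a box in $\{t=0\}$ precisely a rectangle on which both $x_1$ and $x_2$ are constant, so that assuming $t$ and $x_1-x_2$ have opposite strict signs at a point forces $x$, hence $t$, to be constant on such a box, contradicting the strictness coming from the sup-formula.

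Finally, combining the four identities, $(\Db\circ\Eb)\circ(\Sb\circ\Cb)=\Db\circ(\Eb\circ\Sb\circ\Cb)=\Db\circ\Cb=\id_\F$ and $(\Sb\circ\Cb)\circ(\Db\circ\Eb)=\Sb\circ(\Cb\circ\Db\circ\Eb)=\Sb\circ\Eb=\id_\H$, so $\Db\circ\Eb$ and $\Sb\circ\Cb$ are mutually inverse; in particular $\Sb\circ\Cb=(\Db\circ\Eb)^{-1}$ and the sets $\F$ and $\H$ are in bijection.
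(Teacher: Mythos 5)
Your proposal is correct in outline and, on three of the four identities, coincides with the paper's argument: $\Sb\circ\Eb=\id$ from uniqueness (Theorem~\ref{th:globalsol}), $\Db\circ\Cb=\id$ by unwinding Definitions~\ref{def:C} and~\ref{def:D}, and the routine matching of $\Z,\V,\W$ once the curves are identified (including the normalization at $-\infty$ for the fifth component, which you flag and the paper also glosses). Where you genuinely differ is in the two remaining identities: the paper treats them separately, each time proving $\bar\X\le\X$ and excluding strict inequality, and for $\Cb\circ\Db\circ\Eb=\Eb$ it does so by constructing the explicit degenerate box solution ($\tilde t=0$, $\tilde x,\tilde U$ constant, $\tilde J=J_1+J_2$, $\tilde K=K_1+K_2$) and invoking uniqueness on rectangles (Lemma~\ref{lem:globalsol}); you instead unify both through the single identity $\{(X,Y)\mid t(X,Y)<0\}=\{(X,Y)\mid x_1(X)<x_2(Y)\}$, from which equality of the two sup-formula curves of Definitions~\ref{def:Eb} and~\ref{def:C} is immediate, since each sup-formula depends only on the corresponding set restricted to an anti-diagonal. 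This works, and it can even be made more elementary than the paper's first step, but the point you assert without proof is exactly the crux: that $t(X,Y)<0$ together with $x_1(X)\ge x_2(Y)$ forces $x$, hence $t$, to be constant on a box. The short repair: with $s_1=\X^{-1}(X)$ and $s_2=\Y^{-1}(Y)$, monotonicity of $t$ forces $s_1<s_2$ and $x(s_1)=x(s_2)$; since $x(X,Y)$ is nondecreasing in both variables by \eqref{eq:posxX}--\eqref{eq:posxY} and takes equal values at the diagonal corners $(\X(s_1),\Y(s_1))$ and $(\X(s_2),\Y(s_2))$, it is constant on the box $[\X(s_1),\X(s_2)]\times[\Y(s_1),\Y(s_2)]$, whence $x_X=x_Y=0$ a.e.\ there, \eqref{eq:reltx} gives $t_X=t_Y=0$ a.e., and $t\equiv t(\X(s_1),\Y(s_1))=0$ on a box containing $(X,Y)$, a contradiction; the reverse inclusion follows from the same relations applied along the two axis-parallel segments through $(X,Y)$. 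With this supplied, your route replaces the paper's appeal to the explicit box solution and uniqueness by a monotonicity argument at the corners, which is leaner; what the paper's construction buys instead is a transparent picture of how nontrivial energy ($J_X$, $J_Y$ not identically zero) can reside inside such a box while $t$, $x$, $U$ remain constant.
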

\begin{proof} 
  \textbf{Step 1.} We prove
  \eqref{eq:cdeeqe}. Given $Z\in\H$, let
  $(\X,\Y,\Z,\V,\W)=\Eb(Z)$,
  $(\psi_1,\psi_2)=\Db(\X,\Y,\Z,\V,\W)$ and
  $(\bar\X,\bar\Y,\bar\Z,\bar\V,\bar\W)=\Cb(\psi_1,\psi_2)$. We
  want to prove that
  $(\bar\X,\bar\Y,\bar\Z,\bar\V,\bar\W)=(\X,\Y,\Z,\V,\W)$. We
  have to prove that $\X=\bar\X$, the rest will
  easily follow.  For any $s\in\Real$, we claim
  that for any couple $(X,Y)$ such that $X<\X(s)$
  and $X+Y=2s$ then we have either
  \begin{equation}
    \label{eq:condx1low}
    x_1(X)<x_1(\X(s))\quad\text{ or }\quad x_2(Y)>x_2(\Y(s)).
  \end{equation}
  Let us assume the opposite, that is, there
  exist $\bar s$, $\bar{X}$ and $\bar{Y}$ such
  that $\bar X<\X(\bar{s})$, $\bar{X}+\bar{Y}=2\bar{s}$ and
  \begin{equation*}
    x_1(\bar{X})=x_1(\X(\bar{s}))=x(\bar{s})=x_2(\Y(\bar{s}))=x_2(\bar{Y}).
  \end{equation*}
  Here, $x(s)$ denotes $\Z_2(s)$, see
  \eqref{eq:x1x2x}. Let $s_0=\X^{-1}(\bar X)$ and
  $s_1=\Y^{-1}(\bar Y)$. Since
  $\bar{X}<\X(\bar{s})$ and $\bar{Y}>\Y(\bar{s})$,
  we have $s_0<\bar{s}<s_1$. We have
  \begin{equation*}
    x(s_0)=x_1(\X(s_0))=x_1(\bar X)=x(\bar{s})
  \end{equation*}
  and, similarly, we obtain that
  $x(s_1)=x(\bar{s})$. We consider the rectangular
  domain
  $\Omega=[\X(s_0),\X(s_1)]\times[\Y(s_0),\Y(s_1)]$. Since
  $x(s)=x(s_0)=x(s_1)$ for all $s\in[s_0,s_1]$, we
  have $\dot x=0$ on $[s_0,s_1]$ because $x$ is
  nondecreasing. We have $\dot
  x=0=\V_2(\X)\dot\X+\W_2(\Y)\dot\Y$ on
  $[s_0,s_1]$, which implies that $\V_2(X)=0$ for
  a.e. $X\in[\X(s_0),\X(s_1)]$ and $\W_2(Y)=0$ for
  a.e. $Y\in[\Y(s_0),\Y(s_1)]$. By
  \eqref{eq:reltx1} and \eqref{eq:relVW} it
  implies $\V_1=\V_2=\V_3=0$ on
  $[\X(s_0),\X(s_1)]$ and $\W_1=\W_2=\W_3=0$ on
  $[\Y(s_0),\Y(s_1)]$. Then, we can check that
  $\tilde Z$ given by
  \begin{equation*}
    \tilde t(X,Y)=0,\quad \tilde x(X,Y)=x(\bar{s}),\quad \tilde U(X,Y)=U(\bar{s})
  \end{equation*}
  and
  \begin{equation*}
    \tilde J(X,Y)=J_1(X)+J_2(Y),\quad \tilde K(X,Y)=K_1(X)+K_2(Y)
  \end{equation*}
  is a solution to \eqref{eq:goveq} in $\Omega$
  (that is, $\tilde Z\in\H(\Omega)$) and $\tilde
  Z\bullet(\X,\Y)=(\X,\Y,\Z,\V,\W)$. By uniqueness
  of the solution, we get $\tilde Z=Z$.  In
  particular, we have $t(\bar X,\bar Y)=0$ such
  that $\bar X+\bar Y=2s$ and $\bar X<\X(\bar s)$,
  which contradicts the definition of $\X(s)$
  given by \eqref{eq:defXsE}. This concludes the
  proof of the claim \eqref{eq:condx1low}.  Since
  $x_1(\X(s))=x_2(2s-\X(s))$ we get, by
  \eqref{eq:x1x2x}, that
  \begin{equation*}
    \bar\X(s)\leq\X(s).
  \end{equation*} 
  We have by the continuity of $x_1$ and $x_2$ that
  \begin{equation}
    \label{eq:egbarX}
    x_1(\bar\X(s))=x_2(\bar\Y(s)).
  \end{equation}
  Let us assume that $\bar\X(s)<\X(s)$, then, by
  the claim \eqref{eq:condx1low} we have proved,
  we have either $x_1(\bar\X(s))<x_1(\X(s))$ or
  $x_2(\bar\Y(s))>x_2(\Y(s))$. If
  $x_1(\bar\X(s))<x_1(\X(s))$, then, as
  $\bar\Y(s)>\Y(s)$
  \begin{equation*}
    x_1(\bar\X(s))<x_1(\X(s))=x_2(\Y(s))\leq x_2(\bar\Y(s))
  \end{equation*}
  which contradicts \eqref{eq:egbarX}. Similarly,
  we check that if $x_2(\bar\Y(s))>x_2(\Y(s))$
  then we obtain a contradiction to
  \eqref{eq:egbarX}. Hence, $\bar\X=\X$ and
  therefore $\bar\Y=\Y$. Then,
  $\bar x(s)=x_2(\bar\X(s))=x_2(\X(s))=x(s)$
  and similarly, we treat the other components of
  $\bar\Z$. From the definitions of $\Cb$ and
  $\Db$, we have that $\bar\V=\V$ and
  $\bar\W=\W$. Hence, we have proved that
  $\Cb\circ \Db\circ \Eb=\Eb$. The fact that
  $\Db\circ\Cb=\id$ follows directly from the
  definitions of $\Cb$ and $\Db$. 

  \textbf{Step 2.} We prove \eqref{eq:escc}.
  Given $(\psi_1,\psi_2)\in\F$, let
  $(\X,\Y,\Z,\V,\W)=\Cb(\psi_1,\psi_2)$,
  $Z=\Sb(\X,\Y,\Z,\V,\W)$ and
  $(\bar\X,\bar\Y,\bar\Z,\bar\V,\bar\W)=\Eb(Z)$. We
  have to prove that $\bar\X=\X$, the rest will
  easily follow. Since $Z\in\H$ is a solution with
  data $(\X,\Y,\Z,\V,\W)\in\G_0$, we have
  $t(\X(s),\Y(s))=0$. Hence, from the definition
  of $\Eb$, we get $\bar\X(s)\leq\X(s)$. Assume
  that there exists $s\in\Real$ such that
  $\bar\X(s)<\X(s)$. By the definition of $\Eb$,
  we have $t(\bar\X(s),\bar\Y(s))=0$. Let
  \begin{equation*}
    s_0=\X^{-1}(\bar\X(s))\quad\text{ and }\quad
    s_1=\Y^{-1}(\bar\Y(s)).    
  \end{equation*}
  We have 
  \begin{equation*}
    \X(s_0)=\bar\X(s)<\X(s)\quad\text{ and }\quad
    \Y(s_1)=\bar\Y(s)>\Y(s), 
  \end{equation*}
  and therefore $s_0<s<s_1$. Due to the
  monotonicity of $t(X,Y)$ (that is, $t_X\geq0$
  and $t_Y\leq0$), since
  $t(\X(s_0),\Y(s_0))=t(\bar\X(s),\bar\Y(s))=t(\X(s_1),\Y(s_1))=0$,
  we get that $t(X,Y)=0$ on the rectangle
  $\Omega=[\X(s_0),\X(s_1)]\times[\Y(s_0),\Y(s_1)]$. It
  implies that $x_X=c(U)t_X=0$ and $x_Y=c(U)t_Y=0$
  on $\Omega$. Hence, the function $x(X,Y)$ is
  constant on $\Omega$ and we have
  \begin{equation*}
    x_1(\bar\X(s))=x_1(\X(s_0))=x(\X(s_0),\Y(s_0))=x(\X(s_1),\Y(s_1))=x_2(\Y(s_1))=x_2(\bar\Y(s)).
  \end{equation*}
  However, the fact that
  $x_1(\bar\X(s))=x_2(\bar\Y(s))$ and
  $\bar\X(s)<\X(s)$ contradicts the definition of
  $\X$ in \eqref{eq:defXs}, and therefore we have
  proved that $\bar X=\X$. Then, $\bar\Y=\Y$ and
  \begin{equation*}
    \bar\Z(s)=Z(\bar\X(s),\bar\Y(s))=Z(\X(s),\Y(s))=\Z(s).
  \end{equation*}
  Similarly, one proves that $\bar\V=\V$ and
  $\bar\W=\W$. Thus we have proved that
  $\Eb\circ\Sb\circ\Cb=\Cb$. The fact that
  $\Sb\circ\Eb=\id$ follows from the uniqueness of
  the solution for a given data.
\end{proof}

\begin{definition}  \label{def:ST}
  For any $T\geq0$, we define the mapping
  $S_T\colon\F\to\F$ by
  \begin{equation}
    \label{eq:defST}
    S_T=\Db\circ \Eb\circ\tb_T\circ \Sb\circ \Cb.
  \end{equation}
\end{definition}
\begin{theorem}
  \label{th:Stsemigroup}
  The mapping $S_T$ is a semigroup.
\end{theorem}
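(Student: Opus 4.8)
The plan is to obtain the identity $S_{T+T'}=S_T\circ S_{T'}$ for all $T,T'\geq 0$ purely algebraically from the relations collected in Lemma~\ref{lem:cdeeqe} together with the translation law \eqref{eq:tranlaw}. First I would unfold the left-hand side using the definition $S_T=\Db\circ\Eb\circ\tb_T\circ\Sb\circ\Cb$, writing
\begin{equation*}
  S_T\circ S_{T'}=\Db\circ\Eb\circ\tb_T\circ\Sb\circ\bigl(\Cb\circ\Db\circ\Eb\bigr)\circ\tb_{T'}\circ\Sb\circ\Cb .
\end{equation*}
The crucial observation is that the inner block $\Cb\circ\Db\circ\Eb$ appears, and by the first identity in \eqref{eq:cdeeqe} it equals $\Eb$. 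Substituting this and then using the second identity in \eqref{eq:escc}, namely $\Sb\circ\Eb=\id_{\H}$, the middle of the composition collapses, leaving
\begin{equation*}
  S_T\circ S_{T'}=\Db\circ\Eb\circ\tb_T\circ\tb_{T'}\circ\Sb\circ\Cb=\Db\circ\Eb\circ\tb_{T+T'}\circ\Sb\circ\Cb=S_{T+T'},
\end{equation*}
where the middle equality is the translation law \eqref{eq:tranlaw}.

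For completeness I would also check the identity element, $S_0=\id_{\F}$: since $\tb_0=\id_{\H}$ one has $S_0=\Db\circ\Eb\circ\Sb\circ\Cb$, and now the first identity in \eqref{eq:escc}, $\Eb\circ\Sb\circ\Cb=\Cb$, together with $\Db\circ\Cb=\id_{\F}$ from \eqref{eq:cdeeqe}, gives $S_0=\Db\circ\Cb=\id_{\F}$. At each stage one should record that the intermediate elements lie in the sets on which the next map is defined --- in particular that $\tb_{T'}\circ\Sb\circ\Cb$ produces an element of $\H$, so that $\Eb$ applies and returns an element of $\G_0\subset\G$ on which $\Sb$ acts --- but this is only bookkeeping.

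Thus there is essentially nothing left to prove here: the whole weight of the argument has already been carried by Lemma~\ref{lem:cdeeqe}. The main obstacle, which is handled there rather than in this theorem, is showing that $\Eb$ reconstructs from a global solution exactly the same parametrization of the level set $\{t=0\}$ that $\Cb\circ\Db$ does; this rests on the uniqueness part of Theorem~\ref{th:globalsol} and on the rigidity of the sets $\{t(X,Y)=\text{const}\}$ (using $t_X\geq 0$ and $t_Y\leq 0$), which forces $x$, $U$, and the relevant derivatives to be constant on any rectangular box contained in such a level set. Once that input is granted, the proof of the present theorem is the two-line computation above.
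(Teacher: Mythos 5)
Your argument is exactly the paper's proof: unfold the two factors, collapse the inner block $\Sb\circ\Cb\circ\Db\circ\Eb$ to the identity via $\Cb\circ\Db\circ\Eb=\Eb$ and $\Sb\circ\Eb=\id$ from Lemma~\ref{lem:cdeeqe}, and then apply the translation law \eqref{eq:tranlaw}. The extra verification that $S_0=\id_{\F}$ and your remark that all the real work sits in Lemma~\ref{lem:cdeeqe} are correct additions, but they do not change the route.
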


\begin{proof}
  We have
  \begin{align*}
    S_T\circ S_{T'}&=\Db\circ \Eb\circ\tb_T\circ \Sb\circ \Cb\circ \Db\circ \Eb\circ\tb_{T'}\circ \Sb\circ \Cb\\
    &=\Db\circ \Eb\circ\tb_T\circ\tb_{T'}\circ \Sb\circ \Cb&&\text{(by Lemma \ref{lem:cdeeqe})}\\
    &=\Db\circ \Eb\circ\tb_{T+T'}\circ \Sb\circ \Cb&&\text{(by \eqref{eq:tranlaw})}\\
    &=S_{T+T'}.
  \end{align*}
\end{proof}

\section{Returning to the original variables}
\label{sec:backto}
\begin{definition}
  \label{def:M}
  Given $\psi=(\psi_1,\psi_2)\in\F$, we define
  $(u,R,S,\mu,\nu)\in\D$ as
  \begin{subequations}
    \label{eq:defMbpf}
    \begin{equation}
      \label{eq:defMbpfu1}
      u(x)=U_1(X)\text{ if }x_1(X)=x
    \end{equation}
    or, equivalently,
    \begin{equation}
      \label{eq:defMbpfu2}
      u(x)=U_2(X)\text{ if }x_2(X)=x
    \end{equation}
    and\footnote{The push-forward of a measure $\lambda$ by a function $f$ is the measure $f_\#\lambda$ defined by $f_\#\lambda(B)=\lambda(f^{-1}(B))$ for Borel sets $B$.}
    \begin{align}
      \label{eq:defMbpf1}
      \mu&=(x_1)_\#(J_1'(X)\,dX),\\
      \label{eq:defMbpf2}
      \nu&=(x_2)_\#(J_2'(Y)\,dY),\\
      \label{eq:defMbpf3}
      R(x)\,dx&=(x_1)_\#\left(2c(U_1(X))V_1(X)\,dX\right),\\
      \label{eq:defMbpf4}
      S(x)\,dx&=(x_2)_\#\left(-2c(U_2(Y))V_2(Y)\,dY\right).
    \end{align}
  \end{subequations}
  The relations \eqref{eq:defMbpf3} and
  \eqref{eq:defMbpf4} are equivalent to 
  \begin{subequations}
    \label{eq:reluRSF}
    \begin{equation}
      \label{eq:reluRF}
      R(x_1(X))x_1'(X)=2c(U_1(X))V_1(X)
    \end{equation}
    and
    \begin{equation}
      \label{eq:reluSF}
      S(x_2(Y))x_2'(Y)=2c(U_2(Y))V_2(Y)
    \end{equation}
  \end{subequations}
  for a.e. $X$ and $Y$. We denote by
  $\Mb\colon\F\to\D$ the mapping that to any
  $\psi\in\F$ associates $(u,R,S,\mu,\nu)\in\F$ as
  defined above.
\end{definition}
We have to prove that the measures
$(x_1)_\#\left(c(U_1(X))V_1(X)\,dX\right)$ and
$(x_2)_\#\left(-c(U_2(Y))V_2(Y)\,dY\right)$ are
absolutely continuous with respect to the Lebesgue
measure and that $(u,R,S,\mu,\nu)$ belongs to $\D$
so that the definition is well-posed. It will be
done in the proof of the following lemma where an
equivalent definition of the mapping $\Mb$ is
given.
\begin{lemma}
  \label{lem:revufromG}
  Given $\psi=(\psi_1,\psi_2)\in\F$, let
  $(u,R,S,\mu,\nu)=\Mb(\psi_1,\psi_2)$ as defined
  in Definition \ref{def:M}. Then, for any
  $\Theta=(\X,\Y,\Z,\V,\W)\in\G_0$ such that
  $(\psi_1,\psi_1)=\Db\Theta$, we have
  \begin{subequations}
    \label{eq:relufromG}
    \begin{equation}
      \label{eq:relufromGu}
      u(\bar x)=U(s)\text{ if }\bar x=x(s)
    \end{equation}
    and
    \begin{align}
      \label{eq:relufromG1}
      \mu&=x_\#(\V_4(\X(s))\dot \X(s)\,ds),\\
      \label{eq:relufromG2}
      \nu&=x_\#(\W_4(\Y(s))\dot \Y(s)\,ds),\\
      \label{eq:relufromG3}
      R(x)\,dx&=x_\#\left(2c(U(s))\V_3(\X(s))\dot \X(s)\,ds\right),\\
      \label{eq:relufromG4}
      S(x)\,dx&=x_\#\left(-2c(U(s))\W_3(\Y(s))\dot
        \Y(s)\,ds\right).
    \end{align}
  \end{subequations}
  The relations \eqref{eq:relufromG3} and
  \eqref{eq:relufromG4} are equivalent to
  \begin{subequations}
    \label{eq:reluRS}
    \begin{equation}
      \label{eq:reluR}
      R(x(s))\V_2(\X(s))=c(U(s))\V_3(\X(s))\text{ for any }s\text{ such that }\dot\X(s)>0
    \end{equation}
    and
    \begin{equation}
      \label{eq:reluS}
      S(x(s))\W_2(\X(s))=-c(U(s))\W_3(\X(s))\text{ for any }s\text{ such that }\dot\Y(s)>0,
    \end{equation}
  \end{subequations}
  respectively.
\end{lemma}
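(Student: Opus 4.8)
The plan is to fix an arbitrary $\Theta=(\X,\Y,\Z,\V,\W)\in\G_0$ with $\Db\Theta=(\psi_1,\psi_2)$ and to reduce the formulas \eqref{eq:relufromG} to Definition \ref{def:M} by a sequence of changes of variables; the well-posedness of Definition \ref{def:M} will be obtained along the way. First I would record the relations supplied by Definition \ref{def:D}: $x_1\circ\X=x_2\circ\Y=x:=\Z_2$, $U_1\circ\X=U_2\circ\Y=U:=\Z_3$, $V_1=\V_3$, $V_2=\W_3$, and $J_1(\X(s))=\int_{-\infty}^s\V_4(\X(\sigma))\dot\X(\sigma)\,d\sigma$ together with the analogues for $J_2,K_1,K_2$. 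Differentiating the last identity, and using that the Lipschitz map $\X$ sends null sets to null sets together with the area bound $\meas(\X(\{\dot\X=0\}))\le\int_{\{\dot\X=0\}}\dot\X\,ds=0$, yields $J_1'=\V_4$ a.e., and likewise $J_2'=\W_4$, $K_1'=\V_5$, $K_2'=\W_5$. From $\Z_1=0$ combined with \eqref{eq:relZVWG} and \eqref{eq:V2W2equala} one also gets $\dot x=2\V_2(\X)\dot\X$, hence $x_1'=2\V_2$ and $x_2'=2\W_2$ a.e.

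The heart of the argument is an elementary change of variables: for an increasing Lipschitz $f\colon\Real\to\Real$ with $f-\id$ bounded (hence continuous and onto) and any nonnegative measurable $g$, one has $\int_\Real g(f(s))f'(s)\,ds=\int_\Real g(X)\,dX$, i.e. $f_\#(f'\,ds)=dX$ --- this is just the fundamental theorem of calculus on the intervals where $f$ is strictly increasing, the plateaus carrying no mass. Applying it with $f=\X$ and $g=\mathbf{1}_A\V_4$ (legitimate since $\V_4\ge0$ by \eqref{eq:posVWb}) gives $\X_\#\big(\V_4(\X(s))\dot\X(s)\,ds\big)=\V_4(X)\,dX$; then, since $x=x_1\circ\X$ and push-forward is functorial, $x_\#\big(\V_4(\X(s))\dot\X(s)\,ds\big)=(x_1)_\#(\V_4\,dX)=(x_1)_\#(J_1'\,dX)=\mu$, which is \eqref{eq:relufromG1}. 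The identity \eqref{eq:relufromG2} is symmetric, and \eqref{eq:relufromG3}--\eqref{eq:relufromG4} follow the same way (splitting into positive and negative parts) with $\V_4$ replaced by $2c(U)\V_3=2c(U)V_1$, resp. $-2c(U)\W_3=-2c(U)V_2$. Equation \eqref{eq:relufromGu} is immediate from the definitions: $u(x(s))=u(x_1(\X(s)))=U_1(\X(s))=U(s)$. For the equivalences \eqref{eq:reluRS}, I would expand the push-forward in \eqref{eq:relufromG3} over $\{\dot\X>0\}$ and use $\V_2=\frac12x_1'$ to pass between \eqref{eq:reluRF} and \eqref{eq:reluR}.

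It remains to check the well-posedness of Definition \ref{def:M}. That $u$ is well defined follows from the well-posedness of Definition \ref{def:D}: if $x_1(X_1)=x_1(X_2)$, then, with $s_i=\X^{-1}(X_i)$, $x(s_1)=x(s_2)$, so $U(s_1)=U(s_2)$ and $U_1(X_1)=U_1(X_2)$. For the absolute continuity of the measures in \eqref{eq:defMbpf3}--\eqref{eq:defMbpf4}, relation \eqref{eq:relxJV1} gives $\{x_1'=0\}\subset\{V_1=0\}$, so the part of $2c(U_1)V_1\,dX$ carried by $\{x_1'=0\}$ vanishes, while on $\{x_1'>0\}$ a change of variables for the Lipschitz function $x_1$ (whose plateaus account for only countably many values of $x$) shows that $(x_1)_\#\big(2c(U_1)V_1\,dX\big)$ has density $\big(\frac{2c(U_1)V_1}{x_1'}\big)\circ x_1^{-1}$ with respect to Lebesgue measure; this density is exactly the function $R$ of \eqref{eq:defMbpf3}, and $S$ is obtained analogously. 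Then $\mu=(x_1)_\#(J_1'\,dX)$ is a finite positive Radon measure with $\mu(\Real)=\lim_{X\to\infty}J_1(X)<\infty$ by \eqref{eq:limJzero}, and the same density computation together with \eqref{eq:relxJV1} yields $\muac=\frac14R^2\,dx$ (similarly $\nuac=\frac14S^2\,dx$). Finally $R,S\in L^2(\Real)$ since $\int R^2\,dx=4\int_{\{x_1'>0\}}J_1'\,dX\le4\norm{J_1}_{L^\infty}$, $u\in L^2(\Real)$ since $\int u^2\,dx\le\norm{x_1'}_{L^\infty}\norm{U_1}_{L^2}^2$ (using $\meas(\Real\setminus x_1(\{x_1'>0\}))=0$), and $u_x=\frac1{2c}(R-S)$ comes from differentiating $u\circ x_1=U_1$ along a curve $(\X,\Y)\in\C$ with $x_1\circ\X=x_2\circ\Y$ and invoking \eqref{eq:condU1U_2}; hence $(u,R,S,\mu,\nu)\in\D$. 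I expect the main obstacle to be the measure-theoretic bookkeeping for the increasing Lipschitz maps $\X,\Y,x_1,x_2$, which may be constant on intervals and whose generalized inverses are only lower semicontinuous --- the required area-formula and push-forward identities are close to those used for the Camassa--Holm and Hunter--Saxton equations in \cite{HolRay:07}.
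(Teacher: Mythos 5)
Your proposal is correct in substance and follows the same overall route as the paper: unwind the push-forwards in Definition \ref{def:M} by changes of variables along the monotone Lipschitz maps $\X$, $\Y$, $x_1$, $x_2$, and then verify membership in $\D$. Where you differ is in the measure-theoretic bookkeeping. The paper stays in the $s$-parametrization: it shows via Cauchy--Schwarz and \eqref{eq:relVW} that $u$ is locally H\"older-$\tfrac12$ and that $\V_3(\X)\dot\X\,ds$ is absolutely continuous with respect to $\dot x\,ds$, then handles the discrepancy between $B$ and $x^{-1}(x(B))$ to get \eqref{eq:reluR}, and identifies $\muac$ through the sets $A=\{\V_2(\X)>0\}$ and $B=(x(A^c))^c$. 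You instead work in the $X$-variable: you first recover $x_1'=2\V_2$, $J_1'=\V_4$, $V_1=\V_3$ a.e.\ from $\Db\Theta=\psi$ (legitimate, since the exceptional set is the Lipschitz image of $\{\dot\X=0\}$, which is null), and then use the algebraic constraint \eqref{eq:relxJV1} (so $V_1=0$ where $x_1'=0$) together with Lemma \ref{lem:short}-type facts to get absolute continuity, the density formula, and $\muac=\frac14R^2\,dx$. This variant is slightly more elementary (no H\"older estimate is needed for the lemma as stated), at the price of losing the H\"older-continuity byproduct and of having to justify the a.e.\ recovery of the primed quantities; both organizations ultimately rest on the same two facts about increasing Lipschitz maps (images of zero-derivative sets are null; derivatives vanish a.e.\ on preimages of null sets). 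Two spots in your sketch are asserted rather than proved and deserve the short arguments the paper supplies: the well-definedness of $u$ does \emph{not} follow from the well-posedness of Definition \ref{def:D} alone --- the needed implication $x(s_1)=x(s_2)\Rightarrow U(s_1)=U(s_2)$ is about plateaus of $x$, not of $\X$, and requires the chain $\dot x=0\Rightarrow\V_2\dot\X=\W_2\dot\Y=0\Rightarrow\V_3\dot\X=\W_3\dot\Y=0\Rightarrow\dot U=0$ via \eqref{eq:relVW}; and the claim $u_x=\frac1{2c}(R-S)$ needs more than the a.e.\ chain-rule identity obtained from \eqref{eq:condU1U_2} (an a.e.\ pointwise derivative of a merely continuous function need not be its weak derivative) --- either integrate your identity along the curve to get $u(y)-u(x)=\int_x^y\frac{R-S}{2c(u)}\,dz$, or argue distributionally with a test function as the paper does. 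Both fixes live entirely inside your framework.
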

\begin{proof} We decompose the proof into 5 steps.
  
  \textbf{Step 1.} We prove that
  \eqref{eq:defMbpf} imply
  \eqref{eq:relufromG}. If $\bar x=x_1(X)$, let
  $s=\X^{-1}(X)$. Then, we have $\bar
  x=x_1(\X(s))=x(s)$ and
  $U_1(X)=U_1(\X(s))=U(s)$. Hence,
  \eqref{eq:defMbpfu1} implies
  \eqref{eq:relufromGu}. For any measurable set
  $A$, we have
  \begin{align*}
    \mu(A)&=\int_{x_1^{-1}(A)}J_1'(X)\,dX\\
    &=\int_{(x_1\circ\X)^{-1}(A)}J_1'(\X(s))\dot\X(s)\,ds&\text{
      (after a change of variables)}\\
    &=\int_{x^{-1}(A)}\V_4(\X(s))\dot\X(s)\,ds&\text{
      (by the definition of $\Db$)}
  \end{align*}
  and therefore
  $\mu=x_{\#}(\V_4(\X(s))\dot\X(s)\,ds)$. One
  proves in the same way the other identities in
  \eqref{eq:relufromG}. 

  \textbf{Step 2.}  We prove that $u$ is a
  well-defined function $L^2$ that is Hölder
  continuous with exponent $1/2$. Given $\bar x$
  such that $\bar x=x(s_0)=x(s_1)$. Since $x$ is
  nondecreasing, it implies that $\dot
  x=\V_2(\X)\dot\X+\W_2(\Y)\dot\Y=0$ in
  $[s_0,s_1]$. Hence,
  $\V_2(\X)\dot\X=\W_2(\Y)\dot\Y=0$ as both
  quantities are positive. By \eqref{eq:relVW}, it
  implies that $\V_3(\X)\dot\X=\W_3(\Y)\dot\Y=0$
  and therefore
  \begin{equation*}
    \dot U=\V_3(\X)\dot\X+\W_3(\Y)\dot\Y=0
  \end{equation*}
  in $[s_0,s_1]$ and $U(s_0)=U(s_1)$. The
  definition of $u$ is therefore well-posed. We
  have
  \begin{equation*}
    \int_{\Real}u^2(x)\,dx=\int_{\Real}u^2(x(s))\dot x(s)\,ds\leq\norm{U}_{L^2}^2
  \end{equation*}
  and $u\in L^2(\Real)$. We have
  \begin{equation}
    \label{eq:uholpr}
    u(x(s))-u(x(\bar s))=\int_{\bar s}^{s}\dot U(s)\,ds
    =\int_{\bar
      s}^{s}(\V_3(\X)\dot\X+\W_3(\Y)\dot\Y)\,ds.
  \end{equation}
  Since $\Theta\in\G_0$, we have $t(s)=0$ which
  implies that $\V_1(\X)\dot\X=\W_1(\Y)\dot\Y$
  and, therefore, $\V_2(\X)\dot\X=\W_2(\Y)\dot\Y$,
  by \eqref{eq:reltx1}, and
  \begin{equation}
    \label{eq:dotxteqz}
    \dot x=2\V_2(\X)\dot\X=2\W_2(\Y)\dot\Y.
  \end{equation}
  By using the Cauchy--Schwarz inequality and \eqref{eq:relVW}, we
  get
  \begin{align}
    \notag \int_{\bar
      s}^{s}\abs{\V_3(\X)}\dot\X\,ds&\leq\left(\int_{\bar s}^s\V_3^2(\X)\dot\X\,ds\right)^{1/2}\left(\int_{\bar s}^s\dot\X\,ds\right)^{1/2}\\
    \notag &\leq C\left(\int_{\bar
        s}^s\V_2^2(\X)\dot\X\,ds\right)^{1/2}\\
    \label{eq:holderbd}
    &\leq C\left(\int_{\bar s}^s\dot
      x\,ds\right)^{1/2}=C(x(s)-x(\bar s))^{1/2}
  \end{align}
  where the constant $C$ depends on
  $\tnorm{\Theta}_{\G}$ and $\abs{\X(s)-\X(\bar
    s)}$. Similarly, one proves that $\int_{\bar
    s}^{s}\abs{\W_3(\Y)}\dot\Y\,ds\leq
  C(x(s)-x(\bar s))^{1/2}$. Hence,
  \eqref{eq:uholpr} implies that $u$ is locally
  Hölder continuous with exponent $1/2$. 

\textbf{Step 3.} We show that the measures
    $x_\#\left(c(U(s))\V_3(\X(s))\dot
      \X(s)\,ds\right)$ and\\
    $x_\#\left(-c(U(s))\W_3(\Y(s))\dot
      \Y(s)\,ds\right)$ are absolutely continuous
    and \eqref{eq:reluR} and \eqref{eq:reluS}
    hold. The inequality
  \eqref{eq:holderbd} proves that the measure
  $\V_3(\X)\dot\X\,ds$ is absolutely continuous
  with respect to $\dot x\,ds$. For any set $A$ of
  zero measure, we have $\int_{x^{-1}(A)}\dot
  x\,ds=0$ and therefore
  $\int_{x^{-1}(A)}\V_3(\X)\dot\X\,ds=0$. It
  follows that
  \begin{equation*}
    x_\#\left(2c(U(s))\V_3(\X(s))\dot
      \X(s)\,ds\right)(A)=\int_{x^{-1}(A)}2c(U)\V_3(\X)\dot\X\,ds=0
  \end{equation*}
  and the measure
  $x_\#\left(2c(U(s))\V_3(\X(s))\dot
    \X(s)\,ds\right)$ is absolutely continuous. In
  the same way, one proves that
  $x_\#\left(-2c(U(s))\W_3(\Y(s))\dot
    \Y(s)\,ds\right)$ is absolutely continuous and
  $R(x)$ and $S(x)$ as given by
  \eqref{eq:relufromG3} and \eqref{eq:relufromG4}
  are well-defined. We have
  \begin{equation}
    \label{eq:rewpusfw}
    \int_{x^{-1}(A)}R(x(s))\dot x(s)\,ds=\int_{x^{-1}(A)}2c(U(s))\V_3(\X(s))\dot \X(s)\,ds
  \end{equation}
  for any measurable set $A$. For any measurable
  set $B$, we have the decomposition
  $x^{-1}(x(B))=B\cup(B^c\cap x^{-1}(x(B)))$. Let
  prove that the set $B^c\cap x^{-1}(x(B))$ has
  measure zero with respect to $\dot x(s)\,ds$. We
  consider a point $\bar s\in B^c\cap
  x^{-1}(x(B))$, there exists $s\in B$ such that
  $x(\bar s)=x(s)$. Since $x$ is increasing, it
  implies that $\dot x(\bar s)=0$ and therefore
  $\int_{B^c\cap x^{-1}(x(B))}\dot x(s)\,ds=0$ so
  that the set $B^c\cap x^{-1}(x(B))$ has zero
  measure with respect to $\dot x(s)\,ds$. Since
  $\V_3(\X(s))\dot\X(s)\,ds$ is absolutely
  continuous with respect to $\dot x(s)\,ds$, it
  implies that $\int_{B^c\cap
    x^{-1}(x(B))}\V_3(\X(s))\dot\X(s)\,ds=0$. Hence,
  taking $A=x(B)$ in \eqref{eq:rewpusfw}, we
  obtain
  \begin{equation*}
    \int_{B}R(x(s))\dot x(s)\,ds=\int_{B}2c(U(s))\V_3(\X(s))\dot \X(s)\,ds
  \end{equation*}
  for any Borel set $B$. Hence,
  \begin{equation}
    \label{eq:relRSfromG1}
    R(x(s))\dot x(s)=2c(U(s))\V_3(\X(s))\dot \X(s)
  \end{equation}
  which yields
  \begin{equation}
    \label{eq:relRSfromG2}
    R(x(s))\V_2(\X(s))=c(U(s))\V_3(\X(s))
  \end{equation}
  after simplifying by $\dot\X(s)$. Similarly, we
  obtain \eqref{eq:reluS}. 

\textbf{Step 4.}  We show that $R$ and $S$ belong to $L^2$
    and $u_x=\frac{R-S}{2c}$. We have
  \begin{align*}
    \int_\Real R^2(x)\,dx&=\int_\Real
    R^2(x(s))\dot x\,ds\\
    &=2\int_\Real
    R^2(x(s))\V_2(\X(s))\dot\X\,ds\\
    &=2\int_{\{s\in\Real\mid
      \V_2(\X(s))>0\}}\frac{(R(x(s))\V_2(\X(s)))^2}{\V_2(\X(s))}\dot\X\,ds\\
    &\leq 4\int_\Real\V_4(\X)\dot\X\,ds&\text{(by
      \eqref{eq:reluR} and \eqref{eq:relVW})}\\
    &\leq 4\norm{J}_{L^\infty(\Real)}\leq
    4\tnorm{\Theta}_{\G}
  \end{align*}
  because $\dot
  J(s)=\V_4(\X)\dot\X+\W_4(\Y)\dot\Y$ and $\V_4$
  and $\W_4$ are positive. Hence $R\in L^2$ and,
  similarly, one proves that $S\in L^2$. For any
  smooth function $\phi$ with compact support, we
  have
  \begin{equation*}
    \int_{\Real}u\phi_x\,dx=\int_{\Real}u(x(s))\phi_x(x(s))\dot x(s)\,ds=\int_\Real U(s)\phi(x(s))_s\,ds.
  \end{equation*}
  After integrating by parts, it yields
  \begin{align*}
    \int_{\Real}u\phi_x\,dx=-\int_{\Real}\dot U(s)\phi(x(s))\,ds&=\int_\Real(\V_3(\X)\dot\X+\W_3(\Y)\dot\Y)\phi(x(s))\,ds\\
    &=\int_\Real\frac1{c(U)}(R(x(s))\V_2(\X)\dot\X-S(x(s))\W_2(\Y)\dot\Y)\phi(x(s))\,ds\\
    &=\int_\Real\frac1{2c(U)}(R(x(s))-S(x(s))\phi(x(s))\dot
    x\,ds\quad\text{ (by \eqref{eq:dotxteqz})}\\
    &=\int_\Real\frac{R-S}{2c(u)}\phi\,dx,
  \end{align*}
  after a change of variables. Hence,
  $u_x=\frac{R-S}{2c(u)}$ in the sense of
  distribution. 

  \textbf{Step 5.} We show that
  $\muac=\frac14 R^2dx$ and
  $\nuac=\frac14 S^2dx$. Let
  \begin{equation}
    \label{eq:defsetA}
    A=\{s\mid \V_2(\X(s))>0\}\quad\text{ and }B=(x(A^c))^c.
  \end{equation}
  We have
  \begin{equation*}
    \meas(B^c)=\int_{A^c}\dot x\,ds=0
  \end{equation*}
  because $\dot x=0$ almost everywhere on $A^c$,
  by \eqref{eq:dotxteqz}. The set $B$ has
  therefore full measure. We have
  $x^{-1}(B)\subset A$. Indeed, for any $s\in
  x^{-1}(B)$, we have $x(s)\neq x(\bar s)$ for all
  $\bar s\in A^c$. For any measurable subset
  $B'\subset B$, we have, by definition of $\mu$,
  that
  \begin{equation}
    \label{eq:mubpr}
    \mu(B')=\int_{x^{-1}(B')}\V_4(\X(s))\dot\X(s)\,ds.
  \end{equation}
  Hence, since $x^{-1}(B')\subset A$,
  \begin{align*}
    \mu(B')&=\int_{x^{-1}(B')}\frac{\V_4(\X)\V_2(\X)}{\V_2(\X)}\dot\X\,ds\\
    &=\int_{x^{-1}(B')}\frac{c^2(U)\V_3^2(\X)}{2\V_2(\X)}\dot\X\,ds&\text{ (by \eqref{eq:relVW}) }\\
    &=\frac14\int_{x^{-1}(B')}R^2(x(s))\dot x(s)\,ds&\text{ (by \eqref{eq:reluR} and \eqref{eq:dotxteqz}) }\\
    &=\frac14\int_{B'}R^2\,dx
  \end{align*}
  and therefore
  $\muac=\frac14 R^2\,dx$. Similarly, one proves
  that $\nuac=\frac14 S^2\,dx$.
\end{proof}
\begin{lemma}
  \label{lem:relURS}
  Given $(u_0,R_0,S_0,\mu_0,\nu_0)\in\D$, let us
  denote $(u,R,S,\mu,\nu)(T)=
  \Mb\circ S_T\circ\Lb (u_0,R_0,S_0,\mu_0,\nu_0)$ and
  $Z=\Sb\circ\Lb(u_0,R_0,S_0,\mu_0,\nu_0)$. Then,
  we have
  \begin{equation}
    \label{eq:reluU}
    u(t(X,Y),x(X,Y))=U(X,Y)
  \end{equation}
  for all $(X,Y)\in\Real^2$, and 
  \begin{subequations}
    \label{eq:relRUXY}
    \begin{align}
      \label{eq:relRUXY1}
      R(t(X,Y),x(X,Y))x_X(X,Y)&=c(U(X,Y))U_X(X,Y),\\
      \label{eq:relRUXY2}
      S(t(X,Y),x(X,Y))x_Y(X,Y)&=-c(U(X,Y))U_Y(X,Y),
    \end{align}
  \end{subequations}
  for almost all $(X,Y)\in\Real^2$ such that
  $x_X(X,Y)>0$ and $x_Y(X,Y)>0$. We have
  \begin{equation}
    \label{eq:weakderu}
    u_t=\frac12(R+S)\quad\text{ and }\quad u_x=\frac1{2c(u)}(R-S)
  \end{equation}
  in the sense of distributions.
\end{lemma}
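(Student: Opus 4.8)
Write $\Theta_T=\Eb\circ\tb_T(Z)=(\X,\Y,\Z,\V,\W)\in\G_0$, where $Z=\Sb\circ\Cb\circ\Lb(u_0,R_0,S_0,\mu_0,\nu_0)$. The plan is to peel off the definitions so that the three assertions become consequences of Lemma~\ref{lem:revufromG} applied to $\Theta_T$, and then to handle the points not lying on the extracted curve. Since $\tb_T$ only subtracts $T$ from the $t$-component and leaves $x,U,J,K$ (hence $Z_X,Z_Y$) unchanged, Definition~\ref{def:Eb} produces a curve $(\X,\Y)\in\C$ with $t(\X(s),\Y(s))=T$ for all $s$, together with $\Z(s)=\big(0,\,x(\X(s),\Y(s)),\,U(\X(s),\Y(s)),\,J(\X(s),\Y(s)),\,K(\X(s),\Y(s))\big)$, $\V(X)=Z_X(X,\Y(X))$ and $\W(Y)=Z_Y(\X(Y),Y)$. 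By Lemma~\ref{lem:cdeeqe}, $\Cb\big(S_T\circ\Lb(\cdot)\big)=\Cb\circ\Db\circ\Eb\circ\tb_T(Z)=\Eb\circ\tb_T(Z)=\Theta_T$, hence $S_T\circ\Lb(\cdot)=\Db\,\Theta_T$ and $(u,R,S,\mu,\nu)(T)=\Mb\circ\Db\,\Theta_T$; this is exactly the setting of Lemma~\ref{lem:revufromG} with $\Theta=\Theta_T$.

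From Lemma~\ref{lem:revufromG} I would read off $u(x(s))=U(s)$, $R(x(s))\V_2(\X(s))=c(U(s))\V_3(\X(s))$ for a.e.\ $s$ with $\dot\X(s)>0$, $S(x(s))\W_2(\Y(s))=-c(U(s))\W_3(\Y(s))$ for a.e.\ $s$ with $\dot\Y(s)>0$, and $u_x=\tfrac1{2c(u)}(R-S)$ at time $T$. Substituting the identifications above ($x(s)=x(\X(s),\Y(s))$, $U(s)=U(\X(s),\Y(s))$, $\V_2(\X(s))=x_X$, $\V_3(\X(s))=U_X$, $\W_2(\Y(s))=x_Y$, $\W_3(\Y(s))=U_Y$, evaluated at $(\X(s),\Y(s))$) turns these into \eqref{eq:reluU}, \eqref{eq:relRUXY1} and \eqref{eq:relRUXY2} \emph{restricted to the curve} $(\X(s),\Y(s))$. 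To drop that restriction in \eqref{eq:reluU}: any $(X_0,Y_0)$ with $t(X_0,Y_0)=T$ either lies on $(\X,\Y)$ or lies in a rectangular box or on a horizontal/vertical segment of $\Gamma_T=\{t=T\}$; on such a piece $t$, hence (by $x_X=c(U)t_X$, $x_Y=-c(U)t_Y$ and $2J_Xx_X=(c(U)U_X)^2$, $2J_Yx_Y=(c(U)U_Y)^2$) also $x$ and $U$, is constant, while $(\X,\Y)$ meets the closure of that piece, so $u(T,x(X_0,Y_0))=U(X_0,Y_0)$ persists; as $T=t(X_0,Y_0)$ was arbitrary, \eqref{eq:reluU} holds on all of $\Real^2$, and in particular $u$ is thereby a well-defined (indeed continuous) function on $\Real^2$, so that \eqref{eq:weakderu} is meaningful. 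For \eqref{eq:relRUXY1}--\eqref{eq:relRUXY2} I would note that a.e.\ $(X,Y)$ with $x_X(X,Y)>0$ lies on the graph part of $\Gamma_T$, $T=t(X,Y)$ (on the two-dimensional boxes $x_X$ vanishes, and the one-dimensional horizontal and vertical pieces are Lebesgue-negligible), that Definition~\ref{def:Eb} extracts exactly that graph there, and that on the graph part $\dot\X>0$ for a.e.\ $s$; a Fubini/coarea argument along the family $\{\Gamma_T\}_{T\in\Real}$ then promotes the curve-wise identity to a.e.\ such $(X,Y)$, and symmetrically for $x_Y>0$.

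It remains to upgrade the time-slice identity $u_x=\tfrac1{2c(u)}(R-S)$ to the space--time statement \eqref{eq:weakderu}, imitating Step~4 of the proof of Lemma~\ref{lem:revufromG} but now with the two-dimensional change of variables $(X,Y)\mapsto(t(X,Y),x(X,Y))$, whose Jacobian is $t_Xx_Y-t_Yx_X=-2c(U)t_Xt_Y\ge0$ and vanishes exactly on the boxes (where $U$ is constant, so they contribute nothing). For a test function $\phi$ set $\hat\phi(X,Y)=\phi(t(X,Y),x(X,Y))$; then $\hat\phi_X=t_X(\phi_t+c\phi_x)$ and $\hat\phi_Y=t_Y(\phi_t-c\phi_x)$, so $\phi_t=\tfrac12\big(\hat\phi_X/t_X+\hat\phi_Y/t_Y\big)$, and substituting $u\circ(t,x)=U$ into $\int u\phi_t\,dt\,dx$, changing variables, integrating by parts in $X$ and $Y$ (the mixed term $U x_{XY}$ cancels), and using \eqref{eq:relRUXY1}--\eqref{eq:relRUXY2} together with $x_X=c(U)t_X$, $x_Y=-c(U)t_Y$, collapses the integral to $\int\tfrac12(R+S)\phi\,dt\,dx$; the analogous computation with $\phi_x$ reproduces $u_x=\tfrac1{2c(u)}(R-S)$, consistent with what is already known slice by slice. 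The main obstacle is precisely this last round of bookkeeping --- legitimizing the change of variables despite $(t,x)$ being non-injective (the degeneracy sits on the boxes, where $U$ is constant) --- together with the measure-theoretic transfer from the single extracted curve to almost every $(X,Y)$ required for \eqref{eq:relRUXY1}--\eqref{eq:relRUXY2}.
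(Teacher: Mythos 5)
Your proposal follows essentially the same route as the paper: you reduce everything to Lemma \ref{lem:revufromG} via the extraction $\Eb\circ\tb_T$ of the time slice, transfer the identities from the extracted curve to arbitrary points of $\{t=T\}$ by the same monotonicity-of-$t,x$ plus energy-relation argument that forces $x$ and $U$ to be constant on the degenerate rectangles (the paper phrases this as the rectangle between $(X,Y)$ and a matching curve point rather than via the pieces of $\Gamma_T$, but it is the same mechanism), and you prove \eqref{eq:weakderu} by exactly the paper's change of variables $(X,Y)\mapsto(t,x)$ with Jacobian $t_Xx_Y-t_Yx_X$ followed by integration by parts and \eqref{eq:relRUXY}. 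The only cosmetic differences are your explicit Fubini/coarea remark for the almost-everywhere bookkeeping and the formal division by $t_X,t_Y$ (which the paper avoids by using the identity $\hat\phi_Xx_Y-\hat\phi_Yx_X=\phi_t(t_Xx_Y-t_Yx_X)$), neither of which changes the argument.
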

\begin{proof}
  We consider a solution $Z\in\H$. Given
  $(X,Y)\in\Real^2$, let us denote $\bar t=t(X,Y)$
  and $\bar x=x(X,Y)$. Let
  $(\X,\Y,\Z,\V,\W)=\Eb\circ\tb_{\bar t}(Z)$. By
  definition, we have $t(\X(s),\Y(s))=\bar t$,
  and, slightly abusing the notation,
  $x(s)=\Z_2(s)=x(\X(s),\Y(s))$ and
  $U(s)=\Z_3(s)=U(\X(s),\Y(s))$ for all
  $s\in\Real$. By Lemma \ref{lem:revufromG}, we
  have $u(\bar t,\bar x)=U(s)$ for any $s$ such
  that $x(s)=\bar x$. It implies that, for any
  $\bar s$ such that
  \begin{subequations}
    \label{eq:tXbart}
    \begin{equation}
      \label{eq:tXbart1}
      t(\X(\bar s),\Y(\bar s))=\bar t=t(X,Y)
  \end{equation}
  and
  \begin{equation}
    \label{eq:tXbart2}
    x(\X(\bar s),\Y(\bar s))=\bar x=x(X,Y),
  \end{equation}
  \end{subequations}
  we have
  \begin{equation*}
    u(\bar t,\bar x)=U(\X(\bar s),\Y(\bar s)).
  \end{equation*}
  Then, \eqref{eq:reluU} will be proved once we
  have proved that
  \begin{equation}
    \label{eq:Uconst}
    U(\X(\bar s),\Y(\bar
    s))=U(X,Y).    
  \end{equation}
  Let us prove that when \eqref{eq:tXbart} hold,
  then either $(X,Y)=(\X(\bar s),\Y(\bar s))$ or
  \begin{equation}
    \label{eq:derxUvan}
    x_X=x_Y=U_X=U_Y=0,
  \end{equation}
  in the rectangle with corners at $(X,Y)$ and
  $(\X(\bar s),\Y(\bar s))$, so that
  \eqref{eq:Uconst} holds in both cases. We
  consider first the case where $\X(\bar s)\leq X$
  and $\Y(\bar s)\leq Y$. Since the function $x$
  is increasing in the $X$ and $Y$ directions, we
  must have $x_X=x_Y=0$ in the rectangle $[\X(\bar
  s),X]\times [\Y(\bar s),Y]$ and, by
  \eqref{eq:energrel2}, $U_X=U_Y=0$ in the same
  rectangle so that $U$ is constant and we have
  proved \eqref{eq:Uconst}. In the case where
  $\X(\bar s)\leq X$ and $\Y(\bar s)\geq Y$, since
  the function $t$ is increasing in the $X$
  direction and decreasing in the $Y$ direction,
  it follows that $t_X=t_Y=0$ in the rectangle
  $[\X(\bar s),X]\times [Y,\Y(\bar s)]$. Hence,
  $x_X=x_Y=0$ and, as before, we prove
  \eqref{eq:Uconst}. The other cases can be
  treated in the same way and this concludes the
  proof of \eqref{eq:Uconst} and therefore
  \eqref{eq:reluU} holds. Let us prove
  \eqref{eq:relRUXY1}. By \eqref{eq:reluR} and the
  definition of $\Eb$, we get
  \begin{equation*}
    R(\bar t,x(s))x_X(\X(s),\Y(s))=c(U(\X(s),\Y(s)))U_X(\X(s),\Y(s))
  \end{equation*}
  so that
  \begin{equation}
    \label{eq:RtxbarX}
    R(t(X,Y),x(X,Y))x_X(\X(\bar s),\Y(\bar s))=c(U(\X(\bar s),\Y(\bar s)))U_X(\X(\bar s),\Y(\bar s))
  \end{equation}
  for any $\bar s$ such that \eqref{eq:tXbart}
  holds. We have proved that when
  \eqref{eq:tXbart} is satisfied, then either
  $(X,Y)=(\X(\bar s),\Y(\bar s))$ or
  \eqref{eq:derxUvan} holds. Hence,
  \eqref{eq:relRUXY1} follows from
  \eqref{eq:RtxbarX}. Similarly, one proves
  \eqref{eq:relRUXY2}. For any smooth function
  $\phi(t,x)$ with compact support, we have
  \begin{multline}
    \label{eq:intuphit}
    \int_{\Real^2}u(t,x)\phi_t(t,x)=\\\int_{\Real^2}u(t(X,Y),x(X,Y))\phi_t(t(X,Y),x(X,Y))(t_Xx_Y-x_Xt_Y)\,dXdY
  \end{multline}
  where we have used \eqref{eq:reltx} and
  \eqref{eq:reluU}.  By differentiating the
  function $\phi(t(X,Y),x(X,Y))$ with respect to
  $X$ and $Y$, we get that
  \begin{equation*}
    \phi(t(X,Y),x(X,Y))_Xx_Y-\phi(t(X,Y),x(X,Y))_Yx_X=\phi_t(t(X,Y),x(X,Y))(t_Xx_Y-t_Yx_X).
  \end{equation*}
  We insert this identity in \eqref{eq:intuphit} and
  obtain, after integrating by parts,
  \begin{align*}
    \int_{\Real^2}u(t,x)\phi_t(t,x)\,dtdx&=-\int_{\Real^2}((Ux_Y)_X-(Ux_X)_Y)(X,Y)\phi(t(X,Y),x(X,Y))\,dXdY\\
    &=-\int_{\Real^2}((U_Xx_Y-U_Yx_X)(X,Y)\phi(t(X,Y),x(X,Y))\,dXdY.
  \end{align*}
  We use \eqref{eq:relRUXY} and get
  \begin{align*}
    \int_{\Real^2}u(t,x)\phi_t(t,x)\,dtdx&=-\int_{\Real^2}\Big(\big(\frac{R+S}{c(u)}\phi\big)\circ(t,x)\,x_Xx_Y\,dXdY\\
    &=-\int_{\Real^2}\big(\frac12(R+S)2\phi\big)\circ(t,x)\,(t_Xx_Y-tx_Yx_X)\,dXdY\\
    &=-\int_{\Real^2}\frac12(R+S)(t,x)\phi(t,x)\,dtdx.
  \end{align*}
  This proves the first identity in
  \eqref{eq:weakderu}; the second one is proven in the
  same way.
\end{proof}
We can now define the semigroup mapping $\bar S_T$
on $\D$, the original set of variables.
\begin{definition}
  \label{def:bST}
  For any $T>0$, let $\bar S_T:\D\to\D$ be defined
  as
  \begin{equation*}
    \bar S_T=\Mb\circ S_T\circ \Lb
  \end{equation*}
\end{definition}
Given $(u_0,R_0,S_0,\mu_0,\nu_0)\in\D$, let us
denote $(u,R,S,\mu,\nu)(t)=\bar
S_t(u_0,R_0,S_0,\mu_0,\nu_0)$. In the theorem that
follows, we prove that $u(t,x)$ is a weak solution
of the nonlinear wave equation. However it is not
clear if $\bar S_T$ is a semigroup. Indeed, we have
\begin{equation*}
  \bar S_{T}\circ\bar S_{T'}=\Mb\circ S_T\circ \Lb\circ\Mb\circ S_{T'}\circ \Lb.
\end{equation*}
By the semigroup property of $S_T$, it would
follow immediately that $\bar S_{T}$ is also a
semigroup if we had $\Lb\circ\Mb=\id$, but this
identity does not hold in general. It is the aim
of the last section to show that $\bar S_{T}$ is a
semigroup.

\section{Relabeling symmetry}
\label{sec:relasym}

We consider the set of transformations of the
$\Real^2$-plane given by
\begin{equation*}
  (X,Y)\mapsto(f(X),g(Y))
\end{equation*}
for any $(f,g)\in\Gr^2$, where $\Gr$ is the group
of diffeomorphisms on the line, see Definition
\ref{def:Gr}. It is a subgroup of the group of
diffeomorphisms of $\Real^2$. Such transformations
let the characteristics lines invariant. Indeed,
vertical and horizontal lines, which correspond to
the characteristics  in our new sets of
coordinates, remain vertical and horizontal lines
through this mapping. In this section, we show
that the subgroup $\Gr^2$ plays an essential role
by exactly capturing the degree of freedom we have
introduced when changing coordinates and
introduced the equivalent system
\eqref{eq:goveq}. Given $f$ and $g$ in $\Gr$, the
$\Real^2$ plane is stretched in the $X$ and $Y$
direction by the transformations $X\mapsto f(X)$
and $Y\mapsto g(Y)$. The solutions of
\eqref{eq:goveq} are preserved and we can define
an action of $\Gr^2$ on the set of solutions $\H$.

\begin{definition}
  \label{def:actionH} For any $Z\in\H$, $f$ and
  $g$ in $\Gr$, we define $\bar Z\in\H$ as
  \begin{equation}
    \label{eq:defactHZ}
    \bar Z(X,Y)=Z(f(X),g(Y)).
  \end{equation}
  The mapping from $\H\times\Gr^2$ to $\H$ given
  by $Z\times(f,g)\mapsto\bar Z$ defines an action of
  the group $\Gr^2$ on $\H$ and we denote $\bar
  Z=Z\act(f,g)$.
\end{definition}
\begin{proof}[Proof of well-posedness of Definition \ref{def:actionH}]
  For any $\Omega$, given $Z\in\H(\Omega)$ and
  $(f,g)\in\Gr^2$, let $\bar X_l=f(X_l)$,
  $X_r=f(X_r)$, $\bar Y_l=g(Y_l)$, $\bar
  Y_r=g(Y_r)$ and
  \begin{equation*}
    \bar\Omega=[\bar X_l,\bar X_r]\times[\bar Y_l,\bar Y_r].
  \end{equation*}
  Let us prove that $\bar Z\in\H(\bar\Omega)$. We have
  \begin{equation*}
    \bar Z_X(X,Y)=f'(X)Z_X(f(X),g(Y)),\quad\bar Z_Y(X,Y)=g'(Y)Z_X(f(X),g(Y)).
  \end{equation*}
  and
  \begin{equation*}
    \bar Z_{XY}(X,Y)=f'(X)g'(Y)Z_{XY}(f(X),g(Y)).
  \end{equation*}
  By using the linearity of the mapping $F(Z)$ in
  \eqref{eq:condgoveq}, we get
  \begin{align*}
    \bar Z_{XY}&=f'g'Z_{XY}(f,g)\\
    &=f'g'F(Z(f,g))\big(Z_X(f,g),Z_Y(f,g)\big)\\
    &=F(Z(f,g))\big(f'Z_X(f,g),g'Z_Y(f,g)\big)&\text{(by
      the linearity of $F(Z)$)}\\
    &=F(\bar Z)(\bar Z_X,\bar Z_Y)
  \end{align*}
  and $\bar Z$ is a solution of \eqref{eq:goveq}.
  Since $f$ and $g$ belong to $\Gr$, there exists
  $\delta>0$ such that $f'(X)>\delta$ for
  a.e. $X\in\Real$ and $g'(Y)>\delta$ for
  a.e. $Y\in\Real$, see Lemma \ref{lem:charH}. We
  have to check that $\bar Z$ fulfills
  \eqref{eq:relpres}. It is not difficult to do so
  once one has observed that the equalities and
  inequalities in \eqref{eq:relpres} enjoy the
  required homogeneity properties. For example, we
  have
  \begin{equation*}
    2\bar J_X\bar x_X=2f'^2J_X(f,g)x_X(f,g)=f'^2(c(U(f,g))U_X(f,g))^2=(c(\bar U)\bar U_X)^2
  \end{equation*}
  and 
  \begin{equation*}
    \bar x_X=f'x_X(f,g)\geq0.
  \end{equation*}
  We will prove that $\bar Z$ fulfills the
  condition (ii) in Definition \ref{def:H}
  after we have introduced the action of $\Gr^2$
  on $\G$.
\end{proof}
We can define an action on $\C$ as follows. This
action corresponds to a stretching of the curve in
the $X$ and $Y$ direction.
\begin{definition}
  \label{def:actionC}
  Given $(\X,\Y)\in\C$, we define
  $(\bar\X,\bar\Y)\in\C$ such that
  \begin{align}
    \label{eq:defactXY}
    \bar\X=f^{-1}\circ\X\circ
    h&&\bar\Y=g^{-1}\circ\Y\circ h
  \end{align}
  where $h$ is the re-normalizing function which
  yields $\bar\X+\bar\Y=2\id$, that is,
  \begin{equation}
    \label{eq:defhren}
    (f^{-1}\circ\X+g^{-1}\circ\Y\big)\circ h=2\id.
  \end{equation}
  We denote $(\bar\X,\bar\Y)=(\X,\Y)\act(f,g)$.
\end{definition}
\begin{proof}[Proof of wellposedness of Definition \ref{def:actionC}]
  Let us denote
  $v=f^{-1}\circ\X+g^{-1}\circ\Y$. We have
  $v-\id\in\Winf(\Real)$ because $f^{-1}-\id$,
  $g^{-1}-\id$, $\X-\id$ and $\Y-\id$ all belong
  to $\Winf(\Real)$. There exists $\delta>0$ such
  that $(f^{-1})'\geq\delta$ and
  $(g^{-1})'\geq\delta$ a.e. and therefore
  $\dot{v}\geq\delta(\dot\X+\dot\Y)=2\delta$. Hence,
  by Lemma \ref{lem:charH}, we have that $v$ is
  invertible so that $h$ exists and
  $h-\id\in\Winf(\Real)$. One proves then easily
  that $(\bar\X,\bar\Y)\in\C$.
\end{proof}
We define the action on $\G$ so that it commutes
with the $\bullet$ operation and the actions on
$\H$ and $\C$, that is,
\begin{equation}
  \label{eq:comactbule}
  (Z\bullet\Gamma)\act\phi=(Z\act\phi)\bullet(\Gamma\act\phi)
\end{equation}
for any $Z\in\H$, $\Gamma=(\X,\Y)\in\C$ and
$\phi=(f,g)\in\Gr^2$. We obtain the following
definition.
\begin{definition}
  \label{def:actionG}
  For any $\Theta=(\X,\Y,\Z,\V,\W)\in\G$ and
  $f,g\in\Gr$, we define $\bar\Theta=(\bar \X,\bar
  \Y,\bar\Z,\bar \V,\bar \W)\in\G$ as follows
  \begin{equation*}
    (\bar\X,\bar\Y)=(\X,\Y)\act(f,g)
  \end{equation*}
  and
  \begin{align}
    \label{eq:defactGVW}
    \bar\V(X)=f'(X)\V(f(X))&&\bar\W(Y)=g'(Y)\W(g(Y))
  \end{align}
  and
  \begin{equation}
    \label{eq:defactGz}
    \bar\Z=\Z\circ h
  \end{equation}
  where $h$ is given by \eqref{eq:defhren}. The
  mapping from $\G\times\Gr^2$ to $\G$ given by
  $\Theta\times(f,g)\mapsto\bar\Theta$ defines an
  action of the group $\Gr^2$ on $\G$ that we
  denote $\bar\Theta= \Theta\act(f,g)$.
\end{definition}
To check that this definition is well-posed, we
have to check that $\bar\Theta\in\G$. This can be
done without any special difficulty and we omit
the details here. Let us however prove
\eqref{eq:comactbule} in details as we will use it
several times in the following. For any $Z\in\H$,
$\Gamma=(\X,\Y)\in\C$ and $\phi=(f,g)\in\Gr^2$, we
denote $\Theta=(\X,\Y,\Z,\V,\W)=Z\bullet\Gamma$,
$\bar\Theta=(\bar\X,\bar\Y,\bar\Z,\bar\V,\bar\W)=\Theta\act\phi$,
$\bar\Gamma=(\bar\X,\bar\Y)=\Gamma\act\phi$ and
$\tilde\Theta=\bar Z\act\bar\Gamma$. We want to
prove that $\tilde\Theta=\bar\Theta$. We have
$\Z(s)=Z(\X(s),\Y(s))$ and, by
\eqref{eq:defactGz}, $\bar\Z(s)=Z(\X\circ
h(s),\Y\circ h(s))$. Hence, by
\eqref{eq:defactXY},
\begin{equation*}
  \tilde\Z=\bar Z(\bar\X,\bar\Y)=Z(f\circ\bar\X,g\circ\bar\Y)=Z(\X\circ h,\Y\circ h)=\bar\Z.
\end{equation*}
We have $\V(\X(s))=Z_X(\X(s),\Y(s))$ and, by
\eqref{eq:defactGVW}, 
\begin{multline*}
  \bar\V(\bar\X(s))=f'(\bar\X(s))\V(f\circ\bar\X(s))=f'(\bar\X(s))\V(\X\circ
  h(s))\\=f'(\bar\X(s))Z_X(\X\circ h(s),\Y\circ
  h(s))
\end{multline*}
and
\begin{multline*}
  \tilde\V(\bar\X(s))=\bar
  Z_X(\bar\X(s),\bar\Y(s))=f'(\X(s))Z_X(f\circ\bar\X(s),g\circ\bar\Y(s))\\=f'(\bar\X(s))Z_X(\X\circ
  h(s),\Y\circ h(s)).
\end{multline*}
Hence, $\tilde\V=\bar\V$. Similarly one proves
that $\tilde\W=\bar\W$, which concludes the proof
of \eqref{eq:comactbule}. 
\begin{proof}[End of proof of well-posedness of
  Definition \ref{def:actionH}]
  For any $\phi\in\Gr^2$ and $Z\in\H$, it remains
  to prove that $\bar Z$, as defined by
  \eqref{eq:defactHZ}, fulfills the condition
  (ii) in Definition \ref{def:H}. By this
  same condition, for any $Z\in\H$, there exists a
  curve $\Gamma=(\X,\Y)\in\G$ such that
  $Z\bullet\Gamma\in\G$. From
  \eqref{eq:comactbule}, it follows that, for the
  curve $\bar\Gamma=\Gamma\act\phi$, we have $\bar
  Z\bullet\bar\Gamma\in\G$ because
  $(Z\bullet\Gamma)\act\phi\in\G$ and therefore
  $\bar Z$ fulfills the condition (ii) in
  Definition \ref{def:H}.
\end{proof}
\begin{definition}
  \label{def:actionF}
  For any $\psi=(\psi_1,\psi_2)\in\F$ and
  $f,g\in\Gr$, we define
  $\bar\psi=(\bar\psi_1,\bar\psi_2)\in\F$ as
  follows
  \begin{align*}
    \bar x_1(X)&=x_1(f(X)),&\bar U_1(X)&=U_1(f(X)),&\bar J_1(X)&=J_1(f(X)),\\
    \bar x_2(Y)&=x_2(g(Y)),&\bar U_2(Y)&=U_2(g(Y)),&\bar J_2(Y)&=J_2(g(Y)),
  \end{align*}
  and
  \begin{align*}
    \bar V_1(X)=V_1(f(X))f'(X),&&    \bar V_2(Y)=V_2(g(Y))g'(Y).
  \end{align*}
  The mapping from $\F\times\Gr^2$ to $\F$ given
  by $\psi\times(f,g)\mapsto\bar\psi$ defines an
  action of the group $\Gr^2$ on $\F$, and we
  denote
  \begin{equation*}
    \bar\psi=\psi\act
\phi.
  \end{equation*}
\end{definition}
\begin{proof}[Proof of well-posedness of
  Definition \ref{def:actionF}] We have to check
  that $\bar\psi=(\bar\psi_1,\bar\psi_2)\in\F$. We
  will only check that the identities
  \eqref{eq:condcurvF} in the definition
  \ref{def:F} of $\F$ are fulfilled, as the other
  properties can be checked without
  difficulty. For any curve $(\bar\X,\bar\Y)\in\C$
  such that $\bar x_1(\bar\X)=\bar x_2(\bar\Y)$,
  let $(\X,\Y)=(\bar\X,\bar\Y)\act(f,g)$. We have
  \begin{equation*}
    x_1(\X(s))=\bar x_1\circ f^{-1}\circ\X(s)=\bar x_1\circ\bar\X\circ h^{-1}(s)=\bar x_2\circ\bar\Y\circ h^{-1}(s)=\bar x_2\circ f^{-1}\circ\Y(s)=x_2(\Y(s))
  \end{equation*}
  and therefore, since $\psi\in\F$,
  $U_1(\X(s))=U_2(\Y(s))$ for all $s\in\Real$,
  which implies
  \begin{equation*}
    \bar U_1(\bar\X)=U_1\circ\X\circ h=U_2\circ\Y\circ h=\bar U_2(\bar\Y)
  \end{equation*}
  and this proves \eqref{eq:U1U2equal} for
  $\bar\psi$. Similarly, one proves that
  \eqref{eq:condU1U_2} holds for $\bar\psi$.
\end{proof}
In the following lemma, we show that all the
mappings given in \eqref{eq:diagmappings} are
equivariant with respect to the action of the
group $\Gr^2$.
\begin{lemma}
  The mappings $\Eb$, $\tb_T$, $\Cb$, $\Db$ and
  $\Cb$ are $\Gr^2$-equivariant, that is, for all
  $\phi=(f,g)\in\Gr^2$,
  \begin{subequations}
    \label{eq:equivariants}
    \begin{equation}
      \label{eq:equivariants1}
      \Eb(Z\cdot\phi)=\Eb(Z)\cdot\phi,
    \end{equation}
    \begin{equation}
      \label{eq:equivariants2}
      \tb_T(Z\cdot\phi)=\tb_T(Z)\cdot\phi
    \end{equation}
    for all $Z\in\H$ and
    \begin{equation}
      \label{eq:equivariants3}
      \Sb(\Theta\cdot\phi)=\Sb(\Theta)\cdot\phi
    \end{equation}
    for all $\Theta\in\G$ and
    \begin{equation}
      \label{eq:equivariants4}
      \Db(\Theta\cdot\phi)=\Db(\Theta)\cdot\phi
    \end{equation}
    for all $\Theta\in\G_0$ and
    \begin{equation}
      \label{eq:equivariants5}
      \Cb(\psi\cdot\phi)=\Cb(\psi)\cdot\phi
    \end{equation}
  \end{subequations}
  for $\psi\in\F$. Therefore $S_T$ is
  $\Gr^2$-equivariant, that is,
  \begin{equation}
    \label{eq:equivariantsST}
    S_T(\psi\cdot\phi)=S_T(\psi)\cdot\phi
  \end{equation}
  for all $\psi\in\F$.
\end{lemma}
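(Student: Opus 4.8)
The plan is to establish the five equivariance identities \eqref{eq:equivariants1}--\eqref{eq:equivariants5} one at a time, in order of increasing difficulty, and then deduce \eqref{eq:equivariantsST} from the definition \eqref{eq:defST} of $S_T$, since a composition of $\Gr^2$-equivariant maps is again $\Gr^2$-equivariant. The single tool used throughout is the compatibility relation \eqref{eq:comactbule}, $(Z\bullet\Gamma)\act\phi=(Z\act\phi)\bullet(\Gamma\act\phi)$, together with the facts, already contained in the well-posedness proofs of Definitions \ref{def:actionH}, \ref{def:actionC}, \ref{def:actionG} and \ref{def:actionF}, that the action of $\phi=(f,g)\in\Gr^2$ carries $\H$, $\C$, $\G$, $\G_0$ and $\F$ into themselves.

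Identity \eqref{eq:equivariants2} is immediate: by Definition \ref{def:tT}, $\tb_T$ only modifies the value of $Z$ (subtracting $T$ from the first component), whereas by \eqref{eq:defactHZ} the action of $\phi$ only pre-composes $Z$ with $(X,Y)\mapsto(f(X),g(Y))$; the two operations act on disjoint slots and commute. For \eqref{eq:equivariants3}, fix $\Theta\in\G$ with curve $\Gamma=(\X,\Y)$ and put $Z=\Sb(\Theta)$, so $\Theta=Z\bullet\Gamma$. Then $Z\act\phi\in\H$ and, by \eqref{eq:comactbule}, $(Z\act\phi)\bullet(\Gamma\act\phi)=(Z\bullet\Gamma)\act\phi=\Theta\act\phi$; since $\Gamma\act\phi$ is exactly the curve of $\Theta\act\phi$ by Definitions \ref{def:actionC} and \ref{def:actionG}, the uniqueness part of Theorem \ref{th:globalsol} forces $\Sb(\Theta\act\phi)=Z\act\phi=\Sb(\Theta)\act\phi$.

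I expect the main obstacle to be \eqref{eq:equivariants1}. Write $\bar Z=Z\act\phi$, so that $\bar t(X,Y)=t(f(X),g(Y))$, and let $(\X,\Y)$ and $(\X^\phi,\Y^\phi)$ be the curves produced by $\Eb$ from $Z$ and from $\bar Z$ through \eqref{eq:defXsE}. The heart of the matter is to show $(\X^\phi,\Y^\phi)=(\X,\Y)\act\phi$, that is, $\X^\phi=f^{-1}\circ\X\circ h$ with $h$ the renormalization of \eqref{eq:defhren}. In the supremum defining $\X^\phi(s)$ I would substitute $X'=f(\tilde X')$ and use that $f$ and $g$ are increasing homeomorphisms, so the constraint $X'<X$ is preserved, while the fact that $f(X')+g(2s-X')$ need not equal $2s$ is exactly what forces the change of parametrization by $h$; the monotonicity of $t$ (increasing in $X$, decreasing in $Y$) then transfers the sign condition on $\bar t$ to one on $t$ along the stretched curve, and an argument of the same type as in the well-posedness proof of Definition \ref{def:Eb} identifies $\X^\phi$. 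Once the curves are matched, Lemma \ref{lem:exany} gives $\bar Z\bullet(\X^\phi,\Y^\phi)\in\G$, and \eqref{eq:comactbule} yields $\Eb(\bar Z)=\bar Z\bullet(\X^\phi,\Y^\phi)=(Z\bullet(\X,\Y))\act\phi=\Eb(Z)\act\phi$.

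Finally, \eqref{eq:equivariants4} and \eqref{eq:equivariants5} are checked directly from the explicit formulas. For $\Db$: with $\bar\Theta=\Theta\act\phi$ of curve $\bar\X=f^{-1}\circ\X\circ h$, data $\bar\Z=\Z\circ h$ and $\bar\V(X)=f'(X)\V(f(X))$, one has $f\circ\bar\X=\X\circ h$, so the $x_1$- and $U_1$-components of $\Db(\bar\Theta)$ agree with those of $\Db(\Theta)\act\phi$ at once; for the $J_1$- and $K_1$-components one uses the cancellation $f'(\bar\X(s))\,(f^{-1})'(\X(h(s)))=1$ to get $\bar\V_4(\bar\X(s))\dot{\bar\X}(s)=\V_4(\X(h(s)))\dot\X(h(s))\dot h(s)$ and then changes variables $\sigma=h(\tau)$ in \eqref{eq:defJ1J2inD}. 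The computation for $\Cb$ is entirely analogous, starting from the supremum \eqref{eq:defXs} and substituting $X'=f(\tilde X')$ as in the $\Eb$ step. Having shown all five maps to be $\Gr^2$-equivariant, \eqref{eq:equivariantsST} follows by composing them in \eqref{eq:defST}: $S_T(\psi\act\phi)=(\Db\circ\Eb\circ\tb_T\circ\Sb\circ\Cb)(\psi\act\phi)=\big((\Db\circ\Eb\circ\tb_T\circ\Sb\circ\Cb)(\psi)\big)\act\phi=S_T(\psi)\act\phi$.
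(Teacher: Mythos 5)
Your proposal is correct and follows essentially the same route as the paper: the trivial cases ($\tb_T$, $\Db$) from the definitions, $\Sb$ via the compatibility relation \eqref{eq:comactbule} plus uniqueness in Theorem \ref{th:globalsol}, the curve-matching for $\Eb$ and $\Cb$ through their supremum definitions, monotonicity, and a contradiction argument of the same type as in the well-posedness proof of Definition \ref{def:Eb}, and finally \eqref{eq:equivariantsST} by composing equivariant maps in \eqref{eq:defST}. The only difference is cosmetic: the paper proves the one-sided inequality $\bar\X\le\tilde\X$ by evaluating $t$ on the transported curve rather than by substituting in the supremum, but the core argument (ruling out strict inequality via the monotonicity of $t$, respectively of $x_1,x_2$) is the same.
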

\begin{proof}
  Let us prove \eqref{eq:equivariants1}. We denote
  $\Theta=(\X,\Y,\Z,\V,\W)=\Eb(Z)$, $\bar
  Z=Z\act\phi$,
  $\bar\Theta=(\bar\X,\bar\Y,\bar\Z,\bar\V,\bar\W)=\Eb(\bar
  Z)$ and
  $\tilde\Theta=(\tilde\X,\tilde\Y,\tilde\Z,\tilde\V,\tilde\W)=\Theta\act\phi$. We
  want to prove that
  $\tilde\Theta=\bar\Theta$. First we prove that
  $\tilde\Gamma=\bar\Gamma$ where
  $\tilde\Gamma=(\tilde\X,\tilde\Y)$ and
  $\bar\Gamma=(\bar\X,\bar\Y)$. By definition
  \eqref{eq:defXsE}, we have
  \begin{equation*}
    \bar\X(s)=\sup\{X\in\Real\mid t(f(X'),g(Y'))<0\quad\text{for all }X'<X\text{ and }Y'\text{ such that }X'+Y'=s\}.
  \end{equation*}
  We have 
  \begin{align*}
    t(f(\tilde\X(s)),g(\tilde\Y(s)))=t(\X\circ
    h(s),\Y\circ h(s))=0,
  \end{align*}
  by the definition of $(\X,\Y)$ given by
  \eqref{eq:defXsE} which implies that
  $t(\X(s),\Y(s))=0$ for all $s\in\Real$. Hence,
  \begin{equation}
    \label{eq:barxinftilx}
    \bar\X\leq\tilde\X.
  \end{equation}
  Assume that $\bar\X(s)<\tilde\X(s)$ for some
  point $s$. We have
  $t(f(\bar\X(s)),g(\bar\Y(s)))=0$ and due to the
  monotonicity of $t$, it implies that $t(X,Y)=0$
  for all
  $(X,Y)\in[f\circ\bar\X(s),f\circ\tilde\X(s)]\times[g\circ\tilde\Y(s),g\circ\bar\Y(s)]$. If
  $f\circ\bar\X(s)\leq2h(s)-g\circ\bar\X(s)$, we
  obtain a contradiction. Indeed, if we set
  $X'=2h(s)-g\circ\bar\Y(s)$ and
  $Y'=g\circ\bar\Y(s)$, then we have
  \begin{equation*}
    X'<2h(s)-g\circ\tilde\Y(s)=2h(s)-\Y\circ h(s)=\X\circ h(s)
  \end{equation*}
  so that $t(X',Y')=0$ and $X'+Y'=2h(s)$, which
  contradicts the definition \eqref{eq:defXsE} of
  $(\X,\Y)$ at $h(s)$. If
  $f\circ\bar\X(s)>2h(s)-g\circ\bar\X(s)$, then we
  set $X'=f\circ\bar\X(s)<\X\circ
  h(s)=f\circ\tilde\X(s)$ and
  $Y'=2h(s)-f\circ\bar\X(s)$. We have $t(X',Y')=0$
  and $X'+Y'=2h(s)$, which also leads to a
  contradiction of \eqref{eq:defXsE}. Hence, we
  have proved that $\bar\X=\tilde\X$ and therefore
  $\bar\Gamma=\tilde\Gamma$. It means that
  \begin{equation*}
    \Eb(Z\cdot\phi)=(Z\act\phi)\bullet\bar\Gamma=(Z\act\phi)\bullet\tilde\Gamma=(Z\act\phi)\bullet(\Gamma\act\phi).
  \end{equation*}
  Hence, by \eqref{eq:comactbule}, it yields
  \begin{equation*}
    \Eb(Z\cdot\phi)=(Z\bullet\Gamma)\act\phi=\Eb(Z)\act\phi,
  \end{equation*}
  and we have proved \eqref{eq:equivariants1}. The
  identity \eqref{eq:equivariants2} follows
  directly from the definition of $\tb_T$. Let us
  prove \eqref{eq:equivariants3}. For any
  $\Theta=(\X,\Y,\Z,\V,\W)$, we denote
  $Z=S(\Theta)$, $\bar Z=S(\Theta\act\phi)$,
  $\tilde Z=Z\act\phi$. We want to prove that
  $\tilde Z=\bar Z$. By definition of the solution
  operator $S$, we have
  \begin{equation*}
    \bar Z\bullet(\Gamma\act\phi)=\Theta\act\phi,
  \end{equation*}
  and
  \begin{equation*}
    \tilde Z\bullet(\Gamma\act\phi)=(Z\act\phi)\bullet(\Gamma\act\phi)=(Z\bullet\Gamma)\act\phi=\Theta\act\phi,
  \end{equation*}
  by \eqref{eq:comactbule}. Hence, $\bar Z$ and
  $\tilde Z$ are solutions that match the same
  data on a curve. Since the solution is unique by
  Theorem \ref{th:globalsol}, we get $\bar
  Z=\tilde Z$. The property
  \eqref{eq:equivariants4} follows directly from
  the definitions. Let us prove
  \eqref{eq:equivariants5}. For any
  $\psi=(\psi_1,\psi_2)\in\F$, $\phi\in\Gr^2$, we
  denote
  $\bar\psi=(\bar\psi_1,\bar\psi_2)=\psi\act\phi$,
  $\Theta=(\X,\Y,\Z,\V,\W)=\Cb(\psi)$,
  $\bar\Theta=(\bar\X,\bar\Y,\bar\Z,\bar\V,\bar\W)=\Cb(\bar\psi)$
  and $\tilde\Theta=\Theta\act\phi$. We want to
  prove that $\tilde\Theta=\bar\Theta$. First, we
  prove that $\tilde\Gamma=\bar\Gamma$ where
  $\tilde\Gamma=(\tilde\X,\tilde\Y)$ and
  $\bar\Gamma=(\bar\X,\bar\Y)$. By definition
  \eqref{eq:defXsE}, we have
  \begin{multline}
    \label{eq:defXbar}
    \bar\X(s)=\sup\{X\in\Real\mid x_1\circ
    f(X')<x_2\circ g(Y'),\\\text{for all
    }X'<X\text{ and }Y'\text{ such that
    }X'+Y'=s\},
  \end{multline}
  and
  \begin{equation*}
    \X(s)=\sup\{X\in\Real\mid x_1(X')<x_2(Y'),\text{for all
    }X'<X\text{ and }Y'\text{ such that
    }X'+Y'=s\}.
  \end{equation*}
  By continuity, we have $x_1(\X(s))=x_2(\Y(s))$
  so that $x_1\circ\X\circ h=x_2\circ\Y\circ
  h$. Hence, $x_1\circ f\circ\tilde\X=x_2\circ
  g\circ\tilde\Y$ and it implies, by
  \eqref{eq:defXbar}, that
  \begin{equation*}
    \bar\X\leq\tilde\X.
  \end{equation*}
  The proof then resembles to what was done above
  after \eqref{eq:barxinftilx}. Let us assume that
  $\bar\X(s)<\tilde\X(s)$ for some point $s$. We
  have $f(\bar X(s))<f(\tilde\X(s))$ and $g(\tilde
  Y(s))<g(\bar\Y(s))$. By using the monotonicity
  of $x_1$ and $x_2$, we get
  \begin{equation*}
    x_1\circ f\circ\bar\X(s)\leq x_1\circ f\circ\tilde\X(s)=x_2\circ g\circ\tilde\Y(s)\leq x_2\circ g\circ\bar\Y(s)=x_1\circ f\circ\bar\X(s).
  \end{equation*}
  Hence, $x_1\circ f\circ\bar\X(s)=x_1\circ
  f\circ\tilde\X(s)$ and $x_2\circ
  g\circ\tilde\Y(s)=x_2\circ
  g\circ\bar\Y(s)$. Since $x_1$ and $x_2$ are
  decreasing, it follows that $x_1$ and $x_2$ are
  constant on
  $[f\circ\bar\X(s),f\circ\tilde\X(s)]$ and
  $[g\circ\tilde\Y(s),g\circ\bar\Y(s)]$,
  respectively. If
  $f\circ\bar\X(s)\leq2h(s)-g\circ\bar\X(s)$, then
  we obtain a contradiction. Indeed, let us set
  $X'=2h(s)-g\circ\bar\Y(s)$ and
  $Y'=g\circ\bar\Y(s)$, we have
  \begin{equation*}
    X'<2h(s)-g\circ\tilde\Y(s)=2h-\Y\circ h(s)=\X\circ h(s)
  \end{equation*}
  so that $x_1(X')=x_2(Y')=0$ and $X'+Y'=2h(s)$,
  which contradicts the definition
  \eqref{eq:defXs} of $(\X,\Y)$ at $h(s)$. If
  $f\circ\bar\X(s)>2h(s)-g\circ\bar\X(s)$, then we
  have
  \begin{equation*}
    x_1(f\circ\bar\X(s))=x_2(2h(s)-f\circ\bar\X(s)),
  \end{equation*}
  which, in the same way, leads to a contradiction
  of \eqref{eq:defXs}. Thus we have proved that
  $\tilde\Gamma=\bar\Gamma$. We then prove that
  $\tilde\Z=\bar\Z$, $\tilde\V=\bar\V$ and
  $\tilde\W=\bar\W$. It is just a matter of
  applying directly the definitions. For example,
  we have
  \begin{equation*}
    \bar U(s)=\bar U_1\circ\bar\X(s)=U_1\circ f\circ\bar\X(s)=U_1\circ f\circ\tilde\X(s)=U_1\circ\X\circ h(s)=U\circ h(s)=\tilde U(s)
  \end{equation*}
  and
  \begin{equation*}
    \tilde\V_1(X)=f'(X)\V_1(f(X))=\frac{f'(X)x_1'(f(X))}{2c(U_1(f(X)))}=\frac{\bar x_1'(X)}{2c(\tilde U_1(X))}=\frac{\bar x_1'(X)}{2c(\bar U_1(X))}=\bar\V_1(X).
  \end{equation*}
  The equivariance property
  \eqref{eq:equivariantsST} of $S_T$ follows
  directly from the definition of $S_T$ and the
  equivariance properties \eqref{eq:equivariants}.
\end{proof}

\begin{definition}
  We define by $\Fquot$ the quotient of $\F$ with
  respect to the action of the group $\Gr^2$ on
  $\F$, that is,
  \begin{equation*}
    \psi\sim\bar\psi\text{ if there exists $\phi\in\Gr^2$ such that }\bar\psi=\psi\act\phi.
  \end{equation*}
\end{definition}

\begin{definition}
  Let
  \begin{equation*}
    \F_0=\{\psi=(\psi_1,\psi_2)\in\F\mid x_1+J_1=\id\text{ and }x_2+J_2=\id\}
  \end{equation*}
  and $\Pib\colon\F\to\F_0$ be the projection on
  $\F_0$ given by
  $\bar\psi=(\bar\psi_1,\bar\psi_2)=\Pib(\psi)$
  where $\bar\psi\in\F_0$ is defined as
  follows. Let
  \begin{equation}
    \label{eq:deffgpi}
    f(X)=x_1(X)+J_1(X),\quad g(Y)=x_2(Y)+J_2(Y),
  \end{equation}
  we set
  \begin{equation*}
    \bar\psi=\psi\act\phi^{-1}.
  \end{equation*}
\end{definition}

\begin{lemma} The following statements hold: \\
  \begin{enumerate}
  \item[(i)] We have
    \begin{equation}
      \label{eq:eqeqprojpi}
      \psi\sim\bar\psi\quad\text{ if and only if }\quad\Pib(\psi)=\Pib(\bar\psi)
    \end{equation}
    so that the sets $\Fquot$ and $\F_0$ are in bijection.\\[2mm]
  \item[(ii)] We have
    \begin{equation}
      \label{eq:mpim}
      \Mb\circ\Pib=\Mb
    \end{equation}
    and
    \begin{equation}
      \label{eq:linvm}
      \Lb\circ\Mb|_{\F_0}=\id|_{\F_0}\quad\text{ and }\quad\Mb\circ\Lb=\id
    \end{equation}
    so that the sets $\D$, $\F_0$ and $\Fquot$ are
    in bijection.
  \item[(iii)] We have
    \begin{equation}
      \label{eq:picomSt}
      \Pib\circ S_T\circ\Pib=\Pib\circ S_T.
    \end{equation}
  \end{enumerate}
\end{lemma}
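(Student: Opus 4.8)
The argument proceeds claim by claim; parts (i) and (iii) are formal consequences of the structure of the $\Gr^2$-action, while (ii) carries the measure-theoretic content.

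\textbf{Part (i).} First I would check that $\Pib$ is well defined: the pair $\phi=(f,g)$ with $f=x_1+J_1$, $g=x_2+J_2$ lies in $\Gr^2$ by \eqref{eq:Gprop}, and for $\bar\psi=\Pib(\psi)=\psi\act\phi^{-1}$ one has $\bar x_1+\bar J_1=(x_1+J_1)\circ f^{-1}=\id$ and likewise $\bar x_2+\bar J_2=\id$, so $\bar\psi\in\F_0$. The action of $\Gr^2$ on $\F$ from Definition \ref{def:actionF} is a right action for the composition law $(f_1,g_1)(f_2,g_2)=(f_1\circ f_2,g_1\circ g_2)$, with identity $(\id,\id)$ and $(f,g)^{-1}=(f^{-1},g^{-1})$. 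Now if $\bar\psi=\psi\act\sigma$ with $\sigma=(p,q)\in\Gr^2$, then $\bar x_1+\bar J_1=(x_1+J_1)\circ p=f\circ p$, so the pair $\bar\phi$ defining $\Pib(\bar\psi)$ equals $\phi\sigma$; hence
\[
\Pib(\bar\psi)=(\psi\act\sigma)\act(\phi\sigma)^{-1}=\psi\act\big(\sigma\sigma^{-1}\phi^{-1}\big)=\psi\act\phi^{-1}=\Pib(\psi).
\]
Conversely, $\Pib(\psi)=\Pib(\bar\psi)$ yields $\bar\psi=\psi\act(\phi^{-1}\bar\phi)$, so $\psi\sim\bar\psi$. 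Since $\psi\in\F_0$ forces $\phi=(\id,\id)$ and hence $\Pib|_{\F_0}=\id|_{\F_0}$, the map $[\psi]\mapsto\Pib(\psi)$ is a well-defined bijection $\Fquot\to\F_0$ (well definedness and injectivity from the two implications just proved, surjectivity since $[\psi_0]\mapsto\psi_0$ for $\psi_0\in\F_0$).

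\textbf{Part (ii), step 1.} The identity $\Mb(\psi\act\phi)=\Mb(\psi)$ for every $\phi=(f,g)\in\Gr^2$ is a change-of-variables computation in Definition \ref{def:M}: with $\bar\psi=\psi\act\phi$ one has $\bar u(x)=\bar U_1(X)=U_1(f(X))=u(x)$ whenever $x=\bar x_1(X)=x_1(f(X))$, and for any Borel set $B$, substituting $\tilde X=f(X)$ and using $\bar x_1=x_1\circ f$, $\bar J_1'=(J_1'\circ f)f'$,
\[
\bar\mu(B)=\int_{(x_1\circ f)^{-1}(B)}J_1'(f(X))f'(X)\,dX=\int_{x_1^{-1}(B)}J_1'(\tilde X)\,d\tilde X=\mu(B),
\]
and similarly $\bar\nu=\nu$, $\bar R=R$, $\bar S=S$. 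Since $\Pib(\psi)=\psi\act(x_1+J_1,x_2+J_2)^{-1}$, this gives $\Mb\circ\Pib=\Mb$.

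\textbf{Part (ii), step 2.} For $\Mb\circ\Lb=\id_\D$, take $(u,R,S,\mu,\nu)\in\D$ and $\psi=\Lb(u,R,S,\mu,\nu)$; note $\psi\in\F_0$ by \eqref{eq:defJ1J2}. The $u$-component of $\Mb(\psi)$ is $u$ because $U_1=u\circ x_1$. For the measures, the point is that \eqref{eq:defx1} makes $x_1$ the generalized inverse of $y\mapsto y+\mu((-\infty,y))$, so $(x_1)_\#(dX)$ has distribution function $x\mapsto x+\mu((-\infty,x))$; subtracting $(x_1)_\#(x_1'\,dX)=dx$ (change of variables, $x_1$ Lipschitz, nondecreasing and surjective) gives $(x_1)_\#(J_1'\,dX)=\mu$, and likewise for $\nu$; formulas \eqref{eq:reluRF}--\eqref{eq:reluSF} reduce to \eqref{eq:defV1V2} after cancelling $x_1'$ where it is positive and noting both sides vanish (up to a $dx$-null image) where $x_1'=0$. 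Conversely, for $\psi\in\F_0$ put $(u,R,S,\mu,\nu)=\Mb(\psi)$ and $\bar\psi=\Lb(u,R,S,\mu,\nu)$; using $x_1+J_1=\id$, $J_1(X)=\int_{-\infty}^XJ_1'$ and $\mu=(x_1)_\#(J_1'\,dX)$ one checks $x+\mu((-\infty,x))=x_1(X_x)+J_1(X_x)=X_x$ with $X_x=\sup\{X:x_1(X)<x\}$, which identifies the function defined by \eqref{eq:defx1} with $x_1$; then $\bar J_1=\id-x_1=J_1$, $\bar U_1=u\circ x_1=U_1$, $\bar V_1=V_1$ by \eqref{eq:reluRF}, $\bar K_1=K_1$ by comparing \eqref{eq:defK1K2} with the defining integral, and symmetrically for $\psi_2$, so $\bar\psi=\psi$. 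Thus $\Mb|_{\F_0}$ and $\Lb$ are mutually inverse bijections $\F_0\leftrightarrow\D$, and together with part (i) the sets $\D$, $\F_0$, $\Fquot$ are in bijection.

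\textbf{Part (iii) and the main difficulty.} Claim (iii) is immediate from part (i) and the equivariance \eqref{eq:equivariantsST}: writing $\Pib(\psi)=\psi\act\phi^{-1}$, equivariance gives $S_T(\Pib(\psi))=S_T(\psi)\act\phi^{-1}\sim S_T(\psi)$, and the forward implication of part (i) then yields $\Pib(S_T(\Pib(\psi)))=\Pib(S_T(\psi))$, which is \eqref{eq:picomSt}. I expect the real obstacle to be step 2 of part (ii): the identities $\Mb\circ\Lb=\id$ and $\Lb\circ\Mb|_{\F_0}=\id$ require pushing measures through the generalized-inverse maps $x_1,x_2$ and controlling the behaviour on the sets where $x_1'$ or $x_2'$ vanishes — precisely where the singular parts of $\mu$ and $\nu$ live and where $x_1,x_2$ fail to be injective; parts (i) and (iii) are then purely formal.
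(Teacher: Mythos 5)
Your proposal is correct and follows essentially the same route as the paper: the same group-composition argument for (i), the same key identity $x+\mu((-\infty,x))=g(x)$ with $g(x)=\sup\{X\mid x_1(X)<x\}$ driving both identities in \eqref{eq:linvm} (you phrase it via push-forwards, the paper via comparing the two distribution functions, but the content is identical), and the same equivariance-plus-(i) argument for (iii). The only difference is that you spell out the $\Gr^2$-invariance $\Mb(\psi\act\phi)=\Mb(\psi)$ behind $\Mb\circ\Pib=\Mb$ by an explicit change of variables, a computation the paper's proof leaves implicit.
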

Note that the first identity in \eqref{eq:linvm}
is equivalent to
\begin{equation}
  \label{eq:linvmeq}
  \Lb\circ\Mb\circ\Pib=\Pib.
\end{equation}
\begin{proof}
  \textbf{Step 1.}  We prove
  \eqref{eq:eqeqprojpi}. If $\bar\psi\sim\psi$,
  then there exists $\tilde\phi\in\Gr^2$ such that
  $\bar\psi=\psi\act\tilde\phi$. Let $\phi=(f,g)$
  and $\bar\phi=(\bar f,\bar g)$ be given by
  \eqref{eq:deffgpi} for $\psi$ and $\bar\psi$,
  respectively. One can check that
  $\bar\phi=\phi\circ\tilde\phi$ and therefore
  \begin{equation*}
    \Pib(\bar\psi)=\bar\psi\act(\bar\phi)^{-1}=(\psi\act\tilde\phi)\act(\phi\circ\tilde\phi)^{-1}=\psi\act(\tilde\phi\circ(\phi\circ\tilde\phi)^{-1})=\psi\act\phi^{-1}=\Pib(\psi).
  \end{equation*}
  Conversely, if $\Pib(\bar\psi)=\Pib(\psi)$ then
  $\bar\psi\act\bar\phi^{-1}=\psi\act\phi^{-1}$ so
  that
  $\bar\psi=(\psi\act\phi^{-1})\act\bar\phi=\psi\act(\phi^{-1}\circ\phi)$
  and $\bar\psi$ and $\psi$ are equivalent.

  \textbf{Step 2.}  We prove that
  $\Lb\circ\Mb=\id_{\F_0}$. Given
  $\psi=(\psi_1,\psi_2)\in\F_0$, let us consider
  $(u,R,S,\mu,\nu)=\Lb(\psi_1,\psi_2)$ and
  $\bar\psi=(\bar\psi_1,\bar\psi_2)=\Mb(u,R,S,\mu,\nu)$. We
  want to prove that $\bar\psi=\psi$. Let
  \begin{equation}
    \label{eq:gdef}
    g(x)=\sup\{X\in\Real\mid x_1(X)<x\}.
  \end{equation}
  It is not hard to prove, using the fact that
  $x_1$ is increasing and continuous, that
  \begin{equation}
    \label{eq:ygxeqx}
    x_1(g(x))=x
  \end{equation}
  for all $x\in\Real$ and
  $x_1\inv((-\infty,x))=(-\infty,g(x))$. For any
  $x\in\Real$, we have, by \eqref{eq:defMbpf1},
  that
  \begin{equation}
    \label{eq:J1geqmu}
    \mu((-\infty,x))=\int_{x_1\inv((-\infty,x))}J_1'(X)\,dX
    =\int_{-\infty}^{g(x)}J_1'(X)\,dX
    =J_1(g(x))
  \end{equation}
  because $J_1(-\infty)=0$. Since $\psi\in\F_0$,
  $x_1+J_1=\id$ and we get, by \eqref{eq:ygxeqx}
  and \eqref{eq:J1geqmu}, that
  \begin{equation}
    \label{eq:mupxeqg}
    \mu((-\infty,x))+x=g(x).
  \end{equation}
  From the definition of $\bar x_1$, we then obtain
  that
  \begin{equation}
    \label{eq:ybardef}
    \bar x_1(X)=\sup\{x\in\Real\mid g(x)<X\}.
  \end{equation}
  For any given $X\in\Real$, let us consider an
  increasing sequence $z_i$ tending to $\bar
  x_1(X)$ such that $g(z_i)<X$; such sequence
  exists by \eqref{eq:ybardef}. Since $x_1$ is
  increasing and using \eqref{eq:ygxeqx}, it
  follows that $z_i\leq x_1(X)$. Letting $i$ tend
  to $\infty$, we obtain $\bar x_1(X)\leq
  x_1(X)$. Assume that $\bar x_1(X)<x_1(X)$. Then,
  there exists $x$ such that $\bar
  x_1(X)<x<x_1(X)$ and \eqref{eq:ybardef} then
  implies that $g(x)\geq X$. On the other hand,
  $x=x_1(g(x))<x_1(X)$ implies $g(x)<X$ because
  $x_1$ is increasing, which gives us a
  contradiction. Hence, we have $\bar x_1=x_1$. It
  follows directly from the definitions, since
  $x_1+J_1=\id$, that $\bar J_1=J_1$ and $\bar
  U_1=U_1$. It follows from the definition
  \eqref{eq:defV1V2} and \eqref{eq:reluRSF} that
  $\bar V_1=V_1$ and $\bar V_2=V_2$. Thus we
  have proved that $\bar\psi_1=\psi_1$. In the
  same way, we prove that $\bar\psi_2=\psi_2$,
  which concludes the proof that $L\circ
  M=\id_{\F_0}$.
  
  \textbf{Step 3.} We prove that $\Mb\circ\Lb=\id$.
  Given $(u,R,S,\mu,\nu)\in\D$, let
  $\psi=(\psi_1,\psi_2)=\Lb(u,R,S,\mu,\nu)$ and
  $(\bar u,\bar R,\bar S,\bar \mu,\bar
  \nu)=\Mb(\psi)$. We want to prove that $(\bar
  u,\bar R,\bar S,\bar \mu,\bar
  \nu)=(u,R,S,\mu,\nu)$. Let $g$ be the function
  defined as before by \eqref{eq:gdef}. The same
  computation that leads to \eqref{eq:mupxeqg} now
  gives
  \begin{equation}
    \label{eq:mubarg}
    \bar \mu((-\infty,x))+x=g(x).
  \end{equation}
  Given $X\in\Real$, we consider an increasing
  sequence $x_i$ which converges to $x_1(X)$ and
  such that $\mu((-\infty,x_i))+x_i<X$. Passing to
  the limit and since $x\mapsto\mu((-\infty,x))$ is
  lower semi-continuous, we obtain
  $\mu((-\infty,x_1(X)))+x_1(X)\leq X$. We take
  $X=g(x)$ and get
  \begin{equation}
    \label{eq:musg}
    \mu((-\infty,x))+x\leq g(x).
  \end{equation}
  From the definition of $g$, there exists an
  increasing sequence $X_i$ which converges to
  $g(x)$ such that $x_1(X_i)<x$. The definition
  \eqref{eq:defx1} of $x_1$ tells us that
  $\mu((-\infty,x))+x\geq X_i$. Letting $i$ tend
  to infinity, we obtain $\mu((-\infty,x))+x\geq
  g(x)$ which, together with \eqref{eq:musg},
  yields
  \begin{equation}
    \label{eq:mueqg}
    \mu((-\infty,x))+x=g(x).
  \end{equation}
  Comparing \eqref{eq:mueqg} and \eqref{eq:mubarg}
  we get that $\bar\mu=\mu$. Similarly, one proves
  that $\bar\nu=\nu$. It is clear from the
  definitions that $\bar u=u$. The fact that $\bar
  R=R$ and $\bar S=S$ follow from
  \eqref{eq:defV1V2} and
  \eqref{eq:reluRSF}. Hence, we have proved that
  $(\bar u,\bar R,\bar S,\bar \mu,\bar
  \nu)=(u,R,S,\mu,\nu)$ and $M\circ L=\id_{\D}$.
  \textbf{Step 4.} We prove
  \eqref{eq:picomSt}. For any
  $\psi=(\psi_1,\psi_2)\in\F$, we denote
  $\psi_T=S_T\psi$. Let $\phi=(f,g)\in\Gr^2$ and
  $\phi_T=(f_T,g_T)\in\Gr^2$ be defined as in
  \eqref{eq:deffgpi} so that
  $\Pib\psi=\psi\act\phi^{-1}$ and
  $\Pib\psi_T=\psi_T\act\phi_T^{-1}$. By using
  \eqref{eq:equivariantsST}, we get
  \begin{equation*}
    S_T\circ\Pib(\psi)=S_T(\psi\act\phi^{-1})=S_T(\psi)\act\phi^{-1}
  \end{equation*}
  and therefore $S_T\circ\Pib(\psi)$ and
  $S_T(\psi)$ are equivalent. Then,
  \eqref{eq:picomSt} follows from
  \eqref{eq:eqeqprojpi}.
\end{proof}

We now come to our main theorem.
\begin{theorem} \label{th:main}
  Given $(u_0,R_0,S_0,\mu_0,\nu_0)\in\D$, let us
  denote $(u,R,S,\mu,\nu)(t)=\bar
  S_t(u_0,R_0,S_0,\mu_0,\nu_0)$. Then $u$
  is  a weak solution of the nonlinear variational
  wave equation \eqref{eq:nvw}, that is,
  \begin{equation}
    \label{eq:soldist}
    \int_{\Real^2}(\phi_t-(c(u)\phi)_x)R\,dxdt+\int_{\Real^2}(\phi_t+(c(u)\phi)_x)S\,dxdt=0
  \end{equation}
  for all smooth functions $\phi$ with compact
  support and where
  \begin{equation}
    \label{eq:weakderu1}
    R=u_t+c(u)u_x,\quad S=u_t-c(u)u_x.
  \end{equation}
  Moreover, the measures $\mu(t)$ and $\nu(t)$
  satisfy the following equations in the sense of
  distribution
  \begin{subequations}
    \label{eq:meassol}
    \begin{equation}
      \label{eq:meassol1}
      (\mu+\nu)_t-(c(\mu-\nu))_x=0
    \end{equation}
    and
    \begin{equation}
      \label{eq:meassol2} 
      (\frac{1}{c}(\mu-\nu))_t-(\mu+\nu)_x=0.
    \end{equation}
  \end{subequations}
  The mapping $\bar S_T:\D\to\D$ is a semigroup, that is,
  \begin{equation*}
    \bar S_{t+t'}=\bar S_t\circ \bar S_{t'}
  \end{equation*}
  for all positive $t$ and $t'$.
\end{theorem}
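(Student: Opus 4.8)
The plan is to establish the three distributional identities — the weak formulation \eqref{eq:soldist} and the two measure equations \eqref{eq:meassol1}, \eqref{eq:meassol2} — by transporting the problem to the $(X,Y)$-coordinates, where $Z=\Sb\circ \Cb\circ\Lb(u_0,R_0,S_0,\mu_0,\nu_0)$ solves the governing system \eqref{eq:goveq}, and then to read off the semigroup property from the algebraic relations between $\Lb$, $\Mb$, $\Pib$ and $S_T$ already proved.

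By Lemma \ref{lem:relURS}, applied to $Z$ as above, we have $u(t(X,Y),x(X,Y))=U(X,Y)$, the relations \eqref{eq:relRUXY} expressing $R$ and $S$ in terms of $U_X$ and $U_Y$, and $u_t=\tfrac12(R+S)$, $u_x=\tfrac1{2c(u)}(R-S)$ in the sense of distributions; the last two identities yield \eqref{eq:weakderu1} at once. For \eqref{eq:soldist} I would fix $\phi\in C_c^\infty(\Real^2)$, set $\Phi(X,Y)=\phi(t(X,Y),x(X,Y))$, and change variables in the two space--time integrals. Using $x_X=c(U)t_X$ and $x_Y=-c(U)t_Y$, the Jacobian of $(X,Y)\mapsto(t,x)$ equals $t_Xx_Y-t_Yx_X=2x_Xx_Y/c(U)\ge 0$, and with \eqref{eq:relRUXY} the two integrands become expressions in $U_X$, $U_Y$, $\Phi_X$, $\Phi_Y$ and the coefficients appearing in \eqref{eq:goveq}. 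Integrating by parts in $X$ and in $Y$ and substituting the governing equation \eqref{eq:govequ} for $U_{XY}$ together with the constitutive relations \eqref{eq:reltx}, \eqref{eq:reltJK}, \eqref{eq:energrel2}, the boundary and source terms cancel; this is the derivation of Section \ref{sec:equivsys} run backwards, and it gives \eqref{eq:soldist}.

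For the measure equations \eqref{eq:meassol} I would use that, by Lemma \ref{lem:revufromG}, along the curve $\Gamma_T=\{(X,Y)\mid t(X,Y)=T\}$ one has $\mu(T)=x_\#(J_X\,dX)$ and $\nu(T)=x_\#(J_Y\,dY)$, so that the push-forward under $(X,Y)\mapsto(t,x)$ of the exact $1$-form $dJ=J_X\,dX+J_Y\,dY$ is the measure-valued form $(\mu+\nu)\,dx+c(\mu-\nu)\,dt$ and, by the relations \eqref{eq:derJKrel}, the push-forward of $dK$ is $\tfrac1c(\mu-\nu)\,dx+(\mu+\nu)\,dt$. Testing \eqref{eq:meassol1}, \eqref{eq:meassol2} against $\phi(t,x)$, one changes variables to $(X,Y)$ (the level curves of $t$ foliating $\Real^2$ up to the degenerate segments and boxes described in Section \ref{sec:initdata}); since $dJ$ and $dK$ are exact in the $(X,Y)$-coordinates they are closed, and closedness is preserved under push-forward, which is exactly the pair of identities \eqref{eq:meassol1}, \eqref{eq:meassol2}. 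The absolutely continuous part of the statement is consistent with \eqref{eq:weakderu}, $\muac=\tfrac14R^2\,dx$ and $\nuac=\tfrac14S^2\,dx$; the content is that the singular masses, carried by the vertical and horizontal segments and by the boxes of $\Gamma_T$, are transported correctly, which is automatic here because in the $(X,Y)$-picture $J$ and $K$ are genuine Lipschitz functions.

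It remains to prove that $\bar S_T$ is a semigroup. From $\Mb\circ\Pib=\Mb$ and $\Lb\circ\Mb|_{\F_0}=\id_{\F_0}$ one gets $\Lb\circ\Mb=\Pib$ on all of $\F$ (apply $\Lb\circ\Mb$ to $\psi$, write $\Mb\psi=\Mb\Pib\psi$ and use $\Pib\psi\in\F_0$), and from $\Mb\circ\Pib=\Mb$ together with \eqref{eq:picomSt} one gets $\Mb\circ S_T\circ\Pib=\Mb\circ S_T$. Hence, by Theorem \ref{th:Stsemigroup},
\begin{equation*}
  \bar S_T\circ\bar S_{T'}=\Mb\circ S_T\circ(\Lb\circ\Mb)\circ S_{T'}\circ\Lb=\Mb\circ S_T\circ\Pib\circ S_{T'}\circ\Lb=\Mb\circ S_T\circ S_{T'}\circ\Lb=\Mb\circ S_{T+T'}\circ\Lb=\bar S_{T+T'}.
\end{equation*}
The main obstacle is the change-of-variables argument of the second and third paragraphs: arranging the integration by parts so that the source terms generated by $U_{XY}$, $t_{XY}$ and $x_{XY}$ cancel the boundary terms, and, for the measures, making the foliation change of variables rigorous so that energy concentrated on boxes of $\Gamma_T$ is correctly accounted for; everything else reduces to results already established.
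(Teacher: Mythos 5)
Your overall route---pass to the $(X,Y)$-variables via Lemma \ref{lem:relURS}, change variables and integrate by parts to reduce \eqref{eq:soldist} to the governing equations \eqref{eq:goveq}, obtain \eqref{eq:meassol} from the closedness of $dJ$ and $dK$ (the paper itself observes that the tested identity is exactly $\int d(\phi\, dJ)=0$), and deduce the semigroup property from $\Mb\circ\Pib=\Mb$, $\Lb\circ\Mb\circ\Pib=\Pib$ and \eqref{eq:picomSt} together with Theorem \ref{th:Stsemigroup}---is the same as the paper's, and your algebraic derivation of $\Lb\circ\Mb=\Pib$ and of the chain $\bar S_T\circ\bar S_{T'}=\bar S_{T+T'}$ is correct. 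Your sketch of \eqref{eq:meassol} is also sound in substance, though making ``closedness is preserved under push-forward'' precise amounts to the paper's explicit change of variables $(X,Y)\mapsto(t,s)$ with Jacobian $(x_X+x_Y)/(2c)$.

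The genuine gap is in the weak-formulation step, which you describe as ``the derivation of Section \ref{sec:equivsys} run backwards'' and flag only as an obstacle without supplying the missing idea. Expanding $(c(u)\phi)_x$, equation \eqref{eq:soldist} contains the quadratic source $c'(u)(R-S)^2/(2c)$ (this is the paper's reformulation \eqref{eq:soldist1}). When you pull this term back to $(X,Y)$, the relations \eqref{eq:relRUXY} only give $R\circ(t,x)=c\,U_X/x_X$ and $S\circ(t,x)=-c\,U_Y/x_Y$ where $x_X>0$ and $x_Y>0$; after multiplying by the Jacobian $2x_Xx_Y/c$ one is left with terms like $(U_X^2/x_X)\,x_Y$, which by \eqref{eq:energrel2} equal $2J_Xx_Y/c^2$---and this expression does \emph{not} vanish on the set $A_1$ of \eqref{eq:defA1}, where $x_X=0$, $x_Y>0$ and $c'(U)\neq0$ (nor on the symmetric set $A_2$). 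So your backwards computation is valid only on the complement of $A=A_1\cup A_2$, and to conclude you must know that $\meas(A)=0$, i.e.\ the claim \eqref{eq:measclaim}; this is exactly what allows the paper to pass from the integral over $A^c$ in \eqref{eq:Rsqm2rs0} to the integral over $\Real^2$ in \eqref{eq:Rsqm2rs}. Establishing \eqref{eq:measclaim} is the substantive analytic content of the proof: using \eqref{eq:goveq}, the bound $x_X+J_X>0$, \eqref{eq:energrel2} and a Lebesgue-point, second-order expansion argument, one shows that for a.e.\ $X$ the zeros of $Y\mapsto x_X(X,Y)$ at which $x_Y>0$ and $c'(U)\neq0$ are isolated, whence $A_1$ is null by Fubini. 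Nothing in your proposal supplies this (or an alternative), and without it the cancellation you invoke fails precisely at the points of energy concentration, which are the raison d'\^etre of the whole construction.
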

\begin{proof}
  From Lemma \ref{lem:relURS}, we know that
  \eqref{eq:weakderu1} is fulfilled. On can check
  that \eqref{eq:soldist} is equivalent to
  \begin{equation}
    \label{eq:soldist1}
    R_t-(c(u)R)_x+S_t+(c(u)S)_x+c'(u)\frac{(R-S)^2}{2c(u)}=0
  \end{equation}
  in the sense of distributions, which makes
  senses, as $R$, $S$ belong to $L^2$ and $c(u)$,
  $c'(u)$ are bounded. After a change of
  variables, we have
  \begin{align*}
    \int_{\Real^2}(R\phi_t-(c(u)R)\phi_x)(t,x)\,dtdx&=\int_{\Real^2}(R(\phi_t-c(u)\phi_x))(t,x)(t_Xx_Y-t_Yx_X)\,dXdY\\
    &=2\int_{\Real^2}(c(u)R(\phi_t-c(u)\phi_x))(t,x)t_Xx_Y\,dXdY\\
    &=-2\int_{\Real^2}c(U)U_X\phi(t,x)_Y\,dXdY
  \end{align*}
  by \eqref{eq:relRUXY1} and because
  $\phi(t,x)_Y=\phi_t(t,x)t_Y+\phi_x(t,x)x_Y
=-\frac{x_Y}{c}(\phi_t-c\phi_x)(t,x)$. We
  integrate by parts and obtain
  \begin{align}
    \notag
    \int_{\Real^2}(R\phi_t-(c(u)R)\phi_x)(t,x)\,dtdx&=2\int_{\Real^2}(c(U)U_X)_Y\phi(t,x)\,dXdY\\
    \label{eq:exprRtuxy}
    &=2\int_{\Real^2}(c(U)U_{XY}+c'(U)U_XU_Y)\phi(t,x)\,dXdY.
  \end{align}
  In the same way, one proves that 
  \begin{equation}
    \label{eq:exprStuxy}
    \int_{\Real^2}(S\phi_t+(c(u)S)\phi_x)(t,x)\,dtdx=2\int_{\Real^2}(c(U)U_{XY}+c'(U)U_XU_Y)\phi(t,x)\,dXdY.
  \end{equation}
  We have, after a change of variables,
  \begin{equation}
    \label{eq:Rsqmin}
    \int_{\Real^2}\frac{R^2-2RS+S^2}{2c(u)}c'(u)\phi\,dtdx=2\int_{\Real^2}\Big(\frac{R^2-2RS+S^2}{2c(u)^2}c'(u)\phi\Big)(t,x)x_Xx_Y\,dXdY.
  \end{equation}
  We introduce the set $A=A_1\cup A_2$ where
  \begin{equation}
    \label{eq:defA1}
    A_1=\{(X,Y)\in\Real^2\mid x_X(X,Y)=0,\quad x_Y(X,Y)>0\text{ and }c'(U)(X,Y)\neq0\}
  \end{equation}
  and
  \begin{equation}
    \label{eq:defA2}
    A_2=\{(X,Y)\in\Real^2\mid x_Y(X,Y)=0,\quad x_X(X,Y)>0\text{ and }c'(U)(X,Y)\neq0\}.
  \end{equation}
  We claim that 
  \begin{equation}
    \label{eq:measclaim}
    \meas(A)=\meas(A_1)=\meas(A_2)=0.
  \end{equation}
  We prove this claim later. By using
  \eqref{eq:measclaim}, we get
  \begin{align}
    \notag
    \int_{\Real^2}\frac{R^2-2RS+S^2}{2c(u)}c'(u)\phi\,dtdx&=2\int_{A^c}\Big(\frac{R^2-2RS+S^2}{2c(u)^2}c'(u)\phi\Big)(t,x)x_Xx_Y\,dXdY\\
    \notag
    &=\int_{A^c}\Big(\frac{U_X^2}{x_X}x_Y+2U_XU_Y+\frac{U_Y^2}{x_Y}x_X\Big)c'(U)\phi(t,x)\,dXdY\\
    \label{eq:Rsqm2rs0}
    &=\int_{A^c}\Big(2\frac{J_Xx_Y}{c^2(U)}+2U_XU_Y+\frac{J_Yx_X}{c^2(U)}\Big)c'(U)\phi(t,x)\,dXdY\\
    \label{eq:Rsqm2rs}
    &=\int_{\Real}\Big(2\frac{J_Xx_Y}{c^2(U)}+2U_XU_Y+\frac{J_Yx_X}{c^2(U)}\Big)c'(U)\phi(t,x)\,dXdY.
  \end{align}
  Note that \eqref{eq:measclaim} is necessary to
  get \eqref{eq:Rsqm2rs} from \eqref{eq:Rsqm2rs0}
  as the integrand in \eqref{eq:Rsqm2rs0} does not
  vanish on $A$. After combining
  \eqref{eq:exprRtuxy}, \eqref{eq:exprStuxy} and
  \eqref{eq:Rsqm2rs}, and using the governing
  equations \eqref{eq:goveq}, we get
  \begin{align*}
    \int_{\Real^2}
    (R+S)\phi_t&-(c(u)(R-S))\phi_x-c'(u)\frac{(R-S)^2}{2c(u)}\phi\,dtdx\\
    &=-\int_{\Real^2}\Big(4c(U)U_{XY}-\frac{2c'}{c^2}(J_Xx_Y+J_Yx_X)+2c'(U)U_YU_X\Big)\phi(t,x)\,dXdY\\
    &=0,
  \end{align*}
  which proves \eqref{eq:soldist1} and therefore
  \eqref{eq:soldist} holds. It remains to prove
  the claim \eqref{eq:measclaim}. Let us introduce
  the set
  \begin{equation*}
    A_1(X)=\{Y\in\Real\mid (X,Y)\in A_1\}.
  \end{equation*}
  Let us prove that, for almost every $X\in\Real$,
  $\meas(A_1(X))=0$ and therefore, by Fubini's
  theorem, $\meas(A_1)=0$. We consider a point
  $Y_0\in A_1(X)$ and a rectangle
  $\Omega=[X_l,X_r]\times[Y_l,Y_r]$ which contains
  $(X,Y_0)$. Since $Z\in\H(\Omega)$, there exist
  $\delta>0$ such that $(x_X+J_X)(X,Y)>\delta$ for
  almost every $X\in\Real$ and all
  $Y\in\Real$. Since $x_X\in\WY(\Omega)$, the
  function $x_X$ is continuous with respect to $Y$
  for almost any given $X\in\Real$.  Formally the
  argument goes as follows: We consider a fixed
  given $X\in\Real$ and denote
  $f(Y)=x_X(X,Y)$. For any $Y_0\in A_1(X)$, we
  have, by definition, $x_X(X,Y_0)=f(Y_0)=0$. By
  using \eqref{eq:goveqx}, we get
  \begin{equation*}
    f'(Y_0)=x_{XY}(X,Y)=0
  \end{equation*}
  because $x_X=0$ implies $U_X=0$, see
  \eqref{eq:energrel2}. We do not have enough
  regularity to differentiate \eqref{eq:goveqx};
  but if nevertheless we do so, then we formally obtain
  \begin{align*}
    f''(Y_0)=x_{XYY}(X,Y_0)&=\frac{c'}{2c}(U_Yx_{XY}+U_{XY}x_Y)(X,Y_0)\\
    &=\frac{c'^2}{4c^4}(J_Xx_Y^2)(X,Y_0)
  \end{align*}
  where we have used again the fact
  $x_X(X,Y_0)=U_X(X,Y_0)=0$. We have
  $J_X(X,Y_0)=(x_X+J_X)(X,Y_0)\geq\delta$ and
  $x_Y(X,Y_0)>0$, $c'^2(U(X,Y_0))>0$ because
  $(X,Y_0)\in A_1(X)$. Hence, $f''(Y_0)>0$ and it
  implies that $f(Y)>0$ for all $Y$ different from
  $Y_0$ in a neighborhood of $Y_0$, so that the
  points in $A_1(X)$ are isolated. Let us now
  prove this result rigorously. Again, we consider
  $Y_0\in A_1(X)$ and a rectangle
  $\Omega=[X_l,X_r]\times[Y_l,Y_r]$ which contains
  $(X,Y_0)$. Without loss of generality, we assume
  that $Y_0$ is a Lebesgue point for the function
  $Y\mapsto x_Y(X,Y)$ and, therefore, since
  $x_Y(X,Y_0)>0$, there exists $\delta>0$ such
  that
  \begin{equation}
    \label{eq:lowxY}
    \int_{Y_0}^Yx_Y(\bar Y)\,d\bar Y>\delta'(Y-Y_0)
  \end{equation}
  in a neighborhood of $Y_0$. We can choose
  $\delta'>0$ such that, in addition,
  \begin{equation*}
    c'(U(X,Y))>\delta\quad\text{ and }\quad J_X(X,Y)>\delta
  \end{equation*}
  in a neighborhood of $Y_0$ (we recall that $U$ is
  continuous). We have, after using the governing
  equations \eqref{eq:goveq},
  \begin{align*}
    x_X&=\int_{Y_0}^{Y}\frac{c'}{2c}(U_Yx_X+U_Xx_Y)\,d\bar Y\\
    &=\int_{Y_0}^{Y}\frac{c'}{2c}(U_Y\int_{Y_0}^{\bar
      Y}x_{XY}\,d\tilde Y+x_Y\int_{Y_0}^{\bar Y}U_{XY}d\tilde Y)\,d\bar Y\\
    &=\int_{Y_0}^{Y}\frac{c'}{2c}\Big(U_Y\int_{Y_0}^{\bar
      Y}x_{XY}\,d\tilde Y+x_Y\int_{Y_0}^{\bar
      Y}\big(\frac{c'}{2c^3}(x_YJ_X+J_Yx_X-\frac{c'}{2c}U_YU_X)\big)d\tilde
    Y\Big)\,d\bar Y.
  \end{align*}
  Since $x_{XY}$ and $U_{XY}$ are bounded (by
  \eqref{eq:goveq}) and $x_X(X,Y_0)=U_X(X,Y_0)=0$,
  we have that $x_X(X,Y)\leq C\abs{Y-Y_0}$ and
  $U_X(X,Y)\leq C\abs{Y-Y_0}$ in a neighborhood of
  $Y_0$ for a constant $C$ which depends only on
  $\tnorm{Z}_{\H(\Omega)}$. Hence,
  \begin{align*}
    \abs{\int_{Y_0}^{\bar Y}x_{XY}\,d\tilde
    Y}&=\abs{\int_{Y_0}^{\bar
      Y}\frac{c'}{2c}(U_Yx_X+U_Xx_Y)\,d\tilde Y}\\
    &\leq C\int_{Y_0}^{\bar Y}\abs{\tilde Y-Y_0}\,d\tilde
    Y\\
    &\leq C(Y-Y_0)^2.
  \end{align*}
  Thus,
  \begin{equation}
    \label{eq:lowbdxX1}
    \abs{\int_{Y_0}^{Y}\frac{c'}{2c}(U_Y\int_{Y_0}^{\bar
      Y}x_{XY}\,d\tilde Y)\,d\bar Y}\leq C\abs{Y-Y_0}^3.
  \end{equation}
  In the same way, one proves that 
  \begin{equation*}
    \abs{\int_{Y_0}^{Y}\frac{c'}{2c}x_Y\int_{Y_0}^{\bar
        Y}\Big(\frac{c'}{2c^3}(J_Yx_X-\frac{c'}{2c}U_YU_X)\Big)d\tilde
      Y)\,d\bar Y}\leq C\abs{Y-Y_0}^3.
  \end{equation*}
  For $Y>Y_0$, we have
  \begin{align*}
    \int_{Y_0}^{Y}\frac{c'}{2c}x_Y\Big(\int_{Y_0}^{\bar
      Y}\frac{c'}{2c^3}x_YJ_Xd\tilde Y\Big)\,d\bar
    Y&\geq\frac{\kappa^4\delta^3}{4}\int_{Y_0}^{Y}x_Y\Big(\int_{Y_0}^{\bar
      Y}x_Yd\tilde Y\Big)\,d\bar Y\\
    &=\frac{\kappa^4\delta^3}{4}\Big(\int_{Y_0}^{Y}x_Y\,d\bar
    Y\Big)^2\\ &\geq
    \frac{\kappa^4\delta^5}4(Y-Y_0)^2
  \end{align*}
  in a neighborhood of $Y_0$. We can check that
  the same inequality holds for $Y<Y_0$. Finally,
  we obtain that, in a neighborhood of $Y_0$,
  \begin{equation}
    \label{eq:lowxXsq}
    x_X(X,Y)\geq\frac{\kappa^4\delta^5}4(Y-Y_0)^2-C\abs{Y-Y_0}^3\geq \frac{\kappa^4\delta^5}8(Y-Y_0)^2.
  \end{equation}
  To complete the argument, we consider the sets
  \begin{multline*}
    A_1^k(X)=\{Y_0\in A_1(X)\cap[Y_l,Y_r]\mid
    x_X(X,Y_0)=0\\\text{ and }x_X(X,Y)>0\text{ for
      all }Y\in[Y_0-\frac1k,Y_0+\frac1k]\setminus
    \{Y_0\}\}
  \end{multline*}
  for any integer $k$. By \eqref{eq:lowxXsq}, we
  have
  \begin{equation*}
    A_1(X)\cap[Y_l,Y_r]=\cup_{k>0}A_1^k(X).
  \end{equation*}
  At the same time, since $A_1^k(X)$ consists of
  points separated by a distance of at least
  $\frac1k$, we have $\meas(A_1^k(X))=0$. Hence,
  after taking sequences of $Y_l$ and $Y_r$ which
  tend to plus and minus infinity, respectively,
  we get $\meas(A_1(X))=0$ so that $\meas(A_1)=0$
  and the proof of the claim \eqref{eq:measclaim}
  is complete. Let us prove \eqref{eq:meassol1},
  that is,
  \begin{equation*}
    \int_{\Real^2}(\phi_t-c(u)\phi_x)\,d\mu dt+\int_{\Real^2}(\phi_t+c(u)\phi_x)\,d\nu dt=0
  \end{equation*}
  for all smooth function $\phi$ with compact
  support. We have, after a change of variables,
  that
  \begin{multline*}
    \int_{\Real}\Big(\int_\Real(\phi_t-c(u)\phi_x)\,d\mu(t)\Big)dt\\=\int_{\Real}\Big(\int_\Real(\phi_t(t,x(t,s))-c(u(t,x(t,s)))\phi_x(t,x(t,s)))\V_4(t,\X(t,s))\X_s(t,s)\,ds\Big)dt,
  \end{multline*}
  where we have added the dependence in $t$ of the
  values of $\Theta(t)=\Lb(u,R,S,u,\mu,\nu)(t) $
  (which gives $x(t,s)$, $\V_4(t,X)$, $u(t,s)$ and
  $\X(t,s)$ in the equation above). We proceed to
  the change of variables
  $(X,Y)\mapsto(t(X,Y),s=\frac12(X+Y))$ whose
  Jacobian is equal to $\frac{x_X+x_Y}{2c(u)}$ and get
  \begin{multline}
    \label{eq:JXXs}
    \int_{\Real}\Big(\int_\Real(\phi_t-c(u)\phi_x)\,d\mu(t)\Big)dt\\
    =\int_{\Real^2}(\phi_t(t,x)-c(u(t,x))\phi_x(t,x))J_X(X,Y)\X_s(t,s)\frac{(x_X+x_Y)(X,Y)}{2c(U(X,Y))}\,dXdY.
  \end{multline}
  Since $t(\X(t,s),\Y(t,s))=t$, by definition, we
  get $t_X\X_s+t_Y\Y_s=0$ and, since
  $\X(s)+\Y(s)=2s$, we have $\X_s+\Y_s=2$. Hence,
  $(x_X+x_Y)\X_s(t,s)=2x_Y$ and \eqref{eq:JXXs}
  implies
  \begin{multline*}
    \int_{\Real}\Big(\int_\Real(\phi_t-c(u)\phi_x)\,d\mu(t)\Big)dt\\
    =\int_{\Real^2}(\phi_t(t,x)-c(u(t,x))\phi_x(t,x))J_X(X,Y)\frac{x_Y}{c(U(X,Y))}\,dXdY.
  \end{multline*}
  Since
  $\phi(t,x)_Y=-\frac{x_Y}{c(u)}(\phi_t-c(u)\phi_x)(t,x)$, it yields
  \begin{equation*}
    \int_{\Real}\Big(\int_\Real(\phi_t-c(u)\phi_x)\,d\mu(t)\Big)dt=-\int_{\Real^2}\phi(t,x)_YJ_X\,dXdY.
  \end{equation*}
  Similarly, one proves that
  \begin{equation*}
    \int_{\Real}\Big(\int_\Real(\phi_t+c(u)\phi_x)\,d\nu(t)\Big)dt=\int_{\Real^2}\phi(t,x)_XJ_Y\,dXdY
  \end{equation*}
  so that
  \begin{equation}
    \label{eq:phiyJx}
    \int_{\Real^2}(\phi_t-c(u)\phi_x)\,d\mu dt+\int_{\Real^2}(\phi_t+c(u)\phi_x)\,d\nu dt=\int_{\Real^2}(-\phi(t,x)_YJ_X+\phi(t,x)_XJ_Y)\,dXdY=0,
  \end{equation}
  by integration by parts, as the support of
  $\phi$ is compact. Similarly one proves
  \eqref{eq:meassol2}. Note that the integrand in
  \eqref{eq:phiyJx} is equal to the exact form
  $d(\phi dJ)$ and equation \eqref{eq:meassol1} is
  actually equivalent to $ddJ=0$ while
  \eqref{eq:meassol2} is equivalent to
  $ddK=0$. The proof of the semigroup property
  follows in a straightforward manner from the
  results that have been established in this
  section. We have
  \begin{align*}
    \bar S_{T}\circ \bar S_{T'}&=\Mb\circ S_T\circ \Lb\circ\Mb\circ S_{T'}\circ \Lb\\
    &=\Mb\circ\Pib\circ S_T\circ \Lb\circ\Mb\circ\Pib\circ S_{T'}\circ \Lb&\text{ by \eqref{eq:mpim} }\\
    &=\Mb\circ\Pib\circ S_T\circ\Pib\circ S_{T'}\circ \Lb&\text{ by \eqref{eq:linvmeq} }\\
    &=\Mb\circ\Pib\circ S_T \circ S_{T'}\circ \Lb&\text{ by \eqref{eq:picomSt} }\\
    &=\Mb\circ S_T \circ S_{T'}\circ \Lb&\text{ by \eqref{eq:mpim} }\\
    &=\Mb\circ S_{T+T'}\circ \Lb&\text{ by Theorem \ref{th:Stsemigroup} }\\
    &=\bar S_{T+T'}.
  \end{align*}
\end{proof}

The semigroup of solution we have constructed is
conservative in the sense given by the following
theorem.
\begin{theorem}
  \label{th:energconcentration}
  Given $(u_0,R_0,S_0,\mu_0,\nu_0)\in\D$, let us
  denote $(u,R,S,\mu,\nu)(t)=\bar
  S_t(u_0,R_0,S_0,\mu_0,\nu_0)$. We have
  \begin{enumerate}
  \item[(i)] For all $t\in\Real$
    \begin{equation}
      \label{eq:prestotenerg}
      \mu(t)(\Real)+\nu(t)(\Real)=\mu_0(\Real)+\nu_0(\Real).
    \end{equation}
  \item[(ii)] For almost every $t\in\Real$, the
    singular part of $\mu(t)$ and $\nu(t)$ are
    concentrated on the set where $c'(u)=0$.
  \end{enumerate}
\end{theorem}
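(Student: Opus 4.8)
The plan is to treat the two statements separately; (i) is a conservation identity coming from the curve-independence of the limit of $J$ at infinity, while (ii) rests entirely on the fact, already proved in the proof of Theorem~\ref{th:main} (the claim \eqref{eq:measclaim}), that the sets $A_1,A_2$ of \eqref{eq:defA1}--\eqref{eq:defA2} are Lebesgue null in $\Real^2$, transferred to the individual time slices.

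\emph{Proof of (i).} Set $Z=\Sb\circ\Cb\circ\Lb(u_0,R_0,S_0,\mu_0,\nu_0)\in\H$ and, for fixed $t$, $\Theta(t)=(\X,\Y,\Z,\V,\W)=\Eb\circ\tb_t(Z)\in\G_0$; since $(u,R,S,\mu,\nu)(t)=\Mb\bigl(\Db(\Theta(t))\bigr)$, Lemma~\ref{lem:revufromG} gives $\mu(t)=x_\#(\V_4(\X(s))\dot\X(s)\,ds)$ and $\nu(t)=x_\#(\W_4(\Y(s))\dot\Y(s)\,ds)$. Hence, using \eqref{eq:relZVWG} (for the fourth component) and then \eqref{eq:normalizationJ},
\[
  \mu(t)(\Real)+\nu(t)(\Real)=\int_\Real\bigl(\V_4(\X(s))\dot\X(s)+\W_4(\Y(s))\dot\Y(s)\bigr)\,ds=\int_\Real\dot J(s)\,ds=\lim_{s\to\infty}J(\X(s),\Y(s)),
\]
with $J=Z_4$. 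By Lemma~\ref{lem:exany} this limit is independent of the curve in $\C$, so the right-hand side depends only on $Z$; in particular $\mu(t)(\Real)+\nu(t)(\Real)=\mu(0)(\Real)+\nu(0)(\Real)$. Finally $\tb_0=\id$, so $S_0=\Db\circ\Cb=\id_\F$ by Lemma~\ref{lem:cdeeqe} and $\bar S_0=\Mb\circ\Lb=\id_\D$ by \eqref{eq:linvm}; thus $\mu(0)=\mu_0$, $\nu(0)=\nu_0$, and \eqref{eq:prestotenerg} follows.

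\emph{Proof of (ii).} Fix $t$ and keep $\Theta(t)=(\X,\Y,\Z,\V,\W)$, $\Gamma_t=(\X,\Y)$. From $\mu(t)=x_\#(\V_4(\X)\dot\X\,ds)$ together with $\muac(t)=\tfrac14R(t)^2\,dx$ (Lemma~\ref{lem:revufromG}) one reads off $\mu_{\mathrm{sing}}(t)=x_\#\bigl(\V_4(\X(s))\dot\X(s)\,\mathbf 1_{\{\dot x=0\}}\,ds\bigr)$; since $\dot x=2\V_2(\X)\dot\X$ by \eqref{eq:dotxteqz}, $\V_2(\X(s))=x_X(\Gamma_t(s))$, $u(t,x(s))=U(s)$, and $x_X=0$ forces $U_X=0$ by \eqref{eq:energrel2}, the push-forward formula yields
\[
  \mu_{\mathrm{sing}}(t)\bigl(\{x:c'(u(t,x))\neq0\}\bigr)=\int_{E(t)}\V_4(\X(s))\dot\X(s)\,ds,\qquad E(t)=\{s:\dot\X(s)>0,\ x_X(\Gamma_t(s))=0,\ c'(U(\Gamma_t(s)))\neq0\}.
\]
So it suffices to show $\int_{E(t)}\V_4(\X(s))\dot\X(s)\,ds=0$ for a.e.\ $t$ (and the analogue for $\nu$ with $S$ and $A_2$). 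Split $E(t)=E_1(t)\cup E_{\mathrm d}(t)$ according to whether $x_Y(\Gamma_t(s))>0$ or $x_Y(\Gamma_t(s))=0$. For $s\in E_1(t)$ one has $\Gamma_t(s)\in A_1$; since $\meas_{\Real^2}(A_1)=0$ and $\Phi(X,Y)=(t(X,Y),\tfrac12(X+Y))$ is locally Lipschitz ($Z\in\Winfloc(\Real^2)$), the image $\Phi(A_1)$ is Lebesgue null, hence by Fubini the slice $\{s:(t,s)\in\Phi(A_1)\}$ is $ds$-null for a.e.\ $t$, and it contains $E_1(t)$ because $\Phi(\Gamma_t(s))=(t,s)$. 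For $s\in E_{\mathrm d}(t)$ the point $\Gamma_t(s)$ lies in $N=\{(X,Y):x_X=x_Y=0\}$, where also $t_X=t_Y=0$, so $t$ is constant on each connected component of the open set $\mathrm{int}(N)$ and $t(\mathrm{int}(N))$ is at most countable; for $t\notin t(\mathrm{int}(N))$, $\Gamma_t$ meets $N$ only in $\partial N$ (in particular meets no rectangular box, since those have interior contained in $\mathrm{int}(N)$), and a direct monotonicity argument (on any maximal $\dot x=0$ interval of $s$ the curve spans a coordinate rectangle on which $x$ is constant; if that rectangle had positive area it would sit in $\mathrm{int}(N)$, so either $\dot\X\equiv0$ there and there is no contribution, or the rectangle degenerates to a horizontal segment already handled by the $A_1$ case) shows $\int_{E_{\mathrm d}(t)}\V_4\dot\X\,ds=0$. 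This gives (ii) for $\mu$, and the same with $A_2$ for $\nu$.

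The main obstacle will be exactly the last, degenerate case: controlling the curve points where $x_X$ and $x_Y$ vanish simultaneously. The genuine boxes are harmless only because they occupy two-dimensional regions on which $t$ is locally constant, so that simultaneous concentration at a point with $c'(u)\neq0$ can happen only at a null set of times; but one still has to verify carefully that the boundary $\partial N$ carries no singular energy at a positive set of times while $c'(u)\neq0$ there (in particular ruling out a singular-continuous common part of $\mu(t)$ and $\nu(t)$), which is where the second-order mechanism behind the proof of \eqref{eq:measclaim} must be invoked, and to make the transfer of the null sets $A_1,A_2$ to the time slices fully rigorous in spite of the degeneration of $\Phi$ on $N$.
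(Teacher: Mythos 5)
Your proof of (i) is correct and is essentially the paper's own argument: write $\mu(t)(\Real)+\nu(t)(\Real)=\int_\Real(\V_4(\X)\dot\X+\W_4(\Y)\dot\Y)\,ds=\lim_{s\to\infty}J$ along the time-$t$ curve and invoke Lemma \ref{lem:exany} plus the normalization \eqref{eq:normalizationJ}; the identification of the value at $t=0$ with $\mu_0(\Real)+\nu_0(\Real)$ via $\bar S_0=\id_\D$ is a harmless variant of the paper's direct computation.

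In (ii), however, the degenerate case is a genuine gap, and the way you try to dispose of it does not work. Your split handles $E_1(t)$ (points of the time slice lying in $A_1$) correctly, but for $E_{\mathrm d}(t)$, where $x_X=x_Y=0$ and $c'(U)\neq0$, your parenthetical "the rectangle degenerates to a horizontal segment already handled by the $A_1$ case" is false: on such a segment the points of $E_{\mathrm d}(t)$ have $x_Y=0$ by definition, so they are \emph{not} in $A_1$ \eqref{eq:defA1}, and nothing you have proved makes $\int_{E_{\mathrm d}(t)}\V_4\dot\X\,ds$ vanish there. Your remaining tool, constancy of $t$ on components of $\mathrm{int}(N)$ with $N=\{x_X=x_Y=0\}$, only excludes countably many times coming from positive-area components; it says nothing about curve points in $N\setminus\mathrm{int}(N)$ (e.g.\ box boundaries) or about a degenerate set with empty interior. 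Note also that $N\cap E$ can have positive two-dimensional measure (the boxes of Section \ref{sec:nonlin}), so the pushforward-plus-Fubini trick with $\Phi=(t,\tfrac12(X+Y))$ cannot be extended to it, and at the available regularity ($t\in\Winfloc$, not $C^2$) one cannot appeal to a Sard-type statement that $t(\{\nabla t=0\})$ is null -- Whitney-type examples show a Lipschitz (even $C^1$) function can be non-constant along its critical set. So your argument leaves open the possibility that, for a positive measure set of times, the curve meets the degenerate part of $E$ in a set of positive $ds$-measure carrying singular energy; this is exactly the obstacle you flag at the end, and it is not a technicality that can be waved away.

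The paper closes this gap with one stroke and without any further "second-order" analysis beyond \eqref{eq:measclaim}: apply the coarea formula to $t$ on the whole set $E=\{x_X=0,\ c'(U)\neq0\}$. On $E$ one has $\sqrt{t_X^2+t_Y^2}=x_Y/c(U)$, which vanishes on the degenerate part and integrates to zero on $E\cap\{x_Y>0\}=A_1$ by \eqref{eq:measclaim}; hence $\int_\Real\H^1(E\cap t^{-1}(\tau))\,d\tau=0$, so for a.e.\ $\tau$ the \emph{entire} slice $E\cap t^{-1}(\tau)$, degenerate points included, is $\H^1$-null. Since $s\mapsto(\X(\tau,s),\Y(\tau,s))$ is injective (because $\X+\Y=2s$) with speed $(\X_s^2+\Y_s^2)^{1/2}\geq\tfrac12(\X_s+\Y_s)=1$, the area formula gives $\meas\{s:(\X(\tau,s),\Y(\tau,s))\in E\}\leq\H^1(E\cap t^{-1}(\tau))=0$, and therefore $\int_{E(t)}\V_4\dot\X\,ds=0$ for a.e.\ $t$. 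If you want to salvage your write-up, replace the whole $E_1/E_{\mathrm d}$ discussion by this coarea--area argument (your Fubini step is then subsumed by it).
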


This theorem corresponds to
Theorem 3 in \cite{BreZhe:06}. We use a different
proof based on the coarea formula.

\begin{proof}
  Let us prove (i). We consider a given time that
  we denote $\tau$ (to avoid any confusion with
  the function $t(X,Y)$). As in the proof of the
  previous theorem, we add the dependence in time
  of the values of
  $\Theta(\tau)=\Lb(u,R,S,u,\mu,\nu)(\tau)$. In
  particular, the curve $(\X(\tau,s),\Y(\tau,s))$
  corresponds to the curve where time is constant
  and equal to $\tau$, that is,
  $t((\X(\tau,s),\Y(\tau,s)))=\tau$. By definition
  (see \eqref{eq:relufromG1}, and Definition
  \ref{def:Eb}), we have, for any Borel set $B$,
  \begin{align}
    \label{eq:mutauB1}
    \mu_\tau(B)&=\int_{\{s\in\Real\ |\  x(\tau,s)\in B\}} \V_4(\X(\tau,s))\dot\X(\tau,s)\,ds\\
    \label{eq:mutauB2}
    &=\int_{\{s\in\Real\ |\  x(\X(\tau,s),\Y(\tau,s))\in B\}} J_X(\X(\tau,s),\Y(\tau,s))\dot\X(\tau,s)\,ds.
  \end{align}
  Here, the notation may be confusing as $x$
  denotes two different but of course very related
  functions. In \eqref{eq:mutauB1}, we have
  $x(\tau,s)=\Z_2(\tau,s)$, which corresponds to
  the space variable $\Z_2$ parametrized by $s$ at
  time $\tau$ while, in \eqref{eq:mutauB2},
  $x(X,Y)=Z_2(X,Y)$, corresponds to the value of
  the space variable $Z_2$, where $Z(X,Y)$ is the
  solution of \eqref{eq:goveq} on the whole
  $\Real^2$ plane. We have
  $\Z_2(\tau,s)=Z_2(\X(\tau,s),\X(\tau,s))$ by
  \eqref{eq:defextdatZ} abd Definition
  \ref{def:Eb}. Correspondingly, we have
  \begin{equation*}
    \nu_\tau(B)=\int_{\{s\in\Real\ |\  x(\X(\tau,s),\Y(\tau,s))\in B\}} J_Y(\X(\tau,s),\Y(\tau,s))\dot\Y(\tau,s)\,ds.
  \end{equation*}
  Hence,
  \begin{align*}
    \mu_\tau(\Real)+\nu_\tau(\Real)&=\int_\Real J_X(\X(\tau,s),\Y(\tau,s))\dot\X(\tau,s)+J_Y(\X(\tau,s),\Y(\tau,s))\dot\Y(\tau,s)\,ds\\
    &=\lim_{s\to\infty} J(\X(\tau,s),\Y(\tau,s))\,ds\\
    &=\lim_{s\to\infty} J(\X(0,s),\Y(0,s))\,ds,\quad\text{ by Lemma \ref{lem:exany},}\\
    &=\mu_0(\Real)+\nu_0(\Real)
  \end{align*}
  
  Let us prove (ii). Let
  $\mu_\tau=(\mu_{\tau})_{\text{ac}}+(\mu_{\tau})_{\text{sing}}$
  be the Radon-Nykodin decomposition of
  $\mu_\tau$.  We want to prove that, for allmost
  every time $\tau\in\Real$, we have
  \begin{equation}
    \label{eq:musC}
    (\mu_{\tau})_{\text{sing}}(\{x\in\Real\ |\  c'(u(\tau,x))\neq 0\})=0.
  \end{equation}
  Let us introduce
  \begin{equation*}
    A_\tau=\{s\in\Real\ |\ x_X(\X(\tau,s),\Y(\tau,s))>0\}.
  \end{equation*}
  The set $A_\tau$ corresponds to $A$ in
  \eqref{eq:defsetA} in the proof of Lemma
  \ref{lem:revufromG}.  In this same proof, we
  obtain that, for any Borel set $B$,
  \begin{equation*}
    (\mu_\tau)_{\text{ac}}(B)=\mu_\tau(B\cap (x(\tau,(A_\tau)^c))^c)
  \end{equation*}
  so that 
  \begin{equation*}
    (\mu_\tau)_{\text{sing}}(B)=\mu_\tau(B\cap (x(\tau,(A_\tau)^c))),
  \end{equation*}
  because $\meas(x(\tau,(A_\tau)^c))=0$. Hence,
  \begin{equation}
    \label{eq:mutausinge}
    (\mu_{\tau})_{\text{sing}}(B)=\int_{\{s\in\Real\ |\ x(\X(\tau,s),\Y(\tau,s))\in B\cap (A_\tau)^c\}}J_X(\X(\tau,s),\Y(\tau,s))\dot\X(s)\,ds.
  \end{equation}
  We introduce the set
  \begin{equation*}
    E=\{(X,Y)\in\Real^2\ |\ x_X(X,Y)=0\text{ and } c'(U(X,Y))\neq 0\}.
  \end{equation*}
  By using \eqref{eq:mutausinge}, we get
  \begin{equation}
    \label{eq:mutaucp}
    \mu_\tau(\{x\in\Real\ |\  c'(u(\tau,x))\neq 0\})=\int_{\{s\in\Real\ |\ (\X(\tau,s),\Y(\tau,s))\in E\}}J_X(\X(\tau,s),\Y(\tau,s))\dot\X(s)\,ds
  \end{equation}
  By the coarea formula, see \cite{Ambrosio}, we
  get
  \begin{equation*}
    \int_{\Real}\H^1(E\cap t^{-1}(\tau))\,d\tau=\int_E\sqrt{t_X^2+t_Y^2}\,dXdY=\int_E\frac{x_Y}{c(U)}\,dXdY=0
  \end{equation*}
  because of \eqref{eq:measclaim}. Here, $\H^1$
  denotes the one-dimensional Hausdorff
  measure. Hence, we have that, for allmost every
  time $\tau\in\Real$, the set $E\cap
  t^{-1}(\tau)$ has zero one-dimensional Hausdorff
  measure. We claim that, if
  $\mu_\tau(\{x\in\Real\ |\ c'(u(\tau,x))\neq
  0\})>0$, then $\H^1(E\cap
  t^{-1}(\tau))>0$. Indeed, let us define, for a
  given $\tau$, the mapping $\Gamma_\tau:s\mapsto
  (\X(\tau,s),\Y(\tau,s))$ from $\Real$ to
  $\Real^2$. We rewrite \eqref{eq:mutaucp} as
  \begin{equation*}
    \mu_\tau(\{x\in\Real\ |\ c'(u(\tau,x))\neq
    0\})=\int_{\Gamma_\tau^{-1}(E)}J_X(\X(\tau,s),\Y(\tau,s))\dot\X(s)\,ds.
  \end{equation*}
  In particular it implies that, if
  $\mu_\tau(\{x\in\Real\ |\ c'(u(\tau,x))\neq
  0\})>0$, then $\meas(\Gamma_\tau^{-1}(E))>0$. By
  the area formula, we have
  \begin{equation*}
    \H^1(E)\geq\H^1(\Gamma_\tau\circ\Gamma_\tau^{-1}(E))=\int_{\Gamma_\tau^{-1}(E)}(\X_s^2+\Y_s^2)^{1/2}\,ds\geq\meas(\Gamma_\tau^{-1}(E))
  \end{equation*}
  because
  $(\X_s^2+\Y_s^2)^{1/2}\geq\frac12(\X_s+\Y_s)=1$. Hence,
  our claim is proved and it follows that
  $\mu_\tau(\{x\in\Real\ |\ c'(u(\tau,x))\neq
  0\})>0$ for at most almost every $\tau\in\Real$
  and we have proved \eqref{eq:musC}.
\end{proof}

\begin{figure}[h]
  \centering
  \includegraphics[scale=0.8]{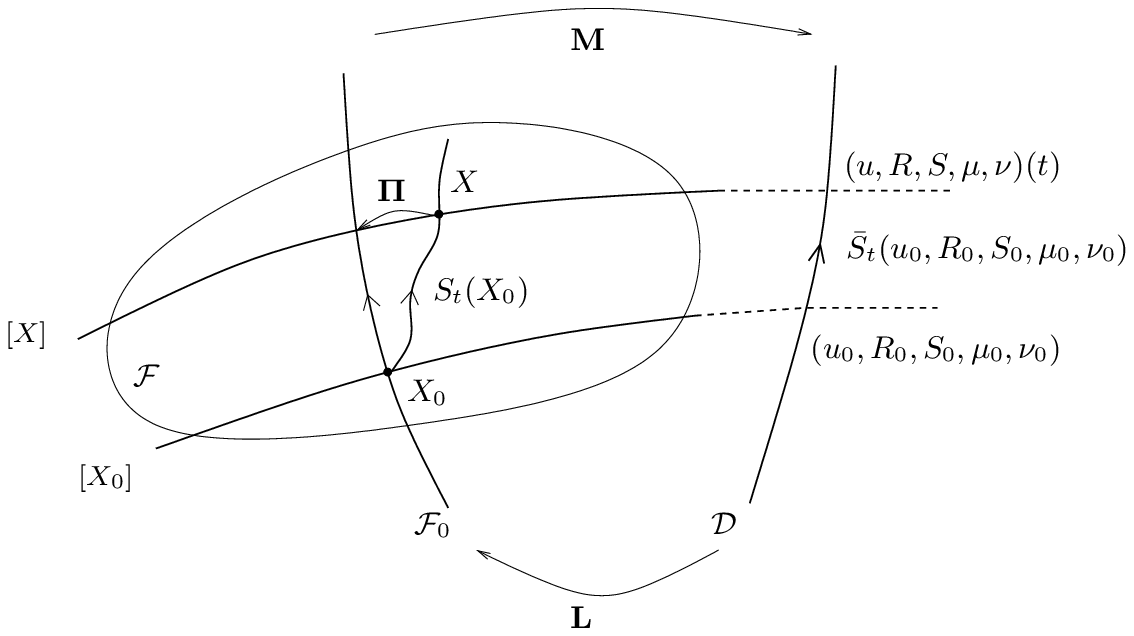}
  \caption{The semigroup $\bar S_t$.}
  \label{fig:semigroup}
\end{figure}

\begin{theorem}[Finite speed of propagation] \label{thm:finite}
  Given $\tb\geq0$ and $\xb\in\Real$, for any two
  initial datas
  $\zeta_0=(u_0,R_0,S_0,\mu_0,\nu_0)$ and
  $\bar\zeta_0=(\bar u_0,\bar R_0,\bar
  S_0,\bar\mu_0,\bar\nu_0)$ in $\D$, we consider
  the corresponding solutions $(u,R,S,\mu,\nu)(t)$
  and $(u,R,S,\mu,\nu)(t)$ given by Definition
  \ref{def:bST}. If the restrictions of $\zeta_0$
  and $\bar\zeta_0$ are equal on
  $[\xb-\kappa\tb,\xb+\kappa\tb]$, then the two
  solutions coincide at $(\tb,\xb)$, that is,
  $u(\tb,\xb)=\bar u(\tb,\xb)$.
\end{theorem}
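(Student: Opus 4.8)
The plan is to transport the statement to the characteristic coordinates $(X,Y)$, where the bound $c\le\kappa$ confines the physical domain of dependence of $(\tb,\xb)$ to $I:=[\xb-\kappa\tb,\xb+\kappa\tb]$. Since $\bar S_0=\mathrm{id}$, the case $\tb=0$ is trivial ($u(0,\xb)=u_0(\xb)=\bar u_0(\xb)$), so assume $\tb>0$. First I would reduce to a one-sided statement: writing $a=\xb-\kappa\tb$, $b=\xb+\kappa\tb$, glue $\zeta_0$ and $\bar\zeta_0$ into the datum $\hat\zeta_0$ equal to $\zeta_0$ on $(-\infty,b]$ and to $\bar\zeta_0$ on $(b,\infty)$; this is consistent because the two agree at $b\in I$, and one checks $\hat\zeta_0\in\D$ — the only non-routine points being that the glued $\hat u$ stays in $H^1(\Real)$ (continuity of $u_0,\bar u_0$ and their coincidence on $I$) and that $\hat\mu_{\rm ac}=\tfrac14\hat R^2\,dx$, $\hat\nu_{\rm ac}=\tfrac14\hat S^2\,dx$ are inherited from the four original measures. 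Then $\hat\zeta_0$ agrees with $\zeta_0$ on the closed half-line $(-\infty,b]$ and with $\bar\zeta_0$ on $[a,\infty)$, so it suffices to prove the one-sided claim — two data in $\D$ coinciding on $(-\infty,b]$ give the same solution value at $(\tb,\xb)$ — together with its mirror (coincidence on $[a,\infty)$, by exchanging the roles of $X$ and $Y$); chaining them through $\hat\zeta_0$ gives $u(\tb,\xb)=\hat u(\tb,\xb)=\bar u(\tb,\xb)$.

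For the one-sided case I would take $\zeta_0,\zeta_0'\in\D$ coinciding on $(-\infty,b]$ and set $Z=\Sb\circ\Cb\circ\Lb(\zeta_0)$, $Z'=\Sb\circ\Cb\circ\Lb(\zeta_0')$ in $\H$; by $\Eb\circ\Sb\circ\Cb=\Cb$ (Lemma~\ref{lem:cdeeqe}) the time-$0$ data of $Z$ is $(\X,\Y,\Z,\V,\W)=\Cb\circ\Lb(\zeta_0)$, the curve $(\X,\Y)$ given by \eqref{eq:defXs}, and similarly for $Z'$. I would fix a point $(X_0,Y_0)$ with $t(X_0,Y_0)=\tb$, $x(X_0,Y_0)=\xb$; such a point exists since $\Eb\circ\tb_{\tb}(Z)\in\G_0$ provides a curve in $\C$ along which $t\equiv\tb$ and $x=\Z_2$ is continuous with limits $\pm\infty$ (Lemma~\ref{lem:exany}). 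With $s_r=\X^{-1}(X_0)$, $s_\ell=\Y^{-1}(Y_0)$ (generalised inverses), the monotonicities $t_X\ge0$, $t_Y\le0$ together with $t(X_0,Y_0)=\tb>0=t$ on $\{t=0\}$ force $\X(s_\ell)\le X_0$, $\Y(s_r)\ge Y_0$, $s_\ell\le s_r$, so that $(\X,\Y)|_{[s_\ell,s_r]}\in\C(\Omega)$ for $\Omega:=[\X(s_\ell),X_0]\times[Y_0,\Y(s_r)]$, with $(X_0,Y_0)\in\Omega$. By the uniqueness in Lemma~\ref{lem:globalsol}, $Z|_\Omega$ is determined by the restricted data $(\X,\Y,\Z,\V,\W)|_{[s_\ell,s_r]}$, and likewise for $Z'$.

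The key estimate would be the following. Along the horizontal segment from $(\X(s_\ell),\Y(s_\ell))=(\X(s_\ell),Y_0)$ to $(X_0,Y_0)$, using $x_X=c(U)t_X$ (see \eqref{eq:reltx}) with $x_X\ge0$, $t_X\ge0$ and $c\le\kappa$,
\[
\xb-x\bigl(\X(s_\ell),Y_0\bigr)=\int_{\X(s_\ell)}^{X_0}c(U)\,t_X\,dX\le\kappa\tb,
\]
and along the vertical segment from $(X_0,Y_0)$ to $(X_0,\Y(s_r))=(\X(s_r),\Y(s_r))$, with $x_Y=-c(U)t_Y$,
\[
x\bigl(X_0,\Y(s_r)\bigr)-\xb=\int_{Y_0}^{\Y(s_r)}\bigl(-c(U)\bigr)t_Y\,dY\le\kappa\tb.
\]
Since $s\mapsto x(\X(s),\Y(s))$ is nondecreasing, $x(\X(s),\Y(s))\in[a,b]$ for $s\in[s_\ell,s_r]$, hence $x_1(\X(s))\le b$, $x_2(\Y(s))\le b$ there and $x_1(X_0)\le b$. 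Because $\zeta_0=\zeta_0'$ on $(-\infty,b]$, the increasing maps $x\mapsto x+\mu_0((-\infty,x))$ and $x\mapsto x+\mu_0'((-\infty,x))$ agree on $(-\infty,b]$, so $x_1=x_1'$ on $\{X\mid x_1(X)\le b\}$ (Definition~\ref{def:mappingL}), and likewise $x_2=x_2'$; then $U_1=u_0\circ x_1=u_0'\circ x_1'=U_1'$, $J_1=\id-x_1=J_1'$, and $V_1$, $K_1$ agree on that range, and the same for the second components. Inspecting Definition~\ref{def:C} — observing that the supremum \eqref{eq:defXs} takes the same value for the two data as long as it does not exceed $\sup\{X\mid x_1(X)\le b\}$, which holds on $[s_\ell,s_r]$ by the previous sentence — shows that the two time-$0$ data coincide on $[s_\ell,s_r]$ and on $[\X(s_\ell),X_0]\times[Y_0,\Y(s_r)]$. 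Hence $Z|_\Omega=Z'|_\Omega$ by Lemma~\ref{lem:globalsol}, so $U(X_0,Y_0)=U'(X_0,Y_0)$ while $t=t'=\tb$, $x=x'=\xb$ at $(X_0,Y_0)$, and Lemma~\ref{lem:relURS} gives $u(\tb,\xb)=U(X_0,Y_0)=U'(X_0,Y_0)=u'(\tb,\xb)$.

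The hard part will be the last step's bookkeeping: $\Lb$ fixes the $(X,Y)$-parametrisation via a global normalisation involving the cumulative masses $\mu_0((-\infty,x))$, $\nu_0((-\infty,x))$, and $\Cb$ selects the Goursat curve by reading off $x_1,x_2$, so one must confirm that on the window $[s_\ell,s_r]$ only values of these functions below the level $b$ enter, which is exactly where the propagation estimate is used. Arguing instead directly from the two-sided equality on $I$ would force $\Lb$ of the two data to match on the relevant window only after a translation $(X,Y)\mapsto(X-a_\mu,Y-a_\nu)\in\Gr^2$ (with $a_\mu,a_\nu$ the discrepancies in the masses of $\mu_0,\nu_0$ to the left of $I$) and up to additive constants in the potentials $J,K$ — harmless for $U$ — after which the $\Gr^2$-equivariance of $\Sb,\Cb,\Eb,\Db,\tb_T$ and the $\Gr^2$-invariance of $\Mb$ would close the argument; the one-sided reduction simply avoids that.
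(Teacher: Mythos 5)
Your overall architecture is the same as the paper's: localize via $\Cb\circ\Lb$, use uniqueness on a rectangle (Lemma \ref{lem:globalsol}), control the size of the relevant rectangle with the speed estimate $x_X=c(U)t_X\le\kappa t_X$, and conclude with \eqref{eq:reluU}. Your ordering (pick $(X_0,Y_0)$ with $t=\tb$, $x=\xb$ first, then project back along the $t=0$ curve to define the window $[s_\ell,s_r]$ and show its $x$-values lie in $[\xb-\kappa\tb,\xb+\kappa\tb]$) is a clean variant of the paper's Step 4, and your left-sided comparison is genuinely simpler than the paper's Steps 1--3, because agreement of the data on all of $(-\infty,b]$ makes the cumulative functions $x\mapsto x+\mu_0((-\infty,x))$, $x\mapsto x+\nu_0((-\infty,x))$ literally coincide below $b$, so no relabeling and no additive constants in $J,K$ are needed.

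The gap is in the reduction. Your chaining requires, besides the left-sided claim, its ``mirror'': two data coinciding on $[a,\infty)$ give the same value at $(\tb,\xb)$, and you assert this follows ``by exchanging the roles of $X$ and $Y$''. It does not: the whole construction is anchored at $-\infty$, not symmetric under $x\mapsto -x$. In Definition \ref{def:mappingL} the parametrizations $x_1,x_2$ are built from the cumulative masses $\mu_0((-\infty,x))$, $\nu_0((-\infty,x))$, and the potentials are normalized by \eqref{eq:limJzero}, \eqref{eq:normalizationJ}. If $\hat\zeta_0$ and $\bar\zeta_0$ agree only on $[a,\infty)$ but carry different mass on $(-\infty,a)$, then on the relevant window their $x_1$'s (and $x_2$'s) agree only after translations $X\mapsto X+a_\mu$, $Y\mapsto Y+a_\nu$, and their $J$'s and $K$'s only up to additive constants; so your one-sided proof, which rests on literal equality of the Goursat data on $[s_\ell,s_r]$, does not apply verbatim to the mirror case. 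To close it you must invoke exactly the machinery you set out to avoid: the $\Gr^2$-translation, the equivariance properties \eqref{eq:equivariants}, the invariance of $\Mb$ under the action (via \eqref{eq:eqeqprojpi} and \eqref{eq:mpim}), and the invariance of \eqref{eq:goveq} under adding constants to the energy potentials --- i.e.\ the content of the paper's Steps 1--3, which compares each datum with the truncation supported in $[a,b]$ and handles precisely this shift. (A smaller point: your claim that $x_1=x_1'$ on $\{x_1\le b\}$ and that the supremum \eqref{eq:defXs} is unaffected needs the additional observation that the sublevel sets $\{x_2\le b\}$ and $\{x_2'\le b\}$ coincide, since the defining condition involves $x_2(2s-X')$ at points where its value may exceed $b$; this is true and provable from \eqref{eq:defx2}, but it is part of the bookkeeping you flagged rather than automatic.)
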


\begin{proof}
  For a given $\zeta_0=(u_0,R_0,S_0,\mu_0,\nu_0)$,
  we define $\bar\zeta_0$ equal to $\zeta_0$ on
  $[\xb-\kappa\tb,\xb+\kappa\tb]$ and zero
  otherwise, i.e.,
  \begin{equation*}
    (\bar u_0,\bar R_0,\bar
    S_0)(x)=
    \begin{cases}
      (u_0,R_0,S_0)(x)&\text{ if
      }x\in[\xb-\kappa\tb,\xb+\kappa\tb]\\
      0&\text{otherwise}
    \end{cases}
  \end{equation*}
  and
  \begin{equation*}
    \bar\mu_0(E)=\mu_0(E\cap[\xb-\kappa\tb,\xb+\kappa\tb]),\quad \bar\nu_0(E)=\nu_0(E\cap[\xb-\kappa\tb,\xb+\kappa\tb])
  \end{equation*}
  for any Borel set $E$. It is enough to prove
  that the theorem holds for this particular
  $\bar\zeta_0$. We have to compute the solutions
  for $\zeta_0$ and $\bar\zeta_0$. Let us denote
  $x_l=\xb-\kappa\tb$, $x_r=\xb+\kappa\tb$. We set
  $\psi=\Cb(\zeta_0)$ and
  $\bar\psi=\Cb(\bar\zeta_0)$.
  
  \textbf{Step 1.} We want to compute $\bar\psi$
  as a function of $\psi$. We denote $X_l=x_l$,
  $Y_l=x_l$, $X_r=x_r+\mu_0([x_l,x_r])$,
  $Y_r=x_r+\nu_0([x_l,x_r])$ and
  $\Omega=[X_l,X_r]\times[Y_l,Y_r]$. Let us prove
  that
  \begin{equation}
    \label{eq:compbarx1}
    \bar x_1(X)=
    \begin{cases}
      X&\text{ if }X\leq X_l\\
      x_1(X+\mu_0(-\infty,x_l))&\text{ if }X_l<X\leq X_r\\
      X-\mu_0([x_l,x_r])&\text{ if }X_r<X
    \end{cases}
  \end{equation}
  and
  \begin{equation}
    \label{eq:compbarx2}
    \bar x_2(Y)=
    \begin{cases}
      Y&\text{ if }Y\leq Y_l\\
      x_2(Y+\nu_0(-\infty,x_l))&\text{ if }Y_l<Y\leq Y_r\\
      Y-\nu_0([x_l,x_r])&\text{ if }Y_r<Y.
    \end{cases}
  \end{equation}
  From the definition \eqref{eq:defx1}, we have
  \begin{equation}
    \label{eq:defx1bar}
    \bar x_1(X)=\sup\{x'\in\Real\mid x'+\bar\mu_0(-\infty,x')<X\}
  \end{equation}
  First case: $X\leq x_l$. For any $x'$ such that
  $x'+\bar\mu_0(-\infty,x')<X$ we have
  $x'<X$. Hence, $x'<x_l$ and
  $\bar\mu_0(-\infty,x')=0$. It follows that $\bar
  x_1(X)=X$. Second case: $X_l<X\leq X_r$. For any
  $x'$ such that $x'+\bar\mu_0(-\infty,x')<X$, we
  have $x'\leq x_r$. Let us assume the opposite,
  i.e., $x'> x_r$, then
  $\bar\mu_0(-\infty,x')=\mu_0((-\infty,x')\cap[x_l,x_r])=\mu_0([x_l,x_r])$
  and therefore $x'+\mu_0([x_l,x_r])<X\leq
  x_r+\mu_0([x_l,x_r])$, which gives a
  contradiction. We can assume without loss of
  generality that $x'\geq x_l$ because for
  $x'=x_l$, $\bar\mu_0(-\infty,x')=0$ and we have
  $x'+\bar\mu_0(-\infty,x')=x'=x_l<X$. Thus we
  have $x'\in[x_l,x_r]$ and \eqref{eq:defx1bar}
  rewrites
  \begin{equation}
    \label{eq:barx1inter}
    \bar x_1(X)=\sup\{x'\in[x_l,x_r]\mid x'+\mu_0[x_l,x')<X\}.
  \end{equation}
  We now want to prove that, for $X_l\leq X\leq
  X_r$,
  \begin{equation}
    \label{eq:x1inter}
    x_1(X+\mu_0(-\infty,x_l))=\sup\{x'\in[x_l,x_r]\mid x'+\mu_0[x_l,x')<X\}.
  \end{equation}
  For any $x'$ such that
  $x'+\mu_0(-\infty,x')<X+\mu_0(-\infty,x_l)$, we
  have $x'\leq x_r$. Let us assume the opposite,
  i.e., $x'>x_r$, then $x_r+\mu_0([x_l,x_r])\leq
  x'+\mu_0([x_l,x'))<X$ implies a contradiction
  with the assumption that $X\leq X_r$. For
  $x'=x_l$, we have $x'+\mu_0(-\infty,x')<
  X+\mu_0(-\infty,x_l)$ so that we can assume
  without loss of generality that $x'\geq
  x_l$. Hence, 
  \begin{align*}
    x_1(X+\mu_0(-\infty,x_l))&=\sup\{x'\in\Real\mid
    x'+\mu_0(-\infty,x')<X+\mu_0(-\infty,x_l)\}\\
    &=\sup\{x'\in[x_l,x_r]\mid
    x'+\mu_0(-\infty,x')<X+\mu_0(-\infty,x_l)\}
  \end{align*}
  and \eqref{eq:x1inter} follows. By comparing
  \eqref{eq:barx1inter} and \eqref{eq:x1inter}, we
  get $\bar x_1(X)=x_1(X+\mu_0(-\infty,x_l))$ for
  $X_l<X<X_r$. Third case: $X_r<X$. For $x'=x_r$,
  we have $x'+\bar\mu_0(-\infty,x')\leq
  X_r=x_r+\mu_0[x_l,x_r]<X$. Hence, $\bar
  x_1(X)=\sup\{x'\in[x_r,\infty) \mid
  x'+\bar\mu_0(-\infty,x')<X\}$. Since, for
  $x'>x_r$,
  $\bar\mu_0(-\infty,x')=\mu_0([x_l,x_r])$, it
  follows that $\bar
  x_1(X)=X-\mu_0([x_l,x_r])$. This concludes the
  proof of proved \eqref{eq:compbarx1}. One proves
  in the same way \eqref{eq:compbarx2}. Let
  $\phi=(f,g)\in\Gr^2$ where $f:X\mapsto
  X+\mu_0(-\infty,x_l)$ and $g:Y\mapsto
  Y+\nu_0(-\infty,x_r)$. We denote
  $\tilde\psi=\psi\act\phi$. We have proved that
  \begin{equation}
    \label{eq:compbarx1b}
    \bar x_1(X)=\tilde x_1(X)\text{ for }X_l<X\leq X_r
  \end{equation}
  and
  \begin{equation}
    \label{eq:compbarx2b}
    \bar x_2(Y)=\tilde x_2(Y)\text{ for }X_l<Y\leq
    X_r.
  \end{equation}
  We denote $\bar\theta=\Cb(\bar\psi)$ and
  $\tilde\theta=\Cb(\tilde\psi)$.

  \textbf{Step 2.} We prove that
  \begin{equation}
    \label{eq:equXYabrti}
    \bar\X(s)=\tilde\X(s)\quad\text{and}\quad
    \bar\Y(s)=\tilde\Y(s)     
  \end{equation}
  for $s\in[s_l,s_r]$ where $s_l=\frac12(X_l+Y_l)$
  and $s_r=\frac12(X_r+Y_r)$. By using the
  definitions of $x_1$ and $x_2$, we obtain that,
  for any $x\in\Real$,
  \begin{equation}
    \label{eq:x1xbr}
    x_1(x+\mu_0(-\infty,x))=x_1(x+\mu_0(-\infty,x])=x
  \end{equation}
  and that the corresponding statement for $x_2$
  holds. Let us now prove that
  \begin{equation}
    \label{eq:Xsl}
    \tilde\X(s_l)=X_l\quad\text{ and }\quad\tilde\Y(s_l)=Y_l.
  \end{equation}
  It follows from \eqref{eq:x1xbr} that 
  \begin{equation}
    \label{eq:x1Xlx2Yr}
    \tilde x_1(X_l)=\tilde x_2(Y_l)=x_l
  \end{equation}
  as we have $\tilde
  x_1(X_l)=x_1(x_l+\mu_0(-\infty,x_l))=x_l=x_2(x_l+\nu_0(-\infty,x_l))=\tilde
  x_2(X_l)$. For any $X<X_l$, we have $\tilde
  x_1(X)\leq \tilde x_1(X_l)=x_l$. Let us prove
  that $\tilde x_1(X)<\tilde x_1(X_l)$. We assume
  the opposite, i.e., that $\tilde x_1(X)=\tilde
  x_1(X_l)=x_l$. Then, there exists an increasing
  sequence $x_i$ such that
  $\lim_{i\to\infty}x_i=x_l$ and
  $x_i+\mu_0(-\infty,x_i)<X+\mu_0(-\infty,x_l)$. It
  implies that $x_l+\mu_0(-\infty,x_l)\leq
  X+\mu_0(-\infty,x_l)$ because of the lower
  semicontinuity of
  $x\mapsto\mu_0(-\infty,x)$. Hence, $x_l\leq
  X_l$, which is a contradiction. Thus we have
  proved that, for any $X<X_l$, $\tilde
  x_1(X)<\tilde x_1(X_l)=\tilde x_2(Y_l)\leq\tilde
  x_2(2s-X)$. Hence, $\tilde X(s_l)=X_l$ and
  \eqref{eq:Xsl} holds.  By using similar
  arguments, one also proves that
  \begin{equation}
    \label{eq:Ysl}
    \tilde\X(s_r)=X_r,\quad\tilde\Y(s_r)=Y_r\quad\text{ and }\quad\tilde x_1(X_r)=\tilde x_2(Y_r)=x_r.
  \end{equation}
  and the corresponding results for $\bar\X$ and
  $\bar\Y$, that is,
  \begin{equation}
    \label{eq:barXsl}
    \bar\X(s_l)=X_l,\quad\bar\Y(s_l)=Y_l,\quad\bar x_1(X_l)=\bar x_2(Y_l)=x_l
  \end{equation}
  and
  \begin{equation}
    \label{eq:barYsl}
    \bar\X(s_r)=X_r,\quad\bar\Y(s_r)=Y_r,\quad\bar x_1(X_r)=\bar x_2(Y_r)=x_r.
  \end{equation}
  In particular, we have proved
  \eqref{eq:equXYabrti} for $s=s_l$ and
  $s=s_r$. For any $s\in(s_l,s_r)$, either
  $X_l<\bar\X(s)\leq X_r$ or
  $Y_l\leq\bar\Y(s)<Y_r$. We consider only the
  case where $X_l<\bar\X(s)\leq X_r$ as the other
  case can be treated similarly. By definition of
  $\bar\X$, there exists an increasing sequence
  $X_i$ such that $\lim_{i\to\infty}X_i=\bar\X(s)$
  and $\bar x_1(X_i)<\bar x_2(Y_i)$ where
  $Y_i=2s-X_i$. For $i$ large enough, we have
  $X_l<X_i\leq X_r$ and, by \eqref{eq:compbarx1b},
  we get
  \begin{equation}
    \label{eq:x1Xi}
    \bar x_1(X_i)=\tilde x_1(X_i)<\bar x_2(Y_i)
  \end{equation}
  If $Y_i\leq Y_r$ then $\bar x_2(Y_i)=\tilde
  x_2(Y_i)$ and 
  \begin{equation}
    \label{eq:strineqx1x2}
    \tilde x_1(X_i)<\tilde x_2(Y_i).
  \end{equation}
  If $Y_i>Y_r$ then \eqref{eq:strineqx1x2} holds
  also. Indeed, let us assume the opposite. By the
  monotonicity of $\tilde x_1$ and $\tilde x_2$,
  we get
  \begin{equation}
    \label{eq:x1Xrx1Xi} 
    \tilde x_1(X_r)\geq\tilde x_1(X_i)\geq\tilde x_2(Y_i)\geq\tilde x_2(Y_r).
  \end{equation}
  By \eqref{eq:Ysl}, we have $\tilde
  x_1(X_r)=\tilde x_1(\X(s_r))=\tilde
  x_2(\Y(s_r))=\tilde x_2(Y_r)$ and therefore
  \eqref{eq:x1Xrx1Xi} implies that $\tilde
  x_2(Y_i)=\tilde x_2(Y_r)=x_r$. From the
  definitions of $x_2$ and $\tilde x_2$, we know
  that there exists a decreasing sequence $x_j$
  such that $\lim_{j\to\infty}x_j=\tilde x_2(Y_i)$
  and $x_j+\nu_0(-\infty,x_j)\geq
  Y_i+\nu_0(-\infty,x_l)$. Letting $j$ tend to
  infinity, we get $\tilde
  x_2(Y_i)+\nu_0(-\infty,x_2(Y_i)]\geq
  Y_i+\nu_0(-\infty,x_l)$. Hence, as $\tilde
  x_2(Y_i)=\tilde x_2(Y_r)=x_r$,
  \begin{equation*}
    Y_r=x_r+\nu_0[x_l,x_r]\geq Y_i
  \end{equation*}
  which is a contradiction and we have proved that
  \eqref{eq:strineqx1x2} holds. If
  $Y_l<\bar\Y(s)$, we get
  \begin{equation}
    \label{eq:equlx1Xs}
    \tilde
    x_1(\bar\X(s))=\bar x_1(\bar\X(s))=\bar
    x_2(\bar\Y(s))=\tilde x_2(\bar\Y(s))   
  \end{equation}
  from \eqref{eq:compbarx1} and
  \eqref{eq:compbarx2}. If $Y_l=\bar\Y(s)$,
  $\tilde x_2(\bar\Y(s))=x_l=\bar x_2(\bar\Y(s))$,
  by \eqref{eq:Ysl} and \eqref{eq:barYsl} so that
  \eqref{eq:equlx1Xs} also holds. Then, it follows
  from \eqref{eq:strineqx1x2} and
  \eqref{eq:equlx1Xs} that $\bar\X(s)=\tilde\X(s)$
  and the proof of \eqref{eq:equXYabrti} is
  complete.

  \textbf{Step 3.} Let $\tilde Z=\Sb\tilde\Theta$
  and $\bar Z=\Sb\bar\Theta$. We prove that
  \begin{equation}
    \label{eq:equaltxU}
    \bar t(X,Y)=\tilde t(X,Y),\quad\bar x(X,Y)=\tilde x(X,Y),\quad\bar U(X,Y)=\tilde U(X,Y)
  \end{equation}
  for all $(X,Y)\in\Omega$. Since $\bar x_1=\tilde
  x_1$ on $[X_l,X_r]$ and $\bar x_2=\tilde x_2$ on
  $[Y_l,Y_r]$, we get, from the definition of
  $\Lb$, that
  \begin{equation*}
    \bar U_1=\tilde U_1,\quad\bar V_1=\tilde V_1,\quad \tilde J_1=\bar J_1+\tilde J_1(X_l),\quad \tilde K_1=\bar K_1+\tilde K_1(X_l)
  \end{equation*}
  on $[X_l,X_r]$ and 
  \begin{equation*}
    \bar U_2=\tilde U_2,\quad\bar V_2=\tilde V_2,\quad\tilde J_2=\bar J_2+\tilde J_2(Y_l),\quad \tilde K_2=\bar K_2+\tilde K_2(Y_l)
  \end{equation*}
  on $[Y_l,Y_r]$. Since, by \eqref{eq:equXYabrti},
  the two paths $(\bar\X,\bar\Y)$ and
  $(\tilde\X,\tilde\Y)$ in $\C(\Omega)$ are equal,
  one can check, by using the definition of the
  mapping $\Cb$, that it implies that
  \begin{equation*}
    \bar t(s)=\tilde t(s),\quad\bar x(s)=\tilde x(s),\quad\bar U(s)=\tilde U(s),\quad \tilde J(s)=\bar J(s)+\tilde J(s_l),\quad \tilde K(s)=\bar K(s)+\tilde K(s_l)
  \end{equation*}
  for $s\in[s_l,s_r]$ and
  \begin{equation*}
    \bar\V=\tilde\V,\quad\bar\W=\tilde\W
  \end{equation*}
  on $[X_l,X_r]$ and $[Y_l,Y_r]$,
  respectively. The elements $\tilde\Theta$ and
  $\bar\Theta$ are equal in $\Omega$ except that
  the energy potentials $J$ and $K$ differ up to a
  constant. However one can check that the
  governing equation \eqref{eq:goveq} is invariant
  with respect to addition of a constant to the
  energy potentials. Hence, by the uniqueness
  result of Lemma \ref{lem:globalsol} which holds
  on finite domains, we get \eqref{eq:equaltxU}.
  
  \textbf{Step 4.} We prove that there exists
  $(X_0,Y_0)\in\Omega$ such that 
  \begin{equation}
    \label{eq:ttbxxb}
    \bar
    t(X_0,Y_0)=\tb\quad\text{ and }\quad\bar x(X_0,Y_0)=\xb.    
  \end{equation}
  We have
  \begin{equation*}
    \bar x_1(X_l)=\bar x_2(Y_l)=\xb-\kappa\tb\quad\text{ and }\quad \bar x_1(X_r)=\bar x_2(Y_r)=\xb+\kappa\tb
  \end{equation*}
  so that
  \begin{equation*}
    \bar x(X_l,Y_l)=x_l\quad\text{ and }\quad \bar x(X_r,Y_r)=x_r.
  \end{equation*}
  Let $P=(X_r,Y_l)$ denote the right-corner of
  $\Omega$. We have
  \begin{equation*}
    \bar x(P)-x_l=\int_{X_l}^{X_r}\bar x_X(X,Y_l)\,dX=\int_{X_l}^{X_r}c(\bar U)\bar t_X(X,Y_l)\,dX
  \end{equation*}
  and
  \begin{equation*}
    \bar t(P)=\int_{X_l}^{X_r}\bar t_X(X,Y_l)\,dX.
  \end{equation*}
  Hence, using the positivity of $\bar t_X$ and
  the assumption that $\frac1\kappa<c<\kappa$, we
  get
  \begin{equation}
    \label{eq:boundxpl}
    \bar x(P)-x_l\leq\kappa \bar t(P).
  \end{equation}
  Similarly, one proves that $x_r-\bar
  x(P)\leq\kappa\bar t(P)$, which added to
  \eqref{eq:boundxpl}, yields
  $x_l-x_r\leq2\kappa\bar t(P)$ or, after plugging
  the definition of $x_l$ and $x_r$,
  \begin{equation}
    \label{eq:uppbdt}
    \tb\leq\bar t(P).
  \end{equation}
  The mapping $(X,Y)\mapsto (\bar t(X,Y),\bar
  x(X,Y))$ is surjective from $\Real^2$ to
  $\Real^2$ and there exists
  $P_0=(X_0,Y_0)\in\Real^2$, which may not be
  unique, such that \eqref{eq:ttbxxb} is
  fulfilled. Let us assume that
  \begin{equation}
    \label{eq:contttb}
    (\bar t(X,Y),\bar x(X,Y))\neq(\tb,\xb)
  \end{equation}
  for all $(X,Y)\in\Omega$. By using
  \eqref{eq:uppbdt} and the monotonicity of the
  function $t$ and $x$ in the $X$ and $Y$
  directions, we infer that either $X_0>X_r$ and
  $Y_l\leq Y$ or $Y<Y_l$ and $X_0\leq X_r$. We
  treat only the first case as the other case can
  be treated similarly. We have $Y_0\leq Y_r$ as,
  otherwise, $x(X_0,Y_0)\geq x(X_r,Y_r)=x_r$. We
  introduce the point
  $P_1=(X_r,Y_0)\in\Omega$. Let us assume
  $x(P_1)\geq x(P_0)=\xb$. By the monotonicity of
  $x$, we get that $x(P_1)=x(P_0)$ and
  $x_X(X,Y_0)=0$ for $X\in[X_r,X_0]$. It implies
  that $t_X(X,Y_0)=0$ for $x\in[X_l,X_r]$ and
  therefore $t(P_1)=t(P_0)=\tb$. However this
  contradicts the original assumption
  \eqref{eq:contttb} and we must have that
  $x(P_1)<\xb$. By following the same type of
  computation that lead to \eqref{eq:boundxpl}, we
  now get
  \begin{equation*}
    \xb>x(P_1)=x_r+\int_{Y_r}^{Y_0}x_Y(X_r,Y)\,dY\geq x_r-\kappa t(P_1)\geq x_r-\kappa\tb\geq\xb,
  \end{equation*}
  which is a contradiction. Hence,
  \eqref{eq:contttb} cannot hold and we have
  proved \eqref{eq:ttbxxb}.

  \textbf{Step 5.} We now conclude the
  argument. By definition, we have $\bar
  u(\tb,\xb)=\bar U(X_0,Y_0)$ for any $(X_0,Y_0)$
  such that \eqref{eq:ttbxxb} holds. By
  \eqref{eq:equaltxU}, it follows that $\tilde
  U(X_0,Y_0)=\bar U(X_0,Y_0)=\bar u(\tb,\xb)$ and
  $\tilde t(X_0,Y_0)=\tb$ and $\tilde
  x(X_0,Y_0)=\xb$. It gives $U(f(X_0),g(Y_0))=\bar
  u(\tb,\xb)$ and $t(f(X_0),g(Y_0))=\tb$ and
  $x(f(X_0),g(Y_0))=\xb$, so that $u(\tb,\xb)=\bar
  u(\tb,\xb)$, by \eqref{eq:reluU}.
  \begin{figure}[h]
    \centering
    \includegraphics[width=15cm]{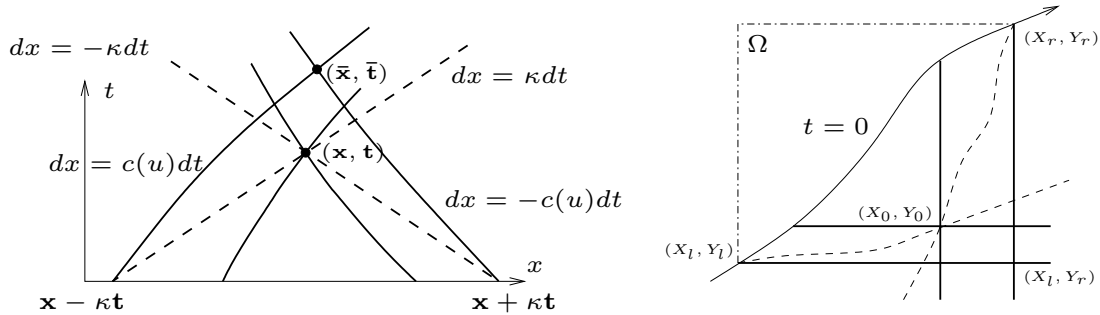}
    \caption{We have $\tb=t(X_0,Y_0)$ and
      $\xb=x(X_0,Y_0)$. In the new set of
      coordinates $(X,Y)$, the domain of dependence
      is given by rectangles. We define the points
      $(X_l,Y_l)$ and $(X_r,Y_r)$ so that they
      correspond to the points $(\xb-\kappa\tb,0)$
      and $(\xb+\kappa\tb,0)$. It then follows, from
      the boundedness of the function $c(u)$ that
      $(X_0,Y_0)$ is contained in $\Omega$.}
    \label{fig:finitespeed}
  \end{figure}  

\end{proof}

\section{Examples}
\label{sec:example}

There is a lack of explicit solutions for any choice of $c$ except the
trivial case of the linear wave equation for which $c$ is constant.
We here discuss two examples; first the linear case with general initial data, and second, a nonlinear case with very simple initial data.  

\subsection{The linear wave equation} \label{sec:linearWE}

In the case of the linear wave equation, the
coefficient $c$ is constant and the equivalent
system \eqref{eq:goveq} rewrites as 
\begin{equation}
  \label{eq:eqsyslin}
  Z_{XY}=0.
\end{equation}
We consider general initial data
$(u_0,R_0,S_0,\mu_0,\nu_0)\in\D$, let
$(\psi_1,\psi_2)=\Lb(u_0,R_0,S_0,\mu_0,\nu_0)$ and
$\Theta=(\X,\Y,\Z,\V,\W)=\Cb(\psi_1,\psi_2)$. From
\eqref{eq:eqsyslin}, we get that
\begin{equation*}
  Z_X(X,Y)=\V(X)\text{ and }Z_Y(X,Y)=\W(X).
\end{equation*}
Given a point $(X,Y)\in\Real^2$, we denote
$s_0=\Y^{-1}(Y)$ and $s_1=\X^{-1}(X)$ so that
\begin{equation}
  \label{eq:defs0s1lin}
  \Y(s_0)=Y\text{ and }\X(s_1)=X.
\end{equation}
We have
\begin{equation*}
  Z(X,Y)=\Z(s_0)+\int_{\X(Y)}^XZ_X(\bar X,Y)\,d\bar X=\Z(s_0)+\int_{\X(Y)}^X\V(\bar X)\,d\bar X
\end{equation*}
and
\begin{equation*}
  Z(X,Y)=\Z(s_1)+\int_{\Y(X)}^YZ_Y(X,\bar Y)\,d\bar Y=\Z(s_1)+\int_{\Y(X)}^Y\W(\bar Y)\,d\bar Y.
\end{equation*}
By averaging these two equations, we get
\begin{equation*}
  Z(X,Y)=\frac12(\Z(s_0)+\Z(s_1))+\frac12(\int_{\X(Y)}^X\V(\bar X)\,d\bar X+\int_{\Y(X)}^Y\W(\bar Y)\,d\bar Y).
\end{equation*}
After a change of variables, it yields
\begin{equation}
  \label{eq:ZXYlinear}
  Z(X,Y)=\frac12(\Z(s_0)+\Z(s_1))+\frac12\int_{s_0}^{s_1}(\V(\X(s))\dot\X(s)-\W(\Y(s))\dot\Y(s))\,ds.
\end{equation}
We recall that for $\Theta\in\G$, we have
$\V_2(\X(s))\dot\X(s)=\W_2(\Y(s))\dot\Y(s)$, see,
for example, \eqref{eq:V2W2equal}.  For the first
component $Z_1(X,Y)$ that we denote $t(X,Y)$, we
have $\Z_1(s)=t(s)=0$ for all $s\in\Real$ and,
after using \eqref{eq:reltx1}, we get
\begin{align}
  \notag
  t(X,Y)&=\frac1{2c}\int_{s_0}^{s_1}(\V_2(\X(s))\dot\X(s)+\W_2(\Y(s))\dot\Y(s))\,ds\\
  \notag
  &=\frac1{2c}(\Z_2(s_1)-\Z_2(s_0)),&\text{ by \eqref{eq:relZVWG} }\\
  \label{eq:tlinear}
  &=\frac1{2c}(x_1(X)-x_2(Y)), &\text{ by
    \eqref{eq:barZmapC2}}.
\end{align}
As far as the second component $Z_2(X,Y)=x(X,Y)$
is concerned, it follows directly from
\eqref{eq:ZXYlinear} and \eqref{eq:barZmapC2} that
\begin{equation}
  \label{eq:xlinear}
  x(X,Y)=\frac12(x_1(X)+x_2(Y)).
\end{equation}
For the third component $Z_3(X,Y)=U(X,Y)$, we have
$\Z_3(s_0)=u_0(x_1(X))$ and
$\Z_3(s_1)=u_0(x_2(Y))$. After using
\eqref{eq:defV1V2} and \eqref{eq:V1V3V2W3}, we
get, after a change of variables, that
\begin{equation*}
  \int_{s_0}^{s_1}(\V_3(\X(s))\dot\X(s)=\int_{s_0}^{s_1}(R_0(x_1(\X(s)))x_1'(\X(s))\dot\X(s)\,ds=\frac1{2c}\int_{x_2(Y)}^{x_1(X)}R_0(x)\,dx.
\end{equation*}
We use the fact that
$x_1(\X(s_1))=x_2(\Y(s_1))=x_2(Y)$, which follows
from \eqref{eq:x1eqx2} and
\eqref{eq:defs0s1lin}. Similarly, we obtain that
$\int_{s_0}^{s_1}W(\Y(s))\dot\Y(s)\,ds=-\frac1{2c}\int_{x_1(X)}^{x_2(Y)}S_0(x)\,dx$. Hence,
\eqref{eq:ZXYlinear} yields
\begin{equation}
  \label{eq:Ulinear}
  U(X,Y)=\frac12(u_0(x_1(X))+u_0(x_2(Y)))+\frac1{4c}\int_{x_1(X)}^{x_2(Y)}(R_0+S_0)\,dx.
\end{equation}
From \eqref{eq:tlinear} and \eqref{eq:xlinear}, it
follows that $x_1(X)=x(X,Y)-ct(X,Y)$ and
$x_2(Y)=x(X,Y)+ct(X,Y)$. Therefore, after using
\eqref{eq:reluU}, we recover d'Alembert's formula
from \eqref{eq:Ulinear}, i.e.,
\begin{equation*}
  u(t,x)=\frac12(u_0(x-ct)+u_0(x+ct))+\frac{1}{4c}\int_{x-ct}^{x+ct}(R_0+S_0)\,dx
\end{equation*}
for the solution of the linear wave equation. Let
us now look at the energy. We use the same
notation as in the proof of Theorem
\ref{th:energconcentration}. For a given time $t$,
$(\X(t,s),\Y(t,s))$ denotes the curve
corresponding to a given time, that is,
$t(\X(t,s),\Y(t,s))=t$ (Beware of the notation,
$t(\cdot,\cdot)$ denotes a function while $t$,
without argument, denotes a constant). For any
point $x$, we have
\begin{equation*}
  \mu(t)(-\infty,x)=\int_{x(\X(t,s),\Y(t,s))<x}J_X(\X(t,s),\X(t,s))\X_s(t,s)\,ds.
\end{equation*}
From \eqref{eq:tlinear} and \eqref{eq:xlinear}, we
get that $x(\X(t,s),\Y(t,s))<x$ if and only if
$x_1(\X(t,s))<x+ct$. Since
$J_X(X,Y)=\V_4(X)=J_1'(X)$, we get
\begin{equation*}
  \mu(t)(-\infty,x)=\int_{x_1(\X(t,s))<x+ct}J_1'(\X(t,s))\X_s(t,s)\,ds.
\end{equation*}
After a change of variables, it yields
\begin{equation*}
  \mu(t)(-\infty,x)=\int_{x_1(X)<x+ct}J_1'(X)\,dX=\mu_0(-\infty,x+ct).
\end{equation*}
Hence, for any Borel set $B$, we have
\begin{equation*}
  \mu(t)(B)=\mu_0(B+ct).
\end{equation*}
Similarly, we get
\begin{equation*}
  \nu(t)(B)=\nu_0(B-ct).
\end{equation*}

\subsection{An example with singular initial
  data} \label{sec:nonlin} Let
\begin{subequations}
  \label{eq:initnum}
  \begin{equation}
    \label{eq:initnumu}
    u_0(x)=1,\ R_{0}(x)=S_{0}(x)=0
  \end{equation}
  for all $x\in\Real$ and
  \begin{equation}
    \label{eq:initnummunu}
    \nu_0=2\mu_0=2\delta
  \end{equation}
\end{subequations}
where $\delta$ denotes the Dirac delta
function. Our intention is to consider initial
data for which all the energy is concentrated in a
set of zero measure (in this case the origin) and
that is why we choose $u_0$ equal to a constant.\footnote{If
we choose $u_0=0$ (the only constant in $L^2(\Real)$) then, since $c'(0)=0$, one can check
from the governing equations \eqref{eq:goveq} that
there is no evolution of the solution, and we have
that $u(t,x)=0$, $2\mu(t)=\nu(t)=2\delta$ is the
conservative solution.} Since $u_0$ does not belong
to $L^2(\Real)$, the theory we have developed
does not apply directly. However, we can consider
the sequence of solutions
$(u_N,R_N,S_N,\mu_N,\nu_N)$ given by the semigroup
$\bar S_t$ for the following initial data
\begin{equation*}
u_0^N(x)=
\begin{cases}
  1&\text{ for }x\in[-N,N]\\
  0&\text{ otherwise }
\end{cases}
\end{equation*}
and $R_{0}^N(x)=S_{0}^N(x)=0$,
$\nu_0=2\mu_0=2\delta$. Given a compact domain in
time and space, we know that for $N$ large enough
the solutions will coincide on this compact
domain due to the finite time of propagation, see Theorem \ref{thm:finite}. Thus we can 
define the solution of
\eqref{eq:nvw} for the initial data
\eqref{eq:initnum} as the limit of the solutions
$u^N$ when $N$ tends to $\infty$. We see that,
by using the same type of construction, we can
actually construct solutions for any initial data
such that $u_0, R_0, S_0$ belong to
$L^2_{\text{loc}}(\Real)$ and $\mu$, $\nu$ are
(not necessarily finite) Radon measures (note
that, by definition, a Radon measure is finite on
compacts). 

For the initial data $(u_0,R_0,S_0,\mu_0,\nu_0)$
given by \eqref{eq:initnum}, let us denote
$(\psi_1,\psi_2)=\Lb(u_0,R_0,S_0,\mu_0,\nu_0)$ as
defined in Definition \ref{def:mappingL} and
$\Theta=(\X,\Y,\Z,\V,\W)=\Cb(\psi_1,\psi_2)$ defined by Definition~\ref{def:C}.
We first find
 \begin{align}
    x_1(X)=
    \begin{cases}
      X&\text{ if }X<0,\\
      0&\text{ if }0\leq X\leq 1,\\
      X-1&\text{ if } X >1,
    \end{cases} \qquad 
  x_2(Y)=
    \begin{cases}
      Y&\text{ if }Y<0,\\
      0&\text{ if }0\leq Y\leq 2,\\
      Y-2&\text{ if } Y >2,
    \end{cases}
  \end{align}
which yields
\begin{align*}
  \Gamma_0 & =\{(X,Y)\mid x_1(X)=x_2(Y)\} \\
&= \{(X,X)\mid X\le 0\}\cup
\big([0,1]\times[0,2]\big) \cup \{(X,X+1) \mid X\ge 1\}.
\end{align*}
Furthermore
\begin{align*}
   J_1(X)&=
    \begin{cases}
      0&\text{ if }X<0,\\
      X&\text{ if }0\leq X\leq 1,\\
      1&\text{ if } X >1,
    \end{cases} & 
  J_2(Y)&=
    \begin{cases}
      0&\text{ if }Y<0,\\
      Y&\text{ if }0\leq Y\leq 2,\\
      2&\text{ if } Y >2,
    \end{cases} \\
 U_1(X)&=1, & U_2(Y)&=1, \\
V_1(X)&=0, & V_2(Y)&=0, \\
  K_1(X)&=
    \begin{cases}
      0&\text{ if }X<0,\\
      X/c(1)&\text{ if }0\leq X\leq 1,\\
      1/c(1)&\text{ if } X >1,
    \end{cases} & 
  K_2(Y)&=
    \begin{cases}
      0&\text{ if }Y<0,\\
      -Y/c(1)&\text{ if }0\leq Y\leq 2,\\
      -2/c(1)&\text{ if } Y >2.
    \end{cases} 
\end{align*}
Next, we obtain
\begin{subequations}
  \label{eq:initnumgoveq}
  \begin{align}
    \X(s)=
    \begin{cases}
      s&\text{ if }s<0,\\
      0&\text{ if }0\leq s<1,\\
      2s-2&\text{ if }1\leq s<3/2,\\
      s-1/2&\text{ if }3/2\leq s,
    \end{cases}&& \Y(s)=
    \begin{cases}
      s&\text{ if }s<0,\\
      2s&\text{ if }0\leq s<1,\\
      2&\text{ if }1\leq s<3/2,\\
      s+1/2&\text{ if }3/2\leq s,
    \end{cases}
  \end{align}
  and
  \begin{align}
    x(s)=
    \begin{cases}
      s&\text{ if }s<0,\\
      0&\text{ if }0\leq s<3/2,\\
      s-3/2&\text{ if }3/2\leq s,
    \end{cases}
  \end{align}
  and $U(s)=1$ and
  \begin{align}
    J(s)=
    \begin{cases}
      0&\text{ if }s<0,\\
      2s&\text{ if }0\leq s<3/2,\\
      3&\text{ if }3/2\leq s,
    \end{cases}
    && K(s)=
    \begin{cases}
      0&\text{ if }s<0,\\
      -\frac{2s}{c(1)}&\text{ if }0\leq s<1,\\
      \frac{2(s-2)}{c(1)}&\text{ if }1\leq s<\frac32,\\
      -\frac{1}{c(1)}&\text{ if }3/2\leq s,
    \end{cases}
  \end{align}
  \begin{align}
    c(1)\V_1(X)=\V_2(X)&=
    \begin{cases}
      1/2&\text{ if }X<0,\\
      0&\text{ if }0\leq X<1,\\
      1/2&\text{ if }1\leq X,
    \end{cases}
    \\ c(1)\W_1(Y)=-\W_2(Y)&=
    \begin{cases}
      1/2&\text{ if }Y<0,\\
      0&\text{ if }0\leq Y<2,\\
      1/2&\text{ if }2\leq Y,
    \end{cases}
  \end{align}
  and $\V_3=\W_3=0$ and
  \begin{align}
    c(1)\V_5(X)=\V_4(X)&=
    \begin{cases}
      0&\text{ if }X<0,\\
      1&\text{ if }0\leq X<1,\\
      0&\text{ if }1\leq X,
    \end{cases}
    \\c(1)\W_5(Y)=-\W_4(Y)&=
    \begin{cases}
      0&\text{ if }Y<0,\\
      1&\text{ if }0\leq Y<2,\\
      0&\text{ if }2\leq Y.
    \end{cases}
  \end{align}
\end{subequations}

In the case of the linear wave equation, the
solution is explicit. In Figure
\ref{fig:initandsollin}, we plot the curve
$(\X,\Y)$ and the curve $t(X,Y)=T$, for a given
$T$. In Figure \ref{fig:initandsollin}, the
letters A to F denote the regions which are
delimited by the neighboring solid or dashed
black lines.  The values of $Z$ in these different
regions are given in Table~\ref{tab:valueZ}. \renewcommand{\arraystretch}{2}

\begin{figure}
  \centering
  \includegraphics{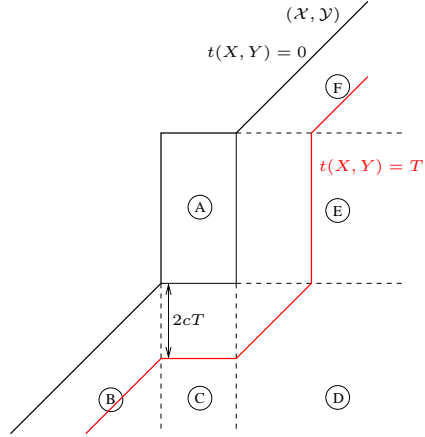}
  \caption{Plot of the initial data curve and the curve for a given time $T$.}
  \label{fig:initandsollin}
\end{figure}

\begin{table}[h]
\begin{center}
  \begin{tabular}[h]{|c|c|c|c|c|c|c|}
    \hline
    &A&B&C&D&E&F\\
    \hline
    $t(X,Y)$&$0$&$\frac{X-Y}{2c}$&$-\frac{Y}{2c}$&$\frac{X-Y-1}{2c}$&$\frac{X-1}{2c}$&$\frac{X-Y+1}{2c}$\\
    \hline
    $x(X,Y)$&$0$&$\frac{X+Y}2$&$\frac{Y}2$&$\frac{X+Y-1}2$&$\frac{X-1}{2}$&$\frac{X+Y-3}{2}$\\
    \hline
    $U(X,Y)$&$1$&$1$&$1$&$1$&$1$&$1$\\
    \hline
    $J(X,Y)$&$X+Y$&$0$&X&$1$&$1+Y$&$3$\\
    \hline
    $K(X,Y)$&$\frac{X-Y}{c}$&$0$&$\frac{X}c$&$\frac{1}c$&$\frac{1-Y}c$&$-\frac{1}c$\\
    \hline
  \end{tabular}
\end{center}
\caption{The values of the solution $Z=(t,x,U,J,K)$ of the linear wave equation for the initial data given by \eqref{eq:initnum} in the different domains of the plane (see Figure \ref{fig:initandsollin}, the regions A-F are delimited by the dashed and solid dark lines).}
\label{tab:valueZ}
\end{table}

If we consider the choice
\begin{equation}
  \label{eq:valc}
  c(u)^2=\beta\cos^2 u+\alpha\sin^2 u
\end{equation}
where $\alpha$ and $\beta$ are strictly positive
constants, then no explicit solutions are
available for the initial data
\eqref{eq:initnum}. Due to the finite speed of
propagation, we know that
\begin{equation}
  \label{eq:exSOL}
 \text{$U(X,Y)=1$ on 
$(-\infty,0\,]\times(-\infty,0\,]
\bigcup\,  [\,0,1\,]\times[\,0,2\,] \bigcup\, [\,1,\infty)\times[\,2,\infty)$.}
\end{equation}
The measures $\nu$ and $\mu$ will become regular
measures for $t$ nonzero.  We take $\alpha=0.2$
and $\beta=0.1$. The solution is illustrated on
Figs.~\ref{fig:difftimes},
\ref{fig:isotimes}--\ref{fig:paramsurface}.  Here we have
used the numerical method described in Section
\ref{sec:num}.

\section{A numerical method for conservative solutions}
\label{sec:num}

Next we describe a general numerical approach to obtain conservative
solutions of the NVW equation. Traditional finite difference methods
will not yield conservative solutions, and we here use the full
machinery of the analytical approach to derive an efficient numerical method for conservative solutions.

We discretize the problem as follows. Given $N$,
$\smin$ and $\smax$, we set $h=(\smax-\smin)/N$
and $s_i=\smin+ih$ for $i=0,\ldots,N$. Let
\begin{align*}
  X_i=\X(s_i),&&Y_j=\Y(s_j),&&P_{i,j}=[X_i,X_j]
\end{align*}
for $i=0,\ldots,N$ and $j=0,\ldots,N$. We compute
the solution of \eqref{eq:goveq} on the domain
$\Omega=[X_0,X_N]\times[Y_0,Y_N]$. The algorithm
follows the same type of iteration as in the proof
of Lemma \ref{lem:globalsol}, and we use the same
notation here.  We approximate the form
$Z_X(X,Y_{j})\,dX$ on the interval $[X_{i-1},X_i]$
by the constant $V_{i,j}$ and the form
$Z_Y(X_{i},Y)\,dY$ on the interval $[Y_j,Y_{j+1}]$
by the constant $W_{i,j}$. We denote by
$Z_{i,j}^h$ and $Z_{i,j}^v$ the approximation of
$Z$ on the segments $P_{i-1,j}-P_{i,j}$ and
$P_{i,j}-P_{i,j+1}$, respectively. The initial
curve is approximated on the piecewise horizontal
and vertical line $\bigcup_{i=1}^{N-1}
([P_{i,i},P_{i,i+1}]\cup[P_{i,i+1},P_{i+1,i+1}])$
and we set
\begin{equation*}
  Z_{i,i}^h=\Z(s_i),\quad\text{ and }\quad
  V_{i,i}=\frac1{X_{i}-X_{i-1}}\int_{X_{i-1}}^{X_{i}}\V(X)\,dX\quad\text{ for }i=1,\ldots,N,
\end{equation*}
\begin{equation*}
  Z_{i,i}^v=\Z(s_i),\quad\text{ and }\quad  W_{i,i}=\frac1{Y_{i+1}-Y_i}\int_{Y_i}^{Y_{i+1}}\W(X)\,dX\quad\text{ for }i=0,\ldots,N-1,
\end{equation*}
where $\Z$, $\V$ and $\W$ are given by
\eqref{eq:initnumgoveq}. If $X_{i}-X_{i-1}$
(respectively $Y_{j+1}-Y_j$) is equal to zero, then
we set $V_{i,i}$ (respectively $W_{i,i}$) to zero
or an arbitrary value (this value will not have
any impact on the computed solution). We compute
the solution iteratively on vertical and
horizontal strips: Given $n\in\{0,\ldots,N\}$, we
assume that the values of
\begin{subequations}
  \label{eq:compvalues}
  \begin{alignat}{3}
    &Z_{i,j}^h,\, V_{i,j}&&\quad\text{ for }1\leq i\leq n,&&\quad 0\leq j\leq n,\\
    &Z_{i,j}^v,\, W_{i,j}&&\quad\text{ for }0\leq i\leq n,&&\quad 0\leq j\leq n,
  \end{alignat}
\end{subequations}
have been computed. Then, we set iteratively, for
$j=n+1,\ldots,1$, 
\begin{align*}
  Z_{n+1,j-1}^h&=Z_{n+1,j}^h-(Y_{j}-Y_{j-1})W_{n,j-1},\\
  V_{n+1,j-1}&=V_{n+1,j}-(Y_{j}-Y_{j-1})F(\frac12(Z_{n+1,j}^h+Z_{n,j-1}^v))(V_{n+1,j},W_{n,j-1}),\\
  Z_{n+1,j-1}^v&=Z_{n,j-1}^v+(X_{n+1}-Y_{n})V_{n+1,j},\\
  W_{n+1,j-1}&=W_{n,j-1}+(X_{n+1}-X_n)F(\frac12(Z_{n+1,j}^h+Z_{n,j-1}^v))(V_{n+1,j},W_{n,j-1}),
\end{align*}
and, for $i=n+1,\ldots,2$,  
\begin{align*}
  Z_{i-1,n+1}^h&=Z_{i-1,n}^h+(Y_{n+1}-Y_{n})W_{i-1,n},\\
  V_{i-1,n+1}&=V_{i-1,n}+(Y_{n+1}-Y_{n})F(\frac12(Z_{i-1,n}^h+Z_{i-1,n}^v))(V_{i-1,n},W_{i-1,n}),\\
  Z_{i-1,n+1}^v&=Z_{i,n+1}^v-(X_{i}-X_{i-1})V_{i,n+1},\\
  W_{i-1,n+1}&=W_{i,n+1}-(X_{i}-X_{i-1})F(\frac12(Z_{i,n+1}^h+Z_{i,n+1}^v))(V_{i,n+1},W_{i,n+1}),
\end{align*}
and
\begin{align*}
  Z_{0,n+1}^v&=Z_{1,n+1}^v-(X_{1}-X_{0})V_{1,n+1},\\
  W_{0,n+1}&=W_{1,n+1}-(X_{1}-X_{0})F(\frac12(Z_{1,n+1}^h+Z_{1,n+1}^v))(V_{1,n+1},W_{1,n+1}).
\end{align*}
We have defined the quantities in
\eqref{eq:compvalues} for $n$ replaced by
$n+1$. By induction we have computed the solution
on the whole domain $\Omega$. To compute the
solution at a given time $T$, we have to extract a
curve $(\X,\Y)\in\C$ such that $t(\X(s),\Y(s))=T$
for all $s\in\Real$. We proceed by iteration and
compute a set of grid points that approximates
well such a curve, for example, by taking
\begin{align*}
  i(k)&=\sup\{ i\in\{0,\ldots,N\} \mid  t(X_{i,k},Y_{i,k})<T\},\\
  j(k)&=k,
\end{align*}
for $k=0,\ldots,N$.  For a given $T$, the function
$u(T,x)$ can be seen as the curve $(x,u(T,x))$ in
$\Real^2$ which is parametrized by $x\in\Real$, and
we approximate this curve by the points
\begin{equation*}
  (x(X_{i(k),j(k)},Y_{i(k),j(k)}),U(X_{i(k),j(k)},Y_{i(k),j(k)}))
\end{equation*}
for $k=0,\ldots,N$. This method has been used to
produce the results presented in Figure~\ref{fig:difftimes}.

\begin{figure}[h]
  \centering
  \includegraphics[width=10cm]{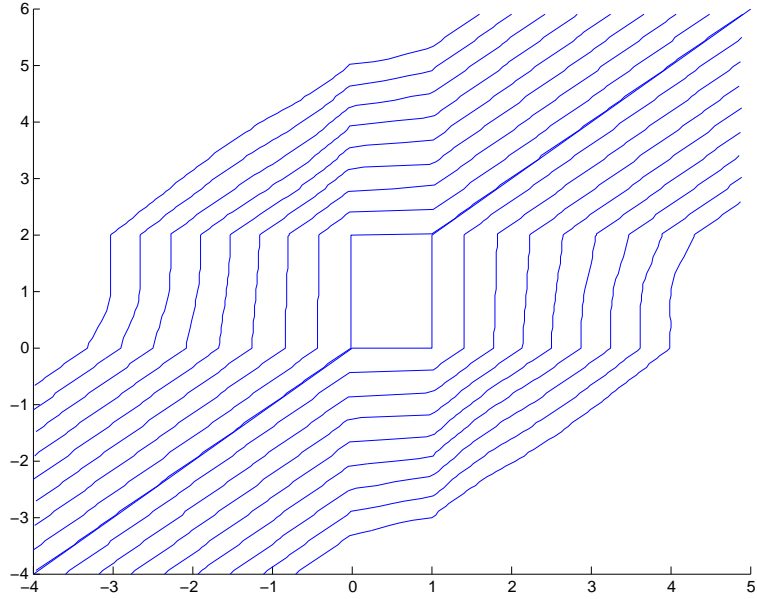}
  \caption{Plot of the isotimes, that is, the
    curves for which $t(X,Y)$ is a constant. Note
    the box in the middle in which $t$ is constant
    and equal to zero.}
  \label{fig:isotimes}
\end{figure}

\begin{figure}[h]
  \centering
  \includegraphics[width=10cm]{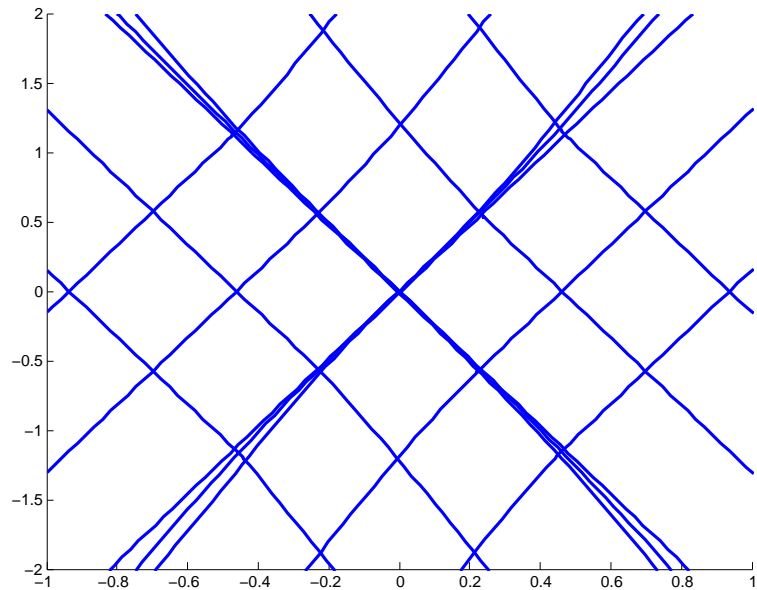}
  \caption{Forward and backward characteristics
     in the $(x,t)$ plane. Both families have
    a point of intersection at zero. It
    corresponds to the point where the measures
    $\mu$ and $\nu$ become singular.}
  \label{fig:charact}
\end{figure}

\begin{figure}[h]
  \centering
  \includegraphics[width=11.5cm]{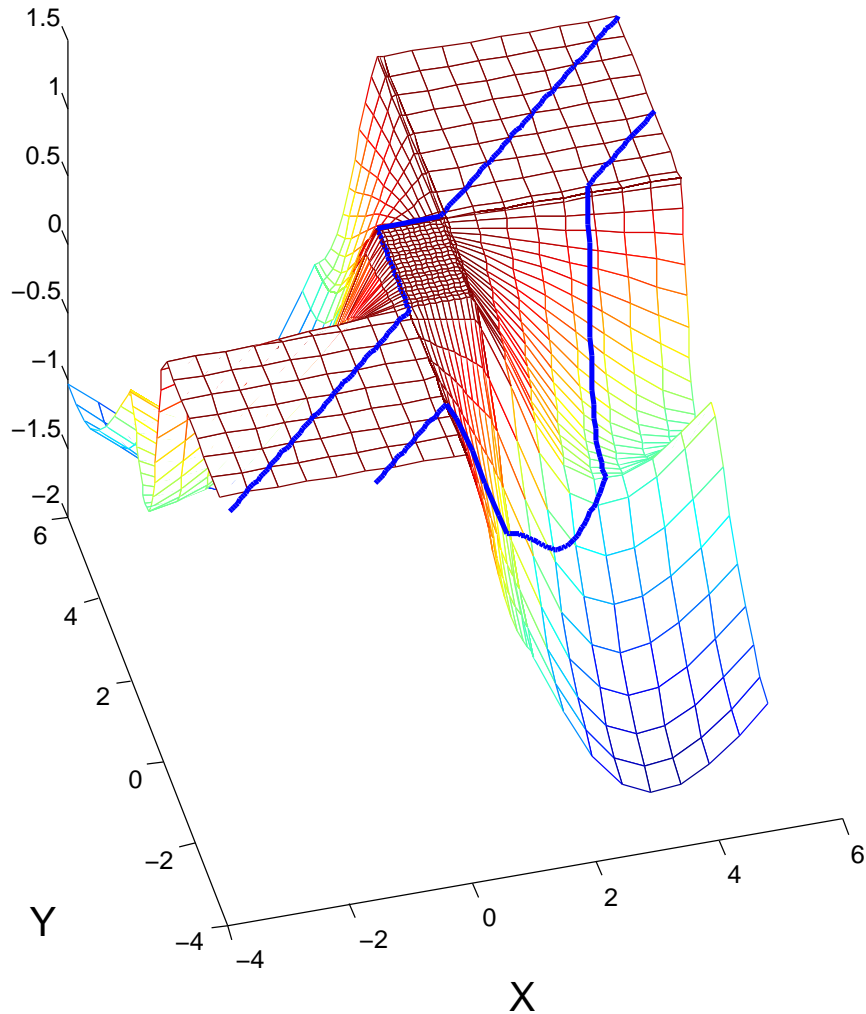}
  \caption{Plot of $U(X,Y)$. The blue curves 
   single out the solution $U(X,Y)$ for 
    times $t=0$ and $t=3$.}
  \label{fig:plotU}
\end{figure}

\begin{figure}[h]
  \centering
  \includegraphics[width=11.5cm]{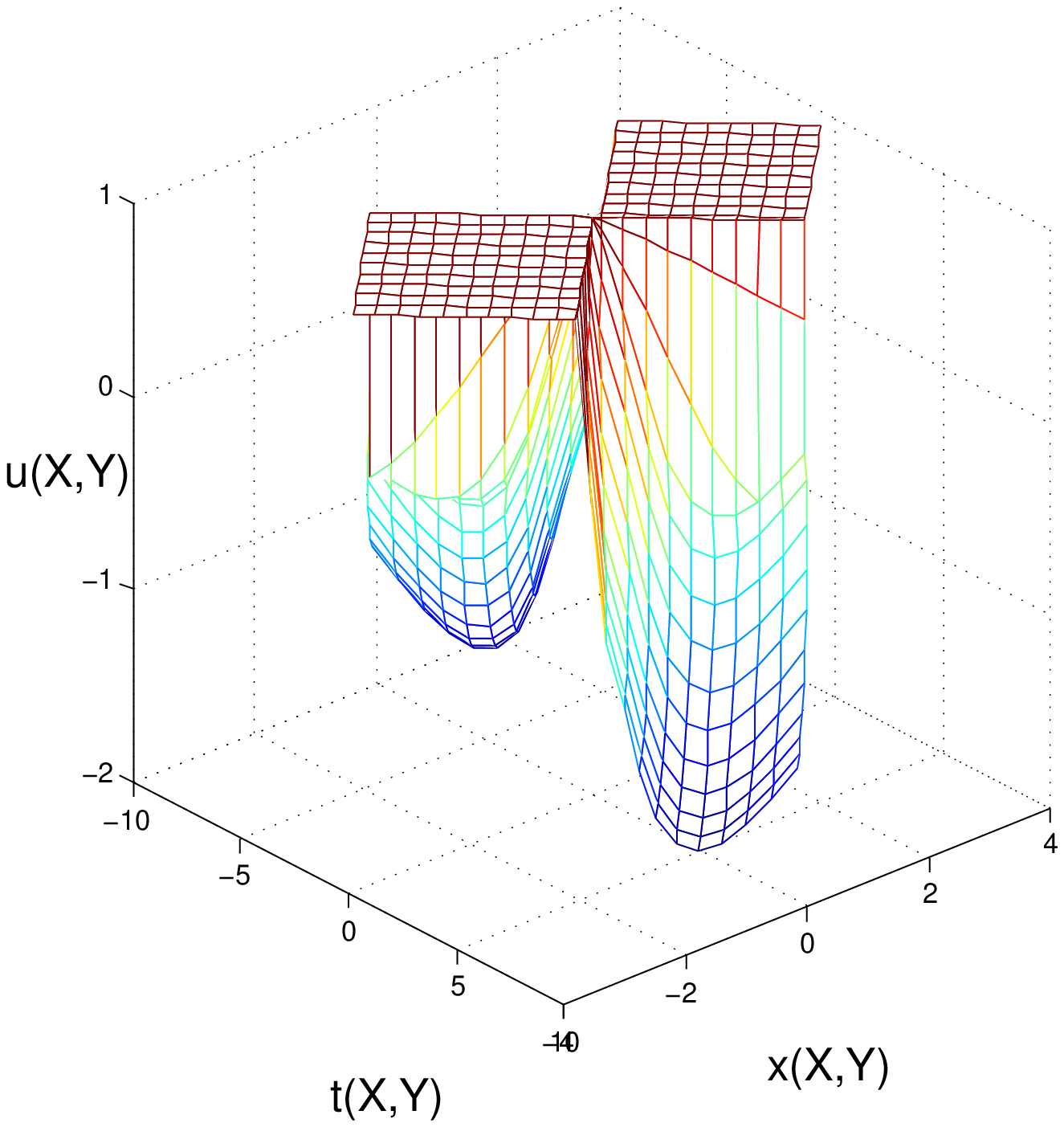}
  \caption{Plot of the surface
    $(t(X,Y),x(X,Y),U(X,Y))$ parametrized by
    $(X,Y)$ and which is approximated by
    $(t(P_{i,j}),x(P_{i,j}),U(P_{i,j}))$ for
    $i,j=0,\ldots,N$.}
  \label{fig:paramsurface}
\end{figure}

\bibliographystyle{plain} 

\begin{thebibliography}{10}

\bibitem{Ambrosio}
L. Ambrosio, N. Fusco, and D. Pallara.
\newblock {\em Functions of Bounded Variation and Free Discontinuity Problems}.
\newblock  Oxford University
  Press, New York, 2000.

\bibitem{brenier:09}
Y. Brenier.
\newblock {$L\sp 2$} formulation of multidimensional scalar conservation laws.
\newblock {\em Arch. Ration. Mech. Anal.}, 193(1):1--19, 2009.

\bibitem{BreCons:06}
A. Bressan and A. Constantin.
\newblock Global dissipative solutions of the {C}amassa--{H}olm equation.
\newblock {\em Anal. Appl. (Singap.)}, 5(1):1--27, 2007.

\bibitem{BrHoRa:09}
A. Bressan, H. Holden, and X. Raynaud.
\newblock Lipschitz metric for the Hunter--Saxton equation.
\newblock {\em Preprint, Submitted}, 2009.

\bibitem{BreZhe:06}
A. Bressan and Y. Zheng.
\newblock Conservative solutions to a nonlinear variational wave equation.
\newblock {\em Comm. Math. Phys.}, 266(2):471--497, 2006.

\bibitem{Folland}
G:~B. Folland.
\newblock {\em Real {A}nalysis}.
\newblock  Wiley,  New York, second edition, 1999.

\bibitem{garabedian}
P.~R. Garabedian.
\newblock {\em Partial Differential Equations}.
\newblock AMS Chelsea Publishing, Providence, RI, 1998.

\bibitem{GlaHunZh:96}
R.~T. Glassey, J.~K. Hunter, and Y. Zheng.
\newblock Singularities of a variational wave equation.
\newblock {\em J. Differential Equations}, 129(1):49--78, 1996.

\bibitem{HolRay:07}
H. Holden and X. Raynaud.
\newblock Global conservative solutions of the {C}amassa--{H}olm equation---a
  {L}agrangian point of view.
\newblock {\em Comm. Partial Differential Equations}, 32(10-12):1511--1549,
  2007.

\bibitem{HS:91}
J.~K. Hunter and R. Saxton.
\newblock Dynamics of director fields.
\newblock {\em SIAM J. Appl. Math.}, 51(6):1498--1521, 1991.

\bibitem{Saxt:89}
R.~A. Saxton.
\newblock Dynamic instability of the liquid crystal director.
\newblock In {\em Current Progress in Hyperbolic Systems: {R}iemann Problems
  and Computations ({B}runswick, {ME}, 1988)}, (ed. W.~B.~Lindquist). 
\newblock  {\em Contemp.
  Math.}, Vol. 100,  Amer. Math. Soc., Providence, RI, 1989, pp. 325--330.

\bibitem{ZhaZhen:98}
P. Zhang and Y. Zheng.
\newblock On oscillations of an asymptotic equation of a nonlinear variational
  wave equation.  
\newblock  {\em Asymptot.\ Anal.,}  18:307--327, 1998.

\bibitem{ZhaZhen:00}
P. Zhang and Y. Zheng.
\newblock Existence and uniqueness of solutions of an asymptotic equation
  arising from a variational wave equation with general data.
\newblock {\em Arch. Ration. Mech. Anal.}, 155(1):49--83, 2000.

\bibitem{ZhaZhen:01}
P. Zhang and Y. Zheng.
\newblock Rarefactive solutions to a nonlinear variational wave equation of liquid crystals.
\newblock {\em Comm. Partial Differential Equations}, 26(3\&4):381--419, 2001.

\bibitem{ZhaZhen:01a}
P. Zhang and Y. Zheng.
\newblock   Singular and
  rarefactive solutions to a nonlinear variational wave equation.
\newblock   {\em Chin. Ann. Math.,}  22B:159--170, 2001.
  
  
\bibitem{ZhaZhen:03}
P. Zhang and Y. Zheng.
\newblock Weak solutions to a nonlinear variational wave equation.
\newblock  {\em Arch.\ Rat.\ Mech.\ Anal.,}  166:303--319, 2003.
  
\bibitem{ZhaZhen:05a}
P. Zhang and Y. Zheng.
\newblock  Weak solutions to
  a nonlinear variational wave equation with general data. 
\newblock  {\em  Ann. Inst. H. Poincar\'e Anal. Non Lin{\'e}aire,} 22:207--226, 2005.

\bibitem{ZhaZhen:05}
P. Zhang and Y. Zheng.
\newblock On the global weak solutions to  a variational wave equation.
\newblock In {\em Handbook of Differential Equations. Evolutionary Equations. Volume 2.} (eds. C.\ M.\ Dafermos, E.\ Feireisl).
\newblock Elsevier, Amsterdam, 2005, pp. 561--648.


\end{thebibliography}

\def\ocirc#1{\ifmmode\setbox0=\hbox{$#1$}\dimen0=\ht0 \advance\dimen0
  by1pt\rlap{\hbox to\wd0{\hss\raise\dimen0
  \hbox{\hskip.2em$\scriptscriptstyle\circ$}\hss}}#1\else {\accent" 17
  #1}\fi}\def\cprime{$'$}

\end{document}